\documentclass{amsart}

\pagestyle{plain}

\usepackage{phaistos}
\usepackage{amssymb}
\usepackage{enumerate, xspace}
\usepackage{enumitem}
\usepackage{mathrsfs}
\usepackage[normalem]{ulem}
\usepackage{bbm}       
\usepackage[backgroundcolor=blue!20!white, linecolor=blue!20!white,
textsize=footnotesize]{todonotes}
\usepackage{verbatim}
\usepackage[colorlinks=true,linkcolor=blue,citecolor=magenta]{hyperref}
\numberwithin{equation}{section}
\usepackage{stmaryrd}

\usepackage{enumitem}
\usepackage{hyperref}

\usepackage{crossreftools}


\makeindex
\newtheorem{theorem}{Theorem}[section]
\newtheorem{lemma}[theorem]{Lemma}
\newtheorem{proposition}[theorem]{Proposition}

\newtheorem{definition}[theorem]{Definition}
\newtheorem{remark}[theorem]{Remark}


\newcommand{\mylabel}[2]{#2\def\@currentlabel{#2}\label{#1}}


\newcommand\ve{\varepsilon}

\newcommand\vf{\varphi}


\renewcommand{\ge}{\geqslant}
\renewcommand{\le}{\leqslant}

\renewcommand{\tilde}{\widetilde}


\newcommand\cA{{\mathcal A}}
\newcommand\cB{{\mathcal B}}
\newcommand\cC{{\mathcal C}}
\newcommand\cD{{\mathcal D}}

\newcommand\cF{{\mathcal F}}
\newcommand\cG{{\mathcal G}}
\newcommand\cH{{\mathcal H}}

\newcommand\cI{{\mathcal I}}
\newcommand\cJ{{\mathcal J}}
\newcommand\cL{{\mathcal L}}
\newcommand\cO{{\mathcal O}}
\newcommand\cM{{\mathcal M}}

\newcommand\cQ{{\mathcal Q}}
\newcommand\cR{{\mathcal R}}
\newcommand\cS{{\mathcal S}}

\newcommand\cW{{\mathcal W}}
\newcommand\cZ{{\mathcal Z}}


\newcommand\bN{{\mathbb N}}

\newcommand\bQ{{\mathbb Q}}
\newcommand\bR{{\mathbb R}}


\DeclareMathAlphabet{\mymathbb}{U}{BOONDOX-ds}{m}{n}

\newcommand{\Deriv}{\mathcal D}

\newcommand{\sing}{\nu^{\text{sing}}}

\newcommand{\catt}{\cF_{A,t}}
\newcommand{\cato}{\cF_{A,0}}
\newcommand{\catjt}{F_{A,j,t}}

\newcommand{\cov}{ \text{Cor} }
\newcommand{\Center}{\text{Center}}
\newcommand{\Coup}{\text{Coup}}
\newcommand{\diam}{\operatorname{diam}}
\newcommand{\Int}{\operatorname{Int}}

\begin{document}

\title{On linear response for discontinuous perturbations of smooth endomorphisms}

\author{Giovanni Canestrari}
\address{Department of Mathematics, University of Toronto, 
40 St.~George Street, Toronto, ON M5S 2E4, Canada. Email:
\texttt{giovanni.canestrari@utoronto.ca}.}

\date{\today}

\begin{abstract}
We consider discontinuous perturbations of smooth endomorphisms and show that if the perturbed family satisfies uniform mixing assumptions on standard pairs the physical measure is Lipschitz in the parameter defining the perturbation. We also study the problem of linear response for this class of perturbations. Finally we discuss the applicability of the abstract assumptions proving linear response for a concrete example.
\end{abstract}

\maketitle
\tableofcontents

\section{Introduction}

The dependence of the SRB measure on the parameters of the system is a natural question in smooth ergodic theory which has seen wide interest in recent years. While on physical grounds it is reasonable to expect that the averages of smooth observables change in a smooth way when varying the parameters smoothly, counterexamples show that this is not always the case (a famous one being the quadratic family \cite{BBS15}).\\
Let \(\{\cF_{t}: \cM \to \cM\}_t\) be a family of dynamical systems such that for each element of the family there exists a unique SRB measure \(\mu_t\) and let \(\vf: \cM \to \bR\) be a smooth observable. Linear response means that the function \(t \to \mu_{t}(\vf)\) is differentiable.\\
It is common knowledge that proving linear response, whenever it holds, is significantly more challenging than showing continuity. However, this is not just an interesting mathematical problem: there are cases in which linear response is really the property that lies at the foundation of a rigorous mathematical justification of a physical theory (see e.g. \cite{CELS93, BDL00} for a derivation of the Ohm law in a periodic Lorentz gas. In this case the conductivity is expressed in terms of microscopical quantities via linear response).\\
Nowadays, linear response has been proved in many situations and the \textit{smooth} uniformly hyperbolic setting seems to be well understood. Ruelle has proved differentiability for smooth hyperbolic diffeomorphisms \cite{Ruelle97, Ruelle04} and Anosov flows \cite{Ruelle08}. Similar results can also be obtained using transfer operators acting on Banach spaces, as has been done in \cite{BL07}. Beyond uniform hyperbolicity, there have recently been studies in many directions: e.g., partially hyperbolic systems \cite{Dolgopyat04}, intermittent maps \cite{BT16, Korepanov}, random systems \cite{BRS20}. For a broader and more complete review of the state of the art, we refer to \cite{Baladi14,Ruelle09} and the references therein.

The situation appears to be less neat when it comes to systems with discontinuities. In the one-dimensional case, even uniformly expanding systems may lack statistical stability or linear response. Results in the literature \cite{Baladi07,BS08,LS18} suggest that the main obstruction to linear response occurs when the perturbation changes the topological class of the map at first order (i.e., when the perturbation is transversal).\footnote{However, there are situations in which linear response may hold also for transversal perturbations. In \cite{BG24} it is shown how unbounded expansion can compensate for the lack of smoothness, providing yet another mechanism for linear response.} In the case of transversal families of piecewise \(\cC^2\) one-dimensional uniformly expanding maps, it is shown that the SRB measure has a dependence not worse than \(t|\ln t|\). This follows from perturbation theory \cite{Keller82,KL99}. If optimal, this would prevent linear response, due to the presence of the logarithm. In \cite{LS18}, it is indeed proved that for a wide family of transversal perturbations, the modulus of continuity of \(\mu_t\) is of the order of \(t\sqrt{|\ln t|}\), demonstrating incompatibility with linear response. Recently, there has been much interest also in the case of discontinuous systems in higher dimensions (see e.g. \cite{Wormell22, Wormell24}), and although linear response beyond the regime of smooth uniform hyperbolicity is somehow believed to be true, it remains an open and challenging problem. Such conviction has driven the search for new mechanisms for linear response. This task has been addressed for instance in \cite{Ruelle18}, where it is argued that a large enough stable dimension yields linear response for smooth non-uniformly hyperbolic systems with tangencies.

With the present work, we show that discontinuous uniformly hyperbolic systems can exhibit linear response, also for families not tangent to a conjugacy class, contrary to the one-dimensional case. This shows the existence of a new mechanism yielding linear response. While in 1D the discontinuities generate delta functions, which do not converge to the absolutely continuous invariant measure when pushed forward by the dynamics, in D\(>1\) (with a contracting direction transversal to the image of the discontinuity set) the discontinuities give rise to measures supported on submanifolds (e.g. standard pairs) that have a chance to converge to the SRB measure.

The idea of using standard pairs to study linear response is not new. In \cite{Dolgopyat04} it has been successfully carried out to study perturbations of partially hyperbolic systems. Other works related to the present one include \cite{CELS93}, which is an example of linear response for a perturbation of a discontinuous system. More specifically, \cite{CELS93} studies billiards in the presence of a small electric field. However, \cite{CELS93} uses some special properties of the system, most notably that the perturbed flow remains 1-1 and onto. Here, we consider the more general situation in which the perturbation can change the number of preimages of points of a measure proportional to the parameter defining the perturbation. In other words, linear response can hold also for perturbations that change at first order the topological class of the unperturbed dynamics (this is the case of the example in Section \ref{sec:examples}).

The present analysis is restricted to discontinuous perturbations around a map that have a smooth invariant density. Since SRB measures of hyperbolic maps are in general distributions (see e.g. \cite{DKL21}), this is a rather restrictive assumption. To check if the strategy we propose can be extended to more general cases requires further work. Unfortunately, the existing literature does not provide the needed technical tools even for studying a simple example (see the discussion in Section \ref{sec:examples} before Proposition \ref{prop:fundamental-example}). To remedy this, in Section \ref{sec:coupling}, we provide the results on the behavior of standard pairs strictly needed to treat the example discussed here.

To test our ideas, we consider the following discontinuous perturbation of the famous Arnold cat map on the unit square
\[
    \begin{pmatrix}x\\y 
\end{pmatrix} \mapsto  \begin{pmatrix} \{x + y\}\\
\{x+(2-t)y\}\end{pmatrix},
\]
where \(t \in [0, 1/8]\) and \(\{a\}\) is the fractional part of \(a\). The map \(\cato\) models a conservative dynamical system (e.g., a Hamiltonian one), while \(\catt\) models the effect of some non-conservative perturbation: as \(t\) increases, a strip-shaped ``hole'' of size \(t\) opens in the image.


\begin{tikzpicture}[scale=2.5]
{\crtcrossreflabel{(Fig.1)}[Fig:1]}
\hspace{0.8cm}


 \begin{scope}[xshift = -0.3cm]
  \coordinate (a3) at (0,0);
      \coordinate (b3) at (1,0);
      \coordinate (c3) at (0,1);
      \coordinate (d3) at (1,1);
      
      \coordinate (ac) at (0,8/15);
      \coordinate (bd) at (1,8/15);
      \coordinate (cd) at (1/8, 1);

      \fill[red] (a3) -- (ac)  -- (b3) -- cycle;
      \fill[blue] (ac) -- (c3)  -- (b3) -- cycle;
      \fill[green] (c3) -- (cd)  -- (bd) -- (b3)-- cycle;
      \fill[brown] (cd) -- (d3)  -- (bd) -- cycle;

      \draw [ultra thick] (a3) -- (b3) -- (d3) --(c3)--(a3);
      \draw [very thick] (b3)--(ac);
      \draw [very thick] (b3)--(c3);
      \draw [very thick] (bd)--(cd);

  \begin{scope}[xshift=3cm]

      \coordinate (a3) at (0,0);
      \coordinate (b3) at (1,0);
      \coordinate (c3) at (0,1);
      \coordinate (d3) at (1,1);
      
      \coordinate (cd1) at (1/8, 1);
      \coordinate (ac) at (0, 1-1/8);
      \coordinate (ab) at (8/15, 0);
      \coordinate (cd2) at (8/15, 1);
      \coordinate (ab1) at (1/8, 0);
      \coordinate (bd1) at (1,1-1/8);

      \fill[red] (a3) -- (cd2)  -- (d3) -- cycle;
      \fill[brown] (ab1) -- (bd1)  -- (ab) -- cycle;
      \fill[green] (a3) -- (ac)  -- (cd1) -- (cd2) -- cycle;
      \fill[blue] (ab) -- (b3)  -- (bd1)  -- cycle;

      \draw [ultra thick] (a3) -- (b3) -- (d3) --(c3)--(a3);
      \draw [very thick] (a3)--(d3);
      \draw [very thick] (cd1)--(ac);
      \draw [very thick] (ab1)--(bd1);
      \draw [very thick] (a3)--(cd2);
      \draw [very thick] (bd1)--(ab);

  \end{scope}

 ;

  \draw[<-] (2.6,1) to [bend right] node [midway, above] { $ \catt $ } (1.4,1);
\end{scope}
  \node at (1.8,-0.3) [align = left] {Figure 1: a perturbation of the Arnold cat map.}; 
\end{tikzpicture}

Denote by \(1\) the constant probability density corresponding to the SRB measure for the unperturbed cat map and by \(\cL_t^A\) the transfer operator associated to the perturbed dynamics, which evolves the densities. This perturbation is pathological in the sense that the expression \(( \cL^A_t 1 - 1)/t\) has a \(\sim 1/t\) blow-up in the uncolored portion of the square in the right hand side. This happens because mass does not reach those regions. On the other hand, since the perturbation moves the discontinuities in a direction transversal to the unstable cone, the resulting inhomogeneities in the mass distribution are smooth along \textit{some} direction in the unstable cone and are therefore “regular” from the viewpoint of a hyperbolic system. To make this precise, we cover the hole with unstable curves (segments in this case), whose lengths are of order one, uniformly in the size of the perturbation. We formalize this in Assumption~\ref{ass:foliation}, where we also allow for a small portion of the hole to be covered by short curves (see Assumptions~\ref{ass:foliation}–\ref{ass:good-initial-cond1} and \ref{ass:foliation}–\ref{ass:good-initial-cond2}). This phenomenon also occurs for the family \(\catt\) due to the little white triangle inside the square on the right of \ref{Fig:1}. The fact that most curves are uniformly long in $t$ yields the uniform decay of correlations needed to establish linear response. Denote by \(m\) the Lebesgue measure and by \(\mu_t\) the SRB measure of \(\catt\). In Section \ref{sec:examples} we prove that, \mbox{for any smooth observable \(\vf\),} 
 \[
    \lim_{t \to 0}\frac{\mu_{t}(\vf) - m(\vf)}{t} = \sum_{k =0}^{\infty}\left[m(\vf) - \int_0^1 (\vf \circ \cato^k)(s,s)ds \right].
 \]
The rightmost term in the equation above is the push-forward of the measure obtained by integration along the diagonal of the square. The diagonal appears because it is where the inhomogeneities of mass in \(( \cL^A_t 1 - 1)/t\) concentrate.

In order to treat the general case, a complete theory for using standard pairs in the setting of hyperbolic piecewise smooth endomorphisms must be developed first. Toward this goal, we introduce an alternative coupling method with respect to the literature, which simplifies the standard arguments by avoiding the use of stable/unstable holonomies on Cantor sets (e.g. \cite{C99}). 

If such a theory is completed, it should be possible to apply it to manifold new cases. This may even allow investigating the situation in which the unperturbed SRB measure is not absolutely continuous or studying Lipschitz dependence of the conditional survival probability for small holes in hyperbolic systems.

The paper is structured as follows: Section \ref{sec:setting} is devoted to presenting the class of maps and perturbations that we care about and the results. In Section \ref{sec:Lip} we prove Theorem \ref{thm:Lip} about Lipschitz dependence. Under additional assumptions, in Section \ref{sec:linear-response} we prove Proposition \ref{prop:linear-resp-accumulation} needed to prove Theorem \ref{thm:linear-resp} about linear response. In Section \ref{sec:examples} we provide a linear response formula for a concrete example and in Section \ref{sec:coupling} we prove Proposition \ref{prop:fundamental-example} that provides the needed tools on standard pairs for the example.

Throughout the paper \(\bN = \{1,2,...\}\), \(\bN_0 = \bN \cup \{0\}\) and \(C \in \bR^+\) is a constant that depends only on the family \(\{(\cF_{j,t}, \cM_{j,t}^{+})\}_{t \in [0,\ve]}\). For \(x,y \in \cM\), \(d(x,y)\) is the Euclidean distance between \(x\) and \(y\). \(m\) is the Lebesgue measure. For \(A, B \subseteq \bR^2\), \(s \in \bR^+\), \([A]_{s} = \{x \in \cM: d(x,A) \le s\}\), \(A \triangle B = (A \setminus B) \cup (B \setminus A)\) and \(d(A,B)\) is the Hausdorff distance between \(A\) and \(B\). Given a disjoint union of curves \(\Gamma\), we let \(\left|\Gamma\right|\) be the sum of their lengths. For \(\cM = (0,1)^2\), we will denote by \(\cC^k(\cM)\) the set of differentiable functions such that \(f^{(p)}\), \(p \in \{0,...,k\}\), admit a continuous extension up to the boundary of \(\cM\), making \(\cC^k(\cM)\) Banach spaces. Finally, by measure we mean signed measure unless otherwise specified.\\

\textbf{Acknowledgments:} I am grateful to Dmitry Dolgopyat for suggesting the question that inspired the present work and sharing with me his insights on the topic on many occasions. I am also grateful to Carlangelo Liverani for patiently explaining coupling to me and for key discussions at the beginning and throughout the whole project. I wish to thank P\'eter B\'alint, Shuang Chen and Nicholas Fleming-Vázquez for stimulating discussions. Finally, I wish to thank the referees for their thoughtful comments, which improved the presentation of this work. This work was supported by the PRIN Grants ``Regular and stochastic behavior in dynamical systems" (PRIN 2017S35EHN), ``Stochastic properties of dynamical systems" (PRIN 2022NTKXCX), and by the MIUR Excellence Department Projects MatMod@TOV and Math@TOV awarded to the Department of Mathematics, University of Rome Tor Vergata. The author acknowledges membership to the GNFM/INDAM and the hospitality of Maryland University and Simons Center where part of this work was done. This work is part of the author's PhD project at Roma Tor Vergata and his activity within the DinAmicI group.

\section{Setting and results}
\label{sec:setting}
Let \(\cM = (0,1)^2\) and \(\cF\) be a \textit{piecewise smooth endomorphism} as explained below.
Let \(\cJ \subset \bN\) be a finite set of indices and \(\cM_j^+ \subseteq \cM\), \(j \in \cJ\), be open disjoint path connected sets such that \(\partial \cM_{j}^+\) is a closed curve and is a finite union of \(\cC^1\) curves of finite length that intersect at their endpoints. Moreover, \(\overline \cM = \bigcup_{j \in \cJ}\overline \cM_j^{+}\). For \(j \in \cJ\), we consider diffeomorphisms 
\[
\cF_j: \cM_j^{+} \to \cM_j^- = \cF_j(\cM^+_j)
\]
such that \(\bigcup_{j \in \cJ} \cM_j^{-} \subseteq \cM\). We also introduce the sets of singularities 
\[
\cS^{+} = \bigcup_{j \in \cJ}\partial \cM_j^{+}, \quad  \quad \cS^{-} =  \bigcup_{ j\in \cJ} \partial \cM_j^{-}.
\]
The maps \(\cF_j\) define a map \(\cF\) from \(\cM \setminus \cS^+\) to its image by applying \(\cF_j\) for \(x \in \cM_j^+\). If the sets \(\cM_j^{-}\) are pairwise disjoint (that guarantees injectivity) and their union is equal to \(\cM \setminus \cS^{-}\) (that guarantees surjectivity), the map \(\cF : \cM \setminus \cS^+ \to \cM \setminus \cS^-\) is a diffeomorphism but we are considering also more general situations. Any piecewise endomorphism is uniquely determined by the pairs \(\{(\cF_j, \cM_j^+)\}_{j \in \cJ}\). We now define a subclass of maps with the needed uniform properties.
\begin{definition}\label{def:uniform}
Let \(\mathfrak S = \mathfrak S_{ D}\), \(D \in \bR^+\), be the set of piecewise endomorphisms such that for all \(\{(\cF_j, \cM_j^+)\}_{j \in \cJ} \in \mathfrak S\):
\begin{enumerate}
\item  [{\crtcrossreflabel{(U1)}[ass:unif-extension0]}] The cardinality of \(\cJ\) is constant and the lengths of \(\partial \cM^+_{j}\) are uniformly bounded.
\item [{\crtcrossreflabel{(U2)}[ass:unif-extension]}] For any \(j \in \cJ\), the map \(\cF_j: \cM_{j}^{+} \to \cM_{j}^{-}\) admits a \(\cC^2\) invertible extension \(F_j\) to a ball \(U_{\cM}\) containing \(\cM\). Moreover, 
\[
\max_{j \in \cJ}\max\left\{\|F_j\|_{\cC^2}, \|F_j^{-1}\|_{\cC^2 } \right\} \le D.
\]
\end{enumerate}
\end{definition}
The sets \(\cS^{+}\) and \(\cS^{-}\) consist of a finite union of \(\cC^1\) curves of finite length and, for each \(k \in \bN\), define
\begin{equation}\label{eq:future-disc}
    \cS^{+,k} = \bigcup_{p=0}^{k-1}\cF^{-p}(\cS^+)  \setminus \partial \cM.
\end{equation} 
Note that \(\cS^{+} \setminus \partial \cM = \cS^{+,1}\) and \(\cF^k\) is discontinuous at most on \(\cS^{+,k}\). We assume that  the sets \(\cS^{+,k}\) and \( \cS^-\) intersect in finitely many points and are transversal. I.e., for any two \(\cC^1\) curves \(\gamma \subseteq \cS^{+,k} \) and \(\widetilde \gamma \subseteq \cS^{-}\), \(\gamma\) and \(\widetilde \gamma\) are transversal at \(\gamma \cap \widetilde \gamma\). The map \(\cF\) is defined on \(\cM \setminus \cS^{+}\), but it is possible to extend the action of \(\cF\) to \(\overline{\cM \setminus \cS^{+}} = [0,1]^2\) by defining \(\cF(\cS^+)\) via limits of images of points in \(\cM \setminus \cS^+\). In general, the resulting map depends on the choice of the particular limiting sequence (e.g. approximating the value of the map from the left or the right), but this ambiguity will not play any role in the sequel. For \(\ve \in \bR^+\), let \(\left\{(\cF_{j,t}, \cM^{+}_{j, t})\right\}_{t \in [0,\ve]}\) be a family of elements of \(\mathfrak S\) indexed by the real parameter \(t\) and let \(\cS^{\pm}_t\), \(F_{j,t}\) and \(\cF_t\) be the corresponding objects. 

\begin{definition}\label{def:admissible-family} We say that \(\left\{(\cF_{j,t}, \cM_{j,t}^{+})\right\}_{t \in [0,\ve]}\) is \textit{an admissible family} if conditions (H1)-(H4) are verified.
\end{definition}
\textit{\begin{enumerate}
\item[{\crtcrossreflabel{(H1)}[ass:sing-near]}] There exists \(Q \in \bR^+\) such that, for all \(t \in [0, \ve]\) and \(j \in \cJ\),  
\[
\begin{split}
&\hspace{1.8cm}\left\|F_{j,t} - F_{j,0}\right\|_{\cC^2}\le Q t, \\
& \cM_{j,0}^{+} \triangle \cM_{j,t}^{+} \subset [\partial \cM_{j,0}^{+} \setminus \partial \cM]_{Q t} \cap [\partial \cM_{j,t}^{+}\setminus \partial \cM]_{Q t}.
\end{split}
\]
\item[{\crtcrossreflabel{(H2)}[ass:smooth-0]}] We assume that \(\cF_0\) preserves a measure \(\mu_0 \ll m\) and \(h_0:=\frac{d\mu_0}{dm} \in \cC^2(\cM, \bR)\).\\
\item [{\crtcrossreflabel{(H3)}[ass:srb]}] For all \(t \in [0, \ve]\), there exists a measure \(\mu_t\) such that, for any \(\vf \in \cC^0(\cM)\), 
\[
\mu_t(\vf) = \lim_{n \to \infty} \mu_0 (\vf \circ \cF_t^n).
\] 
\end{enumerate}
}

The next assumption requires a couple of preliminaries. Let \(W\) be a \(\cC^1\) curve and \(\rho \in \cC^1(W, \bR)\). We set, for \(t \in [0, \ve]\), \(\vf \in \cC^0(\cM)\),  \(n \in \bN_0\),
\begin{equation}\label{eq:cov-def-general}
    \cov(\vf, n, t; \rho) = \left|\int_{W} \left(\vf \circ \cF_t^n\right) \rho - \mu_t(\vf) \int_{W}\rho\right|.
\end{equation}
Let \(\mathscr P_t\) be a measurable partition of \(\cM \setminus (\cS_0^- \cup \cS_{t}^{-})\) in \(\cC^1\) curves. There exist conditional probability densities \(p_{W}: W \to \bR^+\), \(W \in \mathscr P_t\), and a disintegration\footnote{See \cite{Rohlin52} for the existence of disintegrations for measurable partitions.} \(m_{\mathscr P_t}\) of the Lebesgue measure such that for any bounded \mbox{measurable function \(g: \cM \to \bR\),}
\begin{equation}\label{eq:disintegration}
\int_{\cM}gdm = \int_{\mathscr P_t} \left(\int_{W}g p_W \right)dm_{\mathscr P_t}(W).
\end{equation}

\textit{
\begin{enumerate}
\item[{\crtcrossreflabel{(H4)}[ass:foliation]}]
There exists a sequence of positive real numbers \(\{\Theta_n\}_n\), \(\sum_{n=0}^{\infty}\Theta_n < \infty\) and, for every \(t \in [0,\ve]\), a measurable partition \(\mathscr P_t\) of \(\cM \setminus (\cS_0^- \cup \cS_{t}^{-})\) with the following property. We assume that \(\mathscr P_t\) can be partitioned into two \(m_{\mathscr P_t}-\)measurable sets \(\mathscr P^1_t\) and \(\mathscr P^2_t\) such that: For all \(t \in (0,\ve]\) and \(\vf \in \cC^1(\cM)\), \(n \in \bN_0\),\\
\begin{enumerate}
  \item[{\crtcrossreflabel{(a)}[ass:good-initial-cond1]}] \!\!\(\cov\left(\vf, n , t; \rho_W p_{W}\right) \le \Theta_n \|\vf\|_{\cC^1}\|\rho_W\|_{\cC^1}, \text{  } \forall W \in \mathscr P_t^{1}, \text{ } \forall \rho_W \!\in\! \cC^1(W, \bR);\)\\
  \item[{\crtcrossreflabel{(b)}[ass:good-initial-cond2]}] \(t^{-1}\int_{\mathscr P^2_t} \cov\left(\vf, n , t; g p_{W}\right) dm_{\mathscr P_t}(W) \le \Theta_n\|\vf\|_{\cC^1}\|g\|_{\cC^1}\), \text{  }\(\forall g \in \cC^1(\cM)\).\\
\end{enumerate} 
\end{enumerate}
}

We refer to the end of this section and to the introduction for a discussion of the assumptions. From now on, let \(\left\{(\cF_{j,t}, \cM_{j,t}^+)\right\}\) be an admissible family and \(\mu_t\) the corresponding measures given by \ref{ass:srb}. The next two theorems are the main result of this work.
\begin{theorem}[Lipschitz continuity]\label{thm:Lip}
 For all \(t \in  [0, \ve]\), and each \(\vf \in \cC^1(\cM)\),
  \[
    \left| \mu_t(\vf) - \mu_0(\vf) \right| \le C\|\vf\|_{\cC^1} t.
  \]
\end{theorem}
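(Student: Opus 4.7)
The plan is to use a telescoping argument based on the defining limit of $\mu_t$ and the summable decay condition \ref{ass:decay-n}. Since the result for $t=0$ is trivial, fix $t \in (0,\ve]$. By \ref{ass:srb} and \ref{ass:smooth-0}, we have
\[
\mu_t(\vf) - \mu_0(\vf) = \lim_{N \to \infty}\bigl(\mu_0(\vf \circ \cF_t^{N}) - \mu_0(\vf)\bigr) = \lim_{N \to \infty}\sum_{n=0}^{N-1}\int_{\cM}\bigl(\vf \circ \cF_t^{n+1} - \vf \circ \cF_t^{n}\bigr)h_0\, dm,
\]
so the task reduces to estimating the individual increments of the telescoping sum.

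Next, I would invoke \ref{ass:decay-n} (which, as announced in the excerpt, is established in Lemma \ref{lem:decay-tot} under \ref{ass:sing-near}--\ref{ass:foliation}): for every $n \in \bN_0$,
\[
\left|\int_{\cM}\bigl(\vf \circ \cF_t^{n+1} - \vf \circ \cF_t^{n}\bigr)h_0\, dm\right| \le C\,t\,\Theta_n\,\|\vf\|_{\cC^1}.
\]
Summing over $n$ and using that $\sum_{n=0}^{\infty}\Theta_n < \infty$ by \ref{ass:foliation}, the partial sums form an absolutely convergent series, hence their limit exists and is controlled by
\[
|\mu_t(\vf) - \mu_0(\vf)| \le C\,\|\vf\|_{\cC^1}\,t\,\sum_{n=0}^{\infty}\Theta_n \le C'\|\vf\|_{\cC^1}\,t,
\]
which gives the claimed Lipschitz bound.

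Once \ref{ass:decay-n} is in hand the argument is really just a telescoping summation, so there is no serious obstacle inside this proof itself. The essential difficulty has been pushed into Lemma \ref{lem:decay-tot}, whose role is to translate the standard-pair decay estimates \ref{ass:good-initial-cond1}--\ref{ass:good-initial-cond2} (applied along the disintegration \eqref{eq:disintegration} of $h_0\,dm$ over the foliation $\mathscr F_t$) into the uniform-in-$t$ bound \ref{ass:decay-n}. The main point there is that the two terms composing $\vf\circ\cF_t^{n+1}-\vf\circ\cF_t^{n}$, viewed along a leaf $W \in \mathscr F_t$, differ by one application of $\cF_t$, so after a change of variables they may be compared using the mixing of standard pairs of two different types: leaves in $\mathscr F_t^1$ already satisfy a $t$-independent decay via \ref{ass:good-initial-cond1}, while leaves in $\mathscr F_t^2$ are those affected by the discontinuities and contribute a factor proportional to $t$ through \ref{ass:good-initial-cond2}. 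In the present theorem I would simply quote \ref{ass:decay-n} as given and conclude as above.
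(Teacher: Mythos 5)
Your proposal is correct and follows essentially the same route as the paper: the telescoping identity from \ref{ass:srb} and \ref{ass:smooth-0} is exactly Lemma \ref{lem:telescopic} (after the change of variables $\int(\vf\circ\cF_t^{k+1}-\vf\circ\cF_t^k)h_0\,dm=\int(\vf\circ\cF_t^k)(\cL_t h_0-h_0)\,dm$), and invoking \ref{ass:decay-n} is equivalent to applying Lemma \ref{lem:decay-tot} to $\Deriv_t$, with summability of $\Theta_n$ closing the argument exactly as in the paper's proof. Your sketch of why Lemma \ref{lem:decay-tot} holds (good/bad leaves, the bad part carrying a factor $t$) also matches the paper's mechanism.
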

The proof of Theorem \ref{thm:Lip} is presented in Section \ref{sec:Lip}. Recall that \(h_0\) was introduced in \ref{ass:smooth-0}. For \(t \in [0,\ve]\), \(\vf \in \cC^0(\cM)\), set
\[
   \nu_t (\vf) = \int_{\cM} \left(\vf \circ \cF_t - \vf\right) h_0dm.
\]
\begin{theorem}[Linear Response] \label{thm:linear-resp}
 If there exists a measure \(\widetilde \nu\) such that
\begin{equation}\label{eq:limit-at-beginning}
  \lim_{t \to 0} \frac{\nu_t (\vf) }{t} = \widetilde \nu(\vf), \quad \forall \vf \in \cC^0(\cM),
 \end{equation}
then linear response holds and, for any \(\vf \in \cC^1(\cM)\),
\[
\lim_{t \to 0} \frac{\mu_t(\vf) - \mu_0(\vf)}{t} = \sum_{k \in \bN_0} \widetilde\nu \left(\vf \circ \cF_0^k\right).
\]
\end{theorem}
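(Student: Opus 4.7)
The plan is to reduce the linear response formula to the hypothesis (\ref{eq:limit-at-beginning}) by a telescoping identity that exploits the $\cF_0$-invariance of $\mu_0=h_0\,dm$. For $N\in\bN$, I would write
\[
\vf\circ \cF_t^N-\vf\circ \cF_0^N=\sum_{k=0}^{N-1}\bigl(\vf\circ \cF_t^{k+1}\circ \cF_0^{N-k-1}-\vf\circ \cF_t^k\circ \cF_0^{N-k}\bigr)
\]
and factor the $k$-th summand as $(g_k\circ \cF_t-g_k\circ \cF_0)\circ \cF_0^{N-k-1}$ with $g_k=\vf\circ \cF_t^k$. Integrating against $h_0\,dm$ and invoking $\cF_0$-invariance twice---once to collapse the outer $\cF_0^{N-k-1}$, once to replace $g_k\circ \cF_0$ by $g_k$---yields the finite-$N$ identity
\[
\mu_0(\vf\circ \cF_t^N)-\mu_0(\vf)=\sum_{k=0}^{N-1}\nu_t(\vf\circ \cF_t^k).
\]
Sending $N\to\infty$ using \ref{ass:srb} on the left gives the master identity $\mu_t(\vf)-\mu_0(\vf)=\sum_{k=0}^\infty\nu_t(\vf\circ \cF_t^k)$.

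To divide by $t$ and exchange the limit $t\to 0$ with the sum, I observe that $\nu_t(\vf\circ \cF_t^k)=\int(\vf\circ \cF_t^{k+1}-\vf\circ \cF_t^k)\,h_0\,dm$, so \ref{ass:decay-n} (derived from \ref{ass:foliation} in Lemma \ref{lem:decay-tot}) supplies the summable majorant $t^{-1}|\nu_t(\vf\circ \cF_t^k)|\le C\Theta_k\|\vf\|_{\cC^1}$. Dominated convergence for series then reduces the theorem to showing that for each fixed $k\in\bN_0$,
\[
\lim_{t\to 0}\frac{\nu_t(\vf\circ \cF_t^k)}{t}=\widetilde\nu(\vf\circ \cF_0^k).
\]

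This per-$k$ limit is the main obstacle and is the content I expect Proposition \ref{prop:linear-resp-accumulation} to cover. The natural split
\[
\frac{\nu_t(\vf\circ \cF_t^k)}{t}=\frac{\nu_t(\vf\circ \cF_0^k)}{t}+\frac{\nu_t\bigl(\vf\circ \cF_t^k-\vf\circ \cF_0^k\bigr)}{t}
\]
exposes two difficulties. First, $\vf\circ \cF_0^k$ is only continuous off the codimension-one set $\cS^{+,k}$, so (\ref{eq:limit-at-beginning}) cannot be applied directly: I would approximate $\vf\circ \cF_0^k$ by a $\cC^0$ function $\psi_\delta$ that agrees with it outside $[\cS^{+,k}]_\delta$ with $\|\psi_\delta\|_\infty\le\|\vf\|_\infty$, apply the hypothesis to $\psi_\delta$, and control the error via a boundary-layer estimate of the form $|\nu_t(\eta)|\le Ct\|\eta\|_{\cC^0}$ for $\eta$ supported in $[\cS^{+,k}]_\delta$ (which follows from (H1) and smoothness of $h_0$). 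Second, by (H1), $\cF_t^k-\cF_0^k$ is $O(t)$ in $\cC^1$ off a set of measure $O(t)$ around the moving singular sets, and combining this with the $O(t)$-gain intrinsic to $\nu_t$ yields an $o(t)$ contribution after dividing by $t$. The delicate matching of the approximation scale $\delta$ with $t$---so that the boundary-layer and convergence errors vanish in the right order---is precisely what Proposition \ref{prop:linear-resp-accumulation} is engineered to track.
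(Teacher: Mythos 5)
Your skeleton is essentially the paper's route, stripped of its bookkeeping: your ``master identity'' is Lemma \ref{lem:telescopic} after the change of variables \(\nu_t(\vf\circ\cF_t^k)=\int_{\cM}\vf\circ\cF_t^k\,(\cL_t h_0-h_0)\,dm\), the summable majorant is Lemma \ref{lem:decay-tot} (i.e. \ref{ass:decay-n}), and the reduction to the per-\(k\) limit \(t^{-1}\nu_t(\vf\circ\cF_t^k)\to\widetilde\nu(\vf\circ\cF_0^k)\) is exactly the content of Lemma \ref{lem:limit-fixed-k}; dropping the Banach--Alaoglu/accumulation-point framework of Proposition \ref{prop:linear-resp-accumulation} is harmless here because \eqref{eq:limit-at-beginning} forces a unique limit along every sequence. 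The problem is that the per-\(k\) limit is precisely where the work lies, and your sketch of it has concrete gaps; note also that Proposition \ref{prop:linear-resp-accumulation} as \emph{stated} does not hand you this limit (its proof does), so deferring to it does not close the argument.

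Two specific issues. First, your boundary-layer estimate \(|\nu_t(\eta)|\le Ct\|\eta\|_{\cC^0}\) for \(\eta\) supported in \([\cS^{+,k}]_{\delta}\) is too weak: after dividing by \(t\) it yields an error of size \(C\|\eta\|_{\cC^0}\approx C\|\vf\|_{\cC^0}\), which does not vanish as \(\delta\to0\) (and you cannot take \(\delta\ll t\), since \(\delta\) must be fixed before letting \(t\to0\) to apply \eqref{eq:limit-at-beginning} to \(\psi_\delta\)). What is needed is a bound of order \(\delta\|\vf\|_{\cC^0}\) uniform in \(t\), as in \eqref{eq:support-lusin}. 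Second, neither this bound nor your claim that \(t^{-1}\nu_t(\vf\circ\cF_t^k-\vf\circ\cF_0^k)\to0\) follows from \ref{ass:sing-near} and smoothness of \(h_0\) alone: \(\Deriv_t=(\cL_t h_0-h_0)/t\) is of size \(1/t\) on the \(O(t)\)-neighborhood of \(\cS_0^-\) (Lemma \ref{lem:regularity-norms}), so the portion of \([\cS_0^{+,k}]_{\delta}\) (resp. \([\cS_0^{+,k}]_{Q_k t}\)) meeting \([\cS_0^-]_{Ht}\) would contribute \(O(1)\) after dividing by \(t\) unless one invokes the transversality of \(\cS_0^{+,k}\) and \(\cS_0^-\), i.e. Lemma \ref{lem:trasversality}, to get measure \(O(\delta t)\) (resp. \(O(t^2)\)) for that intersection; this, together with Lemma \ref{lem:n-map-sing}, is exactly how Lemma \ref{lem:deriv-k} is proved. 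Finally, even with the corrected boundary-layer bound you must still justify \(\widetilde\nu(\psi_\delta)\to\widetilde\nu(\vf\circ\cF_0^k)\): the hypothesis controls only continuous observables, and \(\widetilde\nu\) can have a singular part on \(\cS_0^-\), which intersects \(\cS_0^{+,k}\); the paper handles this via the Lusin-theorem step in Lemma \ref{lem:limit-fixed-k} combined with the uniform-in-\(t\) estimate \eqref{eq:support-lusin}. Without these three inputs your argument does not go through as written.
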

The condition \eqref{eq:limit-at-beginning} states that the inhomogeneities in the mass distribution produced by the perturbation converge to an initial profile \(\tilde \nu\). For the concrete example of the Arnold cat map, this condition is verified in Lemma~\ref{lem:formula-lin-resp}. Let \(\widetilde \nu\) satisfy \eqref{eq:limit-at-beginning}. The following proposition asserts that \(\widetilde \nu\) has a special form which guarantees that \(\tilde \nu\) is a measure with finite total variation (not positive since \(\tilde \nu (1) =0\)).
\begin{proposition}\label{prop:form-measure}
There exists a measure \(\sing\) supported on \(\cS^{-}_0\) of finite total variation and a function \(\mathcal R \in Lip(\cM)\) such that, for any \(\vf \in \cC^0(\cM)\),
\[
  \widetilde \nu(\vf) = \int_{\cM}\mathcal R \vf dm + \sing(\vf).
\]
\end{proposition}
The proof of Proposition \ref{prop:form-measure} is postponed to Section \ref{sec:linear-response}. Theorem \ref{thm:linear-resp} is a consequence of a more general result. Let \(\{t_n\}_{n \in \bN} \subset (0,\ve]\), \(\lim_{n \to \infty}t_n = 0\) and consider
\begin{equation}\label{eq:A-Bsets}
  \begin{split}
  &  \mathscr A = \left\{\frac{\nu_{t_n}}{t_n}\right\}_{n \in \bN}, \quad \mathscr B = \left\{\frac{\mu_{t_n} - \mu_0}{t_n}\right\}_{n \in \bN}.
  \end{split}
\end{equation}
By Lemma \ref{lem:total-variation} (see \eqref{eq:deriv-def} for the definition of  \(\Deriv_t\)),
\begin{equation}\label{eq:compactness-initial}
\begin{split}
\frac{|\nu_t(\vf)|}{t} =&  \left|\int_{\cM} \vf \Deriv_t dm\right| \le  \|\vf\|_{\cC^0}\|\Deriv_t\|_{L^1} \le C\|\vf\|_{\cC^0}.
\end{split}
\end{equation}
Therefore, by \eqref{eq:compactness-initial} and Theorem \ref{thm:Lip} respectively,
\[
\begin{split}
  &\mathscr A \subseteq \left\{\rho \in {\cC^0(\cM)}^*: \sup_{\vf \in \cC^0(\cM), \text{ }\|\vf\|_{\cC^0} = 1}|\rho (\vf)| \le C\right\},\\
  &\mathscr B \subseteq \left\{\rho \in {\cC^1(\cM)}^*: \sup_{\vf \in \cC^1(\cM), \text{ }\|\vf\|_{\cC^1} = 1}|\rho (\vf)| \le C\right\}.
\end{split}
\]
Hence, by Banach-Alaoglu theorem both \(\mathscr A\) and \(\mathscr B\) admit accumulation points in the correspondent weak-\(\star\) topologies. Denote by \(\partial \mathscr A \subseteq \mathscr A\) and \(\partial \mathscr B \subseteq \mathscr B\) the set of such accumulation points.
\begin{proposition}
  \label{prop:linear-resp-accumulation}
  There exists a surjective map \(I: \partial \mathscr A \to \partial \mathscr B\) such that, for any \(\widetilde \nu \in \partial \mathscr A\), \(\vf \in \cC^1(\cM)\), 
 \begin{equation}
  \label{eq:linear-resp-formulas}
  I(\widetilde \nu)(\vf) = \sum_{k \in \bN_0} \widetilde \nu \left(\vf \circ \cF_0^k\right).
\end{equation}
\end{proposition}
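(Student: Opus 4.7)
The plan is to write $(\mu_t-\mu_0)/t$ as an absolutely convergent series in $\nu_t$ and extract its weak-$\star$ accumulation points termwise. Since by \ref{ass:smooth-0} $d\mu_0=h_0\,dm$ and by \ref{ass:srb} $\mu_t(\vf)=\lim_N\int(\vf\circ\cF_t^N)h_0\,dm$, the telescoping identity $\vf\circ\cF_t^N-\vf=\sum_{k=0}^{N-1}(\vf\circ\cF_t^{k+1}-\vf\circ\cF_t^k)$, integrated against $h_0\,dm$, yields
\begin{equation*}
\frac{\mu_t(\vf)-\mu_0(\vf)}{t}=\sum_{k=0}^{\infty}\frac{\nu_t(\vf\circ\cF_t^k)}{t}.
\end{equation*}
The bound $|\nu_t(\vf\circ\cF_t^k)|/t\le C\Theta_k\|\vf\|_{\cC^1}$ from \ref{ass:decay-n} (supplied by Lemma \ref{lem:decay-tot}), combined with $\sum_k\Theta_k<\infty$, makes this series absolutely convergent, uniformly in $t\in(0,\varepsilon]$, and so justifies the passage $N\to\infty$.

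Fix now $\widetilde\nu\in\partial\mathscr A$ and a subsequence, still denoted $\{t_n\}$, along which $\nu_{t_n}/t_n\to\widetilde\nu$ in the weak-$\star$ topology of $\cC^0(\cM)^*$. For each fixed $k$ I would establish
\begin{equation*}
\lim_{n\to\infty}\frac{\nu_{t_n}(\vf\circ\cF_{t_n}^k)}{t_n}=\widetilde\nu(\vf\circ\cF_0^k),
\end{equation*}
by splitting the test function as $\vf\circ\cF_0^k+(\vf\circ\cF_{t_n}^k-\vf\circ\cF_0^k)$. The first piece is obtained from the weak-$\star$ convergence after approximating $\vf\circ\cF_0^k$ (continuous off the Lebesgue-null set $\cS_0^{+,k}$) by a genuinely continuous function outside an $\eta$-neighborhood of $\cS_0^{+,k}$, the approximation error being controlled by the $|\Deriv_{t_n}|$-mass of that neighborhood. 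For the second piece, iterating \ref{ass:sing-near} yields $|\cF_{t_n}^k-\cF_0^k|=O_k(t_n)$ outside an $O_k(t_n)$-neighborhood of $\cS_0^{+,k}$, which therefore contributes only $O_k(t_n\|\vf\|_{\cC^1})$ to the integral, whereas the contribution from the neighborhood itself is again bounded by the $|\Deriv_{t_n}|$-mass of a shrinking neighborhood of $\cS_0^{+,k}$.

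Once termwise convergence is established, the $\Theta_k$-dominant bound from the first paragraph lets me invoke dominated convergence in the sum to obtain
\begin{equation*}
\lim_n\frac{\mu_{t_n}(\vf)-\mu_0(\vf)}{t_n}=\sum_{k\in\bN_0}\widetilde\nu(\vf\circ\cF_0^k),
\end{equation*}
so the map $I$ defined by \eqref{eq:linear-resp-formulas} sends $\partial\mathscr A$ into $\partial\mathscr B$. Surjectivity follows by reversing the extraction: given $\widetilde\mu\in\partial\mathscr B$ realised along some $t_n\to 0$, the Banach--Alaoglu precompactness of $\mathscr A$ lets me refine this subsequence so that $\nu_{t_n}/t_n$ also converges, to some $\widetilde\nu\in\partial\mathscr A$; the display above then forces $I(\widetilde\nu)=\widetilde\mu$. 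The main obstacle is the $|\Deriv_{t_n}|$-mass estimate on shrinking neighborhoods of $\cS_0^{+,k}$, uniform in $n$: the plain $L^1$ bound from \eqref{eq:compactness-initial} is not enough because $\Deriv_{t_n}$ concentrates along the curves $\cS^-_{t_n}\cup\cS^-_0$; the argument should exploit the transversality of these with $\cS_0^{+,k}$ built into the setting of Section \ref{sec:setting}, together with the regular/singular decomposition of the limiting measures provided by Lemma \ref{lem:form-measure}.
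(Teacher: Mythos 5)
Your proposal follows essentially the same route as the paper: the telescopic identity (Lemma \ref{lem:telescopic}), the uniform summable bound from \ref{ass:decay-n}/Lemma \ref{lem:decay-tot}, termwise convergence of $\int(\vf\circ\cF_{t_n}^k)\Deriv_{t_n}\,dm$ to $\widetilde\nu(\vf\circ\cF_0^k)$, dominated convergence in $k$, and surjectivity by extracting a further subsequence via compactness of $\mathscr A$. The ``main obstacle'' you flag is exactly what the paper resolves in Lemmata \ref{lem:n-map-sing}, \ref{lem:trasversality}, \ref{lem:deriv-k} and \ref{lem:limit-fixed-k}, and the mechanisms you name (the $\cG_t$/$\cB_t$ splitting of $\Deriv_t$ with the $C/t$ bound localized near $\cS_0^-$, transversality of $\cS_0^{+,k}$ with $\cS_0^-$, and closeness of $\cF_{t}^k$ to $\cF_0^k$ away from $[\cS_0^{+,k}]_{Q_kt}$) are precisely the ones used there, the only cosmetic difference being that the paper handles $\widetilde\nu$ on the discontinuous function $\vf\circ\cF_0^k$ via Lusin's theorem rather than via the decomposition of Lemma \ref{lem:form-measure}.
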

The proof of Proposition \ref{prop:linear-resp-accumulation} is presented in section \ref{sec:linear-response}. We end this section by showing how Theorem \ref{thm:linear-resp} is a consequence of Proposition \ref{prop:linear-resp-accumulation} and with some remarks.
\begin{proof}[\textbf{Proof of Theorem \ref{thm:linear-resp}}]
  Let \(\vf \in \cC^1(\cM)\) and consider any sequence \(\{t_n\} \subset (0, \ve]\), \(\lim_{n \to \infty}t_n = 0\) together with \(\mathscr A\) and \(\mathscr B\).
  By \eqref{eq:limit-at-beginning}, \(\lim_{n \to \infty} \nu_{t_n} / t_n = \widetilde \nu\) weakly. Hence, \(\# \partial \mathscr A = 1\) and by Proposition \ref{prop:linear-resp-accumulation} also \(\# \partial \mathscr B = 1\). Therefore,
\[
    \begin{split}
    &\lim_{n \to \infty}\frac{\mu_{t_{n}}(\vf) - \mu_0(\vf)}{t_{n}} = I(\widetilde \nu) (\vf) = \sum_{k \in \bN_0}\widetilde \nu\left(\vf \circ \cF_0^k\right).
    \end{split} 
\]
The claim follows from the fact that the sequence \(\{t_n\}\) is arbitrary.
\end{proof}

We conclude this section with a discussion of the assumptions. Assumption \ref{ass:sing-near} describes perturbations of piece-wise smooth maps in which the continuity domains and the branches of the map are allowed to vary with the perturbation. In Assumption \ref{ass:smooth-0} we restrict to the case in which the unperturbed dynamics is conservative, and, as discussed in the introduction, this is an important limitation that we are not able to relax in this work. Assumption \ref{ass:srb} requires that the perturbed maps have a SRB measure (or, more appropriately, physical measure) and it is a natural assumption in the context of linear response. Assumption \ref{ass:foliation} is the most technical one. It requires that \(\cM \setminus (\cS_t^{-} \cup \cS_0^{-})\) can be foliated with \(\cC^1\) curves which support probability measures whose correlations decay fast enough. This assumption is closely related with the notion of standard pair, that is, loosely speaking, a measure supported on a unstable curve equipped with a smooth density. In concrete situations from hyperbolic dynamics, Assumption \ref{ass:foliation} could be `unpacked' in two parts: standard pairs mix rapidly (this typically happens in systems with some hyperbolicity and a contracting direction, see e.g., \cite{Dolgopyat04, CM06, DSL16, C99, CD09}) and the geometry of the discontinuity set \(\cS_t^{-} \cup \cS_0^{-}\) allows \(\cM \setminus (\cS_t^{-} \cup \cS_0^{-})\) to be foliated by long unstable curves, except possibly for a set whose measure is \(\cO(t^2)\) which is accounted for in \ref{ass:foliation}–\ref{ass:good-initial-cond2}. In the example of the perturbed Arnold cat map of Section \ref{sec:examples}, this assumption holds because the singularities \(\cS_t^{-} \cup \cS_0^{-}\) are aligned with the unstable cone and so they do not `cut' unstable curves efficiently, see \ref{Fig:2}. We look for long unstable curves because in hyperbolic dynamics the inverse length of an unstable curve determines the time needed for mixing as the curve has to grow in size in order to equidistribute. The fact that these curves are long uniformly in \(t\) guarantees the uniform decay of correlations needed for linear response. We believe/hope that the range of applicability of these assumptions is vast but, as of now, the verification of \ref{ass:foliation} requires a considerable amount of work even for the simple example of the perturbed cat map.

\begin{remark}\label{rm:discuss-3}
In Lemma \ref{lem:decay-tot} we show that under the previous assumptions it holds:
\textit{
\begin{enumerate}
\item[{\crtcrossreflabel{(H4')}[ass:decay-n]}]  
For any \(t \in (0, \ve]\), \(n \in \bN_0\), \(\frac{\left|\int_{\cM} \left(\vf \circ \cF_t^{n+1} - \vf \circ \cF_t^n\right) h_0 dm\right| }{t}\le C\Theta_n \|\vf\|_{\cC^1}\).\\
\end{enumerate}
}
While assumption \ref{ass:decay-n} yields Theorem \ref{thm:Lip} straightforwardly, it is also possible to assume \ref{ass:decay-n} instead of \ref{ass:foliation} in Theorem \ref{thm:linear-resp}.
\end{remark}

\section{Lipschitz continuity}\label{sec:Lip}
In this section we prove Theorem \ref{thm:Lip}. We start by defining the transfer operator point-wise, for any \(g: \cM \to \bR\), and \(x \in \cM \setminus \cS^{-}_{t}\), as
\begin{equation}
\label{eq:transf-op}
   \cL_t g(x) = \sum_{y \in \cF_t^{-1}(x)}\frac{g(y)}{\left| \det D\cF_t(y) \right|}. 
\end{equation}
As it is well known, for every \(h \in L^{\infty}(\cM)\) and \(g \in L^{1}(\cM)\),
\begin{equation}
  \label{eq:change-of-var}
  \int_{\cM} (h \circ \cF_t) g dm = \int_{\cM} h  (\cL_t g) dm.
\end{equation}
By the following Lemma, equation \eqref{eq:transf-op} has a uniform version on sufficiently small open sets. For each \(x \in \cM \setminus \cS^{-}_t\) and \(t \in [0, \ve]\), set
\begin{equation}\label{eq:tagged-preimages}
    n_t(x) = \{j \in \cJ: x \in \cM_{j,t}^{-}\}.
\end{equation}

\begin{lemma}
  \label{lem:tr-op}
  For any \(t \in [0,\ve]\), \(x \in \cM \setminus \cS_t^{-}\), there exists a neighborhood \(U \ni x\), such that, on \(U\), for any \(g: \cM \to \bR\),
  \[
  \cL_t g = \sum_{j \in n_t(x)} \frac{g }{\left|\det D F_{j,t}\right|}\circ F_{j,t}^{-1}.
  \]
  \end{lemma}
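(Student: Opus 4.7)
\textbf{Proof plan for Lemma \ref{lem:tr-op}.} The strategy is to use the hypothesis $x \notin \cS_t^-$ to select a neighborhood on which the index set of pieces containing a point is constant, and then read off the transfer operator from its point-wise definition \eqref{eq:transf-op}.

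First, I would construct the neighborhood $U$. Recall that $\cS_t^- = \bigcup_{j \in \cJ} \partial \cM_{j,t}^-$ and that each $\cM_{j,t}^-$ is open (as the image of the open set $\cM_{j,t}^+$ under the diffeomorphism $\cF_{j,t}$). For each $j \in n_t(x)$, one has $x \in \cM_{j,t}^-$, so a small enough ball around $x$ is contained in $\cM_{j,t}^-$. For each $k \notin n_t(x)$, the assumption $x \notin \cS_t^-$ rules out $x \in \partial \cM_{k,t}^-$, so $x$ lies in the open complement of $\overline{\cM_{k,t}^-}$; hence a small enough ball around $x$ is disjoint from $\overline{\cM_{k,t}^-}$. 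Taking $U$ to be the intersection of these finitely many (since $\cJ$ is finite by \ref{ass:unif-extension0}) neighborhoods yields
\[
U \subseteq \bigcap_{j \in n_t(x)}\cM_{j,t}^{-}, \qquad U \cap \cM_{k,t}^{-} = \emptyset \ \text{ for } k \notin n_t(x).
\]
In particular, for every $y \in U$ we have $n_t(y) = n_t(x)$.

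Next, I would identify the preimages. By definition, $\cF_t$ acts on $\cM \setminus \cS_t^{+}$ by $\cF_t|_{\cM_{j,t}^+} = \cF_{j,t}$, and $\cF_{j,t}: \cM_{j,t}^{+} \to \cM_{j,t}^{-}$ is a bijection whose inverse agrees with $F_{j,t}^{-1}$ on $\cM_{j,t}^-$ by \ref{ass:unif-extension}. Hence, for $y \in U$, the set $\cF_t^{-1}(y)$ consists of exactly one point $F_{j,t}^{-1}(y) \in \cM_{j,t}^{+}$ for each $j$ with $y \in \cM_{j,t}^-$, and no points otherwise. By the previous paragraph, the indices contributing are exactly those in $n_t(x)$. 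Substituting into \eqref{eq:transf-op} and using $\det D\cF_t(F_{j,t}^{-1}(y)) = \det DF_{j,t}(F_{j,t}^{-1}(y))$, I obtain
\[
\cL_t g(y) \;=\; \sum_{j \in n_t(x)} \frac{g(F_{j,t}^{-1}(y))}{\left|\det DF_{j,t}(F_{j,t}^{-1}(y))\right|} \;=\; \sum_{j \in n_t(x)} \left(\frac{g}{\left|\det DF_{j,t}\right|}\right) \circ F_{j,t}^{-1}(y),
\]
which is the claim. No step here looks delicate; the only point requiring care is the selection of $U$, but it is a direct consequence of the openness of the $\cM_{j,t}^-$ together with $x \notin \cS_t^-$ and the finiteness of $\cJ$.
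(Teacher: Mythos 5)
Your proof is correct and follows essentially the same route as the paper: use $x \notin \cS_t^-$ to find a neighborhood on which $n_t$ is constant and equal to $n_t(x)$, then read off the local form of $\cL_t$ from the point-wise definition \eqref{eq:transf-op}. The only (immaterial) difference is that you build $U$ by intersecting finitely many balls separating $x$ from the closures of the $\cM_{k,t}^-$ with $k \notin n_t(x)$, whereas the paper takes any (connected) neighborhood disjoint from $\cS_t^-$ and notes that each $\cM_{j,t}^-$ then either contains $U$ or misses it.
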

  \begin{proof}
  The set \(\cM \setminus \cS^{-}_t\) is open. Hence, there exists a neighborhood \(U\) of \(x\) such that \(U \cap \cS_t^{-} = \emptyset\). In particular, for all \(j \in \cJ\), either \(U \subseteq \cM_{j,t}^{-}\) or \(U \cap \cM_{j,t}^{-} = \emptyset\). Since \(x \in U\), one has that \(U \subseteq \cM_{j,t}^{-}\) if and only if \(j \in n_t(x)\). Hence, for any \(z \in U\), \(\cF_t^{-1}(z) = \left\{F_{j,t}^{-1}(z) : j \in n_t(x)\right\}\) and the claim follows from the formula \eqref{eq:transf-op} for the transfer operator.
  \end{proof}
  
The next Lemma is nothing more than a simple observation that allows us to rephrase the problem of linear response in terms of the convergence of a series.

\begin{lemma}
  \label{lem:telescopic}
  For any \(t \in [0, \ve]\), \(\vf \in \cC^0(\cM)\),
  \[
    \mu_t(\vf) - \mu_0(\vf) = \sum_{k \in \bN_0}\int_{\cM}\vf \circ \cF_t^k \left(\cL_t h_0 - h_0 \right)dm.
  \]
\end{lemma}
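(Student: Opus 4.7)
The plan is to establish the identity at the level of finite partial sums using a telescoping argument, and then pass to the limit using assumption \ref{ass:srb}.

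First, I would write $\mu_0(\vf) = \int_{\cM} \vf \, h_0 \, dm$ and, via \ref{ass:srb}, recognize
\[
\mu_t(\vf) - \mu_0(\vf) = \lim_{n \to \infty} \left[ \int_{\cM} \vf \circ \cF_t^n \, h_0 \, dm - \int_{\cM} \vf \, h_0 \, dm \right].
\]
Inside the brackets, I would insert a telescoping sum in $k$:
\[
\int_{\cM} \vf \circ \cF_t^n \, h_0 \, dm - \int_{\cM} \vf \, h_0 \, dm = \sum_{k=0}^{n-1} \left( \int_{\cM} \vf \circ \cF_t^{k+1} h_0 \, dm - \int_{\cM} \vf \circ \cF_t^{k} h_0 \, dm \right).
\]

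Next, I would rewrite each summand using the transfer operator duality \eqref{eq:change-of-var} with test function $h = \vf \circ \cF_t^k$ and density $g = h_0$, giving
\[
\int_{\cM} \vf \circ \cF_t^{k+1} h_0 \, dm = \int_{\cM} (\vf \circ \cF_t^k) \circ \cF_t \cdot h_0 \, dm = \int_{\cM} \vf \circ \cF_t^k \cdot \cL_t h_0 \, dm.
\]
Subtracting $\int_{\cM} \vf \circ \cF_t^k h_0 \, dm$ yields $\int_{\cM} \vf \circ \cF_t^k (\cL_t h_0 - h_0) \, dm$, so for each finite $n$ we have the exact identity
\[
\int_{\cM} \vf \circ \cF_t^n h_0 \, dm - \int_{\cM} \vf \, h_0 \, dm = \sum_{k=0}^{n-1} \int_{\cM} \vf \circ \cF_t^k (\cL_t h_0 - h_0) \, dm.
\]

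Finally, I would send $n \to \infty$: the left side converges to $\mu_t(\vf) - \mu_0(\vf)$ by \ref{ass:srb}, so the partial sums on the right converge too, and their limit is by definition the series $\sum_{k \in \bN_0} \int_{\cM} \vf \circ \cF_t^k (\cL_t h_0 - h_0) \, dm$. There is no real obstacle here beyond a clean application of \eqref{eq:change-of-var}; in particular, no absolute convergence of the series is required since the lemma merely asserts equality with the limit of partial sums, whose existence is guaranteed by \ref{ass:srb}.
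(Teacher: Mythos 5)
Your proof is correct and follows essentially the same route as the paper: a telescoping sum combined with \ref{ass:srb}, \ref{ass:smooth-0}, and the duality \eqref{eq:change-of-var} applied with \(h = \vf \circ \cF_t^k\), \(g = h_0\). Your closing remark that the series is merely interpreted as the limit of partial sums (whose existence is supplied by \ref{ass:srb}) is also consistent with the paper's reading of the statement.
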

\begin{proof}
Applying \ref{ass:srb} and \ref{ass:smooth-0}, and by means of a telescopic sum,
  \[
    \begin{split}
    \mu_t(\vf) &= \lim_{n \to \infty} \mu_{0}(\vf \circ \cF_t^n) = \lim_{n \to \infty} \int_{\cM} (\vf \circ \cF_t^n)h_0 dm \\
    &= \lim_{n \to \infty} \sum_{k=0}^n \int_{\cM}\left(\vf \circ \cF_t^{k+1}- \vf \circ \cF_t^k\right) h_0 dm  + \int_{\cM} \vf h_0 dm.
    \end{split} 
  \]
Finally, the claim is proved by the following identity derived by \eqref{eq:change-of-var},
\[
  \int_{\cM}\left( \vf \circ \cF_t^{k+1}- \vf \circ \cF_t^k\right) h_0 dm = \int_{\cM}\vf \circ \cF_t^k \left(\cL_t h_0 - h_0 \right)dm.
\]
\end{proof}
We introduce sets of good and bad points. Recalling \eqref{eq:tagged-preimages}, define
\begin{equation}\label{eq:goodbad}
  \begin{split}
    &\cB_t = \biggl\{ x \in \cM \setminus \left( \cS^{-}_t \cup \cS^{-}_0\right) :  n_t(x) \neq n_0(x)\biggr\} \\
&\hspace{0.35cm}= \bigcup_{j \in \cJ}\cM_{j,0}^{-} \triangle \cM_{j,t}^{-} \setminus \left( \cS^{-}_t \cup \cS^{-}_0\right),\\
& \cG_t = \cM \setminus \left(\cB_t \cup \cS^{-}_t \cup \cS^{-}_0\right).
  \end{split}
\end{equation}
In simple terms, \(\cB_t\) are the points having a different number of preimages with respect to \(\cF_t\) and \(\cF_0\) and/or the preimages come from a different \(\cM_j^+\). Since \(n_t\) and \(n_0\) are locally constant functions \(\cM \setminus (\cS_t^{-}\cup \cS_0^{-}) \to \cJ\), one has that
\begin{equation}\label{eq:boundariesB-G}
\partial \cB_t \cup \partial \cG_t \subseteq \cS_{t}^{-}\cup \cS_{0}^{-}.
\end{equation}
By Lemma \ref{lem:telescopic}, we understand that the function \(t^{-1}(\cL_t h_0 -h_0)\) is particularly important for our analysis. We split it into a good and a bad part:
\begin{equation}\label{eq:deriv-def}
\Deriv_t = \Deriv_t^\cG + \Deriv_t^\cB; \quad   \Deriv_t^\cG = \frac{\cL_{t} h_0 - h_0}{t}\mathbbm 1_{\cG_t}, \text{ }\text{ } \Deriv_t^\cB = \frac{\cL_{t} h_0 - h_0}{t}\mathbbm 1_{\cB_t}.
\end{equation}
In the next Lemmata, we argue that the series \(\sum_{k \in \bN_0} \left|\int_{\cM} \vf \circ \cF_t^k \Deriv_t dm \right|\) converges uniformly in \(t\). To this aim, it is crucial to notice that \(\Deriv_t\) has zero average,
\begin{equation}\label{eq:zero-avg-D}
  \int_{\cM} \Deriv_t dm = 0.
\end{equation} 
We first consider the good part \(\Deriv_t^{\cG}\) and show that it has bounded \(\cC^1\) norm, uniformly in \(t\). We need the following preliminary Lemma. Recall that \(D\) was introduced in \ref{ass:unif-extension} and \(Q\) in \ref{ass:sing-near}. We will denote finite dimensional norms by \(|\cdot|\).
\begin{lemma}
\label{lem:bounding-inverse}
For all \(t \in [0,\ve]\) small enough, \(j \in \cJ\) and \(x \in \cM_{j,t}^{-} \cap \cM_{j,0}^{-}\),
\[
\begin{split}
  &\left|F_{j,t}^{-1}(x) - F_{j,0}^{-1}(x) \right| \le QD t,\\
  & \left| D(F_{j,t}^{-1})(x) - D(F_{j,0}^{-1})(x)\right| \le D^2(D+2)Qt.
\end{split}
\]
\end{lemma}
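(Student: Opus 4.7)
Both estimates are short calculus arguments that only use the uniform $C^2$ bounds in (U2) and the closeness assumption in (H1). Set $y_t := F_{j,t}^{-1}(x)$ and $y_0 := F_{j,0}^{-1}(x)$; both are well defined because $x \in \cM_{j,t}^- \cap \cM_{j,0}^-$, and by construction $F_{j,t}(y_t) = x = F_{j,0}(y_0)$.

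For the first inequality, I would rewrite
\[
F_{j,0}(y_t) - F_{j,0}(y_0) \;=\; F_{j,0}(y_t) - F_{j,t}(y_t),
\]
so that the right-hand side has norm at most $\|F_{j,0} - F_{j,t}\|_{C^0} \le Qt$ by \ref{ass:sing-near}. Since $F_{j,0}$ is a $C^2$ diffeomorphism on $U_{\cM}$ with $\|F_{j,0}^{-1}\|_{C^2} \le D$ by \ref{ass:unif-extension}, applying the mean value theorem to $F_{j,0}^{-1}$ gives
\[
|y_t - y_0| \;=\; \bigl|F_{j,0}^{-1}(F_{j,0}(y_t)) - F_{j,0}^{-1}(F_{j,0}(y_0))\bigr| \;\le\; D \cdot Qt,
\]
which is the first claim. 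Taking $t$ small enough ensures $y_t$ remains in $U_{\cM}$ so the argument is legitimate.

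For the second inequality, I would use the standard matrix identity $A^{-1} - B^{-1} = A^{-1}(B-A)B^{-1}$ applied to $A = DF_{j,t}(y_t)$ and $B = DF_{j,0}(y_0)$, noting that $D(F_{j,s}^{-1})(x) = (DF_{j,s}(y_s))^{-1}$ has operator norm bounded by $\|F_{j,s}^{-1}\|_{C^2}\le D$ for $s \in \{0,t\}$. This gives
\[
\bigl|D(F_{j,t}^{-1})(x) - D(F_{j,0}^{-1})(x)\bigr| \;\le\; D^2 \, \bigl|DF_{j,0}(y_0) - DF_{j,t}(y_t)\bigr|.
\]
To estimate the middle factor I would split
\[
DF_{j,0}(y_0) - DF_{j,t}(y_t) \;=\; \bigl[DF_{j,0}(y_0) - DF_{j,0}(y_t)\bigr] + \bigl[DF_{j,0}(y_t) - DF_{j,t}(y_t)\bigr],
\]
bounding the first bracket by $\|D^2 F_{j,0}\|_\infty \cdot |y_0 - y_t| \le D \cdot QDt$ (using the first part of the lemma together with \ref{ass:unif-extension}) and the second bracket by $\|F_{j,0} - F_{j,t}\|_{C^2} \le Qt$ (using \ref{ass:sing-near}). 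Adding these and multiplying by the prefactor $D^2$ gives the stated order-$t$ estimate with a constant of the form $D^2(D+c)Q$.

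There is no conceptual obstacle; the only delicate points are ensuring that $y_t$ lies in the extension domain $U_{\cM}$ (which is why the result is stated for $t$ small enough) and keeping track of the constants through the two triangle-inequality splittings above.
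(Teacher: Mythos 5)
Your proof of the first inequality is essentially the paper's argument in different clothing: the paper writes \(0=\left|F_{j,0}(F_{j,0}^{-1}(x))-F_{j,t}(F_{j,t}^{-1}(x))\right|\), applies the triangle inequality and the lower bound \(|F_{j,0}(a)-F_{j,0}(b)|\ge D^{-1}|a-b|\) coming from the Lipschitz bound on \(F_{j,0}^{-1}\), which is exactly your rearrangement plus the mean value theorem; both versions share the same implicit point about staying in the extension domain \(U_{\cM}\), which you correctly flag.

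For the second inequality your route is genuinely different and, as written, does not yield the stated constant. With \(A=DF_{j,t}(y_t)\), \(B=DF_{j,0}(y_0)\), the identity \(A^{-1}-B^{-1}=A^{-1}(B-A)B^{-1}\) plus your splitting gives a base-point term \(\|D^2F_{j,0}\|\,|y_0-y_t|\le D\cdot QDt\) and a parameter term \(Qt\); after multiplying by the prefactor \(D^2\) you get \(D^2(D^2+1)Qt\), which is not of the form \(D^2(D+c)Qt\) with an absolute \(c\), and for \(D\ge 2\) it exceeds the claimed \(D^2(D+2)Qt\), so the lemma as stated does not follow. This is harmless downstream (only a bound \(\le Ct\) with \(C\) depending on the family is ever used), but to recover the exact constant argue as the paper does: add and subtract \((DF_{j,t})^{-1}\circ F_{j,0}^{-1}(x)\), so that the base-point shift is measured through the map \(y\mapsto (DF_{j,t}(y))^{-1}\), whose Lipschitz constant is at most \(D^2\) by \(D((DF_{j,t})^{-1})=[D^2(F_{j,t}^{-1})\circ F_{j,t}]\,DF_{j,t}\) (this is where your argument loses a factor: your prefactor \(|A^{-1}||B^{-1}|=D^2\) multiplies the Lipschitz constant \(D\) of \(DF_{j,0}\), producing \(D^3\) rather than \(D^2\) for that term), and then bound the same-point difference \((DF_{j,0})^{-1}-(DF_{j,t})^{-1}\) by the standard matrix perturbation estimate \(D^2Qt/(1-DQt)\le 2D^2Qt\) for \(t\) small. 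This gives \(D^3Qt+2D^2Qt=D^2(D+2)Qt\), the constant in the statement.
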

\begin{proof}
By \ref{ass:unif-extension} and \ref{ass:sing-near}, and recalling that \(F_{j}\) are defined on \(U_{\cM} \supset \cM\), 
\[
\begin{split}
0 &= \left|F_{j,0}(F_{j,0}^{-1}(x)) - F_{j,t}(F_{j,t}^{-1}(x)) \right| \\
&\ge  \left|F_{j,0}(F_{j,0}^{-1}(x)) - F_{j,0}(F_{j,t}^{-1}(x))\right| - \left|F_{j,0}(F_{j,t}^{-1}(x))- F_{j,t}(F_{j,t}^{-1}(x)) \right| \\
&\ge  D^{-1} \left|F_{j,0}^{-1}(x)-F_{j,t}^{-1}(x) \right| - Qt,
\end{split}
\]
proving the first part of the Lemma. Using what we just proved,
\begin{equation}\label{eq:bound-inverse-matrix}
\begin{split}
\bigl| D(F_{j,t}^{-1})(x) - D(F_{j,0}^{-1})(x)\bigr| &\le \left| (DF_{j,t})^{-1} \circ F_{j,t}^{-1}(x) - (DF_{j,t})^{-1} \circ F_{j,0}^{-1}(x) \right|\\
&+  \left| (DF_{j,t})^{-1} \circ F_{j,0}^{-1}(x) - (DF_{j,0})^{-1} \circ F_{j,0}^{-1}(x)\right|\\
&\le \sup_{\cM}\left| D((DF_{j,t})^{-1})\right| QDt \\
&+ \sup_{\cM}\left|(DF_{j,t})^{-1} -(DF_{j,0})^{-1}\right|.
\end{split}
\end{equation}
To move forward, \( D((DF_{j,t})^{-1}) = [D^2(F_{j,t}^{-1})\circ F_{j,t}] DF_{j,t} \) and, for all \(t\) small enough,
\[
\left|(DF_{j,0})^{-1}- (DF_{j,t})^{-1} \right| \le \left|(DF_{j,0})^{-1}\right|^2 \frac{\left|DF_{j,0} - DF_{j,t}\right|}{1- \left| (DF_{j,0})^{-1}\right|\left|DF_{j,0} - DF_{j,t}\right|}.
\]
By the above relations and \ref{ass:unif-extension}, \ref{ass:sing-near}, we have that the quantity in the right side of \eqref{eq:bound-inverse-matrix} is not bigger than
\[
D^2 QD t+ D^2\frac{ Qt}{1-DQt} \le D^2(D+2)Qt,
\]
for \(t\) small enough. This concludes the proof of the Lemma.
\end{proof}
We will assume that \(\ve\) is small enough such that the above Lemma is satisfied for any \(t \in [0,\ve]\) and we will sometimes denote with the apostrophe derivatives (or differentials in general) of functions whose codomains are the real numbers.
\begin{lemma}\label{lem:single-perturbed-preimage-invariant-measure}
 For any \(t \in (0, \ve]\), 
  \[
    \begin{split}
  &\sup_{\cG_t}\max\left\{\bigl| \Deriv_t\bigr|, \bigl|\Deriv_t'\bigr|  \right\}  \le C.
    \end{split}
  \]
\end{lemma}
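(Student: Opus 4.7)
The starting point is to exploit invariance: by \ref{ass:smooth-0} one has $\cL_0 h_0 = h_0$, so for $x\in\cG_t$ we may rewrite
\[
t\,\Deriv_t(x) \;=\; \cL_t h_0(x)-\cL_0 h_0(x).
\]
Because $x\in\cG_t$, we have $n_t(x)=n_0(x)=:n(x)$, and because $n_t$ and $n_0$ are locally constant on $\cM\setminus(\cS_t^-\cup\cS_0^-)$, Lemma \ref{lem:tr-op} applied to both $\cF_t$ and $\cF_0$ in a common neighborhood $U\subseteq\cG_t$ of $x$ gives, on $U$,
\[
t\,\Deriv_t \;=\; \sum_{j\in n(x)}\bigl(\Psi_{j,t}-\Psi_{j,0}\bigr),\qquad
\Psi_{j,s}(y)\;:=\;\frac{h_0\!\left(F_{j,s}^{-1}(y)\right)}{\bigl|\det DF_{j,s}\bigl(F_{j,s}^{-1}(y)\bigr)\bigr|}.
\]
Since $|n(x)|\le|\cJ|$ is uniformly bounded by \ref{ass:unif-extension0}, the task reduces to proving
\[
\|\Psi_{j,t}-\Psi_{j,0}\|_{\cC^1(U)}\;\le\;C\,t
\]
for each $j$, with $C$ independent of $x$ and $t$.

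To obtain this estimate I would expand $\Psi_{j,t}-\Psi_{j,0}$ by a standard add–and–subtract (``telescoping'') trick that isolates each $t$-dependent factor one at a time. Writing $\Psi_{j,s}=\phi_{j,s}\circ F_{j,s}^{-1}$ with $\phi_{j,s}=h_0/|\det DF_{j,s}|$, split
\[
\Psi_{j,t}-\Psi_{j,0}\;=\;\bigl(\phi_{j,t}-\phi_{j,0}\bigr)\circ F_{j,t}^{-1}\;+\;\phi_{j,0}\circ F_{j,t}^{-1}-\phi_{j,0}\circ F_{j,0}^{-1}.
\]
The first term is controlled by showing $\|\phi_{j,t}-\phi_{j,0}\|_{\cC^1}\le Ct$; this follows from \ref{ass:smooth-0} ($h_0\in\cC^2$), \ref{ass:unif-extension} (uniform $\cC^2$ bounds and uniform lower bound on $|\det DF_{j,s}|$), and \ref{ass:sing-near} ($\|F_{j,t}-F_{j,0}\|_{\cC^2}\le Qt$, so the determinants are $\cC^1$-close at rate $t$). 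Composing with $F_{j,t}^{-1}$ preserves this, again by the uniform $\cC^2$ bounds in \ref{ass:unif-extension}. The second term is the difference of a fixed smooth function $\phi_{j,0}$ composed with two close diffeomorphisms: applying the mean value theorem together with the two estimates of Lemma \ref{lem:bounding-inverse} gives $\cC^0$ smallness of order $t$, while differentiating via the chain rule gives a $\cC^1$ difference of order $t$ as well (one uses both the closeness of $F_{j,t}^{-1}$ and $F_{j,0}^{-1}$ and the closeness of their differentials provided by Lemma \ref{lem:bounding-inverse}, combined with $h_0\in\cC^2$ so that $D\phi_{j,0}$ is Lipschitz).

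Dividing by $t$ and summing over $j\in n(x)$ yields the desired uniform $\cC^1$ bound on $\Deriv_t$ at every point of $\cG_t$; the constant $C$ depends only on $D$, $Q$, $|\cJ|$, and $\|h_0\|_{\cC^2}$, i.e.\ on the admissible family. The only delicate point is accounting: one must keep track of three products (two factors in $\phi_{j,s}$ together with the composition with $F_{j,s}^{-1}$) and apply the chain rule carefully so that each replacement of a $t$-object by its $0$-object contributes exactly one factor of $t$, while all remaining factors are controlled by the uniform $\cC^2$ bounds. That bookkeeping is the main technical nuisance; once organized, no single estimate is hard. Note that there is no issue with ``taking derivatives across the singularity set'' because $\cG_t$ is open and on each connected component the index set $n(x)$ is constant, so the local formula above is genuinely $\cC^1$ there.
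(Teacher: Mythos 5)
Your proposal follows essentially the same route as the paper's proof: rewrite \(\cL_t h_0 - h_0 = \cL_t h_0 - \cL_0 h_0\) using invariance, apply the local formula of Lemma \ref{lem:tr-op} together with \(n_t(x)=n_0(x)\) on \(\cG_t\), then add and subtract to separate the change in the Jacobian factor from the change in the inverse branch, controlling both terms and their derivatives via Lemma \ref{lem:bounding-inverse} and the uniform bounds \ref{ass:unif-extension}, \ref{ass:sing-near}, \ref{ass:smooth-0}. The only difference is the immaterial choice of which factor is frozen at \(t\) versus \(0\) in the cross terms, so the argument matches the paper's.
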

\begin{proof}
Let \(x \in \cG_t \subseteq \cM \setminus \left(\cS_t^{-} \cup \cS_0^{-}\right)\). By the fact that \(h_0\) is invariant and by Lemma \ref{lem:tr-op}, there is a neighbourhood \(U \ni x\) such that, for all points in \(U\),
\begin{equation}\label{eq:comparison-invariant}
\begin{split}
\cL_t h_0 - h_0 &= \cL_t h_0 - \cL_{0}h_0 \\
&= \sum_{j \in n_t(x)} \frac{h_0}{\left|\det DF_{j,t}\right|} \circ F_{j,t}^{-1} - \sum_{j \in n_0(x)} \frac{h_0}{\left|\det DF_{j,0}\right|} \circ F_{j,0}^{-1}\\
&=\sum_{j \in n_0(x)} \frac{h_0}{\left|\det DF_{j,t}\right|} \circ F_{j,t}^{-1} - \sum_{j \in n_0(x)} \frac{h_0}{\left|\det DF_{j,0}\right|} \circ F_{j,0}^{-1},
\end{split}
\end{equation}
where in the last equality we have used that  \(n_t(x) = n_0(x)\) for \(x \in \cG_t\). By adding and subtracting,
\begin{equation}\label{eq:pre-divisionbyt}
\begin{split}
|\cL_t h_0 - h_0| &\le \sum_{j \in n_0(x)} \biggl|\frac{h_0}{|\det DF_{j,t}|} \circ F_{j,t}^{-1} - \frac{h_0}{|\det DF_{j,t}|}\circ F_{j,0}^{-1} \biggr| \\
&+ \sum_{j \in n_0(x)}\biggl|\frac{h_0}{|\det DF_{j,t}|} - \frac{h_0}{|\det DF_{j,0}|} \biggr| \circ F_{j,0}^{-1}\\
& \le \#\cJ \sup_{j \in \cJ}\sup_{\cM} \left|\left( \frac{h_0}{\det DF_{j,t}}\right)' \right|\sup_{\cM_{j,0}^{-}\cap \cM_{j,t}^{-}}\left|F_{j,t}^{-1}- F_{j,0}^{-1}\right|\\
& + \#\cJ\sup_{j \in \cJ}\sup_{\cM}\left|\frac{h_0}{|\det DF_{j,t}|} - \frac{h_0}{|\det DF_{j,0}|}\right|\\
&\le \#\cJ\|h_0\|_{\cC^1} \sup_{j \in \cJ}\sup_{\cM} \biggl[\left|\frac{(\det DF_{j,t})'}{(\det DF_{j,t})^2}\right| + \left|\frac{1}{\det DF_{j,t}}\right|\biggr] QD t\\
& + \#\cJ \|h_0\|_{\cC^0}\sup_{j \in \cJ}\sup_{\cM}\left|\frac{|\det DF_{j,0}| - |\det DF_{j,t}|}{\det DF_{j,t}\det DF_{j,0}}\right|.
\end{split}
\end{equation}
where in the last inequality we have used Lemma  \ref{lem:bounding-inverse}. By \ref{ass:unif-extension} the determinant of \(F_{j}\) is bounded away from zero. Hence, using \ref{ass:sing-near} for the last summand, we have that the previous quantity is bounded by \(Ct\). Taking the derivative of \eqref{eq:comparison-invariant}, we get
\[
\begin{split}
\sum_{j \in n_0(x)}  \left(\frac{h_0}{|\det DF_{j,t}|}\right)' \circ F_{j,t}^{-1}&  D(F^{-1}_{j,t}) - \sum_{j \in n_0(x)}  \left(\frac{h_0}{|\det DF_{j,t}|}\right)' \circ F_{j,0}^{-1}  D(F^{-1}_{j,0}).
\end{split}
\]
By adding and subtracting,
\begin{equation}\label{eq:bound-deriv}
    \begin{split}
&|(\cL_t h_0  - h_0 )'| \le \sum_{j \in n_0(x)} \biggl| \left(\frac{h_0}{|\det DF_{j,t}|}\right)' \circ F_{j,t}^{-1} \bigl[  D(F^{-1}_{j,t}) - D(F^{-1}_{j,0}) \bigr]\biggr|\\
& + \sum_{j \in n_0(x)} \biggl|\biggl[  \left(\frac{h_0}{|\det DF_{j,t}|}\right)' \circ F_{j,t}^{-1}  -  \left(\frac{h_0}{|\det DF_{j,t}|}\right)' \circ F_{j,0}^{-1}  \biggr] D(F_{j,0}^{-1})\biggr|.
\end{split}
\end{equation}
The first term is less than \(Ct\) because of the second part of Lemma \ref{lem:bounding-inverse} and the uniformity assumptions. As for the second term, we notice that it has the same form as the one in \eqref{eq:comparison-invariant} and we have estimates analogous to \eqref{eq:pre-divisionbyt}. Since both bounds \eqref{eq:pre-divisionbyt} and \eqref{eq:bound-deriv} do not depend on \(t\) and \(U\), we have proved the statement of the Lemma.
\end{proof}
In the following lemmata we study the regularity of \(\Deriv_t^{\cB}\).
\begin{lemma}
\label{lem:regularity}
\(\cL_t h_0\) is \(\cC^1\) on \(\cM \setminus \cS_t^{-}\), uniformly in  \(x \in \cM \setminus \cS_t^{-}\) and \(t \in [0,\ve]\).
\end{lemma}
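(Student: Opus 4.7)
The approach is straightforward: use the local formula for $\cL_t h_0$ provided by Lemma~\ref{lem:tr-op} and estimate each summand in $\cC^1$ using the uniform bounds built into the definition of $\mathfrak{S}$ and the smoothness of $h_0$.

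First, I would fix $t \in [0,\ve]$ and $x \in \cM \setminus \cS_t^-$. Lemma~\ref{lem:tr-op} gives a neighborhood $U \ni x$ on which
\[
\cL_t h_0 = \sum_{j \in n_t(x)} \frac{h_0}{|\det DF_{j,t}|}\circ F_{j,t}^{-1}.
\]
Crucially, $n_t(x)$ is constant on $U$, so the same explicit formula (with a fixed, finite index set) defines $\cL_t h_0$ on $U$. Each summand is a composition of $\cC^2$ functions defined on an open set (by \ref{ass:unif-extension} the extensions $F_{j,t}$ are $\cC^2$ on $U_\cM \supset \cM$ with $\cC^2$ inverses), so $\cL_t h_0$ is itself $\cC^2$, hence $\cC^1$, in a neighborhood of $x$.

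Second, I would obtain uniform bounds on $|\cL_t h_0(x)|$ and $|(\cL_t h_0)'(x)|$. For each $j$, by the chain rule,
\[
\Bigl(\frac{h_0}{|\det DF_{j,t}|}\circ F_{j,t}^{-1}\Bigr)' = \Bigl(\frac{h_0}{|\det DF_{j,t}|}\Bigr)' \circ F_{j,t}^{-1} \cdot D(F_{j,t}^{-1}).
\]
By \ref{ass:unif-extension}, $\|F_{j,t}\|_{\cC^2}, \|F_{j,t}^{-1}\|_{\cC^2} \le D$, which in particular bounds $|D(F_{j,t}^{-1})| \le D$ and $|\det DF_{j,t}|$ away from zero (since $|\det DF_{j,t}^{-1}|\le D^2$, hence $|\det DF_{j,t}| \ge D^{-2}$), as well as $|(\det DF_{j,t})'|$ from above. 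By \ref{ass:smooth-0}, $\|h_0\|_{\cC^1}$ is a finite constant. Combining these estimates yields a constant $K = K(D, \|h_0\|_{\cC^2})$ such that
\[
\Bigl\| \frac{h_0}{|\det DF_{j,t}|}\circ F_{j,t}^{-1}\Bigr\|_{\cC^1(\cM_{j,t}^-)} \le K,
\]
uniformly in $j \in \cJ$ and $t \in [0,\ve]$.

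Finally, by \ref{ass:unif-extension0} the cardinality of $\cJ$ is uniformly bounded, and $|n_t(x)| \le \#\cJ$. Summing over $j \in n_t(x)$ therefore gives
\[
|\cL_t h_0(x)| + |(\cL_t h_0)'(x)| \le (\#\cJ) \cdot 2K =: C,
\]
with $C$ independent of $x \in \cM \setminus \cS_t^-$ and of $t \in [0,\ve]$. No step here is genuinely difficult; the only point requiring care is the observation that on each small neighborhood $U$ the \emph{same} formula with a \emph{fixed} index set is valid, so one does not need to worry about $n_t$ jumping. Gluing the pointwise estimates yields the claimed uniform $\cC^1$ control on all of $\cM \setminus \cS_t^-$.
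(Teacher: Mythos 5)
Your proposal is correct and follows essentially the same route as the paper: apply Lemma \ref{lem:tr-op} to write \(\cL_t h_0\) locally as a finite sum of terms \(\frac{h_0}{|\det DF_{j,t}|}\circ F_{j,t}^{-1}\), then invoke \ref{ass:unif-extension}, \ref{ass:smooth-0} and the bounded cardinality of \(\cJ\) to get \(\cC^1\) bounds independent of the neighborhood \(U\) and of \(t\). Your version merely spells out the determinant and chain-rule estimates a bit more explicitly than the paper does.
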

\begin{proof}
Fix any \(t \in [0,\ve]\) and let \(x \in \cM \setminus \cS_t^{-}\). By Lemma \ref{lem:tr-op}, there exists a neighborhood \(U \ni x\), such that, for all points in \(U\),
\begin{equation}
  \label{eq:trop-U}
\cL_t h_0 = \sum_{j \in n_t(x)} \frac{h_0}{\left|\det DF_{j,t} \right|}\circ F_{j,t}^{-1}.
\end{equation}
 By \ref{ass:unif-extension}, \(F_{j,t}^{-1}\) is \(\cC^1\) uniformly in \(U\). Moreover, \(\left|\det DF_{j,t}\right|\) is \(\cC^1\) and bounded away from zero, also uniformly. By \ref{ass:smooth-0}, \(h_0 \in \cC^1 (\cM)\). Hence, the sum in \eqref{eq:trop-U} consists of finitely many terms that are \(\cC^1\) uniformly in \(U\). Since the estimates on the \(\cC^1\) norm do not depend on \(U\) and \(t\) this proves the general statement as well.
\end{proof}

\begin{lemma}
  \label{lem:regularity-norms}
For each \(t \in (0, \ve]\), \(\Deriv_t^{\cB}\) is \(\cC^1\) on (every connected subset of) \(\cB_t\). Moreover,
\[
  \sup_{\cM \setminus \cS_t^{-}} \max\left\{|\Deriv_t|, |\Deriv_t'|\right\} \le \frac C t.
\]
\end{lemma}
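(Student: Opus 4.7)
The plan is to observe that on each connected component of $\cB_t$ the indicator $\mathbbm 1_{\cB_t}$ is locally constant, so that the only new ingredient beyond Lemma \ref{lem:regularity} is the division by $t$. Concretely, on a connected component of $\cB_t$ we have $\Deriv_t^{\cB} = (\cL_t h_0 - h_0)/t$ with no cutoff issues, and since $\cB_t \subset \cM \setminus \cS_t^{-}$ (by the definition \eqref{eq:goodbad}), Lemma \ref{lem:regularity} gives that $\cL_t h_0$ is $\cC^1$ there with a $\cC^1$-bound that is uniform in $t$. Together with $h_0 \in \cC^2(\cM) \subset \cC^1(\cM)$ from \ref{ass:smooth-0}, this yields that $(\cL_t h_0 - h_0)$ is $\cC^1$ on each connected component of $\cB_t$ with $\cC^1$-norm bounded by some constant $K$ independent of $t$. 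Dividing by $t$ proves the first assertion of the lemma and also the bound $\max\{|\Deriv_t^\cB|,|(\Deriv_t^\cB)'|\} \le K/t$ on $\cB_t$.

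Next I combine this with the already known bound on $\cG_t$: Lemma \ref{lem:single-perturbed-preimage-invariant-measure} gives $\sup_{\cG_t} \max\{|\Deriv_t|,|\Deriv_t'|\} \le C$, and since $t \le \ve$ this is in particular $\le C\ve/t$. The remaining part of $\cM \setminus \cS_t^{-}$ is the set $\cS_0^{-} \setminus \cS_t^{-}$, a finite union of $\cC^1$ curves on which both indicators $\mathbbm 1_{\cG_t}$ and $\mathbbm 1_{\cB_t}$ vanish, so $\Deriv_t \equiv 0$ there and contributes nothing to the sup (and the question of differentiability across this set is irrelevant since the sup of $|\Deriv_t'|$ is understood on the open set where the derivative exists, namely on $\cG_t$ and on the interiors of the connected components of $\cB_t$). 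Putting the two bounds together yields $\sup_{\cM \setminus \cS_t^{-}}\max\{|\Deriv_t|,|\Deriv_t'|\} \le C/t$ after relabeling the constant.

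The one mildly subtle point — and the only place where care is required — is verifying that on each connected component of $\cB_t$ the expression $\cL_t h_0$ really is computed by the \emph{same} local formula \eqref{eq:trop-U} (hence is $\cC^1$ there): this is exactly the content of Lemma \ref{lem:tr-op} applied at an interior point of the component, together with the fact that by \eqref{eq:boundariesB-G} the boundary of $\cB_t$ lies in $\cS_t^{-}\cup\cS_0^{-}$, so inside a connected component of $\cB_t$ the index set $n_t(\cdot)$ is locally constant. With that observation the proof is purely bookkeeping, and I expect no serious obstacle beyond assembling the two regimes carefully.
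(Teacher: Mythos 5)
Your proof is correct and uses the same essential ingredients as the paper: Lemma \ref{lem:regularity} (uniform $\cC^1$ control of $\cL_t h_0$ away from $\cS_t^{-}$), \ref{ass:smooth-0}, the fact that $\cB_t\cap\cS_t^{-}=\emptyset$, and division by $t$. The only difference is cosmetic: the paper simply bounds $|\cL_t h_0 - h_0|/t$ and $|(\cL_t h_0)'-h_0'|/t$ by $C/t$ on all of $\cM\setminus\cS_t^{-}$ via the triangle inequality, so your separate treatment of $\cG_t$ (via Lemma \ref{lem:single-perturbed-preimage-invariant-measure}) and of $\cS_0^{-}\setminus\cS_t^{-}$ is harmless but not needed.
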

\begin{proof}
The first part of the statement follows by Lemma \ref{lem:regularity} and the fact that \(\cB_t \cap \cS_t^{-} = \emptyset\) by definition. As for the second part, by Lemma \ref{lem:regularity} and \ref{ass:smooth-0},
\[
\begin{split}
 & |\Deriv_t| = \left|\frac{\cL_t h_0 - h_0}{t}\right | \le \frac{|\cL_t h_0| + |h_0|}{t} \le \frac{C}{t},\\
&  |\Deriv_t '| = \frac{|(\cL_t h_0 )' - h_0 '|}{t} \le \frac{|(\cL_t h_0) '| + |h_0 '|}{t} \le \frac{C}{t}.
\end{split}
\]
\end{proof}
Lemma \ref{lem:regularity-norms} shows that the function \(\Deriv^{\cB}_t\) may have a \(1/t\) blow-up in some parts of the phase space as \(t\) tends to zero. However, as we will see in Lemma \ref{lem:meas-bad-set}, the place where this blow-up occurs is localized in a neighborhood of the image of the singularity lines and has a measure proportional to \(t\). We start with the following fact about the discontinuity lines that will come in handy later on.
\begin{lemma}\label{lem:neigh}
For any \(t \in [0,\ve]\), \(m \bigl( [\cS_0^{\pm}]_{t}\bigr) \le Ct\) and for each \(k \in \bN\) there exists \(C_k \in \bR^+\) such that \(m \bigl( [\cS_0^{+,k}]_{t} \bigr)\le C_k t\).
\end{lemma}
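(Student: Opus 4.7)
The plan is to reduce the statement to the standard fact that for a rectifiable curve $\gamma\subset\bR^2$ of length $L$, the tubular neighborhood satisfies $m([\gamma]_t)\le 2Lt+\pi t^2$, and then bound the total length of each relevant singularity set using the uniformity assumptions \ref{ass:unif-extension0}--\ref{ass:unif-extension}.

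First I would treat $\cS_0^+$ and $\cS_0^-$. By construction, $\cS_0^+=\bigcup_{j\in\cJ}\partial\cM_{j,0}^+$ is a finite union (with $\#\cJ$ bounded by \ref{ass:unif-extension0}) of closed curves, each of uniformly bounded length. Hence $|\cS_0^+|\le C$. For $\cS_0^-=\bigcup_{j\in\cJ}\partial\cM_{j,0}^-=\bigcup_{j\in\cJ}F_{j,0}(\partial\cM_{j,0}^+)$, assumption \ref{ass:unif-extension} gives $\|DF_{j,0}\|_{\cC^0}\le D$, so each image curve has length at most $D$ times the preimage curve; thus $|\cS_0^-|\le C$ as well. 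Applying the tubular formula with $t\le\ve$ bounded yields $m([\cS_0^\pm]_t)\le 2|\cS_0^\pm|\,t+\pi t^2\le Ct$.

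For the iterated set $\cS_0^{+,k}=\bigcup_{p=0}^{k-1}\cF_0^{-p}(\cS_0^+)\setminus\partial\cM$, I would argue inductively on $p$. Each preimage $\cF_0^{-1}(\gamma)$ of a $\cC^1$ curve $\gamma\subset\cM$ is contained in $\bigcup_{j\in\cJ}F_{j,0}^{-1}(\gamma)$; by \ref{ass:unif-extension}, $\|DF_{j,0}^{-1}\|_{\cC^0}\le D$, so each of the (at most $\#\cJ$) preimage pieces has length at most $D$ times $|\gamma|$. Iterating, $|\cF_0^{-p}(\cS_0^+)|\le (D\#\cJ)^p|\cS_0^+|$, and summing over $p=0,\dots,k-1$ gives a finite length $L_k$. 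The tubular-neighborhood bound then gives $m([\cS_0^{+,k}]_t)\le 2L_k t+\pi t^2\le C_k t$ for $t\in[0,\ve]$, where $C_k$ depends on $k$ through $L_k$.

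No serious obstacle is expected; the only subtlety is verifying that the preimage of a $\cC^1$ curve remains a finite union of $\cC^1$ curves (so that length is well defined), which follows from the transversality of $\cS_0^{+,k}$ and $\cS_0^-$ assumed in Section~\ref{sec:setting} together with the fact that $F_{j,0}$ are $\cC^2$ diffeomorphisms of $U_{\cM}$.
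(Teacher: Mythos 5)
Your proposal is correct and follows essentially the same route as the paper: both arguments reduce the lemma to the fact that \(\cS_0^{\pm}\) and \(\cS_0^{+,k}\) are finite unions of \(\cC^1\) curves of uniformly bounded total length (using \ref{ass:unif-extension0}--\ref{ass:unif-extension} and the finitely many branches for the preimages), and then bound the measure of a \(t\)-neighborhood of a curve of length \(L\) by \(CLt\) (the paper via a covering by \(\sim L/t\) balls of radius \(2t\), you via the equivalent tubular-neighborhood estimate). Your write-up just makes explicit the length bookkeeping for \(\cF_0^{-p}(\cS_0^+)\) that the paper leaves implicit.
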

\begin{proof}
By assumption, both \(\cS_0^{+}\) and \(\cS_0^{-}\) are a finite union of \(\cC^1\) curves of finite length and the statement follows by covering the neighborhood of each curve with a number of balls proportional to the length of the curve each one of radius \(2t\). For the second part of the statement, recalling definition \eqref{eq:future-disc} and \ref{ass:unif-extension}, one has that \(\cS_0^{+,k}\) is a finite union of \(\cC^1\) curves of finite length as well, concluding the proof.
\end{proof}
Recall that \(d(A,B) = \inf \{s \ge 0: A \subseteq [B]_s \text{ and }B \subseteq [A]_s\}\).
\begin{lemma}\label{lem:aux-aux-sing}
For all \(t \in [0, \ve]\) and all \(j \in \cJ\), \(d \left( \partial \cM_{j,0}^{+}, \partial \cM_{j,t}^{+}\right) \le Qt\).
\end{lemma}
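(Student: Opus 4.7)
By the definition of Hausdorff distance, the claim is equivalent to the two inclusions $\partial \cM_{j,0}^{+} \subseteq [\partial \cM_{j,t}^{+}]_{Qt}$ and $\partial \cM_{j,t}^{+} \subseteq [\partial \cM_{j,0}^{+}]_{Qt}$. Since the symmetric difference and the tubular-neighbourhood bound in \ref{ass:sing-near} are both invariant under swapping $0 \leftrightarrow t$, a single argument suffices, so I focus only on the first inclusion.

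Fix $x \in \partial \cM_{j,0}^{+}$; if $x \in \partial \cM_{j,t}^{+}$ the bound is immediate, so assume otherwise. As $\cM_{j,t}^{+}$ is open, one of two possibilities holds: (i) $x \in \cM_{j,t}^{+}$, or (ii) $x \notin \overline{\cM_{j,t}^{+}}$. Note that (ii) is forced whenever $x \in \partial \cM$, since $\cM_{j,t}^{+} \subseteq \cM$. In case (i), openness of $\cM_{j,0}^{+}$ together with $x \in \partial \cM_{j,0}^{+}$ gives $x \notin \cM_{j,0}^{+}$, whence $x \in \cM_{j,t}^{+} \setminus \cM_{j,0}^{+} \subseteq \cM_{j,0}^{+} \triangle \cM_{j,t}^{+}$. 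Assumption \ref{ass:sing-near} then yields directly $d(x, \partial \cM_{j,t}^{+}) \le d(x, \partial \cM_{j,t}^{+} \setminus \partial \cM) \le Qt$.

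In case (ii), choose $\delta > 0$ small enough that $B_{\delta}(x) \cap \cM_{j,t}^{+} = \emptyset$. Since $x \in \overline{\cM_{j,0}^{+}}$, for every $\delta' \in (0, \delta)$ there exists $y \in B_{\delta'}(x) \cap \cM_{j,0}^{+}$; such a $y$ necessarily lies in $\cM_{j,0}^{+} \setminus \cM_{j,t}^{+} \subseteq \cM_{j,0}^{+} \triangle \cM_{j,t}^{+}$. Hence \ref{ass:sing-near} gives $d(y, \partial \cM_{j,t}^{+}) \le Qt$, and the triangle inequality yields $d(x, \partial \cM_{j,t}^{+}) \le \delta' + Qt$; sending $\delta' \to 0$ finishes the argument. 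I anticipate no serious obstacle: the only mild subtlety is the case split based on the position of $x$ relative to $\cM_{j,t}^{+}$, together with the observation that the exclusion of $\partial \cM$ on the right-hand side of \ref{ass:sing-near} is harmless because $\partial \cM_{j,t}^{+} \setminus \partial \cM \subseteq \partial \cM_{j,t}^{+}$.
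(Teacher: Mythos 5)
Your proof is correct and follows essentially the same route as the paper: a case split on the position of the boundary point relative to the other domain, producing a point of \(\cM_{j,0}^{+} \triangle \cM_{j,t}^{+}\) at or arbitrarily near it, applying \ref{ass:sing-near}, and invoking the symmetry of that assumption for the reverse inclusion. The only differences are cosmetic — you argue directly with a limiting \(\delta' \to 0\) (and in case (i) apply \ref{ass:sing-near} to \(x\) itself), whereas the paper phrases the same mechanism as a proof by contradiction.
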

\begin{proof}
We first prove that \(\partial \cM_{j,t}^{+}\subseteq [\partial \cM_{j,0}^{+} ]_{Qt}\). If not, there exists \(x \in \partial \cM_{j,t}^{+}\) such that \(d(x, \partial \cM_{j,0}^{+}) > Qt\). There are two cases, either \(x \in \cM_{j,0}^{+}\) or \(x \notin \cM_{j,0}^{+}\). In the case \(x \in \cM_{j,0}^{+}\cap \partial \cM_{j,t}^{+}\), because \(\cM_{j,0}^+\) is open and by definition of boundary, there exists a ball \(B \ni x\) of radius \(\delta\) arbitrarily small, such that \(B \subseteq \cM_{j,0}^{+}\) and \(B \cap (\cM \setminus \cM_{j,t}^{+}) \neq \emptyset\). Hence, there exists \(y \in B\) such that \(y \in (\cM \setminus \cM_{j,t}^{+}) \cap \cM_{j,0}^{+} \subseteq \cM_{j,0}^{+} \triangle \cM_{j,t}^{+}\). On the other hand, 
\[
d(y, \partial \cM_{j,0}^{+}) \ge d(x, \partial \cM_{j,0}^{+} ) -d(y,x) > Qt - \delta,
\]
and since \(\delta\) is arbitrary we have a contradiction with the second equation in \ref{ass:sing-near}. In the case \(x \in (\cM \setminus\cM_{j,0}^{+})\cap \partial \cM_{j,t}^{+}\), either \(x \in \partial \cM_{j,0}^{+}\) (and we have immediately a contradiction) or, for each \(\delta \in \bR^+\), there exists a ball \(B \ni x\) of radius \(\delta\), such that \(B \subseteq \cM \setminus \cM_{j,0}^{+}\) and \(B \cap  \cM_{j,t}^{+} \neq \emptyset\). Analogously, this implies that there exists \(y \in (\cM_{j,0}^{+} \triangle \cM_{j,t}^{+} )\cap B\) such that \(d(y, \partial \cM_{j,0}^{+}) > Qt - \delta\), yielding a contradiction with \ref{ass:sing-near} and showing that \(\partial \cM_{j,t}^{+}\subseteq [\partial \cM_{j,0}^{+} ]_{Qt}\). Since \ref{ass:sing-near} is symmetric with respect to \(\cM_{j,t}^{+}\) and \(\cM_{j,0}^{+}\), by the same argument as above it follows that  \(\partial \cM_{j,0}^{+}\subseteq [\partial \cM_{j,t}^{+} ]_{Qt}\) and hence the statement.
\end{proof}
\begin{lemma}\label{lem:aux-sing}
For all \(t \in [0,\ve]\): If \(y \in \cM_{j,0}^{+}\) and \(F_{j,t}(y) \notin \cM_{j,0}^{-}\), then \(d \left(y, \partial \cM_{j,0}^{+}\right) \le 2 D Q t\). Conversely, if \(y \in \cM_{j,t}^{+}\) and \(F_{j,0}(y) \notin \cM_{j,t}^{-}\), then \(d \left(y, \partial \cM_{j,0}^{+}\right) \le  Q(2D+1) t\).
\end{lemma}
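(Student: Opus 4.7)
The plan is to prove both halves by the same device: transport the hypothesis through one of the diffeomorphic extensions $F_{j,0}$ or $F_{j,t}$ to produce a point $z$ that is outside the relevant open set ($\cM_{j,0}^{+}$ or $\cM_{j,t}^{+}$) while remaining close to $y$; since $y$ lies in that open set, the segment $[y,z]$ must cross its boundary, which yields the claimed distance bound. The only auxiliary input for the first half is the Lipschitz estimate on $F_{j,0}^{-1}$ from \ref{ass:unif-extension}; for the second half one additionally needs Lemma \ref{lem:aux-aux-sing} to transfer the conclusion from $\partial \cM_{j,t}^{+}$ to $\partial \cM_{j,0}^{+}$.

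For the first claim, take $y \in \cM_{j,0}^{+}$ with $F_{j,t}(y) \notin \cM_{j,0}^{-}$ and set $z := F_{j,0}^{-1}(F_{j,t}(y))$, which is well defined for $\ve$ small by \ref{ass:unif-extension} (as in Lemma \ref{lem:bounding-inverse}). First I would observe that $z \notin \cM_{j,0}^{+}$: otherwise $F_{j,t}(y) = F_{j,0}(z) \in F_{j,0}(\cM_{j,0}^{+}) = \cM_{j,0}^{-}$, contradicting the hypothesis. Next, combining the Lipschitz bound $\|F_{j,0}^{-1}\|_{\cC^2} \le D$ from \ref{ass:unif-extension} with $|F_{j,t}(y) - F_{j,0}(y)| \le Qt$ from \ref{ass:sing-near},
\[
|z - y| = \bigl|F_{j,0}^{-1}(F_{j,t}(y)) - F_{j,0}^{-1}(F_{j,0}(y))\bigr| \le D Qt.
\]
Since $y \in \cM_{j,0}^{+}$ and $z \notin \cM_{j,0}^{+}$, the segment $[y,z]$ crosses $\partial \cM_{j,0}^{+}$, so $d(y, \partial \cM_{j,0}^{+}) \le DQt \le 2DQt$.

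For the converse, take $y \in \cM_{j,t}^{+}$ with $F_{j,0}(y) \notin \cM_{j,t}^{-}$ and apply the same mechanism with $F_{j,0}$ and $F_{j,t}$ swapped: set $z' := F_{j,t}^{-1}(F_{j,0}(y))$. Exactly as before, $z' \notin \cM_{j,t}^{+}$ (otherwise $F_{j,0}(y) \in \cM_{j,t}^{-}$) and $|z' - y| \le DQt$, so $d(y, \partial \cM_{j,t}^{+}) \le DQt$. This controls the distance to the wrong boundary, so I would finish by invoking Lemma \ref{lem:aux-aux-sing}, which bounds the Hausdorff distance $d(\partial \cM_{j,t}^{+}, \partial \cM_{j,0}^{+}) \le Qt$ and in particular gives a point of $\partial \cM_{j,0}^{+}$ within $Qt$ of the nearest point of $\partial \cM_{j,t}^{+}$ to $y$; the triangle inequality then yields $d(y, \partial \cM_{j,0}^{+}) \le (D+1)Qt \le Q(2D+1)t$.

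There is no real obstacle; the only mild subtlety is the well-definedness of $F_{j,0}^{-1}(F_{j,t}(y))$ and $F_{j,t}^{-1}(F_{j,0}(y))$, but this is automatic for $\ve$ small enough because both $F_{j,0}$ and $F_{j,t}$ are $\cC^2$ diffeomorphisms of the ball $U_{\cM} \supset \cM$ by \ref{ass:unif-extension} and their images remain close to one another by $\|F_{j,t}-F_{j,0}\|_{\cC^2}\le Qt$ from \ref{ass:sing-near} — precisely the setting already invoked in Lemma \ref{lem:bounding-inverse}. That the resulting constants $DQt$ and $(D+1)Qt$ are slightly sharper than the stated $2DQt$ and $(2D+1)Qt$ is harmless; I would simply absorb the slack into the stated bound.
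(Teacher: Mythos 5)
Your argument is correct in substance and takes a genuinely different route from the paper. The paper proves both halves by contradiction, never using inverse maps: assuming $d(y,\partial \cM_{j,0}^{+})>2DQt$, it uses the lower Lipschitz bound $d(F_{j,0}(y),F_{j,0}(\partial \cM_{j,0}^{+}))\ge D^{-1}d(y,\partial\cM_{j,0}^{+})$ together with $\partial\cM_{j,0}^{-}=F_{j,0}(\partial\cM_{j,0}^{+})$ to force $d(F_{j,t}(y),F_{j,0}(y))>Qt$, contradicting \ref{ass:sing-near}; the second half is reduced to the first via Lemma \ref{lem:aux-aux-sing} by swapping the roles of $t$ and $0$. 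You instead argue directly: pull $F_{j,t}(y)$ back by $F_{j,0}^{-1}$ (resp. $F_{j,0}(y)$ back by $F_{j,t}^{-1}$) to produce a nearby point outside the open set containing $y$, and conclude by the segment-crossing observation, then transfer boundaries via Lemma \ref{lem:aux-aux-sing}. Your version is arguably cleaner, yields the slightly sharper constants $DQt$ and $(D+1)Qt$, and uses the same inputs (\ref{ass:unif-extension}, \ref{ass:sing-near}, Lemma \ref{lem:aux-aux-sing}); the paper's version buys only that it never composes a map with the other map's inverse, so it works with forward images of points of $\cM$ alone.

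One caveat: the well-definedness of $F_{j,0}^{-1}(F_{j,t}(y))$ and $F_{j,t}^{-1}(F_{j,0}(y))$ is not quite ``automatic'' as you assert, and the analogy with Lemma \ref{lem:bounding-inverse} is not exact, since there the inverses are applied to points of $\cM_{j,0}^{-}\cap\cM_{j,t}^{-}$, which lie in the images of the actual pieces. Assumption \ref{ass:unif-extension} only gives a ball $U_{\cM}\supset\cM$, and $\overline{\cM}$ may touch $\partial U_{\cM}$; for $y$ near such a contact point, $F_{j,t}(y)$ could fall outside $F_{j,0}(U_{\cM})$, so $F_{j,0}^{-1}(F_{j,t}(y))$ need not make sense no matter how small $\ve$ is. The fix is two lines: if $F_{j,t}(y)\notin F_{j,0}(U_{\cM})$, then since $|F_{j,t}(y)-F_{j,0}(y)|\le Qt$ and $F_{j,0}(y)\in F_{j,0}(U_{\cM})$, one gets $d(F_{j,0}(y),F_{j,0}(\partial U_{\cM}))\le Qt$, hence $d(y,\partial U_{\cM})\le DQt$; as $\partial U_{\cM}$ is disjoint from $\cM_{j,0}^{+}\subseteq U_{\cM}$, the same segment-crossing argument already gives $d(y,\partial\cM_{j,0}^{+})\le DQt\le 2DQt$, so the conclusion holds in that degenerate case and your argument goes through in the remaining one (and likewise for the second half). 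With that remark added, your proof is complete.
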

\begin{proof}
We consider the first part of the statement and we argue by contradiction. If the statement is false, by \ref{ass:unif-extension} and  \ref{ass:sing-near}, using that \(\partial \cM_{j,0}^{-} = F_{j,0}(\partial \cM_{j,0}^+)\),
\begin{equation}\label{eq:jt-distance}
\begin{split}
    d(F_{j,t}(y),& \partial \cM_{j,0}^{-}) \ge d(F_{j,0}(y), F_{j,0}(\partial \cM_{j,0}^{+})) - d(F_{j,t}(y), F_{j,0}(y)) \\
&\ge D^{-1} d (y, \partial \cM_{j,0}^{+}) - d(F_{j,t}(y), F_{j,0}(y)) > D^{-1}2DQt - Qt > Qt.  
\end{split}
\end{equation}
Since \(F_{j,t}(y) \not \in \cM_{j,0}^{-}\) and \(F_{j,0}(y) \in \cM_{j,0}^{-}\), by \eqref{eq:jt-distance}, 
\[
d(F_{j,t}(y), F_{j,0}(y)) \ge d(F_{j,t}(y), \partial \cM_{j,0}^{-})  > Qt,
\] 
contradicting the first part of \ref{ass:sing-near}. Continuing with the second statement, if this is false, by Lemma \ref{lem:aux-aux-sing},
\[
d(y, \partial \cM_{j,t}^{+}) \ge d(y, \partial \cM_{j,0}^{+}) - d(\partial \cM_{j,0}^{+}, \partial \cM_{j,t}^{+}) > Q(2D +1)t - Qt = 2QDt,
\]
and we apply the same argument as above, exchanging \(t\) and \(0\) in the subscripts.
\end{proof}
As anticipated, the set \(\cB_t\) is near the image of the singularity lines.
\begin{lemma}\label{lem:meas-bad-set}
There exist \(H, C \in \bR^+\) such that, for any \(t \in [0, \ve]\), \(\cB_t \cup \cS_t^{-}\subseteq [\cS_0^{-}]_{Ht}\) and \(
m \left( \cB_t \right) \le C t\).
\end{lemma}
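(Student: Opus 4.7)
The plan is to establish the inclusion $\cB_t \cup \cS_t^- \subseteq [\cS_0^-]_{Ht}$ first; the measure bound then follows immediately from Lemma \ref{lem:neigh}, since $m([\cS_0^-]_{Ht}) \le C \cdot Ht$.

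For the containment $\cS_t^- \subseteq [\cS_0^-]_{Ht}$, I would use that $\cS_t^- = \bigcup_{j} F_{j,t}(\partial \cM_{j,t}^+)$ and that $\cS_0^- \supseteq \bigcup_j F_{j,0}(\partial \cM_{j,0}^+)$. Given $x = F_{j,t}(y)$ with $y \in \partial \cM_{j,t}^+$, Lemma \ref{lem:aux-aux-sing} yields $y' \in \partial \cM_{j,0}^+$ with $|y - y'| \le Qt$, and then the triangle inequality together with \ref{ass:unif-extension} and \ref{ass:sing-near} gives
\[
|F_{j,t}(y) - F_{j,0}(y')| \le |F_{j,t}(y) - F_{j,t}(y')| + |F_{j,t}(y') - F_{j,0}(y')| \le DQt + Qt,
\]
so $\cS_t^- \subseteq [\cS_0^-]_{(D+1)Qt}$.

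For the containment $\cB_t \subseteq [\cS_0^-]_{Ht}$, recall from \eqref{eq:goodbad} that $\cB_t$ is a union over $j \in \cJ$ of the symmetric differences $\cM_{j,0}^- \triangle \cM_{j,t}^-$ (minus the singularity sets). I would treat the two halves $\cM_{j,t}^- \setminus \cM_{j,0}^-$ and $\cM_{j,0}^- \setminus \cM_{j,t}^-$ symmetrically. For $x$ in the first, write $y = F_{j,t}^{-1}(x) \in \cM_{j,t}^+$ and split into subcases: if $y \in \cM_{j,0}^+$, then $F_{j,t}(y) = x \notin \cM_{j,0}^-$ triggers the first part of Lemma \ref{lem:aux-sing}, giving $d(y, \partial \cM_{j,0}^+) \le 2DQt$; if instead $y \notin \cM_{j,0}^+$, then $y \in \cM_{j,0}^+ \triangle \cM_{j,t}^+$ lies within $Qt$ of $\partial \cM_{j,0}^+$ by \ref{ass:sing-near}. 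In either subcase I pick $y' \in \partial \cM_{j,0}^+$ realizing this distance and push forward as in the previous paragraph to conclude $x \in [F_{j,0}(\partial \cM_{j,0}^+)]_{Ht} \subseteq [\cS_0^-]_{Ht}$, choosing $H$ to absorb all numerical constants. The other half $\cM_{j,0}^- \setminus \cM_{j,t}^-$ is handled identically, using the second part of Lemma \ref{lem:aux-sing} (noting the roles of $t$ and $0$ are swapped, but the geometry is the same).

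The conceptual step is the organization of cases rather than any individual estimate: once one knows that every point of the symmetric difference $\cM_{j,0}^- \triangle \cM_{j,t}^-$ arises as the image of a point in the domain that is $O(t)$-close to $\partial \cM_{j,0}^+$, the $\cC^1$ extension $F_{j,0}$ (uniformly Lipschitz by \ref{ass:unif-extension}) maps such a point to within $O(t)$ of $\cS_0^-$. The final measure estimate $m(\cB_t) \le m([\cS_0^-]_{Ht}) \le Ct$ is then direct from Lemma \ref{lem:neigh}, with a new constant $C$ absorbing $H$ and the constant produced there.
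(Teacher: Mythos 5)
Your proof is correct and follows essentially the same route as the paper's: the same split of each $\cM_{j,0}^-\triangle\cM_{j,t}^-$ into subcases according to whether the domain preimage lies in $\cM_{j,0}^+\cap\cM_{j,t}^+$ or in the symmetric difference, handled by the two parts of Lemma \ref{lem:aux-sing} and \ref{ass:sing-near}, with the measure bound coming from Lemma \ref{lem:neigh}. The only (harmless) deviation is in the inclusion $\cS_t^-\subseteq[\cS_0^-]_{Ht}$, which you get directly by pushing forward the Hausdorff closeness of the domain boundaries from Lemma \ref{lem:aux-aux-sing}, whereas the paper deduces $\partial\cM_{j,t}^-\subseteq[\partial\cM_{j,0}^-]_{Ht}$ from the symmetric-difference inclusion \eqref{eq:minus-symm-diff} by repeating the argument of that lemma.
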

\begin{proof}
Let \(x \in \cM_{j,0}^{-} \triangle \cM_{j,t}^{-}\), \(j \in \cJ\). Then either (a) \(x \in \cM_{j,0}^{-} \setminus \cM_{j,t}^{-}\) or (b) \(x \in \cM_{j,t}^{-} \setminus \cM_{j,0}^{-}\). Let us consider the first case. Then \(x = F_{j,0}(y)\) and either (a1) \(y \in \cM_{j,0}^{+} \setminus \cM_{j,t}^{+}\) or (a2) \(y \in \cM_{j,0}^+ \cap \cM_{j,t}^{+} \). In case (a1), by \ref{ass:unif-extension} and \ref{ass:sing-near},
\[
 d(x, \partial \cM_{j,0}^{-}) = d(F_{j,0}(y),  F_{j,0}(\partial \cM_{j,0}^{+})) \le D d(y, \partial \cM_{j,0}^{+}) \le DQt.
\]
In case (a2), by the second part of Lemma \ref{lem:aux-sing},
\[
 d(x, \partial \cM_{j,0}^{-}) = d(F_{j,0}(y),  F_{j,0}(\partial \cM_{j,0}^{+}))  \le Dd(y, \partial \cM_{j,0}^{+}) \le DQ(2D+1)t.
\]
It follows that \( \cM_{j,0}^{-} \setminus \cM_{j,t}^{-} \subseteq [\partial \cM_{j,0}^{-}]_{DQ(2D+1)t}\). In case (b), one has that \(x = F_{j,t}(y)\) and either (b1) \(y \in \cM_{j,t}^{+} \setminus \cM_{j,0}^{+}\) or (b2) \(y \in \cM_{j,t}^{+} \cap \cM_{j,0}^{+} \). In case (b1),
\[
\begin{split}
d\left(x, \partial \cM_{j,0}^{-} \right) &= d\left(F_{j,t}(y), F_{j,0}\left(\partial \cM_{j,0}^{+}\right) \right) \le d\left(F_{j,0}(y), F_{j,0}\left(\partial \cM_{j,0}^{+}\right) \right) \\
&+ d\left(F_{j,t}(y), F_{j,0}(y) \right) \le DQt + Qt = Q(D+1)t.
\end{split}
\]
Finally, in case (b2), by the first part of Lemma \ref{lem:aux-sing}, 
\[
\begin{split}
d\left(x, \partial \cM_{j,0}^{-} \right) &= d\left(F_{j,t}(y), F_{j,0}\left(\partial \cM_{j,0}^{+}\right) \right) \le d\left(F_{j,0}(y), F_{j,0}\left(\partial \cM_{j,0}^{+}\right) \right) \\
&+ d\left(F_{j,t}(y), F_{j,0}(y) \right) \le 2D^2 Qt + Qt = Q(2D^2+1)t.
\end{split}
\]
Hence, for some \(H \in \bR^{+}\) and for all \(j \in \cJ\),
\begin{equation}\label{eq:minus-symm-diff}
     \cM_{j,0}^{-} \triangle \cM_{j,t}^{-} \subseteq [\partial \cM_{j,0}^{-}]_{Ht},
\end{equation}
so that \(\cB_{t} \subseteq \bigcup_{j \in \cJ}\cM_{j,0}^{-}\triangle \cM_{j,t}^{-} \subseteq \bigcup_{j \in \cJ} [\partial \cM_{j,0}^-]_{Ht} = [\cS_0^{-}]_{Ht}\). According to the same argument of Lemma \ref{lem:aux-aux-sing}, equation \eqref{eq:minus-symm-diff} implies that \(\partial \cM_{j,t}^{-} \subseteq [\partial \cM_{j,0}^{-}]_{Ht}\), concluding the proof of the first part. The second part of the statement is a consequence of the first part and Lemma \ref{lem:neigh}.
\end{proof}

We now take advantage of assumption \ref{ass:foliation} about summable decay of correlations on curves and the fact that \(\cB_t\) has small measure to prove decay of correlations uniform in the perturbation parameter.

\begin{lemma}
  \label{lem:decay-tot}
  For all \(t \in (0,\ve]\), \(\vf \in \cC^1(\cM)\), \(n \in \bN_{0}\),
  \[
  \left|\int_{\cM}\bigl(\vf\circ \cF_t^n \bigr)\Deriv_t  dm  \right| \le C\Theta_n\|\vf\|_{\cC^1}.
  \]
\end{lemma}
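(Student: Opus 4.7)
The plan is to combine the zero-average property of $\Deriv_t$ (see \eqref{eq:zero-avg-D}) with the disintegration \eqref{eq:disintegration} and the decay estimates \ref{ass:good-initial-cond1}--\ref{ass:good-initial-cond2} of \ref{ass:foliation}. Replacing $\vf\circ\cF_t^n$ by $\vf\circ\cF_t^n-\mu_t(\vf)$ (legitimate because $\int_\cM\Deriv_t\,dm=0$) and disintegrating yields
\[
\int_\cM(\vf\circ\cF_t^n)\Deriv_t\,dm=\int_{\mathscr F_t}\Big[\int_W(\vf\circ\cF_t^n)\Deriv_t p_W-\mu_t(\vf)\int_W\Deriv_t p_W\Big]dm_{\mathscr F_t}(W),
\]
and the modulus of each inner bracket is exactly $\cov(\vf,n,t;\Deriv_t|_W p_W)$. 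I would then split the outer integral as $\int_{\mathscr F_t^1}+\int_{\mathscr F_t^2}$ and bound each contribution separately by $C\Theta_n\|\vf\|_{\cC^1}$.

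For $W\in\mathscr F_t^1$ the restriction $\Deriv_t|_W$ is $\cC^1$ by Lemmas \ref{lem:single-perturbed-preimage-invariant-measure} and \ref{lem:regularity}, and on curves contained in $\cG_t$ satisfies $\|\Deriv_t|_W\|_{\cC^1}\le C$ uniformly in $t$. Since by the geometric setup of Remark \ref{rm:discuss-1} essentially all of $\mathscr F_t^1$ is foliated by such ``good'' curves, applying \ref{ass:good-initial-cond1} with $\rho_W=\Deriv_t|_W$ and integrating gives $\int_{\mathscr F_t^1}\cov(\cdot)\,dm_{\mathscr F_t}\le C\Theta_n\|\vf\|_{\cC^1}$. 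Any residual contribution from $\mathscr F_t^1$-curves that happen to lie in $\cB_t$, where the only available bound is $\|\Deriv_t|_W\|_{\cC^1}\le C/t$ from Lemma \ref{lem:regularity-norms}, is absorbed using the measure bound $m(\cB_t)\le Ct$ of Lemma \ref{lem:meas-bad-set}.

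For $W\in\mathscr F_t^2$ one cannot use \ref{ass:good-initial-cond2} directly with $g=\Deriv_t$, since $\Deriv_t$ is only piecewise $\cC^1$ on $\cM$. The key step is to decompose $\cM\setminus(\cS_t^-\cup\cS_0^-)$ into its (uniformly in $t$) finitely many connected components $\{A_\alpha\}$: on each $A_\alpha$ the indices $n_t,n_0$ are constant and, by Lemma \ref{lem:tr-op},
\[
\Deriv_t\big|_{A_\alpha}=\frac{1}{t}\Big[\sum_{j\in n_t(A_\alpha)}\frac{h_0}{|\det DF_{j,t}|}\circ F_{j,t}^{-1}-\sum_{j\in n_0(A_\alpha)}\frac{h_0}{|\det DF_{j,0}|}\circ F_{j,0}^{-1}\Big].
\]
Since by \ref{ass:unif-extension} the diffeomorphisms $F_{j,t}$ extend to $U_\cM\supset\cM$, this formula makes sense globally and defines a function $g^{(\alpha)}\in\cC^1(\cM)$ that agrees with $\Deriv_t$ on $A_\alpha$ and satisfies $\|g^{(\alpha)}\|_{\cC^1}\le C/t$ (by the same pointwise computation as in Lemma \ref{lem:regularity-norms}). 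Applying \ref{ass:good-initial-cond2} to each $g^{(\alpha)}$ and summing over the uniformly bounded number of components yields
\[
\int_{\mathscr F_t^2}\cov(\vf,n,t;\Deriv_t|_W p_W)\,dm_{\mathscr F_t}\le\sum_\alpha t\,\Theta_n\|\vf\|_{\cC^1}\|g^{(\alpha)}\|_{\cC^1}\le C\Theta_n\|\vf\|_{\cC^1}.
\]
The main obstacle is precisely producing these extensions $g^{(\alpha)}$ with the correct $\cC^1$ norm $O(1/t)$ and bounding the number of components $A_\alpha$ uniformly in $t$; both rely on \ref{ass:unif-extension} together with the transversality assumptions on the singularity sets.
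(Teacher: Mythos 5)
Your treatment of the $\mathscr F_t^1$ part is essentially the paper's argument: zero average \eqref{eq:zero-avg-D}, disintegration, and the observation (which you should attribute to \eqref{eq:boundariesB-G} rather than to Remark \ref{rm:discuss-1}, which is only heuristic) that each partition curve lies entirely in $\cG_t$ or entirely in $\cB_t$, so that \ref{ass:good-initial-cond1} applies with $\rho_W=\Deriv_t^{\cG}|_W$ (bounded via Lemma \ref{lem:single-perturbed-preimage-invariant-measure}) or with $\rho_W=t\Deriv_t^{\cB}|_W$ (bounded via Lemma \ref{lem:regularity-norms}), the $1/t$ on $\cB_t$ being compensated by $m(\cB_t)\le Ct$ from Lemma \ref{lem:meas-bad-set}.

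The $\mathscr F_t^2$ part, however, has a genuine gap. Your per-component construction needs the number of connected components $A_\alpha$ of $\cM\setminus(\cS_0^-\cup\cS_t^-)$ to be bounded uniformly in $t$, and since each $g^{(\alpha)}$ has $\|g^{(\alpha)}\|_{\cC^1}\sim 1/t$, each component contributes a full $C\Theta_n\|\vf\|_{\cC^1}$ to your sum: if the component count grows as $t\to 0$, the bound is destroyed. No hypothesis of the paper supplies such a bound: \ref{ass:sing-near} allows $\partial\cM_{j,t}^{+}$ to oscillate across $\partial\cM_{j,0}^{+}$ arbitrarily many times inside a $Qt$-neighborhood, and the transversality assumption concerns $\cS_0^{+,k}$ versus $\cS_0^{-}$, not $\cS_0^{-}$ versus $\cS_t^{-}$, so it does not control $\#(\cS_0^-\cap\cS_t^-)$. (A secondary inaccuracy: the displayed formula does not ``make sense globally'' on $\cM$, since $F_{j,t}^{-1}$ is defined only on $F_{j,t}(U_{\cM})$; this is fixable with a cutoff on a uniform neighborhood of $\overline{\cM_{j,t}^{-}}$, but it is extra work.) The paper avoids all of this: it splits $\Deriv_t=\Deriv_t^{\cG}+\Deriv_t^{\cB}$ once and for all, uses \eqref{eq:boundariesB-G} so that on every curve of $\mathscr F_t^2$ the integrand is the restriction of either $\Deriv_t^{\cG}$ or $t\Deriv_t^{\cB}$, both uniformly $\cC^1$ on the relevant curves by \eqref{eq:reg-of-restriction}, and then applies \ref{ass:good-initial-cond2} exactly twice, with $g=\Deriv_t^{\cG}$ and $g=t\Deriv_t^{\cB}$; the built-in factor $t^{-1}$ of \ref{ass:good-initial-cond2} is matched against $\|t\Deriv_t^{\cB}\|_{\cC^1}\le C$, so a $g$ of size $1/t$ never appears and no counting of components is needed. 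You should restructure your $\mathscr F_t^2$ estimate along these lines rather than patching the component bound.
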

\begin{proof}
Recalling \eqref{eq:zero-avg-D}, we have, 
\begin{equation}\label{eq:goodandbad-decay-cor}
\begin{split}
     \left|\int_{\cM}\left(\vf \circ \cF_t^n\right) \Deriv_t dm\right| &= \left|\int_{\cM}\left(\vf \circ \cF_t^n\right) \Deriv_t dm - \mu_t(\vf) \int_{\cM}\Deriv_t dm\right| \\
&\le \left| \int_{\cG_t} \left(\vf\circ \cF_t^n\right) \Deriv_t dm - \mu_t(\vf)  \int_{\cG_t}\Deriv_t dm\right|\\
&+  \left|\int_{\cB_t} \left(\vf\circ \cF_t^n\right) \Deriv_t dm- \mu_t(\vf) \int_{\cB_t}\Deriv_t dm\right|.
\end{split}
\end{equation}
Let \(\mathscr P_t\), \(\mathscr P_t^{1}\), \(\mathscr P_t^2\) be given by \ref{ass:foliation}. By \eqref{eq:boundariesB-G} and since \(\mathscr P_t\) is a partition of \(\cM \setminus (\cS_0^{-} \cup \cS_t^{-})\), for any curve \(W \in \mathscr P_t\), either \(W\subseteq \cB_t\) or \(W \subseteq \cG_t\). Hence, by multiplying and dividing by \(t\) only the terms integrated over \(\cB_t\), we have that \eqref{eq:goodandbad-decay-cor} is not bigger than, (recall \eqref{eq:cov-def-general} for the definition of correlations and \eqref{eq:disintegration} for the definition of the conditional density \(p_W\)),
\[
\begin{split}
   & \int_{\{W \subseteq \cG_t\}}\!\!\!\!\! \!\!\!\! \cov(\vf, n, t; p_W  \Deriv_t^{\cG})dm_{\mathscr P_t}(W)  +t^{-1} \int_{\{W \subseteq \cB_t\}}\!\!\!\!\!\!\!\!\!\!\!\!\!\!\!\!\! \cov(\vf, n, t;  p_W t \Deriv_t^{\cB}) dm_{\mathscr P_t}(W)\\
  &= \int_{\mathscr P_t^{1} \cap \{W \subseteq \cG_t\}}\!\!\!\!\!\!\!\!\!\!\!\!\!\!\!\!\!\!\!\! \!\!\!\! \cov(\vf, n, t;  p_W \Deriv_t^{\cG})dm_{\mathscr P_t}(W)  +t^{-1} \int_{\mathscr P_t^{1} \cap \{W \subseteq \cB_t\}}\!\!\!\!\!\!\!\!\!\!\!\!\!\!\!\!\!\!\!\!\!\!\!\! \cov(\vf, n, t; p_W t \Deriv_t^{\cB}) dm_{\mathscr P_t}(W)   \\
 &+\int_{\mathscr P_t^{2} \cap \{W \subseteq \cG_t\}}\!\!\!\!\!\!\!\!\!\!\!\!\!\!\!\! \!\!\!\!\cov(\vf, n, t; p_W  \Deriv_t^{\cG})dm_{\mathscr P_t}(W)  +t^{-1} \int_{\mathscr P_t^{2} \cap \{W \subseteq \cB_t\}}\!\!\!\!\!\!\!\!\!\!\!\!\!\!\!\!\!\!\!\!\!\!\!\! \cov(\vf, n, t; p_W t \Deriv_t^{\cB}) dm_{\mathscr P_t}(W).
\end{split}
\]
Notice the additional factor \(t\) in the correlations integrated over \(\cB_t\). We now show that every term in the above equation is bounded by \(C \Theta_n\|\vf\|_{\cC^1}\). By Lemma \ref{lem:single-perturbed-preimage-invariant-measure} and Lemma \ref{lem:regularity-norms}, respectively, we have that,
\begin{equation}\label{eq:reg-of-restriction}
\begin{split}
&\bigl\|{\cD_{t}^{\cG}}_{|W}\bigr\|_{\cC^1(W)}\le \left\|{\cD_{t}^{\cG}}\right\|_{\cC^1(\cG_t)}\le C, \quad \forall W \subseteq \cG_t, \\
& \bigl\|t{\cD_{t}^{\cB}}_{|W}\bigr\|_{\cC^1(W)} \le \left\| {t \cD_{t}^{\cB}} \right\|_{\cC^1(\cB_t)} \le C, \quad \forall W \subseteq \cB_t.
\end{split}
\end{equation}
Hence, for the first term we apply \ref{ass:foliation}-\ref{ass:good-initial-cond1} with \(\rho_W \!\!=\!\! {\Deriv_t^{\cG}}_{|W}\), \mbox{and by \eqref{eq:reg-of-restriction} we have,}
\[
\begin{split}
 \int_{\mathscr P_t^{1} \cap \{W \subseteq \cG_t\}}&\!\!\!\!\!\!\!\!\!\!\!\!\!\!\!\!\!\!\!\! \!\!\!\! \cov(\vf, n, t; p_W  \Deriv_t^{\cG}) dm_{\mathscr P_t}(W) \le \Theta_n \|\vf\|_{\cC^1} \left\|\Deriv_t^{\cG}\right\|_{\cC^1} \int_{\mathscr P_t^{1}} dm_{\mathscr P_t}(W) \le C \Theta_n\|\vf\|_{\cC^1}.
\end{split}
\]
For the second term, we apply \ref{ass:foliation}-\ref{ass:good-initial-cond1} with \(\rho_W \!\!=\!\! t{\Deriv_t^{\cB}}_{|W}\), Lemma \ref{lem:meas-bad-set} and \eqref{eq:reg-of-restriction},
\[
\begin{split}
   t^{-1} \int_{\mathscr P_t^{1} \cap \{W \subseteq \cB_t\}}\!\!\!\!\!\!\!\!\!\!\!\!\!\!\!\!\!\!\!\!\!\!\!\! \cov(\vf, n, t;  p_W t \Deriv_t^{\cB}) dm_{\mathscr P_t}(W) & \le t^{-1} \Theta_n \|\vf\|_{\cC^1} \left\|t\Deriv_t^{\cB}\right\|_{\cC^1}  \int_{\{W \subseteq \cB_t\}}dm_{\mathscr P_t}(W) \\
&\le C \Theta_n \|\vf\|_{\cC^1} \frac{m \left( \cB_t\right)}{t} \le C \Theta_n \|\vf\|_{\cC^1}.
\end{split}
\]
The final result follows by applying \ref{ass:foliation}-\ref{ass:good-initial-cond2} to the last two terms integrated over \(\mathscr P_t^2\), with \(g\) being either \(\Deriv_t^{\cG}\) or \(t\Deriv_t^{\cB}\), and by \eqref{eq:reg-of-restriction}.
\end{proof}

\begin{proof}[\textbf{Proof of Theorem \ref{thm:Lip}}]
  By Lemma \ref{lem:telescopic}, for any \(t \in (0, \ve]\), dividing and multiplying by \(t\),
\[
    |\mu_t(\vf) - \mu_0(\vf)| \le \left(\sum_{k \in \bN_0} \left|\int_{\cM} (\vf \circ \cF_t^k) \Deriv_t dm\right|\right) t,
\]
and the statement of the Theorem follows by Lemma \ref{lem:decay-tot} and the fact that \(\sum_{n \in \bN_0}\Theta_n\) is convergent by \ref{ass:foliation}.
\end{proof}

\section{Linear Response} \label{sec:linear-response}

In this section we prove Propositions \ref{prop:linear-resp-accumulation} and \ref{prop:form-measure}. We start with the latter. Let \(\{s_m\} \subseteq (0, \ve]\), \(\lim_{m \to \infty}s_m = 0\), be such that \( \nu_{s_m}/s_m \to \widetilde \nu\) weakly for \(m \to \infty\). I.e., for every \(\vf \in \cC^0(\cM)\),
\begin{equation}\label{eq:weak-star-limit}
    \lim_{m \to \infty} \frac{\nu_{s_m}}{s_m}(\vf) = \lim_{m \to \infty} \int_{\cM} \Deriv_{s_m} \vf dm = \widetilde \nu (\vf).
\end{equation}
Let \(H \in \bR^+\) be given by Lemma \ref{lem:meas-bad-set}.
\begin{lemma}\label{lem:support2}
  For any \(x \in \cM \setminus \cS_0^-\),
  \[
    \limsup_{m \to \infty} \Deriv_{s_m}\mathbbm{1}_{\cM \setminus [\cS_0^-]_{Hs_m}}(x) =  \liminf_{m \to \infty} \Deriv_{s_m}\mathbbm{1}_{\cM \setminus [\cS_0^-]_{Hs_m}}(x).
  \]
\end{lemma}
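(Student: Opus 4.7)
The idea is to reduce the claim to pointwise convergence of $\Deriv_{s_m}(x)$, and then deduce that convergence from the uniform $\cC^1$ bound on the good part, combined with the hypothesised weak limit $\widetilde\nu$, via Arz\`ela-Ascoli.

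First I fix $x\in\cM\setminus\cS_0^-$ and set $\delta=d(x,\cS_0^-)>0$. By Lemma \ref{lem:meas-bad-set}, whenever $Hs_m<\delta$ one has $x\in\cM\setminus[\cS_0^-]_{Hs_m}\subseteq\cG_{s_m}$, so the indicator equals $1$ at $x$ and
\[
\Deriv_{s_m}\mathbbm{1}_{\cM\setminus[\cS_0^-]_{Hs_m}}(x)=\Deriv_{s_m}(x)=\Deriv_{s_m}^{\cG}(x).
\]
It thus suffices to show that the bounded real sequence $\{\Deriv_{s_m}^{\cG}(x)\}_m$ converges.

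Next I would invoke compactness. Fix the compact neighbourhood $K=\overline{B(x,\delta/3)}$. For all $m$ with $Hs_m<\delta/2$, $K\subseteq\cG_{s_m}$, and by Lemma \ref{lem:single-perturbed-preimage-invariant-measure} the restrictions $\Deriv_{s_m}^{\cG}|_K$ are uniformly bounded and equicontinuous. Hence, by Arz\`ela-Ascoli, every subsequence admits a further subsequence converging uniformly on $K$ to some $f\in\cC^0(K)$.

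Any two such uniform limits $f_1,f_2$ must agree on the interior of $K$, and this is where the hypothesised weak limit enters. For $\vf\in\cC^0(\cM)$ with $\supp\vf\subseteq B(x,\delta/3)$, one has $\supp\vf\subseteq\cG_{s_m}$ for $m$ large, so $\int_{\cM}\Deriv_{s_m}\vf\,dm=\int_K\Deriv_{s_m}^{\cG}\vf\,dm$. Passing to the limit along the chosen subsequence, uniform convergence on $K$ together with \eqref{eq:weak-star-limit} gives $\int_K f_i\vf\,dm=\widetilde\nu(\vf)$ for $i=1,2$. Since $\vf$ is arbitrary and the $f_i$ are continuous, $f_1=f_2$ throughout $\mathrm{int}\, K$, and in particular at $x$. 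Consequently every convergent subsequence of $\{\Deriv_{s_m}^{\cG}(x)\}_m$ shares the common value $f(x)$, so the whole bounded sequence converges and $\limsup=\liminf$.

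I do not foresee a real obstacle: the three substantive ingredients — the uniform $\cC^1$ bound on $\Deriv_t^{\cG}$ from Lemma \ref{lem:single-perturbed-preimage-invariant-measure}, the localisation $\cB_t\cup\cS_t^-\subseteq[\cS_0^-]_{Ht}$ from Lemma \ref{lem:meas-bad-set}, and the a priori weak convergence \eqref{eq:weak-star-limit} — are already in hand, and the argument merely assembles them through Arz\`ela-Ascoli and uniqueness of distributional limits.
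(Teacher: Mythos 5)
Your proof is correct, and it rests on exactly the same three ingredients as the paper's: the localisation $\cB_t\cup\cS_t^-\subseteq[\cS_0^-]_{Ht}$ from Lemma \ref{lem:meas-bad-set}, the uniform $\cC^1$ bound on $\Deriv_t^{\cG}$ from Lemma \ref{lem:single-perturbed-preimage-invariant-measure}, and the weak convergence \eqref{eq:weak-star-limit}. The execution differs: the paper argues directly by contradiction, assuming $\limsup>a>b>\liminf$ along two subsequences, choosing a single bump function $\vf_0$ with $\int\vf_0\,dm=1$ supported on a ball of diameter at most $(a-b)/(2\overline C)$ around $x$, and using the Lipschitz bound to show $\left|\int\Deriv_{s_m}\vf_0\,dm-\Deriv_{s_m}(x)\right|\le (a-b)/2$, so that $\int\Deriv_{s_m}\vf_0\,dm$ would oscillate and contradict \eqref{eq:weak-star-limit}; no compactness theorem is invoked. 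You instead run Arz\`ela--Ascoli on a fixed compact ball $K\ni x$, extract uniform subsequential limits, and identify them by testing against all continuous $\vf$ supported near $x$. Your route is slightly heavier in machinery but buys a bit more: it shows genuine pointwise (indeed locally uniform subsequential) convergence near $x$, essentially reconstructing locally the Lipschitz limit $\mathcal R$ that the paper only defines afterwards in \eqref{eq:R-def}; the paper's single-test-function contradiction is the more elementary version of the same mechanism. Either way there is no gap.
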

\begin{proof}
If not, there exists \(x \in \cM \setminus \cS_{0}^{-}\) and two subsequences \(s_{m_{j_1}}\) and \(s_{m_{j_2}}\), \(j_1, j_2 \in \bN\) such that, for some \(a, b \in \bR\),
\begin{equation}\label{eq:limsup-inf}
\Deriv_{s_{m_{j_1}}}\mathbbm{1}_{[\cS_0^-]_{\cM \setminus Hs_{m_{j_1}}}}(x) > a >b > \Deriv_{s_{m_{j_2}}}\mathbbm{1}_{[\cS_0^-]_{\cM \setminus Hs_{m_{j_2}}}}(x), \quad \forall j_1, j_2 \in \bN.
\end{equation}
Let \(m_{*} \in \bN\) be such that \(x \in \cM \setminus [\cS_0^-]_{Hs_{m_{*}}}\) and let \(U\) be any open ball containing \(x\) such that \(U\subseteq \cM \setminus [\cS_0^{-}]_{Hs_{m_{*}}}\). By Lemma \ref{lem:meas-bad-set}, \(\cG_t = \cM \setminus (\cB_t \cup \cS_0^{-} \cup \cS_t^{-})  \supseteq \cM \setminus [\cS_0^{-}]_{Ht}\). Therefore, by Lemma \ref{lem:single-perturbed-preimage-invariant-measure}, there exists \(\overline C \in \bR^+\) such that \(\|\Deriv_{s_m| U}\|_{\cC^1} \le \overline C\) for any \(U\) as before and \(m \ge m_{*}\). We choose \(U\) such that \(\diam U \le (a-b)/(2\overline C)\). Finally, let \(\vf_0 \in \cC^0(\cM)\) supported on \(U\) such that \(\int_{U}\vf_0 dm = 1\). Then, for any \(m \ge m_{*}\),
\begin{equation}\label{eq:limit-fail}
\begin{split}
    \biggl|\int_{\cM} \Deriv_{s_{m}} \vf_0 dm -& \Deriv_{s_m}(x)\biggr| \le  \|{\Deriv_{s_m}}_{|U}\|_{\cC^1} \left(\int_{U} \vf_0 dm\right) \diam U \le \frac{a-b}{2}.
\end{split}
\end{equation}
By \eqref{eq:limsup-inf} and \eqref{eq:limit-fail}, we have that the sequence \(\int_{\cM} \Deriv_{s_{m}} \vf_0  dm\) cannot have a limit for \(m\) tending to infinity, contradicting \eqref{eq:weak-star-limit} and proving the statement.
\end{proof}

By Lemma \ref{lem:support2}, the function \(\mathcal R : \cM \setminus \cS_0^- \to \bR\), 
\begin{equation}\label{eq:R-def}
  \mathcal R=  \lim_{m \to \infty} \Deriv_{s_m}\mathbbm{1}_{\cM \setminus [\cS_0^-]_{Hs_m}},
\end{equation}
is well-defined. Moreover, the restriction of \(\cR\) to any open ball \(U \subset \cM \setminus \cS_0^{-}\) is equal to the point-wise limit of functions with bounded \(\cC^1(\overline U)\) norm. Indeed, this follows from Lemma \ref{lem:single-perturbed-preimage-invariant-measure} and the fact that, for any such \(U\), there exists an \(\overline m\) big enough such that \(U \subseteq \cM \setminus [\cS_0^-]_{Ht_m}\) for any \(m \ge \overline m\). Hence, \(\cR_{|\overline U}\) is a Lipschitz function. Since the estimate on the \(\cC^1\) norm of \({\Deriv_{s_m}}_{|U}\) does not depend on \(U\), it follows that \(\mathcal R\) can be extended to a Lipschitz function to \(\overline{\cM \setminus \cS_{0}^{-}} = \overline{\cM}\). We will call such extension \(\mathcal R\) as well. We now define, for any \(\vf \in \cC^0(\cM)\),
\begin{equation}\label{eq:sing-distribution-def}
  \sing (\vf) = \widetilde \nu(\vf) - \int_{\cM} \mathcal R \vf dm.
\end{equation}

\begin{lemma}\label{lem:sing}
The support of \(\sing\) is contained in \(\cS_0^{-}\).
\end{lemma}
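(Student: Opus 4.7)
The goal is to show $\sing(\vf)=0$ for every $\vf \in \cC^0(\cM)$ whose support is disjoint from $\cS_0^-$, which immediately gives $\supp \sing \subseteq \cS_0^-$. Fix such a test function $\vf$; by compactness there exists $\delta>0$ with $\supp \vf \subseteq \cM \setminus [\cS_0^-]_\delta$. The natural plan is to exploit the good bounds available away from $\cS_0^-$ together with the two weak limits already in hand: the one defining $\widetilde\nu$ and the one defining $\cR$.

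First, for all $m$ large enough that $H s_m < \delta$, the support condition gives
\[
\int_\cM \Deriv_{s_m} \vf\, dm \;=\; \int_\cM \Deriv_{s_m}\,\mathbbm{1}_{\cM \setminus [\cS_0^-]_{H s_m}}\,\vf\, dm.
\]
The left-hand side converges to $\widetilde\nu(\vf)$ by \eqref{eq:weak-star-limit}. For the right-hand side, recall from Lemma \ref{lem:meas-bad-set} that $\cM \setminus [\cS_0^-]_{H s_m} \subseteq \cG_{s_m}$, so Lemma \ref{lem:single-perturbed-preimage-invariant-measure} provides the uniform bound $\sup_m \sup_{\cM \setminus [\cS_0^-]_{H s_m}} |\Deriv_{s_m}| \le C$. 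Thus the integrands on the right are pointwise dominated by $C|\vf|$, which is integrable.

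Next, by the very definition \eqref{eq:R-def} of $\cR$, the pointwise limit of $\Deriv_{s_m}\,\mathbbm{1}_{\cM \setminus [\cS_0^-]_{H s_m}}$ equals $\cR$ on $\cM \setminus \cS_0^-$, and hence on all of $\supp \vf$. Dominated convergence therefore yields
\[
\lim_{m\to\infty}\int_\cM \Deriv_{s_m}\,\mathbbm{1}_{\cM \setminus [\cS_0^-]_{H s_m}}\,\vf\, dm \;=\; \int_\cM \cR\, \vf\, dm.
\]
Combining the two limits gives $\widetilde\nu(\vf) = \int_\cM \cR\,\vf\, dm$, so $\sing(\vf)=0$ by \eqref{eq:sing-distribution-def}, completing the argument.

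The proof is essentially a bookkeeping step: the analytic content is already encapsulated in Lemmas \ref{lem:meas-bad-set}, \ref{lem:single-perturbed-preimage-invariant-measure} and \ref{lem:support2}. The only mildly delicate point is making sure that the \emph{same} subsequence $\{s_m\}$ is used to define both $\widetilde\nu$ and $\cR$ (so that we may identify the two weak limits), and that the uniform $\cC^1$ control from Lemma \ref{lem:single-perturbed-preimage-invariant-measure} on the sets $\cM \setminus [\cS_0^-]_{H s_m}$ furnishes the integrable dominant needed for dominated convergence.
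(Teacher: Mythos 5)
Your proof is correct and follows essentially the same route as the paper: for a test function supported away from $\cS_0^-$, the integral $\int_\cM \Deriv_{s_m}\vf\,dm$ eventually only sees $\cM\setminus[\cS_0^-]_{Hs_m}$, where Lemmata \ref{lem:meas-bad-set} and \ref{lem:single-perturbed-preimage-invariant-measure} give the uniform bound needed for dominated convergence, and the pointwise limit is $\cR$ by \eqref{eq:R-def}, so $\widetilde\nu(\vf)=\int_\cM\cR\vf\,dm$ and $\sing(\vf)=0$. Your closing caveat about using the same sequence $\{s_m\}$ for both $\widetilde\nu$ and $\cR$ is exactly how the paper sets things up at the start of Section \ref{sec:linear-response}, so there is no discrepancy.
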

\begin{proof}
Let \(\vf \in \cC^0(\cM \setminus \cS_0^-)\) with compact support \(K \subset \cM \setminus \cS_0^-\). Since \(\cS_0^{-}\) and \(K\) are compact sets, there exists an \(m_{\star} \in \bN\) such that, for \(m \ge m_{\star}\), \([\cS_0^-]_{Hs_m} \subset \cM \setminus K\). Therefore,
\begin{equation}\label{eq:limit-shrinking-support}
     \lim_{m \to \infty} \int_{[\cS_0^-]_{Hs_m}} \Deriv_{s_m} \vf dm = 0.  
\end{equation}
By \eqref{eq:sing-distribution-def}, \eqref{eq:weak-star-limit} and \eqref{eq:limit-shrinking-support},
\[
\begin{split}
    \sing(\vf) = \widetilde \nu(\vf) &- \int_{\cM}\mathcal R \vf dm = \lim_{m \to \infty} \int_{\cM} \Deriv_{s_m} \vf dm - \int_{\cM} \mathcal R \vf dm \\
&= \lim_{m \to \infty} \int_{\cM  \setminus [\cS_{0}^{-}]_{Hs_m}} \Deriv_{s_m} \vf dm - \int_{\cM}\mathcal R \vf dm
\end{split}
\]
By Lemma \ref{lem:single-perturbed-preimage-invariant-measure}, the functions \(\vf \Deriv_{t_m}\mathbbm{1}_{[\cS_0^-]_{Ht_{m}}}\) are uniformly integrable and we can apply Lebesgue dominated convergence theorem to bring the limit inside the integral. Recalling the definition \eqref{eq:R-def} of \(\cR\), the equation above shows that \(\sing(\vf) = 0\), for any \(\vf \in \cC^{0}(\cM \setminus \cS_{0}^-)\) with compact support, yielding the claim.
\end{proof}
\begin{lemma}\label{lem:total-variation}
For all \(t \in (0,\ve]\), \(\|\Deriv_t\|_{L^1} \le C\).
\end{lemma}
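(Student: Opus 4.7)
The plan is to split $\Deriv_t$ into its good and bad parts via the decomposition $\Deriv_t = \Deriv_t^{\cG} + \Deriv_t^{\cB}$ introduced in \eqref{eq:deriv-def}, and bound each $L^1$ norm separately using estimates already established.

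For the good part, Lemma \ref{lem:single-perturbed-preimage-invariant-measure} gives the uniform pointwise bound $\sup_{\cG_t}|\Deriv_t| \le C$. Since $\Deriv_t^{\cG}$ vanishes outside $\cG_t \subseteq \cM$ and $m(\cM) = 1$, this immediately yields $\|\Deriv_t^{\cG}\|_{L^1} \le C$.

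For the bad part, the strategy is to combine the (worse) pointwise bound $\sup_{\cM \setminus \cS_t^-}|\Deriv_t| \le C/t$ from Lemma \ref{lem:regularity-norms} with the fact that $\cB_t$ is small: Lemma \ref{lem:meas-bad-set} gives $m(\cB_t) \le Ct$. The $t$'s cancel, producing $\|\Deriv_t^{\cB}\|_{L^1} \le (C/t)\cdot Ct = C$. Adding the two bounds gives the claim, with no step presenting any real difficulty since all the work has been done in the preceding lemmata; the point of the lemma is simply to record the uniform integrability that will be needed (and is indeed invoked via dominated convergence) in Lemma \ref{lem:sing} and in \eqref{eq:compactness-initial}.
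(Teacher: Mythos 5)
Your proposal is correct and follows exactly the paper's own argument: split the $L^1$ norm over $\cG_t$ and $\cB_t$, bound the first piece by the uniform estimate of Lemma \ref{lem:single-perturbed-preimage-invariant-measure}, and the second by combining the $C/t$ bound of Lemma \ref{lem:regularity-norms} with $m(\cB_t)\le Ct$ from Lemma \ref{lem:meas-bad-set}. There is nothing to add.
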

\begin{proof}
It follows by writing 
\[
\int_{\cM}|\Deriv_t|dm = \int_{\cG_t}|\Deriv_t| dm + \int_{\cB_t}|\Deriv_t|dm \le \sup_{\cG_t}|\Deriv_t| + m(\cB_t) \sup_{\cM}|\Deriv_t|,
\]
and by Lemmata \ref{lem:single-perturbed-preimage-invariant-measure}, \ref{lem:regularity-norms} and \ref{lem:meas-bad-set}.
\end{proof}
\begin{proof}[\textbf{Proof of Proposition \ref{prop:form-measure}}] We consider the limit \eqref{eq:limit-at-beginning} along the sequence \(\{s_m\}\). The decomposition into singular part and Lipschitz density is a consequence of equation \eqref{eq:sing-distribution-def}, Lemma \ref{lem:sing} and the discussion after equation \eqref{eq:R-def}. By Riesz representation theorem, the total variation of the measure \(\widetilde \nu\) is equal to the norm of \(\widetilde \nu\) as a linear functional on \(\cC^0\). Hence, denoting by \(|\widetilde \nu|\) the variation of \(\widetilde \nu\), by Lemma \ref{lem:total-variation}, we have for the total variation
\[
   |\widetilde \nu|(\cM) = \sup_{\|\vf\|_{\cC^0} =1} \left|\lim_{m \to \infty}\int_{\cM} \vf \Deriv_{s_m}dm \right| \le C.
\]
 Since \(\sing\) is singular w.r.t. \(m\), one has that \(\sing\) has finite total \mbox{variation as well}.
\end{proof}
We now turn to the proof of Proposition \ref{prop:linear-resp-accumulation}. The first part of the argument is controlling the effect of the perturbation on higher iterates of the map. This is obtained in Lemma \ref{lem:n-map-sing} after some preliminary results.

\begin{lemma}\label{lem:path-connected}
Let \(S \subseteq \cM \setminus [\cS_0^{+,1}]_{Qt}\) be path connected. Then, there exists \(j \in \cJ\) such that \(S \subseteq \cM_{j,0}^+ \cap \cM_{j,t}^+\).
\end{lemma}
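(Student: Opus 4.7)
\medskip

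\noindent\textbf{Proof sketch.} The plan is to first show that $S$ lies entirely inside a single unperturbed piece $\cM_{j,0}^+$, using path-connectedness, and then to upgrade this to $\cM_{j,0}^+\cap\cM_{j,t}^+$ via the geometric part of assumption \ref{ass:sing-near}.

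First I would note that the sets $\{\cM_{j,0}^+\}_{j\in\cJ}$ are pairwise disjoint open subsets of $\cM$ whose closures cover $\overline\cM$. Consequently, any point of $\cM$ which is \emph{not} in any $\cM_{j,0}^+$ must lie on $\partial\cM_{j,0}^+$ for some $j$, and since such a point is interior to $\cM$ it does not belong to $\partial\cM$. Hence
\[
\cM\setminus\bigcup_{j\in\cJ}\cM_{j,0}^+\;\subseteq\;\bigcup_{j\in\cJ}\bigl(\partial\cM_{j,0}^+\setminus\partial\cM\bigr)\;\subseteq\;\cS_0^+\setminus\partial\cM\;=\;\cS_0^{+,1},
\]
the last equality following from \eqref{eq:future-disc}. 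Since $S\subseteq\cM\setminus[\cS_0^{+,1}]_{Qt}\subseteq\cM\setminus\cS_0^{+,1}$, we obtain $S\subseteq\bigcup_{j\in\cJ}\cM_{j,0}^+$.

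Next, because $\cJ$ is finite and the $\cM_{j,0}^+$ are disjoint open sets, $\{S\cap\cM_{j,0}^+\}_{j\in\cJ}$ is a partition of $S$ into open (relative to $S$) subsets. Path-connectedness of $S$ forces all but one of these to be empty, so there exists a unique $j\in\cJ$ with $S\subseteq\cM_{j,0}^+$.

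It remains to show $S\subseteq\cM_{j,t}^+$. By \ref{ass:sing-near},
\[
\cM_{j,0}^+\setminus\cM_{j,t}^+\;\subseteq\;\cM_{j,0}^+\triangle\cM_{j,t}^+\;\subseteq\;\bigl[\partial\cM_{j,0}^+\setminus\partial\cM\bigr]_{Qt}\;\subseteq\;[\cS_0^{+,1}]_{Qt},
\]
since $\partial\cM_{j,0}^+\setminus\partial\cM\subseteq\cS_0^{+,1}$ as noted above. Because $S$ is disjoint from $[\cS_0^{+,1}]_{Qt}$ by hypothesis and $S\subseteq\cM_{j,0}^+$, we conclude that $S\cap(\cM_{j,0}^+\setminus\cM_{j,t}^+)=\emptyset$, i.e.\ $S\subseteq\cM_{j,t}^+$, completing the proof.

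There is no real obstacle here; the only subtlety is making sure the identification $\partial\cM_{j,0}^+\setminus\partial\cM\subseteq\cS_0^{+,1}$ is used twice, once to place $S$ in a single unperturbed atom via connectedness and once to rule out the symmetric difference with the perturbed atom.
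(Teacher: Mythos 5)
Your proof is correct and uses exactly the same ingredients as the paper's: assumption \ref{ass:sing-near} to place the symmetric differences inside $[\cS_0^{+,1}]_{Qt}$, the fact that points of $\cM$ avoiding $\cS_0^{+,1}$ lie in some $\cM_{j,0}^+$, and (path-)connectedness to pin down a single $j$. The only difference is the order of the steps (you localize $S$ in one unperturbed atom first and then invoke \ref{ass:sing-near}, while the paper first proves the inclusion $\cM\setminus[\cS_0^{+,1}]_{Qt}\subseteq\bigcup_j(\cM_{j,0}^+\cap\cM_{j,t}^+)$ and then applies connectedness), which is immaterial.
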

\begin{proof}
By \ref{ass:sing-near}, \( [\cS_{0}^{+,1} ]_{Q t} = \bigcup_{j \in \cJ} [\partial \cM_{j,0}^+ \setminus \partial \cM]_{Q t} \supset \bigcup_{j \in \cJ} \cM_{j,0}^+ \triangle \cM_{j,t}^+\). Taking the complement and using that \(\cM_{j,0}^{+}\) are disjoint,
\[
\begin{split}
 \cM \setminus [\cS_{0}^{+,1}]_{Q t} &\subseteq \bigcap_{j\in \cJ} (\cM \setminus ( \cM_{j,0}^+ \cup \cM_{j,t}^+)) \cup \left( \cM_{j,0}^+ \cap \cM_{j,t}^+\right) \\
&\subseteq \bigcap_{j\in \cJ} (\cM \setminus\cM_{j,0}^+ )\cup \left( \cM_{j,0}^+ \cap \cM_{j,t}^+\right) = \bigcup_{j \in \cJ} \cM_{j,0}^+ \cap \cM_{j,t}^+.
\end{split}
\]
The statement follows by noticing thet any path in \(\cM\) intersecting \(\cM_{j,0}^+\) and \(\cM_{k,0}^+\), \(j \neq k\), must intersect also \(\cS_0^{+,1}\).
\end{proof}

\begin{lemma}\label{lem:aux-zero}
Let \(S_k \subseteq \cM \setminus  [\cS_0^{+,k+1}]_{D^{k}\!(k+1)Qt}\), \(k \in \bN_0\), be path connected. Then, for any \(j \in \{0,...,k\}\), the set \(\cF^{j}_t\left( S_k\right)\) is a path connected subset of \(\cM \setminus [\cS_0^{+,1}]_{Qt}\).
\end{lemma}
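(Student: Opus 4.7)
The plan is to prove by induction on $j \in \{0, \ldots, k\}$ the strengthened statement that $\cF_t^j(S_k)$ is well-defined (no iterate hits a discontinuity of the next application of $\cF_t$), path-connected, and satisfies
\[
    \cF_t^j(S_k) \subseteq \cM \setminus [\cS_0^{+,k+1-j}]_{a_j t}, \qquad a_j := D^{k-j}(k+1-j)\, Q.
\]
Since $a_k = Q$ and $\cS_0^{+,1} \subseteq \cS_0^{+,k+1-j}$ for every $j \le k$, this strengthened claim contains the conclusion of the lemma; the base case $j=0$ is exactly the hypothesis on $S_k$.

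For the inductive step, since $D \ge 1$ one has $a_j \ge Q$, so the inductive hypothesis yields in particular $\cF_t^j(S_k) \subseteq \cM \setminus [\cS_0^{+,1}]_{Qt}$. Lemma \ref{lem:path-connected} then produces a common index $i \in \cJ$ with $\cF_t^j(S_k) \subseteq \cM_{i,0}^+ \cap \cM_{i,t}^+$; consequently $\cF_t^{j+1}(S_k) = F_{i,t}(\cF_t^j(S_k))$ is path-connected as a continuous image of a path-connected set.

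The distance bound follows by contradiction. Suppose $y \in \cF_t^{j+1}(S_k)$ and $z \in \cS_0^{+,k-j}$ satisfy $d(y,z) < a_{j+1}\, t$, and write $y = F_{i,t}(x)$ with $x \in \cF_t^j(S_k) \subseteq \cM_{i,0}^+$. Then $F_{i,0}(x) \in \cM_{i,0}^-$ and $d(F_{i,0}(x), y) \le Qt$ by \ref{ass:sing-near}. Applying the smooth inverse branch $F_{i,0}^{-1}$ from \ref{ass:unif-extension} gives
\[
    d(x,\, F_{i,0}^{-1}(z)) \;\le\; D\, d(F_{i,0}(x), y) + D\, d(y, z) \;<\; D(Q + a_{j+1})\, t \;\le\; a_j t,
\]
where the last inequality is the recursion the sequence $a_j$ was designed to satisfy (direct check: $a_j - D(Q + a_{j+1}) = DQ(D^{k-j-1}-1) \ge 0$). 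Writing $z \in \cF_0^{-p}(\cS_0^+)$ with $p \le k-j-1$, the point $F_{i,0}^{-1}(z) \in \cM_{i,0}^+$ is a genuine $\cF_0$-preimage of $z$ and therefore lies in $\cF_0^{-(p+1)}(\cS_0^+) \subseteq \cS_0^{+,k+1-j}$, contradicting the inductive hypothesis.

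The main obstacle is the last step: ensuring that $F_{i,0}^{-1}(z)$ genuinely belongs to $\cM_{i,0}^+$, which is immediate when $z \in \cM_{i,0}^-$. For the borderline situation in which $z$ lies slightly outside $\cM_{i,0}^-$ (which can happen since $\cM_{i,t}^- \neq \cM_{i,0}^-$), $z$ must lie within $(a_{j+1}+Q)t$ of $\partial \cM_{i,0}^-$ (because $y \in \cM_{i,t}^-$ is within $Qt$ of $\cM_{i,0}^-$ and $d(y,z) < a_{j+1} t$), so $F_{i,0}^{-1}(z)$ is within $D(a_{j+1}+Q)\, t$ of $\partial \cM_{i,0}^+ \subseteq \cS_0^+$; combined with the transversality hypothesis and the fact that $x$ is in the interior of $\cM$, this yields the same contradiction.
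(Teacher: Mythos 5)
Your reorganization of the argument—an induction on the iterate \(j\) with the explicit radii \(a_j = D^{k-j}(k+1-j)Q\), pulling the offending singular point back by the inverse branch—is a legitimate alternative to the paper's forward triangle-inequality chain (the paper instead uses that \(\cS_0^{+,m}\setminus\cS_0^-\subseteq\cF_0(\cS_0^{+,m+1})\) together with the finiteness of \(\cS_0^{+,m}\cap\cS_0^-\)), and your main case is sound: when \(z\in\cM_{i,0}^-\), the point \(F_{i,0}^{-1}(z)\) really is an \(\cF_0\)-preimage of \(z\), hence lies in \(\cS_0^{+,k+1-j}\), and the recursion \(D(Q+a_{j+1})\le a_j\) delivers the contradiction.

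The borderline case you flag at the end, however, is a genuine gap and your patch does not close it. If \(z\notin\cM_{i,0}^-\), what you obtain is only that \(x\) is within roughly \(D(Q+a_{j+1})t\) of \(\partial\cM_{i,0}^+\). But \(\partial\cM_{i,0}^+\) may contain pieces of \(\partial\cM\), and \(\partial\cM\) is excluded from every set \(\cS_0^{+,m}\) by definition \eqref{eq:future-disc}; so if the nearby portion of \(\partial\cM_{i,0}^+\) lies on \(\partial\cM\), proximity to it contradicts nothing in the inductive hypothesis. The transversality assumption concerns intersections of \(\cS^{+,k}\) with \(\cS^-\) and does not repair this, and "\(x\) is interior" does not prevent \(x\) from being within \(a_jt\) of \(\partial\cM\). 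The scenario is not vacuous: in the paper's own example (\ref{Fig:1}) two sides of \(\partial\cM_{1,0}^+\) lie on \(\partial\cM\), and their image under \(F_{1,0}\) is the diagonal \(\{y=x\}\subset\cS_0^-\), which sits in the interior of \(\cM\); a curve of \(\cS_0^{+,k-j}\) can come close to \(y=F_{i,t}(x)\) from the far side of this curve, in which case the genuine \(\cF_0\)-preimage of \(z\) is produced by a \emph{different} branch and need not be anywhere near \(x\), so no contradiction can be extracted by applying \(F_{i,0}^{-1}\). Ruling this configuration out requires an extra input—this is precisely where the paper invokes \(\cF_0(\cS_0^{+,m+1})\supseteq\cS_0^{+,m}\setminus\cS_0^-\) plus the transversality/finiteness of \(\cS_0^{+,m}\cap\cS_0^-\) and works with distances to the whole singular sets rather than with a single pulled-back point. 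As written, your final sentence asserts "the same contradiction" rather than proving it, so the induction step is incomplete in exactly this case.
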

\begin{proof}
By Lemma \ref{lem:path-connected} and \ref{ass:sing-near}, for any \(x \in \cM \setminus [\cS_{0}^{+,1}]_{Q t}\), there is \(j \in \cJ\) such that
\begin{equation}\label{eq:bound-simple}
d \left(\cF_t(x), \cF_0(x) \right) = d\left(F_{j,t}(x), F_{j,0}(x) \right) \le Qt.
\end{equation}
We now prove the statement by induction. In the case \(k = 0\) there is nothing to prove. Let us assume that the statement holds for \(k \in \bN_0\) and let \( S_{k+1} \subseteq  \cM \setminus  [\cS_0^{+,k+2}]_{D^{k+1}\!(k+2)Qt}\) be path connected. Since \(S_{k+1} \subseteq  \cM \setminus  [\cS_0^{+,k+1}]_{D^{k}\!(k+1)Qt}\), the fact that the statement holds for \(j \in \{0,...,k\}\) is a consequence of the inductive assumption, and we need to consider only the case \(j = k+1\). By the last sentence and Lemma \ref{lem:path-connected}, there exist \(j_1,..., j_{k}, j_{k+1} \in \cJ\), such that \(\cF_t^{k+1}\left(S_{k+1}\right) = F_{j_{k+1},t} \circ F_{j_{k},t} \circ ....\circ F_{j_1,t}\left(S_{k+1}\right)\). Because \(F_{j_{k+1},t} \circ F_{j_{k},t} \circ ....\circ F_{j_1,t}\) is a diffeomorphism, this implies that \(\cF_t^{k+1}\left(S_{k+1}\right)\) is path connected. It remains to prove that each point is far enough from \(\cS_0^{+,1}\). Using \ref{ass:unif-extension} and triangular inequalities, for all \(x \in S_{k+1}\),
\begin{equation}\label{eq:inductive-step-00}
\begin{split}
&d\left( \!\cF_t^{k+1}(x), \cS_0^{+,1} \!\right)  \!\ge \! d\left( \!\cF_0\circ \cF_t^{k}(x), \cF_0\bigl(\cS_0^{+,2}\bigr) \!\right)\!-\! d\left(\!\cF_t\circ \cF_t^{k}(x), \cF_0 \circ \cF_t^{k}(x)\!\right) \\
&\ge D^{-1}d\left( \cF_t^{k}(x), \cS_0^{+,2}\right) - d\left(\cF_t\circ \cF_t^{k}(x), \cF_0 \circ \cF_t^{k}(x)\right) \ge ...\\
& \ge D^{-k-1}d\left(x, \cS_0^{+,k+2}\right) - \sum_{j=0}^{k} D^{-j}d\left(\cF_t\circ \cF_t^{k-j}(x), \cF_0 \circ \cF_t^{k-j}(x)\right),
\end{split}
\end{equation}
where we have also used that \(\cF_0\bigl(\cS_{0}^{+,k+1} \bigr) \supseteq \cS_{0}^{+,k} \setminus \cF_{0}\bigl( \partial \cM \bigr) \supseteq \cS_{0}^{+,k} \setminus \cS_0^{-}\) and by the transverasality assumption the set in the last inclusion coincides with \(\cS_{0}^{+,k}\) except for finitely many points. On the other hand, by the inductive assumption, \(\cF_t^{k-j}(x) \in \cM \setminus [\cS_{0}^{+,1}]_{Q t}\) for all \(j \in \{0,...,k\}\), and by \eqref{eq:bound-simple} \(d(\cF_t\circ \cF_t^{k-j}(x), \cF_0 \circ \cF_t^{k-j}(x)) \le Qt\). Hence, by \eqref{eq:inductive-step-00} and the fact that \(x \in \cM \setminus [\cS_0^{+,k+2}]_{D^{k+1}(k+2)Qt}\), (assuming that \(D \ge 1\) without loss of generality),
\[
d\left(\cF_t^{k+1}(x), \cS_0^{+,1}\right) > D^{-k-1}D^{k+1}((k+2)Q)t - \sum_{j=0}^{k}D^{-j}Qt  \ge Qt,
\]
proving the statement.
\end{proof}

\begin{lemma}\label{lem:aux}
For each \(k \in \bN_0\), there exist \(Q'_k, C'_k \in \bR^+\) such that, for all \(t \in [0,\ve]\),
\[
    \sup_{\cM \setminus [\cS_0^{+,k+1}  ]_{Q'_k t} } \left|\cF^k_t \circ \cF_0 - \cF^{k+1}_t\right|\le C'_k t.
\]
\end{lemma}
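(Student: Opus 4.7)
Proof proposal. The plan is to write
\[
\cF_t^k \circ \cF_0(x) - \cF_t^{k+1}(x) = \cF_t^k(u) - \cF_t^k(v),
\]
where $u := \cF_0(x)$ and $v := \cF_t(x)$, and then bound $|\cF_t^k(u) - \cF_t^k(v)|$ by iterating the branch Lipschitz constant $D$ from \ref{ass:unif-extension} along a continuous path joining $u$ and $v$. Provided $Q_k' \ge Q$, one has $x \in \cM \setminus [\cS_0^{+,1}]_{Qt}$, so Lemma \ref{lem:path-connected} gives some $j \in \cJ$ with $x \in \cM_{j,0}^+ \cap \cM_{j,t}^+$, and the first inequality of \ref{ass:sing-near} yields $|u-v| = |F_{j,0}(x) - F_{j,t}(x)| \le Qt$. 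The base case $k=0$ is thus immediate with $Q_0' = C_0' = Q$.

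For $k \ge 1$, the core step is to enlarge $Q_k'$ so that the straight segment $S := [u,v] \subseteq \cM$ (well-defined since $\cM = (0,1)^2$ is convex) lies in $\cM \setminus [\cS_0^{+,k}]_{D^{k-1}k Q t}$. The relevant set-theoretic fact is
\[
\cF_0^{-1}\bigl(\cS_0^{+,k}\bigr) \setminus \partial \cM \subseteq \cS_0^{+,k+1},
\]
which follows directly from \eqref{eq:future-disc}. Combined with $\|DF_{j,0}^{-1}\| \le D$ from \ref{ass:unif-extension}, this yields the implication $d(x,\cS_0^{+,k+1}) > Dr \Rightarrow d(\cF_0(x),\cS_0^{+,k}) > r$, modulo the finitely many exceptional points $w \in \cS_0^{+,k} \cap \cS_0^-$ whose $F_{j,0}^{-1}$-preimages fall onto $\partial \cM$; these form a finite set by the transversality hypothesis stated in the setting, and are absorbed into the choice of $Q_k'$. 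Taking $Q_k' := D^k k Q + D Q$ then gives $d(u, \cS_0^{+,k}) > D^{k-1}k Q t + Q t$, and since $|u-v| \le Qt$, the entire segment $S$ remains at distance $> D^{k-1}k Q t$ from $\cS_0^{+,k}$.

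Lemma \ref{lem:aux-zero} (applied with $k-1$ in place of $k$ and $S_{k-1} := S$) then yields that $\cF_t^j(S)$ is path connected and contained in $\cM \setminus [\cS_0^{+,1}]_{Qt}$ for every $j \in \{0,\ldots,k-1\}$. By Lemma \ref{lem:path-connected}, each such image lies in a single branch $\cM_{\iota_j, t}^+$, so $\cF_t$ restricted to $\cF_t^j(S)$ agrees with the smooth branch $F_{\iota_j, t}$, which is $D$-Lipschitz by \ref{ass:unif-extension}. Iterating $k$ times, the length of the continuous path $\cF_t^k(S)$ joining $\cF_t^k(u)$ to $\cF_t^k(v)$ is at most $D^k |u-v| \le D^k Q t$, whence
\[
|\cF_t^k(u) - \cF_t^k(v)| \le D^k Q t,
\]
which is the desired bound with $C_k' := D^k Q$. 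The main technical obstacle I anticipate is the rigorous handling of the $\partial \cM$-preimage exceptional set in the second paragraph: because $\cS_0^{+,k+1}$ excludes $\partial \cM$ by definition while $\cF_0^{-1}(\partial \cM)$ may produce interior curves, one must verify that the resulting discrepancy between $\cF_0^{-1}(\cS_0^{+,k})$ and $\cS_0^{+,k+1}$ reduces to a finite exceptional set; the transversality assumption then ensures that this set does not degrade the linear-in-$t$ order of the distance comparison and can be absorbed into $Q_k'$.
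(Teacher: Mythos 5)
Your outer architecture is sound and is a genuine rearrangement of the paper's argument: instead of inducting on the statement itself (the paper applies Lemma \ref{lem:aux-zero} at level \(k\) together with the inductive hypothesis to get \(C'_{k+1}=DC'_k\)), you apply Lemma \ref{lem:aux-zero} once, at level \(k-1\), to the segment \(S=[\cF_0(x),\cF_t(x)]\) and iterate the branch Lipschitz bound along the path, which would indeed give \(C'_k=D^kQ\) in one stroke. The steps \(|u-v|\le Qt\) (via Lemma \ref{lem:path-connected} and \ref{ass:sing-near}), the convexity of \(\cM\), and the path-length iteration through a single branch at each step are all fine.

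The gap is in the pivotal implication \(d(x,\cS_0^{+,k+1})>Dr\Rightarrow d(\cF_0(x),\cS_0^{+,k})>r\). You pull a near point \(w\in\cS_0^{+,k}\) back by \(F_{j,0}^{-1}\); but \(F_{j,0}^{-1}(w)\) is a genuine \(\cF_0\)-preimage of \(w\), hence a point of \(\cS_0^{+,k+1}\), only when \(w\in\cM_{j,0}^{-}\). If \(w\) lies outside \(\cM_{j,0}^{-}\) (perfectly possible: the maps need not be surjective and \(w\) may be covered only by other branches), then \(F_{j,0}^{-1}(w)\) need not belong to any singularity set. The natural repair, that the segment from \(F_{j,0}(x)\) to \(w\) crosses \(\partial\cM_{j,0}^{-}\) and the crossing pulls back into \(\partial\cM_{j,0}^{+}\subseteq\cS_0^{+}\), still fails exactly when the pulled-back crossing lands on \(\partial\cM_{j,0}^{+}\cap\partial\cM\), which is excluded from \(\cS_0^{+,1}\). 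This obstruction is not ``finitely many exceptional points \(w\) absorbable into \(Q'_k\)'' as you assert: for a base point \(x\) near such a boundary piece, a whole arc of \(\cS_0^{+,k}\) on the far side of \(\partial\cM_{j,0}^{-}\) can lie within \(O(t)\) of \(\cF_0(x)\), while \([\cS_0^{+,k+1}]_{Q'_kt}\) does not obviously contain these \(x\) for any constant \(Q'_k\). A salvage exists but needs a real argument: by transversality the offending crossings cluster at the finitely many points of \(\cS_0^{+,k}\cap\cS_0^{-}\); the transversal crossing forces a genuine preimage curve of \(\cS_0^{+,k}\) inside \(\cM_{j,0}^{+}\) terminating at the corresponding boundary point, so that point lies in the closure of \(\cS_0^{+,k+1}\); and quantitative transversality then absorbs the \(O(t)\)-neighbourhood of these points into \([\cS_0^{+,k+1}]_{Q'_kt}\) after enlarging \(Q'_k\) beyond your explicit value \(D^kkQ+DQ\). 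None of this is in your proposal (you flag the obstacle but dispose of it by assertion, and with the wrong mechanism). The paper sidesteps your implication by running the comparison forward inside the induction, \(d(\cF_0(x),\cS_0^{+,k+1})\ge d(\cF_0(x),\cF_0(\cS_0^{+,k+2}))\ge D^{-1}d(x,\cS_0^{+,k+2})\), where the exceptional set consists of finitely many points of the singularity curves on \(\cS_0^{-}\), harmless because deleting finitely many points from a finite union of curves does not change distances to it; your backward version still owes the corresponding bookkeeping.
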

\begin{proof}
We prove the statement by induction. For the case \(k = 0\), the statement follows by \eqref{eq:bound-simple} with \(Q_0' = C_0' = Q\). Let us assume that the statement is true for \(k \in \bN_0\). For any \(x \in \cM \setminus [\cS_0^{+,k+2}]_{ D^{k+1}(k+2) Q t}\), one has that \(\cF_0(x)\) and \(\cF_t(x)\) belong to the same path connected subset of \(\cM \setminus [\cS_0^{+,k+1}]_{ D^{k}(k+1) Q t}\). \mbox{Indeed, by \ref{ass:unif-extension},} 
\[
d\bigl(\cF_0(x), \cS_0^{+,k+1}\bigr) \ge d\bigl(\cF_0(x), \cF_0 \bigl(\cS_0^{+,k+2}\bigr)\bigr) \ge D^{-1}d\bigl(x, \cS_{0}^{+,k+2}\bigr) > D^{k}(k+2) Q t,
\]
and, by \eqref{eq:bound-simple}, \(d(\cF_t(x), \cF_0(x)) \le Qt\). This implies that \(\cF_0(x)\) and \(\cF_t(x)\) cannot be separated by \(\cS_0^{+, k+1}\) and that
\[
    d (\cF_t(x), \cS_0^{+,k+1}) \ge d(\cF_0(x), \cS_0^{+,k+1}) - d(\cF_t(x), \cF_0(x)) >  D^k(k+1)Qt.
\] 
According to what we just proved, by Lemma \ref{lem:aux-zero}, \(\cF^k_t \circ \cF_0 (x)\) and \(\cF_t^{k+1}(x) = \cF^k_t \circ \cF_t (x)\) belong to the same path connected subset of \(\cM \setminus [\cS_0^{+,1}]_{Q t}\).
Therefore, by Lemma \ref{lem:path-connected}, there exists \(j \in \cJ\) such that 
\[
    d\bigl( \cF_t^{k+1} \circ \cF_0(x), \cF_t^{k+2}(x) \bigr) =  d\bigl(F_{j,t}\circ\cF_t^{k} \circ \cF_0(x) , F_{j,t}\circ \cF_t^{k+1} (x) \bigr).
\]
Hence, for any \(x \in \cM \setminus [\cS_0^{+, k+2}]_{\max \{ D^{k+1}(k+2) Q t, Q'_k t\}} \subseteq \cM \setminus [\cS_0^{+, k+1}]_{Q_k' t}\),  by \ref{ass:unif-extension} and the inductive assumption, we have for the quantity above,
\[
\begin{split}
   d\bigl(F_{j,t}\circ\cF_t^{k} \circ \cF_0(x) , F_{j,t}\circ \cF_t^{k+1} (x) \bigr)  \le D d\bigl(\cF_t^{k} \circ \cF_0(x) , \cF_t^{k+1}(x)\bigr) \le D C'_{k}t,
\end{split}
\]
concluding the proof.
\end{proof}
The following Lemma shows that at any fixed iteration, for \(t\) close to zero, the perturbed dynamics is close to the original one on most of the phase space. In the proof, we will denote by \(Q'_k\) and \(C'_k\) \mbox{the constants given by Lemma \ref{lem:aux}.}
\begin{lemma}
  \label{lem:n-map-sing}
 For each \(k \in \bN\), there exist \(Q_k, C_k \in \bR^+\) such that, for all \(t \in [0,\ve]\),
\[
     \sup_{\cM \setminus [\cS_0^{+,k}  ]_{Q_k t}} \left|\cF^k_t - \cF^k_0\right|\le C_k t.
\]
\end{lemma}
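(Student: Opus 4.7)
The plan is to argue by induction on $k$, using Lemma \ref{lem:aux} as the engine and a triangle-inequality splitting of $\cF_t^{k+1} - \cF_0^{k+1}$ through the intermediate map $\cF_t^{k}\circ \cF_0$. For the base case $k=1$, the estimate $|\cF_t(x) - \cF_0(x)| \le Qt$ on $\cM \setminus [\cS_0^{+,1}]_{Qt}$ is exactly \eqref{eq:bound-simple}: points outside that tube lie in a common branch $\cM_{j,0}^{+}\cap\cM_{j,t}^{+}$ by Lemma \ref{lem:path-connected}, so the comparison reduces to $\|F_{j,t}-F_{j,0}\|_{\cC^0}\le Qt$ from \ref{ass:sing-near}. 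Set $Q_1 = C_1 = Q$.

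For the inductive step, assume the statement holds for some $k\ge 1$ with constants $Q_k, C_k$, and let $Q_k', C_k'$ be the constants provided by Lemma \ref{lem:aux}. The decomposition
\[
\cF_t^{k+1}(x) - \cF_0^{k+1}(x) = \bigl(\cF_t^{k+1}(x) - \cF_t^k(\cF_0(x))\bigr) + \bigl(\cF_t^k(\cF_0(x)) - \cF_0^k(\cF_0(x))\bigr)
\]
reduces the problem to two controlled terms. The first bracket is bounded by $C_k' t$ whenever $x\in\cM\setminus[\cS_0^{+,k+1}]_{Q_k' t}$, by Lemma \ref{lem:aux}. The second bracket is handled by the inductive hypothesis applied at the point $\cF_0(x)$, provided we can ensure $\cF_0(x)\in\cM\setminus[\cS_0^{+,k}]_{Q_k t}$.

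To arrange the latter, I would establish the geometric estimate
\[
d(\cF_0(x), \cS_0^{+,k}) \ge D^{-1}\, d(x, \cS_0^{+,k+1}),
\]
by the same preimage argument used in Lemmata \ref{lem:aux-zero} and \ref{lem:aux}: for any $y\in\cS_0^{+,k}$ and $j\in\cJ$ with $x\in\cM_{j,0}^{+}$, the point $z = F_{j,0}^{-1}(y)$ lies in $\cF_0^{-1}(\cS_0^{+,k})\subseteq \cS_0^{+,k+1}$ (modulo the boundary of $\cM$, which is excluded from the definition \eqref{eq:future-disc} of $\cS_0^{+,k+1}$ and handled exactly as in the proof of Lemma \ref{lem:aux-zero}), and satisfies $d(x,z)\le D\, d(\cF_0(x),y)$ by the Lipschitz constant of $F_{j,0}^{-1}$ from \ref{ass:unif-extension}. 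Choosing $Q_{k+1} = \max\{D Q_k,\, Q_k'\}$ then guarantees that both brackets are simultaneously controlled, and the triangle inequality yields the claim with $C_{k+1} = C_k + C_k'$.

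The proof is essentially bookkeeping: the genuinely novel geometric content, namely that forward iteration preserves path-connected components while forbidden tubes only expand by a factor of $D$ per step, has already been extracted in Lemma \ref{lem:aux-zero} and Lemma \ref{lem:aux}. The only point requiring mild care is the argument that $F_{j,0}^{-1}(y)$ lands in $\cS_0^{+,k+1}$ rather than on $\partial\cM$, but this is the same boundary exception already absorbed by the ``except for finitely many points'' comment in the proof of Lemma \ref{lem:aux-zero}, and imposes no obstacle provided $\ve$ is taken small enough.
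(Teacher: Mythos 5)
Your proposal is correct and is essentially the paper's own proof: the same induction on \(k\), the same triangle-inequality splitting of \(\cF_t^{k+1}-\cF_0^{k+1}\) through the intermediate map \(\cF_t^{k}\circ\cF_0\), with Lemma \ref{lem:aux} controlling the first bracket, the inductive hypothesis at \(\cF_0(x)\) controlling the second, and the same constants \(Q_{k+1}=\max\{DQ_k,Q'_k\}\), \(C_{k+1}=C_k+C'_k\) (the base case via \eqref{eq:bound-simple} is exactly the \(k=0\) case of Lemma \ref{lem:aux} that the paper invokes). The only difference is that you spell out, via the preimage/Lipschitz argument, the inequality \(d(\cF_0(x),\cS_0^{+,k})\ge D^{-1}d(x,\cS_0^{+,k+1})\) which the paper dispatches with ``by \ref{ass:unif-extension}'', and your handling of the boundary/branch exceptions there is at the same level of care as the paper's own treatment in Lemmata \ref{lem:aux-zero} and \ref{lem:aux}.
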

\begin{proof}
We prove the statement by induction. The case \(k = 1\) follows by Lemma \ref{lem:aux} (there, it is the case \(k = 0\)). Let us assume that the statement holds for some \(k \in \bN\). First notice that by \ref{ass:unif-extension}, for any \(x \in \cM \setminus [\cS_0^{+,k+1}]_{ DQ_k t}\), one has that \(\cF_0(x) \in \cM \setminus [\cS_0^{+,k}]_{Q_k t}\). Hence, by Lemma \ref{lem:aux} and the inductive assumption, for any \(x \in \cM \setminus [\cS_0^{+,k+1}]_{\max\{Q'_{k+1}, DQ_k \}t}\),
\[
\begin{split}
    d \bigl(\cF_t^{k+1}(x), \cF_0^{k+1}&(x)\bigr) \le d\bigl(\cF_t^{k+1}(x), \cF_t^k \circ \cF_0(x)\bigr)\\
& + d\bigl(\cF_t^k\circ\cF_0(x), \cF_0^{k+1}(x)\bigr) \le (C'_k + C_k)t,
\end{split}
\]
concluding the proof of the Lemma.
\end{proof}
Notice that in the previous Lemma there is no control on the constants \(Q_k, C_k\). Hence, the statement is useful for fixed \(k\) when \(t\) is small enough. That is the case also for the following result.
\begin{lemma}
  \label{lem:trasversality}
For each \(k \in \bN\), there exists \(C_k\in \bR^+\) such that, for all \(t, t' \in [0,\ve]\),
\[
    m\left([\cS^{+,k}_0 ]_{t} \cap [\cS_0^{-}]_{t'}\right) \le C_k t t'.
\]
\end{lemma}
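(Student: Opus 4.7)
The plan is to exploit the transversality of $\cS_0^{+,k}$ and $\cS_0^{-}$ at their finite intersection set, reducing the estimate to the area of the intersection of two transversal thin tubes in the plane.

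First, I would decompose $\cS_0^{+,k} = \bigcup_i \gamma_i$ and $\cS_0^{-} = \bigcup_j \widetilde\gamma_j$ as finite unions of $\cC^1$ curves of finite length; this is possible by \ref{ass:unif-extension} and the definition \eqref{eq:future-disc}, although the number of pieces in $\cS_0^{+,k}$ grows with $k$ (since $k$ is fixed, the resulting $C_k$ is allowed to depend on it). Set $\cP := \cS_0^{+,k} \cap \cS_0^{-}$; by the transversality hypothesis stated before Definition \ref{def:admissible-family}, $\cP$ is finite and at every $p \in \cP$ the two meeting curves have linearly independent tangents. Using compactness of each $\gamma_i$ and $\widetilde\gamma_j$ and continuity of their tangent directions, one extracts constants $\theta_k \in (0, \pi/2]$ and $\delta_k > 0$ such that (i) at every $p \in \cP$ the tangents to any pair $\gamma_i, \widetilde\gamma_j$ through $p$ meet at an angle at least $\theta_k$, and (ii) for all $i,j$, points of $\gamma_i$ at distance $\ge \delta_k$ from $\cP$ are at distance $\ge 2\delta_k$ from $\widetilde\gamma_j$.

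Second, I would split the estimate according to the size of $t,t'$. For $t,t' \le \delta_k/4$, property (ii) forces
\[
[\cS_0^{+,k}]_t \cap [\cS_0^{-}]_{t'} \subseteq [\cP]_{\delta_k/2},
\]
so the measure decomposes as a sum over the finite set $\cP$ of local contributions, each consisting of finitely many intersections $[\gamma_i]_t \cap [\widetilde\gamma_j]_{t'}$ restricted to a small ball around $p$. Parameterizing $\gamma_i$ and $\widetilde\gamma_j$ by arclength from $p$ and shrinking the neighborhood if necessary, the modulus of continuity of the tangent on each compact curve guarantees that $[\gamma_i]_t$ is contained in the straight strip of width at most $3t$ around the tangent line to $\gamma_i$ at $p$, and similarly for $\widetilde\gamma_j$. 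The intersection of two straight strips of widths $\le 3t, 3t'$ meeting at an angle $\ge \theta_k$ is a parallelogram of area $\le 9tt'/\sin\theta_k$, which upon summing over the finitely many pairs $(i,j)$ through each $p$ and over $p \in \cP$ yields the bound $m([\cS_0^{+,k}]_t \cap [\cS_0^{-}]_{t'}) \le C_k tt'$ in this regime.

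Third, for the complementary regime $\max(t,t') \ge \delta_k/4$, Lemma \ref{lem:neigh} gives $m([\cS_0^{-}]_{t'}) \le C t'$; using $t \ge \delta_k/4$ (or its symmetric counterpart) one has $t' \le (4/\delta_k) tt'$, so the bound extends to all $t,t' \in [0,\varepsilon]$ after enlarging $C_k$. The main delicate point is the local parallelogram estimate in step two: the $\cC^1$ curve must be \emph{contained} in (not merely approximated by) a slightly wider straight strip near $p$, which requires choosing the working neighborhood of $p$ small enough that the tangent's modulus of continuity is below a prescribed $\eta$; uniform continuity of the tangent on each compact curve makes this possible, and the rest of the argument is routine.
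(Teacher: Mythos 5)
Your overall strategy is the same as the paper's: decompose \(\cS_0^{+,k}\) and \(\cS_0^{-}\) into finitely many \(\cC^1\) arcs, use transversality to get a uniform angle at the finitely many intersection points, localize the tube intersection near those points, and bound each local contribution by a strip/parallelogram intersection of widths \(\sim t\) and \(\sim t'\). However, two of your auxiliary claims fail as stated, and the second is the crux of the estimate. First, property (ii) is impossible whenever \(\cP\neq\emptyset\): if \(p\in\cP\) lies on \(\gamma_i\cap\widetilde\gamma_j\) and \(x\in\gamma_i\) is a point with \(d(x,\cP)\) slightly above \(\delta_k\) near \(p\), then \(d(x,\widetilde\gamma_j)\le d(x,p)\approx\delta_k<2\delta_k\). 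This is repairable (by compactness, points of \(\gamma_i\) at distance \(\ge\delta_k\) from \(\cP\) are at distance \(\ge c_k>0\) from \(\overline{\widetilde\gamma_j}\), provided the arcs are chosen, as in the paper, so that closures meet in at most one point), but not with the constant \(2\delta_k\). Second, and more seriously, the containment of \([\gamma_i]_t\), restricted to a neighborhood of \(p\) whose radius is \emph{independent of} \(t\), in a straight strip of width \(3t\) around the tangent line at \(p\) is false: if the tangent direction varies by at most \(\eta\) on a ball of radius \(\rho\) around \(p\), the curve itself may deviate from its tangent line by about \(\eta\rho\), a quantity that does not go to zero with \(t\); so for \(t\ll\eta\rho\) the tube is not inside any width-\(3t\) strip, no matter how small the fixed \(\rho\) is. Your proposed remedy — choosing the working neighborhood once, small enough that the modulus of continuity is below a prescribed \(\eta\) — therefore does not close the gap; the region in which you compare the tube with a straight strip must shrink with \(t,t'\).

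The missing step is precisely where transversality is used quantitatively: a point of \([\gamma_i]_t\cap[\widetilde\gamma_j]_{t'}\) near \(p\) is within \(t+t'\) of some \(x\in\gamma_i\) with \(d(x,\widetilde\gamma_j)\le 2(t+t')\), and the angle bound at \(p\) forces \(d(x,p)\le C_k(t+t')/\sin\theta_k\); hence the whole local intersection sits in a ball of radius \(O(t+t')\) around \(p\), and only at that scale is the \(\cC^1\) deviation from the tangent lines of lower order, making the straight-strip picture legitimate (this is what the paper's ``parallelogram with sides proportional to \(t\) and \(t'\)'' encodes). Even then, to obtain \(tt'\) rather than \((t+t')^2\) in the asymmetric regime \(t\ll t'\), it is cleanest to assume w.l.o.g. \(t\le t'\) and use Fubini in coordinates adapted to \(\gamma_i\): every slice transversal to the tangent of \(\gamma_i\) at \(p\) meets the \(t\)-tube in a set of length \(O(t)\), while by the localization just described only an interval of slices of length \(O(t'/\sin\theta_k)\) can meet the intersection, giving the \(C_k\,tt'\) bound. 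Your treatment of the complementary regime \(\max(t,t')\ge\delta_k/4\) via Lemma \ref{lem:neigh} is correct and is, if anything, more explicit than the paper, which simply absorbs that case into the choice of \(C_k\).
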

\begin{proof}
By assumption \(\cS_{0}^{+,k}  \cap \cS_0^-\) consists of at most finitely many points. Let \(\left\{\gamma_{-, p}\right\}_p\), \(\left\{\gamma_{+,q}\right\}_q\) be finitely many \(\cC^1\) curves of finite length such that \(\overline \gamma_{-,p} \cap \overline \gamma_{+,q}\) consists of at most one point and \(\cS_0^- \subseteq \bigcup_{p}\gamma_{-,p}\), \(\cS_0^{+,k}  \subseteq \bigcup_{q} \gamma_{+,q}\). Since we are considering compact sets, if \(\overline\gamma_{-,p} \cap \overline\gamma_{+,q} = \emptyset\), for some \(p\) and \(q\), then \(d(\gamma_{-,p}, \gamma_{+,q})>0\). Therefore, if \(t\) and \(t'\) are small enough, then \( [\gamma_{-,p}]_t \cap [\gamma_{+,q}]_{t'} = \emptyset\) and the estimate follows by choosing \(C_k\) sufficiently big. On the other hand, if \(\gamma_{-,p} \cap \gamma_{+,q} = \{x\}\), \(x \in \cM\), then, by the transversality assumption, the angle between \(\gamma_{-,p}'(x)\) and \(\gamma_{+,q}'(x)\) is bigger or equal than some \(\alpha_0 >0\) (since there are only finitely many angles and they are all bigger than zero). Since the above curves are \(\cC^1\), they are well approximated by lines in a neighborhood of \(x\). Hence,  for \(t\) and \(t'\) small enough, the above set is contained in a parallelogram centered in \(x\), whose sides are proportional to \(t\) and \(t'\) and form an angle not smaller than \(\alpha_{0}\). This yields the statement for any pair \(\gamma_{-,p}\) and \(\gamma_{+,q}\). The general statement follows by iterating this estimate for all the pairs, which are in a finite number.
\end{proof}

It turns out that the previous estimates are sufficient to identify the form of the distributions in \(\partial \mathscr B\). To this aim, we prove the convergence for \(t\) tending to \(0\) of \(\int_{\cM} (\vf \circ \cF_{t}^k) \Deriv_{t} dm\). Notice that this does not follow by \eqref{eq:weak-star-limit} since \(\vf\) is composed with the perturbed dynamics. We need a preliminary result before stating the intended convergence in Lemma \ref{lem:limit-fixed-k}.

\begin{lemma}
  \label{lem:deriv-k}
 For each \(k \in \bN_0\), there exists \(C_k\in \bR^+\) such that, for any \(\vf \in \cC^1(\cM)\), \(t \in [0, \ve]\),
\[
  \left|\int_{\cM} \left(\vf \circ \cF_{t}^k - \vf \circ \cF_0^k\right) \Deriv_{t} dm \right| \le \|\vf\|_{\cC^1} C_k t.
\] 
\end{lemma}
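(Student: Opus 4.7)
The plan is to split the integration domain into a region away from the discontinuity set of $\cF_0^k$ and a thin neighbourhood of it, and then use the two ingredients already established: away from the singularities the perturbed and unperturbed iterates are uniformly $t$-close (Lemma \ref{lem:n-map-sing}), while the measure of the neighbourhood is controlled by Lemma \ref{lem:neigh} and its intersection with the bad set $\cB_t$ is controlled by the transversality Lemma \ref{lem:trasversality}.

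Concretely, let $Q_k$ be the constant from Lemma \ref{lem:n-map-sing}. I would write
\[
\int_{\cM} (\vf\circ \cF_t^k - \vf\circ \cF_0^k)\Deriv_t\,dm = I_1 + I_2,
\]
where $I_1$ is the integral over $\cM\setminus [\cS_0^{+,k}]_{Q_k t}$ and $I_2$ is the integral over $[\cS_0^{+,k}]_{Q_k t}$. On the first region, Lemma \ref{lem:n-map-sing} gives $|\cF_t^k - \cF_0^k|\le C_k t$, so since $\vf$ is Lipschitz with constant $\le \|\vf\|_{\cC^1}$, the integrand is pointwise bounded by $\|\vf\|_{\cC^1} C_k t\,|\Deriv_t|$. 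Using Lemma \ref{lem:total-variation} to bound $\|\Deriv_t\|_{L^1}\le C$ yields $|I_1|\le C\|\vf\|_{\cC^1} C_k t$.

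For $I_2$, bound the oscillation factor crudely by $2\|\vf\|_{\cC^0}\le 2\|\vf\|_{\cC^1}$ and decompose $\Deriv_t=\Deriv_t^{\cG}+\Deriv_t^{\cB}$ as in \eqref{eq:deriv-def}. On $\cG_t\cap [\cS_0^{+,k}]_{Q_k t}$ we use the uniform bound $|\Deriv_t^{\cG}|\le C$ from Lemma \ref{lem:single-perturbed-preimage-invariant-measure} together with $m([\cS_0^{+,k}]_{Q_k t})\le C_k t$ from Lemma \ref{lem:neigh}, giving a contribution of order $\|\vf\|_{\cC^1}t$. On $\cB_t\cap [\cS_0^{+,k}]_{Q_k t}$, the bound from Lemma \ref{lem:regularity-norms} is only $|\Deriv_t^{\cB}|\le C/t$, which on its own would be fatal; here the inclusion $\cB_t\subseteq [\cS_0^-]_{Ht}$ from Lemma \ref{lem:meas-bad-set} combined with Lemma \ref{lem:trasversality} gives
\[
m\bigl(\cB_t\cap [\cS_0^{+,k}]_{Q_k t}\bigr) \le m\bigl([\cS_0^-]_{Ht}\cap [\cS_0^{+,k}]_{Q_k t}\bigr) \le C_k t^2,
\]
so the $1/t$ blow-up is precisely compensated and this contribution is again of order $\|\vf\|_{\cC^1}t$.

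The main obstacle, and the reason the transversality assumption was introduced, is exactly the last estimate: without the quadratic gain $t^2$ from the transverse intersection of $\cS_0^{+,k}$ with $\cS_0^-$, the bad region contribution would be $O(1)$ rather than $O(t)$ and the lemma would fail. Once the transversality bound is invoked, everything else is a combination of already proved uniform estimates. Combining $I_1$ and $I_2$ yields $|I_1|+|I_2|\le C_k\|\vf\|_{\cC^1}t$ with a constant depending on $k$ (through $Q_k$, $C_k$ of Lemma \ref{lem:n-map-sing} and the transversality constant of Lemma \ref{lem:trasversality}), which is exactly the claim.
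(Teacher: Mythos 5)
Your proposal is correct and follows essentially the same argument as the paper: the same split into the region away from $[\cS_0^{+,k}]_{Q_k t}$ (handled via Lemma \ref{lem:n-map-sing} and Lemma \ref{lem:total-variation}) and a thin neighbourhood of $\cS_0^{+,k}$, with the $1/t$ blow-up near $\cS_0^-$ compensated by the transversality estimate of Lemma \ref{lem:trasversality}. The only cosmetic difference is that you split the neighbourhood into its $\cG_t$ and $\cB_t$ parts, while the paper splits it by distance to $\cS_0^-$ (using $\cM\setminus[\cS_0^-]_{Ht}\subseteq\cG_t$ and $\cB_t\subseteq[\cS_0^-]_{Ht}$), which is the same estimate in slightly different clothing.
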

\begin{proof}
 For \(k = 0\) the statement is trivial. Let \(k \in \bN\), \(Q_k \in \bR^+\) be given by Lemma \ref{lem:n-map-sing} and \(H \in \bR^+\) be given by Lemma \ref{lem:meas-bad-set}. Consider the following partition of \(\cM\):
  \[
    \begin{split}
      &\cM_1 = \cM \setminus  [\cS_0^{+,k} ]_{Q_k t}, \text{ }\cM_2 = [\cS_0^{+,k}]_{Q_k t}  \setminus [\cS_0^-]_{H t}, \text{ }\cM_3 =  [\cS_0^{+,k}]_{Q_k t} \cap [\cS_0^-]_{H t} .
    \end{split}
  \]
In the next estimates, \(C_k \in \bR^+\) is a constant big enough, possibly dependent on \(k\). By Lemma \ref{lem:n-map-sing} and Lemma \ref{lem:total-variation}, 
\[
  \begin{split}
   \biggl|\int_{\cM_1} \left(\vf \circ \cF_t^k - \vf \circ \cF_0^k\right)\Deriv_t dm\biggr|&\le \sup_{\cM_1}\left|\vf \circ \cF_t^k - \vf\circ \cF_0^k \right| \|\Deriv_t\|_{L^1}\\
   & \le C \|\vf\|_{\cC^1} \sup_{\cM \setminus  [\cS_0^{+,k}]_{Q_k t} }\left|\cF_t^k - \cF_0^k\right|  \le \|\vf\|_{\cC^1}C_k t.
  \end{split} 
\]
By Lemma \ref{lem:meas-bad-set}, \(\cG_t \supseteq \cM \setminus [\cS_0^-]_{H t} \supseteq \cM_{2}\) and so by Lemmata \ref{lem:single-perturbed-preimage-invariant-measure} and \ref{lem:neigh},
\[
     \begin{split}
      \biggl|\int_{\cM_2}  \left(\vf \circ \cF_t^k - \vf \circ \cF_0^k\right)\Deriv_t dm\biggr| &\le 2\|\vf\|_{\cC^0}\sup_{ \cM_2}\left|\Deriv_t\right| m \left(\cM_2 \right) \\
&\le 2\|\vf\|_{\cC^0} \sup_{\cG_t}\left|\Deriv_t\right| m \bigl([\cS_0^{+,k}]_{Q_k t}\bigr) \le \|\vf\|_{\cC^1}C_k t.
     \end{split}
\]
Finally by Lemma \ref{lem:trasversality} and Lemma \ref{lem:regularity-norms}, 
\[
  \begin{split}
    \biggl|\int_{\cM_3} \left(\vf \circ \cF_t^k - \vf \circ \cF_0^k\right)\Deriv_t dm\biggr| &\le 2\|\vf\|_{\cC^0} \sup_{\cM}|\Deriv_t|m(\cM_3) \\
&\le  2\|\vf\|_{\cC^0} \frac{C}{t} m \bigl( [\cS_0^{+,k}]_{Q_k t} \cap [\cS_0^-]_{H t}\bigr) \le \|\vf\|_{\cC^1}C_k t. 
  \end{split}
\]
Collecting all the previous estimates, we have the statement of the Lemma.
\end{proof}
Recall from the beginning of this section that \(\{s_m\} \subset (0, \ve] \) is a sequence tending to \(0\) as \(m\) tends to infinity such that equation \eqref{eq:weak-star-limit} holds. 
\begin{lemma}\label{lem:limit-fixed-k}
For each \(k \in \bN_0\) and \(\vf \in \cC^1(\cM)\),
\[
   \lim_{m \to \infty} \int_{\cM} (\vf \circ \cF_{s_{m}}^k) \Deriv_{s_m} dm = \widetilde \nu (\vf \circ \cF_0^k).
\]
\end{lemma}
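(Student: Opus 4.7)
The plan is to first replace $\vf \circ \cF_{s_m}^k$ by $\vf \circ \cF_0^k$ inside the integral, and then approximate the latter (which is discontinuous on $\cS_0^{+,k}$) by a genuinely continuous function so that the weak-$\star$ convergence \eqref{eq:weak-star-limit} can be applied. Concretely, I would split
\[
\int_\cM (\vf \circ \cF_{s_m}^k)\, \Deriv_{s_m}\, dm = \int_\cM (\vf \circ \cF_{s_m}^k - \vf \circ \cF_0^k)\, \Deriv_{s_m}\, dm + \int_\cM (\vf \circ \cF_0^k)\, \Deriv_{s_m}\, dm,
\]
where the first term is $O(s_m)$ by Lemma \ref{lem:deriv-k} and hence vanishes as $m \to \infty$. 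This reduces the problem to proving $\int_\cM (\vf \circ \cF_0^k)\, \Deriv_{s_m}\, dm \to \widetilde \nu(\vf \circ \cF_0^k)$.

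For each $\delta > 0$, I would build a cutoff $\chi_\delta \in \cC^0(\cM,[0,1])$ equal to $1$ on $[\cS_0^{+,k}]_{\delta/2}$ and vanishing outside $[\cS_0^{+,k}]_\delta$, and set $\psi_\delta = (\vf \circ \cF_0^k)(1 - \chi_\delta)$. Since $1 - \chi_\delta$ vanishes in an open neighbourhood of $\cS_0^{+,k}$, the product $\psi_\delta$ is in $\cC^0(\cM)$ with $\|\psi_\delta\|_{\cC^0} \le \|\vf\|_{\cC^0}$, and coincides with $\vf \circ \cF_0^k$ off $[\cS_0^{+,k}]_\delta$. By \eqref{eq:weak-star-limit}, $\int_\cM \psi_\delta\,\Deriv_{s_m}\, dm \to \widetilde \nu(\psi_\delta)$ as $m \to \infty$, so the remaining task is to bound $\int_{[\cS_0^{+,k}]_\delta}|\Deriv_{s_m}|\, dm$ by $O(\delta)$ uniformly in $m$.

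This uniform bound is the technical heart of the argument. I would split $[\cS_0^{+,k}]_\delta$ according to $\cG_{s_m}$ and $\cB_{s_m}$. On $\cG_{s_m}$ the integrand is uniformly bounded by Lemma \ref{lem:single-perturbed-preimage-invariant-measure}, so the contribution is at most $C\cdot m([\cS_0^{+,k}]_\delta) = O(\delta)$ by Lemma \ref{lem:neigh}. On $\cB_{s_m}$ the integrand blows up like $C/s_m$ by Lemma \ref{lem:regularity-norms}, but $\cB_{s_m} \subseteq [\cS_0^-]_{Hs_m}$ by Lemma \ref{lem:meas-bad-set}, and the transversality estimate $m([\cS_0^{+,k}]_\delta \cap [\cS_0^-]_{Hs_m}) \le C_k \delta\cdot Hs_m$ from Lemma \ref{lem:trasversality} produces cancellation with the $1/s_m$ factor, again yielding $O(\delta)$. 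Combining these,
\[
\limsup_{m \to \infty}\left|\int_\cM (\vf \circ \cF_{s_m}^k)\,\Deriv_{s_m}\, dm - \widetilde \nu(\psi_\delta)\right| \le C_k \|\vf\|_{\cC^0}\,\delta.
\]

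It remains to let $\delta \to 0$ and verify $\widetilde \nu(\psi_\delta) \to \widetilde \nu(\vf \circ \cF_0^k)$. Writing $\widetilde \nu = \cR\, dm + \sing$ via Lemma \ref{lem:form-measure}, the absolutely continuous part converges by dominated convergence since $\psi_\delta \to \vf \circ \cF_0^k$ pointwise off the Lebesgue-null set $\cS_0^{+,k}$. The singular contribution is bounded by $2\|\vf\|_{\cC^0}|\sing|([\cS_0^{+,k}]_\delta)$, which, by continuity from above and Lemma \ref{lem:sing}, converges to $|\sing|(\cS_0^{+,k}\cap \cS_0^-)$, a finite set of points by transversality. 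The main obstacle I anticipate is ruling out atoms of $\sing$ at these intersection points; I expect this to follow from the uniform tube estimate $\int_{B(x,r)}|\Deriv_{s_m}^{\cB}|\, dm \le C(r+Hs_m)$, obtained by combining $|\Deriv_{s_m}^{\cB}|\le C/s_m$ with the fact that $\cB_{s_m}$ lies in the width-$Hs_m$ tube around the $\cC^1$ curves of $\cS_0^-$. Passing to the weak-$\star$ limit this gives $|\sing|(B(x,r))\le Cr$, so $\sing$ is atomless, completing the argument.
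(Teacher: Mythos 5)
Your proposal reproduces the paper's proof step for step through its main estimates: the reduction via Lemma \ref{lem:deriv-k}, the continuous approximation of \(\vf\circ\cF_0^k\) agreeing with it off a \(\delta\)-neighborhood of \(\cS_0^{+,k}\), and the uniform-in-\(m\) bound \(\int_{[\cS_0^{+,k}]_\delta}|\Deriv_{s_m}|\,dm\le C_k\|\vf\|_{\cC^0}\delta\) obtained by splitting into \(\cG_{s_m}\) and \(\cB_{s_m}\) and invoking Lemmata \ref{lem:single-perturbed-preimage-invariant-measure}, \ref{lem:regularity-norms}, \ref{lem:neigh}, \ref{lem:meas-bad-set} and \ref{lem:trasversality}; this is exactly what the paper does. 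You diverge only in the last step. The paper never decomposes \(\widetilde\nu\): it bounds \(|\widetilde\nu(\vf_\ve-\vf\circ\cF_0^k)|\le C_k\|\vf\|_{\cC^0}\ve\) directly through a Lusin-theorem argument for \(|\widetilde\nu|\), replacing the discontinuous difference by a continuous \(g_\ve\) with \(|g_\ve|\le|\vf_\ve-\vf\circ\cF_0^k|\) off a set of small \(|\widetilde\nu|\)-measure and pushing the same uniform integral bound through the weak-\(\star\) limit; no limit in \(\delta\) and no information on atoms is needed. You instead write \(\widetilde\nu=\cR\,dm+\sing\) via Lemma \ref{lem:form-measure} and must show that the singular part charges neither \(\overline{\cS_0^{+,k}}\) nor, in particular, the finitely many points of \(\overline{\cS_0^{+,k}}\cap\cS_0^-\) (note the limit of \(|\sing|([\cS_0^{+,k}]_\delta)\) is the mass of the closure, which adds at most finitely many endpoints).

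The one step I would not accept as written is the ``uniform tube estimate'' \(\int_{B(x,r)}|\Deriv_{s_m}^{\cB}|\,dm\le C(r+Hs_m)\). It requires \(m\bigl(B(x,r)\cap[\cS_0^-]_{Hs_m}\bigr)\le Cs_m(r+s_m)\), i.e.\ that the length of \(\cS_0^-\) inside a ball of radius \(r\) is \(O(r)\); the standing hypotheses only say that \(\cS_0^-\) is a finite union of \(\cC^1\) curves of finite length, which does not imply such a local length bound (a finite-length \(\cC^1\) curve may oscillate at small scales near a point), and the paper supplies only the global Lemma \ref{lem:neigh} and the transversality Lemma \ref{lem:trasversality}. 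Fortunately you do not need it: the estimate you already established, \(\sup_m\int_{[\cS_0^{+,k}]_\delta}|\Deriv_{s_m}|\,dm\le C_k\delta\), combined with lower semicontinuity of the total variation under weak-\(\star\) convergence on open sets, gives \(|\widetilde\nu|\bigl(\{x: d(x,\cS_0^{+,k})<\delta/2\}\bigr)\le C_k\delta\) for every \(\delta\), hence \(|\widetilde\nu|(\overline{\cS_0^{+,k}})=0\); then \(\widetilde\nu(\psi_\delta)\to\widetilde\nu(\vf\circ\cF_0^k)\) follows at once by dominated convergence for the finite measure \(|\widetilde\nu|\), with no atom analysis and in fact no decomposition of \(\widetilde\nu\) at all. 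With that replacement your argument is complete and constitutes a legitimate alternative to the paper's Lusin step.
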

\begin{proof}
Fix \(\ve \in \bR^+\). The function \(\vf \circ \cF_0^k\) is discontinuous at most on \(\cS_0^{+,k}\). Hence, there exists \(\vf_{\ve} \in \cC^0(\cM)\) such that \(\vf_{\ve} = \vf \circ \cF_0^k\) on \(\cM \setminus [\cS_0^{+,k}]_{\ve}\) and \(\|\vf_{\ve}\|_{\cC^0} \le \|\vf \circ \cF_0^k\|_{\cC^0} \le \|\vf\|_{\cC^0}\). 
We need some considerations before the main estimate. By definition of \(\vf_{\ve}\) and partitioning \([\cS_0^{+,k}]_{\ve}\), for any \(m \in \bN_0\),
\begin{equation}\label{eq:support-lusin}
\begin{split}
& \int_{\cM} |(\vf \circ \cF_{0}^k - \vf_{\ve})\Deriv_{s_m}| dm \le \int_{[\cS_0^{+,k}]_{\ve} \setminus [\cS_0^{-}]_{Hs_m}} |(\vf \circ \cF_{0}^k - \vf_{\ve})\Deriv_{s_m} |dm \\
+& \int_{[\cS_0^{+,k}]_{\ve} \cap [\cS_0^{-}]_{Hs_m}} |(\vf \circ \cF_{0}^k - \vf_{\ve})\Deriv_{s_m}| dm \le C\|\vf\|_{\cC^0} m([\cS_0^{+,k}]_{\ve}) \\
&+ C\|\vf\|_{\cC^0} \frac{ m \bigl([\cS_0^{+,k}]_{\ve} \cap [\cS_0^{-}]_{Hs_m}\bigr)}{s_m} \le C_k \|\vf\|_{\cC^0}\ve, 
\end{split}
\end{equation}
where we have also used Lemmata \ref{lem:single-perturbed-preimage-invariant-measure} (and \(\cG_t \supseteq \cM \setminus [\cS_0^-]_{H t}\)), \ref{lem:regularity-norms} in the second inequality and Lemmata \ref{lem:neigh}, \ref{lem:trasversality} in the third inequality. Recall the notation \(|\widetilde \nu|\) for the variation of \(\widetilde \nu\) and that, as it can be deduced by Proposition \ref{prop:form-measure}, \(|\widetilde \nu|\) is a finite positive (Radon) measure. By Lusin theorem (see e.g. \cite{Folland99} Theorem 7.10), there exists a set \(L_{\ve}\) and a continuous function \(g_{\ve}: \cM \to \bR\) such that \(\vf_{\ve}-\vf \circ \cF_0^k = g_{\ve}\) on \(L_{\ve}\) and \(|\widetilde \nu| (\cM \setminus L_{\ve}) \le \ve\). It is also possible to require that \(|g_{\ve}| \le | \vf_{\ve} - \vf \circ \cF_0^k|\). By adding and subtracting and using the definition \eqref{eq:weak-star-limit} of \(\widetilde \nu\),
\[
\begin{split}
 \widetilde \nu ( \vf_{\ve} - \vf \circ \cF_0^k) &= \widetilde \nu (g_{\ve}) + \widetilde \nu \bigl(( \vf_{\ve} - \vf \circ \cF_0^k - g_{\ve})\mathbbm 1_{\cM \setminus L_{\ve}}\bigr) \\
&= \lim_{m \to \infty}\int_{\cM} g_{\ve}\Deriv_{s_m}dm + \widetilde \nu \bigl(( \vf_{\ve} - \vf \circ \cF_0^k - g_{\ve})\mathbbm 1_{\cM \setminus L_{\ve}}\bigr).
\end{split}
\]
Therefore, using the properties of \(g_{\ve}\) and \(L_{\ve}\) and \eqref{eq:support-lusin},
\begin{equation}\label{eq:lusin}
\begin{split}
|\widetilde \nu ( \vf_{\ve} \!-\! \vf \circ \cF_0^k)| &\le \!\!\sup_{m \in \bN_0}\!\biggl|\int_{\cM}\!\!\!g_{\ve}\Deriv_{s_m}dm\biggr| + |\widetilde \nu|(\cM \!\setminus \!L_{\ve})\sup_{\cM}|  \vf_{\ve} \! - \! \vf \circ \cF_0^k \! - \! g_{\ve}|\\
&\le \sup_{m \in \bN_0}\int_{\cM} \!\!| \vf_{\ve} - \vf \circ \cF_{0}^k| |\Deriv_{s_m}| dm + 4\ve\|\vf\|_{\cC^0} \le C_k\|\vf\|_{\cC^0}\ve.
\end{split}
\end{equation}
Moreover, by \eqref{eq:weak-star-limit} again, there exists \(m_{*} \in \bN\) such that, for all \(m \ge m_{*}\), 
\begin{equation}\label{eq:weaak-star-prelusin}
\left|\int_{\cM} \vf_{\ve} \Deriv_{s_m} dm - \widetilde \nu (\vf_{\ve})\right| \le \ve.
\end{equation}
Using \eqref{eq:support-lusin}, \eqref{eq:weaak-star-prelusin} and \eqref{eq:lusin}, for all \(m \ge m_{*}\),
\[
\begin{split}
&\Biggl|\int_{\cM} (\vf \circ \cF_{0}^k) \Deriv_{s_m} dm -  \widetilde \nu (\vf \circ \cF_0^k)\Biggr| \le \left|\int_{\cM} (\vf \circ \cF_{0}^k - \vf_{\ve}) \Deriv_{s_m} dm\right|\\
&+ \left|\int_{\cM} \vf_{\ve} \Deriv_{s_m} dm - \widetilde \nu (\vf_{\ve})\right| + \left|\widetilde \nu (\vf_{\ve} - \vf\circ \cF_0^k)\right| \le C_k \|\vf\|_{\cC^0}\ve + \ve.
\end{split}
\]
Therefore, by Lemma \ref{lem:deriv-k} and the estimate above, there exists \(m_{*} \in \bN\) such that, for all \(m \ge m_{*}\), we have
\[
\begin{split}
\Biggl| \int_{\cM} (\vf \circ \cF_{s_{m}}^k) \Deriv_{s_m} dm -  \widetilde \nu (\vf \circ \cF_0^k)\Biggr|&
\le \Biggl| \int_{\cM} (\vf \circ \cF_{s_{m}}^k) \Deriv_{s_m} dm\\- \int_{\cM} (\vf \circ \cF_{0}^k) \Deriv_{s_m} dm\Biggl|
& + \Biggl|\int_{\cM} (\vf \circ \cF_{0}^k) \Deriv_{s_m} dm -  \widetilde \nu (\vf \circ \cF_0^k)\Biggr|\\
& \le  C_k \|\vf\|_{\cC^1} \left(s_{m} + \ve\right) + \ve,
\end{split}
\]
for some \(C_k \in \bR^+\) depending only on \(k\). This concludes the proof of the Lemma.
\end{proof}

We conclude the section proving Proposition \ref{prop:linear-resp-accumulation}. Let \(\{t_n\}\) be the sequence in the definition of \(\mathscr A\) and \(\mathscr B\).

\begin{proof}[\textbf{Proof Proposition \ref{prop:linear-resp-accumulation}}]
Let \(\widetilde \nu \in \partial \mathscr A\). Then, for some subsequence \(\{s_j\} \subset \{t_n\}\), \(j \in \bN\), \(s_j \to 0\) as \(j \to \infty\), one has that \(\nu_{s_j}/s_j \to \widetilde \nu\) weakly. By Lemma \ref{lem:telescopic}, one has that, for each \(j \in \bN\), \(\vf \in \cC^1(\cM)\),
\[
    \frac{\mu_{s_j}(\vf) - \mu_0(\vf) }{s_j} = \sum_{k \in \bN_0} \int_{\cM} ( \vf \circ \cF_{s_j}^k) \Deriv_{s_j} dm.
\]
By Lemma \ref{lem:decay-tot} (or assumption \ref{ass:decay-n}), 
\[
\biggl|\int_{\cM} ( \vf \circ \cF_{s_j}^k) \Deriv_{s_j} dm \biggr| \le C\Theta_k \|\vf\|_{\cC^1},
\]
uniformly in \(j \in \bN\), and by Lemma \ref{lem:limit-fixed-k}, for each \(k \in \bN_0\), \(\lim_{j \to \infty}\int_{\cM}( \vf \circ \cF_{s_j}^k ) \Deriv_{s_j} dm\) exists. Hence, we can exchange limit and sum and by Lemma \ref{lem:limit-fixed-k} again,
\[
\begin{split}
\lim_{j\to \infty}\sum_{k \in \bN_0} \int_{\cM}\!\!( \vf \circ \cF_{s_j}^k) \Deriv_{s_j} dm =\!\! \sum_{k \in \bN_0} \lim_{j\to \infty}\int_{\cM}\!\!( \vf \circ \cF_{s_j}^k) \Deriv_{s_j} dm = \!\!\sum_{k \in \bN_0} \widetilde \nu(\vf \circ \cF_0^k).
\end{split}
\]
By the last two formulas, there exists an element of \(\partial \mathscr B\) of the form \(I(\widetilde \nu)\) as in \eqref{eq:linear-resp-formulas}. To show that \(I\) is surjective we argue as follows. For any \(\widetilde \mu \in \partial \mathscr B\), there exists \(\{s_j\} \subset \{t_n\}\), \(s_j \to 0\) as \(j \to \infty\), such that \( (\mu_{s_j}(\vf) - \mu_0(\vf))/s_j \to \widetilde \mu\). By compactness of \(\mathscr A\), there exists a subsequence \(s_{j_q}\) and \(\widetilde \nu \in \partial \mathscr A\) such that \(\nu_{s_{j_q}}/s_{j_q} \to \widetilde \nu\) weakly, for \(q \to \infty\). Since we can exchange the limit and the sum as above, for any \(\vf \in \cC^1(\cM)\),
\[
\begin{split}
    \widetilde \mu(\vf) &=\lim_{q \to \infty} \frac{\mu_{s_{j_q}}(\vf) - \mu_0(\vf) }{s_{j_q}} = \lim_{q \to \infty}\sum_{k\in \bN_0} \int_{\cM} ( \vf \circ \cF_{s_{j_q}}^k) \Deriv_{s_{j_q}} dm\\
&=\sum_{k\in \bN_0}\lim_{q \to \infty} \int_{\cM} ( \vf \circ \cF_{s_{j_q}}^k ) \Deriv_{s_{j_q}} dm = \sum_{k \in \bN_0}\widetilde \nu \left( \vf \circ \cF_{0}^k\right) = I(\widetilde \nu)(\vf).
\end{split}
\] 
This concludes the proof of the Proposition.
\end{proof}

\section{A perturbation of the Arnold cat map}
\label{sec:examples}

In this section we consider a concrete example in which the assumptions of Theorems \ref{thm:Lip} and \ref{thm:linear-resp} are satisfied, and we compute the linear response formula for this case. We consider the following perturbation \(\catt: \cM \setminus \cS_t^{+} \to \cM\) of the famous Arnold cat map: 
\begin{equation}\label{eq:arnold-cat-map}
    \begin{pmatrix}x\\y 
\end{pmatrix} \to  \begin{pmatrix} \{x + y\}\\
\{x+(2-t)y\}\end{pmatrix},
\end{equation}
where \(t \in [0, 1/8]\) and \(\{a\}\) is the fractional part of \(a\). The singularity \(\cS_t^{+}\) will be introduced shortly. These maps are piecewise endomorphisms as defined in Section \ref{sec:setting}. To show the agreement with the abstract setting, we define the following open polygons:
\[
\begin{split}
  &\cM_{1,t}^+ = \left\{ x>0,\text{ } y>0,\text{ } y < -\frac{1}{2-t}x + \frac{1}{2-t}\right\};\\
  &\cM_{2,t}^+ = \left\{x > 0, \text{ }y > -\frac{1}{2-t}x + \frac{1}{2-t}\text{ }, y < -x+1 \right\};\\
  & \cM_{3,t}^+ = \left\{x < 1, \text{ }y<1, \text{ }y > -x+1, \text{ }y< -\frac{1}{2-t}x + \frac{2}{2-t}\right\} \\
  & \cM_{4,t}^+ = \left\{x < 1, \text{ }y<1, \text{ } y> -\frac{1}{2-t}x + \frac{2}{2-t}\right\}.
\end{split}  
\]
These polygons are plotted in \ref{Fig:1}. We have \(\cS_t^{+} = \cS_t^{+,1} \cup \partial \cM\) and
\begin{equation}
  \label{eq:SingCat+}
  \begin{split}
  \cS^{+,1}_t = \biggl\{(x, y) \in \cM& \text{ } : \text{ } y = -\frac{1}{2-t}x + \frac{1}{2-t} \\
&\text{ or }y = -\frac{1}{2-t}x + \frac{2}{2-t} \text{ or }y = -x +1 \biggr\}.\\
  &.
  \end{split}
\end{equation}
For \(j \in \{1,2,3,4\}\), let \(\catjt: \cM_{j,t}^{+} \to \cM_{j,t}^- = \catjt \left(\cM_{j,t}^+\right)\),
\begin{equation}\label{eq:cat-def}
\begin{split}
  &\catjt (x,y) = A_t\begin{pmatrix}
    x\\
    y
  \end{pmatrix} - v_i,\\
  & A_t = \begin{pmatrix}1&1\\ 1& 2-t\end{pmatrix}, \quad v_1 = \begin{pmatrix}
    0\\
    0
  \end{pmatrix}, \quad v_2 = \begin{pmatrix}
    0\\
    1
  \end{pmatrix}, \quad v_3 = \begin{pmatrix}
    1\\
    1
  \end{pmatrix}, \quad v_4 = \begin{pmatrix}
    1\\
    2
  \end{pmatrix}.
\end{split}  
\end{equation}
 A computation shows that the map \(\catt\) in \eqref{eq:arnold-cat-map} is given by the maps \(F_{A,j,t}\) as described in Section \ref{sec:setting}. Using \eqref{eq:cat-def} and \eqref{eq:SingCat+}, one has that
\begin{equation}
  \label{eq:singCat-}
  \begin{split}
  \cS_t^{-} \setminus \partial \cM = \bigl\{(x,y) \in \cM \text{ : } y = x &\text{ or }y = x - t  \text{ or }y = x + (1-t) \\
&\text{ or }y = (2-t)x \text{ or }y = (2-t)x - 1\bigr\}.
  \end{split}
\end{equation}

We denote by \(\cL^A_t\) the transfer operator associated with \eqref{eq:cat-def}. For \(r_0 > 0\) that will be specified shortly, we define
\[
\begin{split}
&\mathscr C^u = \left\{(\xi,\eta) \in \bR^2: \xi  \eta \ge 0\right\}, \\
&\mathscr C^s = \left\{(\xi, \eta) \in \bR^2: -\frac{\pi}{2} + r_0 \le \arctan\left(\frac{\eta}{\xi}\right) \le  - r_0  \right\}.
\end{split}
\]
The above cones are transversal in the sense that any two non-zero vectors of the respective cones form an angle uniformly bounded away from zero. We choose \(r_0\) small enough so that the direction of the segments of \(\cS_t^{+,1}\) lie inside \(\mathscr C^s\) for each \(t \in [0,1/8]\). We collect here a couple of basic observations that will be important in the next section. First notice that cone contraction is uniform in \(t\),
\begin{equation}\label{eq:cone-invariance}
    A_t \mathscr C^u \subseteq \Int \mathscr C^u, \quad A_t^{-1}\mathscr C^s \subseteq \Int \mathscr C^s, \text{ }\forall t \in \left[0,\frac 1 8\right],
\end{equation}
and there exist \(\lambda, \overline \lambda \in (1, \infty)\) such that
\begin{equation}\label{eq:minimal-contraction}
\lambda = \inf_{t \in \left[0, \frac 1 8\right]}\inf_{v \in \mathscr C^u} \frac{|A_t v |}{|v|}, \quad   \overline \lambda = \sup_{t \in \left[0, \frac 1 8\right]}\sup_{v \in \mathscr C^u} \frac{|A_t v |}{|v|}.
\end{equation}
Since \(\cS_{t}^{+,1}\) is aligned with \(\mathscr C^s\), by the second of \eqref{eq:cone-invariance}, one has that \(\cS_{t}^{+,k}\) is aligned with \(\mathscr C^s\). This last observation together with the fact that \(\cS_0^{-}\) is aligned with \(\mathscr C^u\) implies that \(\cS_{0}^{+,k}\) are transversal to \(\cS_0^{-}\), as requested by the general setting in Section \ref{sec:setting}. We also denote by \(E^u_t\) and \(E^s_t\) the expanding and contracting eigenspaces of the matrices \(A_t\). Notice that \(E^u_t \in \Int \mathscr C^u\) and \(E^s_t \in \Int \mathscr C^s\). Denote by \(\mu^{s}_t\), \(\mu^{u}_t\) the stable and unstable eigenvalues of \(A_t\), respectively. For each \(t \in [0, 1/8]\),
\begin{equation}\label{eq:contraction-uniform-cat}
\mu^s_t = \frac{\det A_t}{\mu^u_t} = \frac{1-t}{\mu^u_t} \le \frac{1}{\lambda}.
\end{equation}\\
We now introduce standard pairs adapted to our problem. Since we are looking at linear maps, we will consider only densities supported on segments. This choice is intended to present only the theory needed to arrive at the desired conclusion. Nonetheless, to the author's knowledge, the result presented in Proposition \ref{prop:fundamental-example} does not follow from any known result in the literature. The reason is that, in coupling results, it is generally assumed that the map has one mixing SRB measure to show fast decay of measures supported on standard pairs (and from this exponential mixing). In our example, we don't have this information \textit{a priori}. However, we will show that, for \(t\) small enough, the maps \(\catt\) are mixing, using that the unperturbed map \(\cato\) has very good statistical properties (see Lemma \ref{lem:first-coupling}).\footnote{The idea of obtaining mixing informations on the perturbed system from a good knowledge of the statistical properties of the unperturbed one is not new in the world of standard pairs. A similar approach has been carried out for billiards in \cite{CD09} and more recently in \cite{DL23}.} Notice the first part of Proposition \ref{prop:fundamental-example} could be obtained by considering perturbations of the transfer operator on suitable Banach spaces. This approach has been carried up in \cite{DL08}, but we cannot apply directly the results there since in \cite{DL08} is assumed that the maps \(\cF_t\) are all piecewise diffeomorphisms (while in our case the maps \(\catt\) are not surjective).\footnote{Banach spaces for cone hyperbolic maps that are well suited to study the family \(\catt\) have been introduced in \cite{BG09, BG10}. However, to the author's knowledge, there are still no results to handle perturbations in these spaces.}

We call standard segment (or standard curve) \(W\) any segment in \(\cM\) which is aligned with \(\mathscr C^u\).  For \(\delta \in \bR^+\), we let \(\cW_{\delta}\) be the set of standard segments of length at least \(\delta\) and we set \(\cW = \bigcup_{\delta \in \bR^+}\cW_{\delta}\). Notice also that since each \(W \in \cW\) is inside \(\cM\), one has that \(|W| \le \sqrt{2}\).
\begin{proposition}\label{prop:fundamental-example}
There exists \(\ve_0 \in (0, 1/8]\) such that, for each \(t \in (0, \ve_0]\), there exists a measure \(\mu_t\) such that
\[
     \mu_{t}(\vf) = \lim_{n \to \infty}m\left( \vf \circ \catt^n \right), \quad \forall \vf \in \cC^0(\cM).
\]
Moreover, there exist \(C \in \bR^+\) and \(\gamma \in (0,1)\) such that, for all \(t \in [0,\ve_0]\), for all \(W \in \cW\), \(\rho \in \cC^1(W, \bR)\), and all \(n \in \bN_0\),
\[
       \left|\int_{W} \left(\vf \circ \catt^n\right) \rho - \mu_t(\vf)\int_{W} \rho \right| \le C \gamma^n \|\rho\|_{\cC^1}\|\vf\|_{\cC^1}\quad \forall \vf \in \cC^1(\cM).
\]
\end{proposition}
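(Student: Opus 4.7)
The plan is to prove both assertions simultaneously through a coupling argument that does not presuppose an invariant measure for $\catt$. The pivotal observation is that for $t=0$ the map $\cato$ is the classical Arnold cat map, which preserves Lebesgue and mixes exponentially on standard pairs; the strategy is to transfer this to small $t>0$ by exploiting the $\cC^2$-closeness of $F_{A,j,t}$ to $F_{A,j,0}$ and the $O(t)$-closeness of the singularity sets guaranteed by \ref{ass:sing-near}, and to read off the existence of $\mu_t$ as a byproduct of the coupling.

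The first ingredient is a growth lemma for standard pairs under $\catt$. Since $\catt$ acts by the linear map $A_t$ on each $\cM_{j,t}^+$, distortion is trivial and every iterate $\catt^n(W)$ decomposes into a finite disjoint union of standard segments; the number of cuts at each step is controlled because $\cS_t^{+,1}$ is uniformly transversal to $\mathscr C^u$. Combining this complexity bound with the uniform expansion $\lambda>1$ from \eqref{eq:minimal-contraction} in a one-step Lasota--Yorke estimate yields constants $\delta_0>0$ and $n_0\in\bN$ such that, at each $n_0$-th iterate, the fraction of $\int_W\rho$ carried by components shorter than $\delta_0$ is at most some $\theta<1$; iterating shows that the mass on short pieces decays exponentially in $n$.

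The core step is a one-shot coupling. The idea is to fix a small rectangle $R\subset \cM$ with sides parallel to $E^u_0$ and $E^s_0$ and far from $\cS_0^-$, and to show that for every $(W,\rho)$ with $|W|\ge \delta_0$ and $\rho\ge 0$ there exists $n_*\in\bN$, uniform in $t\in[0,\ve_0]$, such that the pushforward of $\rho\,dm|_W$ by $\catt^{n_*}$ dominates $\theta_0 (\int_W\rho)\, m|_R$ for some $\theta_0>0$ also independent of $t$. For $t=0$ this is a direct computation using that $\cato^{n_*}(W)$ winds densely and approximately uniformly through $\cM$. For small $t$ it follows from Lemma \ref{lem:n-map-sing}, which ensures that outside an $O(t)$-neighborhood of $\cS_0^{+,n_*}$ the perturbed dynamics agrees with the unperturbed one up to $O(t)$ in position, together with Lemma \ref{lem:meas-bad-set} and Lemma \ref{lem:trasversality} to control the mass lost to the image singularities; choosing $\ve_0$ small makes the perturbative loss negligible against $\theta_0$. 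Once this holds, standard coupling bookkeeping pairs an amount $\theta_0$ of the mass on $R$ between any two initial pairs $(W_1,\rho_1)$, $(W_2,\rho_2)$, leaving a residual which, after one more step of the growth lemma, is again a family of standard pairs of smaller total mass; iteration then yields a geometric contraction in total variation of the difference of the two pushforwards, at a $t$-independent rate $\gamma\in(0,1)$.

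From this uniform coupling the proposition follows. Taking $(W_1,\rho_1)=(W,\rho/\!\int_W\!\rho)$ and letting $(W_2,\rho_2)$ vary over a measurable partition of Lebesgue into standard pairs (in the spirit of \eqref{eq:disintegration}) shows that $m(\vf\circ\catt^n)$ is Cauchy, defines $\mu_t$ via the limit, and produces the stated exponential decay; the factor $\|\rho\|_{\cC^1}$ enters through the bound on the initial total variation of $\rho\,dm|_W$ and through a short argument comparing $\int_W\vf\circ\catt^n\rho$ to the pushforward of $(\rho/\!\int\!\rho)\,dm|_W$. The main obstacle I anticipate is the one-shot coupling itself: the perturbation changes the combinatorics of the pieces $\catt(\cM_{j,t}^+)$ and introduces holes where $\catt$ fails to be surjective, so one must argue carefully, via Lemmata \ref{lem:n-map-sing}, \ref{lem:meas-bad-set} and \ref{lem:trasversality}, that a uniformly positive amount of mass survives inside the coupling rectangle $R$ for all $t\in[0,\ve_0]$, and that the same rectangle can be used simultaneously for all perturbation parameters.
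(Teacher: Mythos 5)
Your overall architecture (growth lemma, coupling uniform in $t$ obtained by comparing $\catt$ with $\cato$ via Lemma \ref{lem:n-map-sing}, then Cauchy-in-$n$ to define $\mu_t$ and an approximation step for $\cC^1$ densities) is aligned with the paper, but the central step of your plan — the ``one-shot coupling'' — is not correct as stated, and the failure is structural rather than technical. You claim that for $|W|\ge\delta_0$ there is $n_*$ with
\[
\left(\catt^{n_*}\right)_{*}\bigl(\rho\, dm|_{W}\bigr) \;\ge\; \theta_0\Bigl(\int_W\rho\Bigr)\, m|_{R}
\]
for a fixed two-dimensional rectangle $R$. This is impossible: $\catt$ is piecewise affine and injective on each $\cM_{j,t}^{+}$, so the image of a standard segment under $\catt^{n_*}$ is a finite union of segments, and the pushforward of $\rho\,dm|_W$ is therefore supported on an $m$-null set; it cannot dominate any positive multiple of two-dimensional Lebesgue measure on $R$. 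A Doeblin-type minorization onto a reference open set works for expanding maps, where absolutely continuous measures stay absolutely continuous, but not for a hyperbolic map acting on measures carried by unstable curves. For the same reason your subsequent claim of ``geometric contraction in total variation of the difference of the two pushforwards'' cannot hold: pushforwards of two distinct standard pairs are in general mutually singular for all $n$, so their total-variation distance stays equal to $2$; any contraction must be measured in a weak (dual-$\cC^1$) sense.

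This is exactly the difficulty the paper's Section \ref{sec:coupling} is built to handle. There, one never couples mass exactly: two standard families are only $(p,\eta)$-coupled, meaning a fraction $p$ of the mass sits on pairs of unstable segments at distance $\le\eta$ along $E^s_t$ (Definition \ref{def:(p,eta)-coupling}); the first encounter is produced via equidistribution for the \emph{unperturbed} cat map plus Lemma \ref{lem:n-map-sing} (Lemma \ref{lem:first-coupling}), the coupled distance then contracts under iteration, but at every block of $N_0$ iterates a proportion of already-coupled mass is lost because the stable segments joining coupled points cross $\cS_t^{+,N_0}$ (Lemma \ref{lem:push-sf}), and the lost mass lands on badly fragmented families whose recovery time is tracked through regularity classes and the weighted norm $\|\cdot\|_{\star}$ (Lemmas \ref{lem:infinite-matrix-scheme} and \ref{lem:contraction-strange-norm}). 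The resulting estimate is $p\,\eta\,\|\vf'\|_{\cC^0}+2(1-p)\|\vf\|_{\cC^0}$ as in \ref{item:dist}, i.e.\ decay against $\cC^1$ observables, which is what the Proposition asserts. Your proposal omits both the fact that coupled mass is only ever $\eta$-close (so a $\|\vf\|_{\cC^1}$-type bound, not TV, is the best possible) and the decoupling caused by the discontinuities at later times, which is the main new bookkeeping the discontinuous setting requires; without replacing the false minorization by a coupling of this kind, the argument does not go through.
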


The proof of Proposition \ref{prop:fundamental-example} is in Section \ref{sec:coupling}. We are now ready to show linear response for the perturbed cat maps. Let \(\mu_t\) be the measure introduced in Proposition \ref{prop:fundamental-example} and \((x,y)\) the usual coordinates on \(\bR^2\).

\begin{theorem}
  \label{thm:cat-lin-resp}
For any \(\vf \in \cC^1(\cM)\),
  \[
    \lim_{t \to 0}\frac{\mu_{t}(\vf) - m(\vf)}{t} = \sum_{k \in \bN_0}\left[m(\vf) - \int_0^1 (\vf \circ \cato^k)(s,s)ds \right].
  \]
\end{theorem}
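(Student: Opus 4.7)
The plan is to verify that the family $\{(F_{A,j,t}, \cM_{j,t}^+)\}$ is admissible in the sense of Definition \ref{def:admissible-family}, identify the weak limit $\widetilde\nu$ of $\nu_t/t$ explicitly, and then invoke Theorem \ref{thm:linear-resp}. Conditions \ref{ass:sing-near} and \ref{ass:smooth-0} are immediate from the affine structure $F_{A,j,t}(x) = A_t x - v_j$: only one entry of $A_t$ depends on $t$ so $\|F_{A,j,t} - F_{A,j,0}\|_{\cC^2} \le t$; the defining lines of $\cM_{j,t}^+$ move with speed $O(t)$ in $t$, placing $\cM_{j,0}^+ \triangle \cM_{j,t}^+$ in an $O(t)$-neighbourhood of $\cS_0^+$; and $\cato$ preserves Lebesgue so $h_0 \equiv 1 \in \cC^2(\cM,\bR)$. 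Condition \ref{ass:srb} is the first statement of Proposition \ref{prop:fundamental-example} (the case $t=0$ being trivial). For \ref{ass:foliation} I would foliate $\cM \setminus (\cS_0^- \cup \cS_t^-)$ by segments aligned with a fixed direction in $\Int\mathscr C^u$ whose slope is different from $1$, $2$, $2-t$, so that each leaf meets the singularity set transversally and all but finitely many of its pieces have length $\Omega(1)$; the short pieces (of length $O(t)$) appear exactly in three strips where pairs of nearly parallel singularity lines pinch together as $t\to 0$, namely $y=x$ with $y=x-t$, $y=2x$ with $y=(2-t)x$, and $y=2x-1$ with $y=(2-t)x-1$. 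Placing the long pieces in $\mathscr F_t^1$ gives part (a) with $\Theta_n = C\gamma^n$ directly from Proposition \ref{prop:fundamental-example}; placing the short ones in $\mathscr F_t^2$, part (b) follows by iterating each short leaf forward for $O(\log(1/t))$ steps until it reaches a macroscopic length and then applying Proposition \ref{prop:fundamental-example}, the total Lebesgue mass $O(t)$ of the short region absorbing the factor $t^{-1}$ in the integrand.

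To identify $\widetilde\nu$, I would compute $\nu_t(\vf) = \int_\cM \vf\,(\cL^A_t 1 - 1)\,dm$ directly, using that $\cL^A_t 1(y) = N_t(y)/(1-t)$ with $N_t(y) = \#\{j : y \in \cM_{j,t}^-\}$, and that the four polygons $\cM_{j,t}^-$ (obtained by applying $A_t$ to the vertices of $\cM_{j,t}^+$ and subtracting $v_j$) are pairwise disjoint, so $N_t \in \{0,1\}$. This produces
\begin{equation*}
\frac{\nu_t(\vf)}{t} = \frac{m(\vf)}{1-t} - \frac{1}{t(1-t)}\int_{\{N_t=0\}} \vf\,dm.
\end{equation*}
A short geometric inspection of $\cM \setminus \bigcup_j \cM_{j,t}^-$ identifies $\{N_t=0\}$ with the strip $\{(x,y) : 0 < x - y < t,\ 0 < y,\ x < 1\}$ of area $t + O(t^2)$ adjacent to the diagonal $\{y=x\}$, plus a corner triangle of area $O(t^2)$ that vanishes after dividing by $t$. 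Continuity of $\vf$ then yields $t^{-1}\int_{\{N_t=0\}}\vf\,dm \to \int_0^1 \vf(s,s)\,ds$, so that
\[
\widetilde\nu(\vf) = m(\vf) - \int_0^1 \vf(s,s)\,ds \quad \text{for every } \vf \in \cC^0(\cM).
\]

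Theorem \ref{thm:linear-resp} then applies and, using the $\cato$-invariance of $m$ to substitute $m(\vf \circ \cato^k) = m(\vf)$ in every term, gives
\[
\lim_{t \to 0}\frac{\mu_t(\vf) - m(\vf)}{t} = \sum_{k \in \bN_0}\widetilde\nu\bigl(\vf \circ \cato^k\bigr) = \sum_{k \in \bN_0}\Bigl[m(\vf) - \int_0^1 (\vf \circ \cato^k)(s,s)\,ds\Bigr].
\]
The main obstacle I expect to face is part (b) of \ref{ass:foliation}: the uniform bound of Proposition \ref{prop:fundamental-example} degrades on a short standard segment (the constant is forced to carry a factor $1/|W|$ through $\|\rho\|_{\cC^1}$), so verifying the $t^{-1}$-rescaled integral bound requires combining that exponential decay with an iteration argument that tracks the length of a short leaf until it becomes macroscopic, and checking that the total mass $O(t)$ of the short region exactly compensates the $t^{-1}$ prefactor while keeping the waiting time of order $\log(1/t)$ absorbed by the exponential factor $\gamma^n$.
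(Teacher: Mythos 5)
Your overall architecture is the same as the paper's (check admissibility, identify \(\widetilde \nu\), invoke Theorem \ref{thm:linear-resp}), and your treatment of \ref{ass:sing-near}--\ref{ass:srb}, your computation of \(\cL^A_t 1\) and of the limit \(\widetilde\nu(\vf) = m(\vf) - \int_0^1\vf(s,s)ds\), and the final use of \(\cato\)-invariance of \(m\) all match Lemma \ref{lem:formula-lin-resp} and the paper's proof. The genuine gap is in your verification of \ref{ass:foliation}, and it is not the technical nuisance you flag at the end but a structural failure of the foliation you chose. If you foliate by segments of one \emph{fixed} unstable direction of slope \(\sigma\notin\{1,2,2-t\}\), then every leaf crossing the three thin regions (between \(y=x\) and \(y=x-t\), and the two wedges) is cut there into a piece of length \(O(t)\), and these short pieces carry total factor mass of order \(t\), not \(t^2\). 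Putting them into \(\mathscr F_t^2\) and testing \ref{ass:foliation}-\ref{ass:good-initial-cond2} with \(g\equiv 1\): since the absolute value sits \emph{inside} the integral over the factor space, each short leaf contributes \(\bigl|\,|W|^{-1}\int_W \vf\circ\catt^n - \mu_t(\vf)\bigr|\), which for fixed \(n\) and \(t\to 0\) converges to \(|\vf(\cF_0^n(s,s)) - m(\vf)|\) at the corresponding diagonal point. Hence the left-hand side of (b) converges, as \(t\to0\), to a positive multiple of \(\int_0^1|\vf\circ\cF_0^n(s,s)-m(\vf)|\,ds\), which by equidistribution of the diagonal tends to \(m(|\vf-m(\vf)|)>0\) as \(n\to\infty\). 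So no \(t\)-independent summable \(\Theta_n\) exists for that choice: the obstruction is in the actual value of the quantity, and no iteration/waiting-time argument of length \(O(\log(1/t))\) can repair it, because such an argument would need cancellations between different short leaves that the modulus inside the factor integral forbids. (Placing the short pieces in \(\mathscr F_t^1\) fails for the same reason, since (a) must hold uniformly over leaves.)

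The paper avoids this by adapting the foliation to each connected component of \(\cM\setminus(\cS_0^-\cup\cS_t^-)\) instead of fixing one direction: the thin diagonal strip is foliated by segments parallel to \(y=x\) (length of order one), and the two wedges by radial fans through their pinch points (Appendix \ref{sec:app}), so all leaves there have length bounded below uniformly in \(t\), satisfy \(\|p_W\|_{\cC^1}\le C\), and go into \(\mathscr F_t^1\). The only sets that shrink in \emph{every} unstable direction are the two triangles \(R_t\) and \(B_t\) of diameter \(O(t)\) and measure \(O(t^2)\); their leaves form \(\mathscr F_t^2\), and there no growth or waiting-time argument is needed: Proposition \ref{prop:fundamental-example} is applied directly with \(\rho = p_{R_s^t}g\), and the factor \(1/|R_s^t|\) from the conditional density cancels exactly against \(|R_s^t|\) in the disintegration, leaving \(\int_0^t C\gamma^n ds = C\gamma^n t\), which the prefactor \(t^{-1}\) turns into \(\Theta_n = C\gamma^n\) (see \eqref{eq:decay-red}). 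So the fix for your proof is not a sharper estimate on short transversal leaves but a different, region-dependent choice of \(\mathscr F_t\) that keeps the long direction of each thin component inside the leaves rather than in the factor space.
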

\begin{proof}
We show that \(\{\catt\}_{t \in [0,\ve_0]}\) is an admissible family for \(\ve_0\) given by Proposition \ref{prop:fundamental-example}. \ref{ass:unif-extension0}, \ref{ass:unif-extension} and \ref{ass:sing-near} are a direct consequence of the definition. \ref{ass:smooth-0} is verified since \(\cato\) projects to a linear automorphism of the torus and so preserves the Lebesgue measure. \ref{ass:srb} coincides with the first assertion in Proposition \ref{prop:fundamental-example}. We now discuss \ref{ass:foliation}. In \ref{Fig:2}, we show the set \(\cS_{0}^{-} \cup \cS_{t}^{-}\). We consider partitions \(\mathscr P_t\) of \(\cM \setminus (\cS_0^{-}\cup \cS_t^{-})\) in standard segments. Since we expect that the conditional probability measures on the elements \(W\) of \(\mathscr P_t\) are of the order \(p_W \sim 1/|W|\) (they are probability measures), we would like to have partitions \(\mathscr P_t\) of \(\cM \setminus (\cS_{0}^{-} \cup \cS_{t}^{-})\) made by standard curves of length of order one (uniformly in \(t\)). However, this is not possible because of the following two sets, for \((x,y) \in \cM\),
\[
   R_t = \left\{ x>0, \text{ }x+1-t<y < 1\right\}, \quad B_t = \left\{ x<1, \text{ }x-t <y<2x-1\right\}.
\]
These are the red and blue triangles in \ref{Fig:2}. 
\hspace{10cm}
\hspace{10cm}
\hspace{10cm}

\begin{tikzpicture}[scale=2.5] 
{\crtcrossreflabel{(Fig.2)}[Fig:2]}


\begin{scope}

      \coordinate (a3) at (0,0);
      \coordinate (b3) at (1,0);
      \coordinate (c3) at (0,1);
      \coordinate (d3) at (1,1);
      
      \coordinate (cd1) at (1/8, 1);
      \coordinate (ac) at (0, 1-1/8);
      \coordinate (ab) at (8/15, 0);
      \coordinate (cd2) at (8/15, 1);
      \coordinate (ab1) at (1/8, 0);
      \coordinate (bd1) at (1,1-1/8);

      \draw [very thick, dotted] (a3) -- (b3) -- (d3) --(c3)--(a3);
      \draw [very thick, dotted] (a3)--(d3);
      \draw [very thick, dotted] (0,0)--(1/2, 1);
      \draw [very thick, dotted] (1,1)--(1/2, 0);
     
\end{scope}
\begin{scope}[xshift = 1.8cm]

      \coordinate (a3) at (0,0);
      \coordinate (b3) at (1,0);
      \coordinate (c3) at (0,1);
      \coordinate (d3) at (1,1);
      
      \coordinate (cd1) at (1/8, 1);
      \coordinate (ac) at (0, 1-1/8);
      \coordinate (ab) at (8/15, 0);
      \coordinate (cd2) at (8/15, 1);
      \coordinate (ab1) at (1/8, 0);
      \coordinate (bd1) at (1,1-1/8);

      \draw [ultra thick] (a3) -- (b3) -- (d3) --(c3)--(a3);
      \draw [very thick] (a3)--(d3);
      \draw [very thick] (cd1)--(ac);
      \draw [very thick] (ab1)--(bd1);
      \draw [very thick] (a3)--(cd2);
      \draw [very thick] (bd1)--(ab);
     
\end{scope}
  
\begin{scope}[xshift = 3.6cm]

      \coordinate (a3) at (0,0);
      \coordinate (b3) at (1,0);
      \coordinate (c3) at (0,1);
      \coordinate (d3) at (1,1);
      
      \coordinate (cd1) at (1/8, 1);
      \coordinate (ac) at (0, 1-1/8);
      \coordinate (ab) at (8/15, 0);
      \coordinate (cd2) at (8/15, 1);
      \coordinate (ab1) at (1/8, 0);
      \coordinate (bd1) at (1,1-1/8);

      \fill[red] (c3) -- (cd1)  -- (ac) -- cycle;
      \fill[blue] (d3)--(7/8, 6/8)--(bd1);

      \draw [ultra thick] (a3) -- (b3) -- (d3) --(c3)--(a3);
      \draw [very thick] (a3)--(d3);
      \draw [very thick] (cd1)--(ac);
      \draw [very thick] (ab1)--(bd1);
      \draw [very thick] (a3)--(cd2);
      \draw [very thick] (bd1)--(ab);
      \draw [very thick, dotted] (0,0)--(1/2, 1);
      \draw [very thick, dotted] (1,1)--(1/2, 0);

\end{scope}
 \node at (2.5,-0.3) [align = left] {Figure 2: \(\cS_0^{-}\) (left), \(\cS_t^{-}\) (center), \(\cS_0^{-}\cup \cS_t^{-}\) (right), \(t = \frac 1 8\). In red and blue, \\
the sets \(R_t\) and \(B_t\) respectively.}; 
\end{tikzpicture}


\hspace{10cm}

Notice that all the other connected components of \(\cM \setminus (\cS_0^- \cup \cS_t^{-})\) do not `shrink' in all the directions of the unstable cone, as \(t\) tends to zero. Let \(\mathfrak R_t = \left\{R_s^t\right\}_{s \in [0,t]}\) and \(\mathfrak B_t = \left\{B_s^t\right\}_{s \in [0,t]}\) be the following partitions of \(R_t\) and \(B_t\) in standard segments:
\begin{equation}\label{eq:partition-triangles}
\begin{split}
 &R_{s}^t = \bigl\{ y =  x + 1-s: \text{ } 0 < x < s  \bigr\}, \text{ } B_s^t = \bigl\{ y =  x -t- s: \text{ } \frac{y+1}{2} < x < 1  \bigr\}.
\end{split}
\end{equation}
We define \(\mathscr P_{t}^2 = \mathfrak R_t \cup \mathfrak B_t\). To prove  \ref{ass:foliation}-\ref{ass:good-initial-cond2}, we take advantage of that the measure of these `bad' areas of the phase space is \(\cO(t^2)\). The conditional measures and the disintegration of \(m\) for the partitions \(\mathfrak R_t\) are
\begin{equation}\label{eq:disint-example}
   dm_{\mathfrak R_{t}}(s) = \frac{|R_s^t| ds}{\sqrt 2}, \quad p_{R_s^t} = \frac{1}{|R_s^t|}.
\end{equation}
For \(g \in \cC^1(\cM)\), \(t \in (0,\ve_0]\), by the second part of Proposition \ref{prop:fundamental-example},
\[
\begin{split}
    \cov(\vf, n, t; p_{R_s^t} g) &= \frac{1}{|R_s^t|}\left|\int_{R_s^t} (\vf \circ \catt^n) g  -  \mu_t(\vf) \int_{R_s^t} g\right|\\
& \le\frac{ C \gamma^n \|g\|_{\cC^1}\|\vf\|_{\cC^1}}{|R_s^t|}.
\end{split}
\]
Hence, by \eqref{eq:disint-example},
\begin{equation}\label{eq:decay-red}
\begin{split}
\frac{\int_{\mathfrak R_t} \cov(\vf, n, t; p_{R_s^t} g) dm_{\mathfrak R_t} }{t} &\le \frac{\int_{0}^t   \frac{C \gamma^n\|g\|_{\cC^1}\|\vf\|_{\cC^1}}{|R_s^t|}\frac{|R_s^t| ds}{\sqrt 2}}{t} \\
&\le C \gamma^n \|\vf\|_{\cC^1}\|g\|_{\cC^1}.
\end{split}
\end{equation}
Analogously, a similar computation for \(\mathfrak B_t\) yields, for \(t \in (0, \ve_0]\),
\begin{equation}\label{eq:decay-blu}
\frac{\int_{\mathfrak B_t} \cov(\vf, n, t; p_{B_s^t} g) dm_{\mathfrak B_t} }{t} \le C \gamma^n \|\vf\|_{\cC^1}\|g\|_{\cC^1}.
\end{equation}
By \eqref{eq:decay-red} and \eqref{eq:decay-blu}, we have that condition \ref{ass:foliation}-\ref{ass:good-initial-cond2} is satisfied with \(\Theta_n = C\gamma^n\). For \(\mathscr P_t^1\), we first notice that is possible to partition \(\cM \setminus (\cS_0^{-} \cup \cS_t^{-} \cup R_t \cup B_t)\) with a collection of standard segments of length not smaller than some given constant (the value of such constant is unimportant but \(1/2\) suffices). This implies that it is possible to define \(\mathscr P_t^1\) such that 
\begin{equation}\label{eq:C1-norm-conditional}
\|p_W\|_{\cC^1} \le C
\end{equation}
 uniformly in \(W \in \mathscr P_t^1\) and \(t \in (0,\ve_0]\). We give more details on the construction of \(\mathscr P_t^1\) and the proof of \eqref{eq:C1-norm-conditional} in Appendix \ref{sec:app}. By the second part of Proposition \ref{prop:fundamental-example}, for every \(t \in (0,\ve_0]\), \(W \in \mathscr P_t^1\) and  \(\rho_W \in \cC^1 (W , \bR)\),
\[
\begin{split}
    \cov(\vf, n, t; p_W \rho_W ) &= \biggl| \int_{W}(\vf \circ \catt^n) p_W \rho_W  -  \mu_t(\vf) \int_{W}p_W \rho_W \biggr)\biggr| \\
&\le C \|p_W \rho_W\|_{\cC^1}\|\vf\|_{\cC^1}\gamma^n \le C \gamma^n \|\rho_W\|_{\cC^1}\|\vf\|_{\cC^1},
\end{split}
\]
where in the last inequality we have used \eqref{eq:C1-norm-conditional}. Therefore, condition \ref{ass:foliation}-\ref{ass:good-initial-cond1} is satisfied. The statement of Theorem \ref{thm:cat-lin-resp} follows by Theorem \ref{thm:linear-resp} and Lemma \ref{lem:formula-lin-resp}, recalling that \(m\) is invariant for \(\cato\).
\end{proof}

\begin{lemma}\label{lem:formula-lin-resp} 
For any \(\vf \in \cC^0(\cM)\),
  \[
       \lim_{t \to 0} \frac{\int_{\cM}(\vf \circ \catt - \vf)dm}{t} =   m (\vf)-\int_{0}^1 \vf(s,s)ds.
  \]
\end{lemma}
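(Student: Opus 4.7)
I would start from the fact that $\catt$ restricted to each $\cM_{j,t}^{+}$ coincides with the affine map $F_{j,t}$ of constant Jacobian $|\det A_t| = 1-t$. The four images $\cM_{j,t}^{-}$ are pairwise disjoint (visible from Figure~\ref{Fig:1}), so the change of variables yields
\[
\int_{\cM}\vf\circ\catt\,dm \;=\; \frac{1}{1-t}\int_{\bigcup_{j}\cM_{j,t}^{-}}\vf\,dm \;=\; \frac{1}{1-t}\Bigl(m(\vf) \;-\; \int_{U_t}\vf\,dm\Bigr),
\]
where $U_t := \cM\setminus\bigcup_{j}\cM_{j,t}^{-}$ is the ``uncovered'' set, which has Lebesgue measure $m(U_t) = 1-(1-t) = t$. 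A short algebraic rearrangement then gives
\[
\frac{1}{t}\int_{\cM}(\vf\circ\catt - \vf)\,dm \;=\; \frac{1}{1-t}\Bigl(m(\vf) \;-\; \frac{1}{t}\int_{U_t}\vf\,dm\Bigr),
\]
so that the proof reduces to showing $t^{-1}\int_{U_t}\vf\,dm \to \int_0^1 \vf(s,s)\,ds$ as $t\to 0$.

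\textbf{Geometric identification of $U_t$.} This is the crux. Computing $F_{j,t}$ at the vertices of each $\cM_{j,t}^{+}$ and intersecting the four complement conditions for $\bigcup_{j}\cM_{j,t}^{-}$, I would verify
\[
U_t \;=\; \{(x,y)\in\cM : x-t < y < x\} \;\cup\; R_t,
\]
where the first piece is a parallelogram-shaped strip of vertical width $t$ along the diagonal $\{y=x\}$ (of area $t - t^{2}/2$) and $R_t$ is the right triangle with vertices $(0,1-t),(0,1),(t,1)$ of area $t^2/2$ (the same $R_t$ as in \eqref{eq:partition-triangles}). Geometrically, the top edge $\{(s,1):s\in[t,1]\}$ of $\cM_{4,t}^{+}$ maps under $F_{4,t}$ to $\{(s,s-t):s\in[t,1]\}$, so the line $y = x - t$ has replaced the line $y = x$ as an upper boundary of $\cM_{4,t}^{-}$, leaving the strip uncovered; simultaneously the corner $(0,1)$, which for $t=0$ belonged to both $\partial\cM_{3,0}^{-}$ and $\partial\cM_{4,0}^{-}$, splits for $t>0$ into the two image vertices $(0,1-t)\in\partial\cM_{3,t}^{-}$ and $(t,1)\in\partial\cM_{4,t}^{-}$, opening up $R_t$. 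The identity $(t - t^{2}/2) + t^{2}/2 = t = m(U_t)$ serves as a useful consistency check.

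\textbf{Passage to the limit.} The triangle $R_t$ contributes at most $\|\vf\|_{\cC^0}\cdot t^2/2 = o(t)$ to $\int_{U_t}\vf\,dm$. For the strip, Fubini gives
\[
\int_{\mathrm{strip}}\vf\,dm \;=\; \int_{0}^{1}\int_{\max(0,\,x-t)}^{x}\vf(x,y)\,dy\,dx,
\]
and for every $x\in(0,1)$ the inner integral divided by $t$ converges to $\vf(x,x)$ by continuity of $\vf$, while remaining uniformly bounded by $\|\vf\|_{\cC^0}$. Dominated convergence then yields $t^{-1}\int_{U_t}\vf\,dm \to \int_0^1\vf(s,s)\,ds$, which combined with the identity from the setup step gives the claim. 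The only genuine difficulty is the geometric identification of $U_t$: once one sees that the ``missing mass'' of the perturbation concentrates on a strip of width $t$ along the diagonal $\{y=x\}$ (with the corner piece $R_t$ contributing only at order $t^2$), the rest is elementary bookkeeping.
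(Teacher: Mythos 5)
Your proposal is correct and follows essentially the same route as the paper: your identification of the uncovered set \(U_t\) as the diagonal strip \(\{x-t<y<x\}\cap\cM\) together with the corner triangle \(R_t\), with density \(1/(1-t)\) on the complement, is exactly the paper's formula \eqref{eq:direct-com} for \((\cL^A_t 1-1)/t\) with \(H_t=D_t\cup R_t\), and your change-of-variables setup is just \(\int_\cM(\vf\circ\catt)\,dm=\int_\cM\vf\,\cL^A_t1\,dm\) in disguise. The only cosmetic difference is that you pass to the limit via Fubini and dominated convergence where the paper uses uniform continuity of \(\vf\) and the measures \(m(D_t)=t-t^2/2\), \(m(R_t)=t^2/2\); both are fine.
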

\begin{proof}
By a direct computation (see also \ref{Fig:1}), for all \(t \in (0, 1/8]\),
\begin{equation}\label{eq:direct-com}
    \frac{\cL^A_{t}1 - 1}{t} = \frac{1}{1-t} \mathbbm 1_{\cM \setminus H_t} - \frac{1}{t}\mathbbm 1_{H_t} ,
\end{equation}
where \(H_t\) is the white portion of \(\cM\) in the right side of \ref{Fig:1}, i.e.,
\[
H_t = D_t \cup R_t, \quad D_t =  \bigcup_{s \in (0,t)} \left\{(x,y) \in \cM : \text{ }y = x - s\right\}, \quad R_t = \bigcup_{s \in (0,t)} R^t_s,
\]
where \(R_s^t\) were defined in \eqref{eq:partition-triangles}. Since \(\vf\) is continuous on a compact set, it is uniformly continuous, and for any \(\ve \!\in \!\bR^+\), there exists \(t_{\ve} \!\in \!\bR^+\) such that, \mbox{for all \(x \!\in \![0,1]\),}
\begin{equation}\label{eq:continuity}
    |\vf(x,y) - \vf(x,x)| \le \ve, \quad \forall y \in (x-t_{\ve}, x + t_{\ve}) \cap \cM.
\end{equation}
Since \(m \left(R_t\right) = t^2 /2\), \(m(D_t) = t - t^2/2\), and by \eqref{eq:continuity}, for any \(t \in (0, t_{\ve})\),
\begin{equation}\label{eq:initial-resp1}
\begin{split}
     &\biggl|\int_{H_t} \frac{\vf(x,y)}{t} dm - \int_0^1\vf(s,s)ds\biggr|\le \biggl|\int_{D_t} \frac{\vf(x,y)}{t} dm - \int_0^1 \vf(s,s)ds\biggr|\\
&+ \frac{m(R_t)}{t}\|\vf\|_{\cC^0} \le \biggl|\int_{D_t}\frac{\vf(x,x)}{t}dm - \int_0^1 \vf(s,s)ds \biggr|\\
&+ \int_{D_t}\frac{|\vf(x,y) - \vf(x,x)|}{t}dm + \frac{t}{2}\|\vf\|_{\cC^0}  \\
&\le \biggl(1 - \frac{m(D_t)}{t}\biggr)\|\vf\|_{\cC^0}+ \ve\frac{m(D_t)}{t} + \frac{t}{2} \|\vf\|_{\cC^0} \le \ve + t\|\vf\|_{\cC^0}.
\end{split}
\end{equation}
And, recalling that \(t \le 1/8\),
\begin{equation}\label{eq:initial-resp2}
\begin{split}
\biggl|\int_{\cM \setminus H_t} \frac{\vf}{1-t} dm - m(\vf)\biggr|&\le  \int_{\cM \setminus H_t}\left|\frac{\vf}{1-t} - \vf \right|dm +  \int_{H_t} |\vf| dm \\
& \le  \left(\frac{t}{1-t} + m\left(H_t\right)\right)\|\vf\|_{\cC^0}\le 3 t \|\vf\|_{\cC^0}.
\end{split}
\end{equation}
By \eqref{eq:direct-com}, \eqref{eq:initial-resp1} and \eqref{eq:initial-resp2},
\[
\begin{split}
   \lim_{t \to 0} \frac{\int_{\cM}(\vf \circ \catt - \vf)dm}{t} &= \lim_{t \to 0} \int_{\cM} \frac{\cL_{t}^A 1 - 1}{t}\vf dm = m (\vf)-\int_{0}^1 \vf(s,s)ds,
\end{split}
\]
concluding the proof of the Lemma.
\end{proof}

We conclude this section with a couple of remarks.
\begin{remark}[Transversality to the topological class]
The perturbation \(\catt\) is not tangent to the topological class of \(\cato\). By this we mean that the map \(\catt\) is not close  modulo errors of size \(o(t)\) to any map topologically conjugated to \(\cato\) (see e.g. \cite{BS08} for a discussion of perturbations tangent to the topological class in the unimodal setting). This happens because \(\cato\) is a bijection while the perturbation opens a hole of size \(O(t)\) in the image. In these regards, the family \(\catt\) is more similar to a 2D hyperbolic analog of a perturbation of the tent map in which we move down the tip of the tent near the full-branch map (losing surjectivity).
\end{remark}

\begin{remark}[Multiple accumulation points]
It is possible to craft examples for which the set \(\mathscr B\) in \eqref{eq:A-Bsets} has multiple accumulation points. Let \(\widetilde \catt\) be a perturbation of \(\cato\) such that \(\widetilde \catt = \catt\), for \(t \in [0,1/8]\setminus \bQ\), and \(\widetilde \catt = \cato\), for \(t \in [0,1/8]\cap \bQ\). Consider a sequence \(t_n \searrow 0\) that contains infinitely many rational and infinitely many irrational numbers. In this case, both the elements of \({\cC^1(\cM)}^{\star}\) \(\vf \to \sum_{k \in \bN_0} \left[m(\vf) - \int_0^1 (\vf \circ \cato^k)(s,s)ds \right]\) and the null element \(\vf \to 0\) are accumulation points of \(\mathscr B\). This yields a (trivial) application of the general statement of Proposition \ref{prop:linear-resp-accumulation}.
\end{remark}

\section{Coupling} \label{sec:coupling}
This section is entirely devoted to the proof of Proposition \ref{prop:fundamental-example} and, modulo the definition of the maps \(\catt\) at the beginning of Section \ref{sec:examples}, is self-contained. We start with some properties of the singularity set.
First we notice that for each \(k \in \bN\) the set \(\cS_t^{+,k}\) is a finite collection of straight lines. This is a consequence of definition \eqref{eq:future-disc}, the fact that \(\partial \cM^+_{j,t}\) are lines and \(\catjt^{-1}\) are affine, for all \(j \in \cJ\). Next Lemma is about the continuation of the singularity lines, a property that is observed also in billiards (see \cite{CM06}, chapter 4).
\begin{lemma}\label{lem:geom-disc}
 For any \(k \in \bN\), \(x \in \cS_t^{+,k}\), there is a decreasing path \(\gamma_x\) inside \(\cS_t^{+,k}\) such that the endpoints of \(\gamma_x\) belong to \(\partial \cM\).
\end{lemma}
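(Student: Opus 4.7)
The plan is to prove the lemma by induction on $k$, exploiting the cone invariance $A_t^{-1}\mathscr C^s \subseteq \mathscr C^s$ from \eqref{eq:cone-invariance} together with the fact that, already at level $1$, the singularity set consists of lines crossing $\cM$ from boundary to boundary. Here ``decreasing'' means that the path admits a parametrization as the graph of a strictly decreasing function, equivalently that its tangent vectors lie in $\mathscr C^s$; all segments arising below will have this property thanks to \eqref{eq:cone-invariance}.

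For the base case $k=1$, I would simply read off from the explicit formula \eqref{eq:SingCat+}: $\cS_t^{+,1}$ is the union of three line segments, each with slope in $\mathscr C^s$ and with both endpoints on $\partial \cM$. Any $x \in \cS_t^{+,1}$ lies on one of these segments, which can be chosen as $\gamma_x$.

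For the inductive step, assume the statement for $k$ and take $x \in \cS_t^{+,k+1}$. If $x \in \cS_t^{+,k}$ the inductive hypothesis applies directly, so I may assume $x \in \catt^{-k}(\cS_t^+)\setminus \cS_t^{+,k}$. In particular $x \notin \cS_t^{+,1} \cup \partial \cM$, so $x$ lies in a unique open $\cM_{j,t}^+$ and $y:=\catjt(x)\in \cS_t^{+,k}$. The inductive hypothesis provides a decreasing path $\gamma_y\subseteq \cS_t^{+,k}$ through $y$ with endpoints on $\partial \cM$. I would then let $\tilde\gamma_x$ be the connected component through $x$ of $\catjt^{-1}(\gamma_y\cap \cM_{j,t}^-)$. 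Since $\catjt$ is affine with linear part $A_t$, the path $\tilde\gamma_x$ has tangent vectors in $A_t^{-1}\mathscr C^s \subseteq \mathscr C^s$, so it is decreasing; and by construction $\tilde\gamma_x \subseteq \catt^{-1}(\cS_t^{+,k})\subseteq \cS_t^{+,k+1}$.

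It remains to push each endpoint of $\tilde\gamma_x$ all the way to $\partial \cM$. Each endpoint of $\tilde\gamma_x$ lies in $\partial \cM_{j,t}^+\subseteq \partial \cM \cup \cS_t^{+,1}$: either $\gamma_y$ exited $\cM_{j,t}^-$ at that image, so the preimage is on $\partial \cM_{j,t}^+$, or $\gamma_y$ reached one of its own $\partial \cM$-endpoints while still in $\overline{\cM_{j,t}^-}$, whose preimage still sits on $\partial \cM_{j,t}^+$. Endpoints already on $\partial \cM$ need no further action; at an endpoint $p \in \cS_t^{+,1}$, the base case supplies a line $\ell\subseteq \cS_t^{+,1}$ through $p$ going from $\partial \cM$ to $\partial \cM$, and one of the two half-lines of $\ell\setminus \{p\}$ prolongs $\tilde\gamma_x$ in the correct monotone direction. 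Concatenating these extensions yields the required $\gamma_x$. The main (very minor) obstacle is precisely this bookkeeping at junctions: one must check that at each meeting point the concatenation stays the graph of a monotone function. This is guaranteed by the uniform transversality of $\mathscr C^u$ and $\mathscr C^s$, which forces all slopes involved to be strictly negative and bounded away from $0$ and $-\infty$, so that exactly one of the two possible prolongations along $\ell$ preserves monotonicity.
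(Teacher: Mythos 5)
Your proof is correct and follows essentially the same route as the paper: induction on \(k\), pulling back a suitable subpath of the inductive path through \(\catt(x)\) by the inverse branch (using the cone invariance \(A_t^{-1}\mathscr C^s\subseteq \mathscr C^s\)), noting the resulting endpoints lie in \(\partial\cM\cup\cS_t^{+,1}\), and then extending to \(\partial\cM\) via the boundary-to-boundary segments of the case \(k=1\). The only differences are cosmetic: the paper truncates the inductive path at its intersections with \(\cS_t^{-}\) and uses continuity of \(\catt^{-1}\) off \(\cS_t^{-}\), whereas you restrict to the single branch \(F_{j,t}^{-1}\) on \(\cM_{j,t}^{-}\), and you make explicit the monotonicity bookkeeping at the junction points that the paper leaves implicit.
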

\begin{proof}
We prove the statement by induction. In the case \(k=1\), the set \(\cS_t^{+,1} = \cS_{t}^+ \setminus \partial \cM\) is a collection of lines with slope in \(\mathscr C^s\), whose endpoints are in \(\partial \cM\) (see \ref{Fig:1}) and the statement is true. Suppose now the statement is true for some \(k \in \bN\) and let \(x \in \cS_t^{+,k+1}\setminus \cS_t^{+,1}\). Then \(\catt(x) \in \cS_t^{+,k}\) and let \(\gamma_{\catt(x)}\) be given by the statement. By definition \(\gamma_{\catt(x)} \cap \cS_t^{-} \neq \emptyset\) and let \(\widetilde \gamma_{\catt(x)} \subseteq \gamma_{\catt(x)}\) be a path containing \(\catt(x)\) such that \(\widetilde \gamma_{\catt(x)} \cap \cS_t^{-} = \{\text{both endpoints of }\widetilde \gamma_{\catt(x)}\}\). By \eqref{eq:cone-invariance}, the set \(\catt^{-1}(\widetilde \gamma_{\catt(x)})\) is a union of lines whose slope is in \(\mathscr C^s\) and so are decreasing. Since \(\catt^{-1}\) is discontinuous at most on \(\cS_t^{-}\), we have that \(\catt^{-1}(\widetilde \gamma_{\catt(x)})\) is a (continuous) decreasing path that contains \(x\) whose endpoints belong to \(\partial \cM \cup \cS_t^{+,1} = \text{“} \catt^{-1}(\cS_t^{-})"\). By the case \(k = 1\), we can connect both these endpoints to \(\partial \cM\) via a single decreasing path and we conclude.
\end{proof}

We say that a segment varies continuously with \(t\) if its endpoints are described by continuous functions of \(t\) (the segment can reduce to a point if the endpoints happen to coincide).

\begin{lemma}\label{lem:continuity-sing-set}
For each \(k \in \bN\), \(\cS_t^{+,k}\) is a finite union of segments which vary continuously with \(t\).
\end{lemma}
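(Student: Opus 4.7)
The plan is a straightforward induction on $k$. For the base case $k=1$, the explicit formula \eqref{eq:SingCat+} shows that $\cS_t^{+,1}$ consists of three line segments whose defining equations have coefficients that are rational (hence continuous) functions of $t \in [0, 1/8]$; the endpoints of each such segment are intersections with $\partial \cM$ or with one another, and these too depend continuously on $t$.

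For the inductive step, the key set-theoretic identity is
\[
\cS_t^{+,k+1} = \cS_t^{+,1} \cup \bigl(\catt^{-1}(\cS_t^{+,k}) \setminus \partial \cM\bigr),
\]
which follows from \eqref{eq:future-disc} together with the inclusion $\catt^{-1}(\partial \cM) \subseteq \cS_t^{+,1} \cup \partial \cM$: if $\catt(x) \in \partial \cM$ and $x \notin \partial \cM$, then $x \in \partial \cM_{j,t}^+$ for some $j$ (since each $\catjt$ is affine and maps the interior of $\cM_{j,t}^+$ into the interior of $\cM_{j,t}^-$), and $\partial \cM_{j,t}^+ \subseteq \cS_t^{+,1} \cup \partial \cM$. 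Assuming inductively that $\cS_t^{+,k} = \bigcup_{i=1}^{N} L_{i,t}$ is a finite union of segments varying continuously with $t$, it suffices to establish the analogous statement for $\catt^{-1}(\cS_t^{+,k}) = \bigcup_{i,\,j \in \cJ} \catjt^{-1}(L_{i,t} \cap \cM_{j,t}^-)$.

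To handle each piece $\catjt^{-1}(L_{i,t} \cap \cM_{j,t}^-)$, I would parametrize the line containing $L_{i,t}$ by $s \mapsto p_{i}(s,t)$, affine in $s$ and continuous in $t$. Since $\cM_{j,t}^-$ is a convex polygon defined by finitely many linear inequalities (compare \eqref{eq:singCat-}) whose coefficients are continuous in $t$, the parameter set $\{s : p_i(s,t) \in \cM_{j,t}^-\}$ is a closed interval $[a_{ij}(t), b_{ij}(t)]$ (possibly empty or degenerate). Intersecting with the parameter range of $L_{i,t}$ itself yields endpoints expressible as finite $\min$'s and $\max$'s of continuous functions, hence continuous in $t$. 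Finally, $\catjt^{-1}$ is affine with coefficients continuous in $t$ (see \eqref{eq:cat-def}), so it maps $L_{i,t} \cap \cM_{j,t}^-$ to a segment with continuously varying endpoints. Taking the finite union over $i$ and $j$ and joining with $\cS_t^{+,1}$ closes the induction.

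The main subtlety is combinatorial: as $t$ varies, the segment $L_{i,t}$ may enter and exit $\cM_{j,t}^-$ through different edges, and the intersection $L_{i,t} \cap \cM_{j,t}^-$ may shrink through a degenerate point or disappear on some range of $t$. This is handled by indexing the output segments by pairs $(i,j)$: the endpoint functions $a_{ij}(t)$ and $b_{ij}(t)$, being $\min$'s and $\max$'s of continuous functions, remain continuous across such transitions, and any collapse to a point is allowed by the definition. No other complication arises because all defining data---the segments $L_{i,t}$, the edges of $\cM_{j,t}^-$, and the affine maps $\catjt^{-1}$---depend continuously (indeed, analytically) on $t$.
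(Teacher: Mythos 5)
Your overall strategy is essentially the paper's: induction on \(k\), the decomposition \(\catt^{-1}(\cS_t^{+,k}) = \bigcup_{i,j}\catjt^{-1}(L_{i,t}\cap\cM_{j,t}^-)\), convexity of \(\cM_{j,t}^-\), and the fact that \(\catjt^{-1}\) is affine and continuous in \(t\). The gap is in the step where you claim that the endpoints \(a_{ij}(t), b_{ij}(t)\) of the parameter interval are ``finite \(\min\)'s and \(\max\)'s of continuous functions, hence continuous in \(t\)''. Writing a defining inequality of \(\cM_{j,t}^-\) along your parametrization as \(\alpha_\ell(t)\,s+\beta_\ell(t)\le 0\), the induced bound on \(s\) is \(-\beta_\ell(t)/\alpha_\ell(t)\), which is continuous only where \(\alpha_\ell(t)\neq 0\), i.e.\ where the line carrying \(L_{i,t}\) is not parallel to the \(\ell\)-th edge of the polygon. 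Without excluding parallelism, the intersection of a continuously varying segment with a continuously varying convex polygon can genuinely jump (e.g.\ a segment parallel to an edge whose first contact with the polygon is an entire nondegenerate edge piece), so continuity of all the defining data alone does not give the conclusion. This is exactly the point at which the paper invokes the transversality between \(S_t\) and \(\cS_t^{-}\), which your proof never uses.

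The gap is fixable in this specific setting, but you must say why parallelism never occurs: by the cone invariance \eqref{eq:cone-invariance} (together with the base case \eqref{eq:SingCat+}) every segment of \(\cS_t^{+,k}\) is aligned with the stable cone \(\mathscr C^s\), while every edge of \(\cM_{j,t}^-\) lies either in \(\cS_t^{-}\), which by \eqref{eq:singCat-} is aligned with the unstable cone, or in \(\partial\cM\); the stable cone is uniformly transversal to both, so the coefficients \(\alpha_\ell(t)\) are bounded away from zero and your \(\min/\max\) formula then does yield continuous endpoints. Note also that your induction as stated only carries ``finite union of continuously varying segments'', so you must carry (or cite) the cone alignment of \(\cS_t^{+,k}\) along the induction as well. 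Your extra care with \(\catt^{-1}(\partial\cM)\) via \eqref{eq:future-disc} is a fine and slightly more precise point than the paper's, but it is not the crux.
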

\begin{proof}
We prove the statement by induction. The case \(k = 1\) follows directly from the expression \eqref{eq:SingCat+} of \(\cS_t^{+,1}\). Assume that the statement holds for \(k \in \bN\). Recalling \eqref{eq:future-disc}, \(\cS_{t}^{+,k+1} = \catt^{-1}(\cS_t^{+,k}) \cup \cS_t^{+,1}\) and it is sufficient to prove the statement for \(\catt^{-1}(\cS_t^{+,k})\). By the inductive hypothesis \(\cS_t^{+,k}\) is a finite collection of segments that vary continuously with \(t\) and let \(S_t\) one of those segments. We have \(\catt^{-1}(S_t) = \bigcup_{j =1}^4 \catjt^{-1}(S_t \cap \cM_{j,t}^{-})\). Since \(\cM_{j,t}^{-}\) are convex, \(S_t \cap \cM_{j,t}^{-}\) is either empty or is a segment and, by transversality between \(S_t\) and \(\cS_t^{-}\), varies continuously with \(t\). Here, we are also using that each segment in \(\cS_t^{-}\) varies continuously with \(t\) as it is clear from their explicit formula \eqref{eq:singCat-}.
The statement follows from the fact that  \(\catjt^{-1}\) are affine and continuous in \(t\).
\end{proof}

In other words, the discontinuity segments either move continuously with \(t\) or gradually grow from a point (a corner) of already existing singularities. For \(t \in [0,1/8]\), \(k \in \bN_0\), let \(\cM^k_t\) be the collection of maximal open connected sets of \(\cM \setminus \cS^{+,k}_t\). We introduce a version of the \textit{complexity} uniform in the parameter defining the perturbation. Let \(K_k \in \bN_0\),
\[
  K_k = \sup_{t \in [0,\frac{1}{8}]}\sup_{x \in \cS_t^{+,k}}\left\{\# \{O \in \cM^k_t: x \in \partial O\}\right\}.
\]
The above is the maximal number of segments of \(\cS_t^{+,k}\) that meet at some point. Next Lemma is often summarized by saying that hyperbolicity beats cutting.
\begin{lemma}\label{lem:hyp-beats-cutting}
  There exists \(N_0 \in \bN\) such that,
  \[
       \frac{K_{N_0} +1}{\lambda^{N_0}} < 1.
  \]
\end{lemma}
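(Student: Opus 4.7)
The plan is to show that $K_k$ grows at most linearly in $k$. Since $\lambda > 1$, any such polynomial bound will immediately give $(K_{N_0}+1)/\lambda^{N_0} \to 0$ and therefore yield $N_0$ as required.

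First I will describe the possible directions of segments in $\cS_t^{+,k}$. Every local branch of $\catt^{-1}$ is the restriction of an affine map with linear part $A_t^{-1}$; hence the iterated preimage under $\catt^{-p}$ of a straight segment of direction $v$ is a union of straight segments, each of direction $A_t^{-p}v$. Combining this with the formula $\cS_t^{+,k} = \bigcup_{p=0}^{k-1}\catt^{-p}(\cS_t^+)\setminus \partial\cM$, every segment of $\cS_t^{+,k}$ has direction in the set $\{A_t^{-p}v_i : 0\le p\le k-1,\ 1\le i\le d\}$, where $v_1,\dots,v_d$ are the finitely many directions appearing in the segments composing $\cS_t^{+,1}$ (for our family $d=2$, with slopes $-1/(2-t)$ and $-1$). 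In particular, there are at most $dk$ distinct directions occurring in $\cS_t^{+,k}$, uniformly in $t\in[0,1/8]$.

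Next I will perform a local geometric count. Fix $x\in \cS_t^{+,k}$. Two segments of $\cS_t^{+,k}$ through $x$ with the same direction must lie on the same line, so the number of distinct lines of $\cS_t^{+,k}$ passing through $x$ is at most $dk$. For $\delta>0$ small enough, only those lines intersect the punctured disk $D(x,\delta)\setminus\{x\}$, and they divide it into at most $2dk$ open sectors. Any $O\in \cM^k_t$ with $x\in\partial O$ must contain at least one such sector in $D(x,\delta)$, and distinct components correspond to distinct sectors; hence
\[
K_k \le 2dk,
\]
uniformly in $t$.

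Since $\lambda>1$ by \eqref{eq:minimal-contraction}, we have $(2dk+1)/\lambda^k\to 0$ as $k\to\infty$, and we simply choose $N_0$ large enough that the right-hand side falls below $1$. The only point that requires any care is the local geometric bookkeeping in the middle step — in particular, handling points $x$ that are common endpoints of several segments rather than interior crossing points — but the upper bound $2dk$ is crude enough to cover every such configuration without alteration, and no finer analysis is needed.
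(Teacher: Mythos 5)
Your proof is correct, but it takes a genuinely different route from the paper. The paper disposes of this lemma by citation: it invokes the proof of Proposition 17 of \cite{BG09} (itself based on \cite{Buzzi97}), which gives polynomial growth of the complexity for general piecewise affine maps, with constants depending only on the dimension and the number of sides of the partition, hence uniformly in \(t\). You instead prove a direct linear bound \(K_k \le C k\) by exploiting a special feature of this family: every inverse branch of \(\catt^p\) is affine with the \emph{same} linear part \(A_t^{-p}\), so the segments of \(\cS_t^{+,k}\) carry at most \(O(k)\) distinct directions, and your local sector count at a point \(x\) (segments not containing \(x\) in their closure stay at positive distance, those containing it fill full radii of a small disk for \(\delta\) small, and distinct components of \(\cM\setminus\cS_t^{+,k}\) adjacent to \(x\) occupy distinct sectors) is sound, including at common endpoints of several segments. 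What you gain is a self-contained, elementary argument with an explicit linear rate; what you lose is generality — with branches having different linear parts the number of directions can grow like (number of branches)\(^k\), and one would genuinely need the Buzzi/Baladi--Gou\"ezel complexity estimate, which is the route the paper takes. Two small points to tidy up: the directions \(v_i\) should be taken from \(\cS_t^{+}\), not just \(\cS_t^{+,1}\), since \(\cS_t^{+}\) contains pieces of \(\partial\cM\) whose preimages do enter \(\cS_t^{+,k}\) (so in this example \(d=4\), counting the horizontal and vertical boundary directions, rather than \(d=2\) — harmless for the linear bound); and you implicitly use that \(\cS_t^{+,k}\) is a finite union of straight segments, which the paper records at the start of Section \ref{sec:coupling} and which indeed follows from the affine structure you describe.
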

\begin{proof}
The statement follows by the proof of Proposition 17 in \cite{BG09} that is based on \cite{Buzzi97}. Indeed, in \cite{BG09} it is shown that \(\sup_{x \in \cS_t^+}\left\{\# \{O \in \cM^k_t: x \in \partial O\}\right\}\) grows polynomially in \(k\), with parameters depending only on the dimension and the number of sides of the partition \(\{\cM_{j, t}^+\}_{j \in \cJ}\). Since these parameters are uniform in \(t\), the statement is proved.
\end{proof}
Recall the definition of \(\cW\) before Proposition \ref{prop:fundamental-example} in Section \ref{sec:examples}.
\begin{lemma}\label{lem:trasversality2}
  For any \(k \in \bN_0\), there exists \(P_k \in \bR^+\) such that, for any \(t \in [0,1/8]\), \(W \in \cW\),
  \[
  \begin{split}
    \#\{W \cap \cS_t^{k,+}\} \le K_{k} + P_{k}|W|,
  \end{split}
  \]
  or, equivalently,
  \[
  \begin{split}
\# \{W': W' \text{ is a maximal connected}& \text{ component} \\
&\text{ of }W \setminus \cS_t^{k,+}\} \le K_{k}+1 + P_k |W|.
  \end{split}
  \]
\end{lemma}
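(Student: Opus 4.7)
The approach is a direct counting argument exploiting the piecewise-affine structure of $\cS_t^{+,k}$.

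First, by Lemma~\ref{lem:continuity-sing-set}, the set $\cS_t^{+,k}$ is a finite union of straight line segments, each with slope in the stable cone $\mathscr C^s$. Indeed, the slopes of $\cS_t^{+,1}$ lie in $\mathscr C^s$ by our choice of $r_0$, and pullback under $\catt^{-1}$ preserves $\mathscr C^s$ by \eqref{eq:cone-invariance}, so every slope appearing in $\cS_t^{+,k}$ remains in $\mathscr C^s$. Since $W \in \cW$ has direction in $\mathscr C^u$, each segment of $\cS_t^{+,k}$ is transversal to $W$ with angle bounded below by some $\theta_k > 0$, uniform in $t \in [0, 1/8]$; in particular, each individual line segment in $\cS_t^{+,k}$ meets $W$ in at most one point.

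Second, I would group these segments into parallel families according to their slope. The slopes take values in the finite set $\{A_t^{-j}(s) : 0 \le j < k,\ s \text{ a slope of } \cS_t^{+,1}\}$; denote by $m_k$ the resulting number of slope families. Within one family, distinct parallel lines are separated by a positive minimum distance, and by the continuous dependence in Lemma~\ref{lem:continuity-sing-set} together with the compactness of $[0,1/8]$, these distances admit a uniform lower bound $d_k > 0$. Consequently, consecutive crossings of $W$ with lines of one family are separated along $W$ by at least $d_k \sin\theta_k$, so $W$ crosses at most $|W|/(d_k \sin\theta_k) + 1$ lines per family. Summing over the $m_k$ families gives
\[
\#\{W \cap \cS_t^{+,k}\} \le P_k\, |W| + m_k, \qquad P_k := m_k/(d_k \sin\theta_k).
\]

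It remains to pass from the global constant $m_k$ to the geometric constant $K_k$, which I expect to be the main obstacle. The intuition is that the iterated pullbacks by $\catt^{-1}$ concentrate complexity: starting from a corner of $\cS_t^{+,k-1}$ where $m_{k-1}$ slopes coexist, each of its preimages under the four affine branches of $\catt^{-1}$ is a corner of $\cS_t^{+,k}$ inheriting all those slopes (pulled back by $A_t^{-1}$) together with the new slope created at level $k$. Inductively, this produces at least one corner of $\cS_t^{+,k}$ at which every slope family is represented; in generic position, such a corner carries $2m_k$ local regions, giving $K_k \ge 2 m_k$ and hence $m_k \le K_k$. The delicate point is verifying that such a maximally-complex corner actually lies inside the open square $\cM$ (rather than being absorbed into $\partial \cM$, which is excluded from $\cS_t^{+,k}$); this requires a careful combinatorial inspection of the preimage alignment under the branches \eqref{eq:cat-def}, but the transversality assumption from Section~\ref{sec:setting} and the explicit geometry visible in Fig.~\ref{Fig:2} make it plausible for every $k \in \bN$.
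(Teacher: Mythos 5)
Your first two steps are sound: transversality between $\mathscr C^u$ and $\mathscr C^s$ gives at most one crossing per singularity segment, and counting crossings within each family of parallel segments gives a bound of the form $\#\{W\cap\cS_t^{+,k}\}\le m_k+P_k|W|$, where $m_k$ is the number of slope families (modulo one caveat: the uniform lower bound $d_k>0$ on the separation of distinct parallel segments needs an argument that two such segments cannot merge or come arbitrarily close as $t$ ranges over $[0,1/8]$; Lemma~\ref{lem:continuity-sing-set} alone gives continuity of the segments, not a uniform gap). The genuine gap is the last step, the reduction $m_k\le K_k$, which is exactly where the statement's constant comes from. The mechanism you sketch does not work: a preimage under a branch of $\catt^{-1}$ of a corner of $\cS_t^{+,k-1}$ carries only the pulled-back slopes, and it acquires a ``new slope created at level $k$'' only if that preimage happens to lie on $\cS_t^{+,1}$, which there is no reason for it to do; moreover distinct slope families need not pass through any common point at all (parallel segments within a family never meet, and different families can intersect only pairwise, in which case $K_k$ can stay bounded while $m_k$ grows with $k$), and the candidate high-complexity corners in this example tend to sit on $\partial\cM$, which is excluded from $\cS_t^{+,k}$. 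So the inequality $K_k\ge 2m_k$ is unsupported and quite possibly false, and without it your argument proves the lemma only with $m_k$ in place of $K_k$ --- a different statement, which would then force you to re-prove a sub\-exponential bound on $m_k$ to salvage Lemma~\ref{lem:hyp-beats-cutting} and the Growth Lemma~\ref{lem:growth-lemma}, since the whole point of the constant $K_k$ is that $(K_{N_0}+1)/\lambda^{N_0}<1$.

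The paper avoids slopes and corners altogether: by Lemma~\ref{lem:continuity-sing-set} and compactness in $t$ there is a $\delta_0>0$, uniform in $t$, bounding from below the distance between segments of $\cS_t^{+,k}$ that do not intersect, so any standard segment of length at most $\delta_0$ can meet $\cS_t^{+,k}$ in at most $K_k$ points (only mutually intersecting singularity segments can be hit). One then cuts $W$ into $\lfloor|W|/\delta_0\rfloor$ pieces of length $\delta_0$ plus a remainder and applies this local bound to each piece, obtaining $K_k\lfloor|W|/\delta_0\rfloor+K_k\le K_k+P_k|W|$ with $P_k=K_k/\delta_0$. If you want to repair your proof, you should replace your step three by an argument of this local type, i.e.\ one that converts the \emph{local} complexity $K_k$ directly into the additive constant, rather than trying to dominate the global count of slope families by $K_k$.
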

\begin{proof}
For each \(k \in \bN_0\), there exists \(\delta_0 \in \bR^+\), uniform in \(t \in [0,1/8]\), such that any \(W \in \bigcup_{\delta \le \delta_0}\cW_{\delta}\) intersects \(\cS_t^{+,k}\) at most \(K_k\) times. Indeed, by Lemma \ref{lem:continuity-sing-set} and because \(t\) varies in a compact set, there exists a lower bound on the distance between segments in \(\cS_t^{+,k}\) that for any \(t \in [0,1/8]\) do not intersect. For \(W \in \cW\), we divide \(W\) in \(\lfloor |W|/\delta_0 \rfloor\) components of size \(\delta_0\) plus a reminder. By the defining property of \(\delta_0\), each of these components can intersect \(\cS_t^{+,k}\) in at most \(K_k\) points. Hence, setting \(P_k = K_k/\delta_0\), the total number of intersection is bounded by
\[
\left \lfloor\frac{|W|}{\delta_0} \right \rfloor K_k + K_k \le P_k |W| + K_k,
\]
yielding the claim.
\end{proof}

We call standard family any finite collection \(\cG = \left\{(W_j, p_j)\right\}_{j \in \cI}\), where \(\cI \subset \bN\), \(W_j \in \cW\) and \(p_j \in (0,1]\), \(\sum_{j\in \cI}p_j = 1\). To any standard family \(\cG\), we associate a probability measure \(\mu_{\cG}\), defined by, for \(\vf \in \cC^0(\cM)\),
\[
    \mu_{\cG} (\vf) = \sum_{j \in \cI}p_j\frac{\int_{W_j}\vf}{|W_j|}.
\]
It is useful to think at \(p_j\) as `the proportion of mass in the segment \(W_j\)'. The segments \(W_j\) need not be disjoint. We say that two standard families \(\cG_1\) and \(\cG_2\) are \textit{equivalent} if their measures coincide, i.e., for any \(\vf \in \cC^0(\cM)\), \(\mu_{\cG_1}(\vf) = \mu_{\cG_2}(\vf)\), and we write \(\mu_{\cG_1} = \mu_{\cG_2}\). We also define the \textit{regularity} \(\cZ(\cG)\) of \(\cG\) as
\begin{equation}
  \cZ (\cG) = \sum_{j \in \cI}p_j \frac{1}{|W_j|}.
\end{equation} 
It is possible that \(\cZ(\cG) \neq \cZ(\cG')\) even if \(\mu_{\cG} = \mu_{\cG'}\). Sometimes it will be useful to consider the \textit{same} standard family but with modified weights. Let \(\cG\) as above and, for some finite indices \(\cI_j \subset \bN\), \(j \in \cI\), and \(p'_{j,k} >0\), \(\sum_{k \in \cI_j} p'_{j,k} = 1\), set
\begin{equation}\label{eq:indistinguishable}
\widetilde \cG = \{( W_{j,k}, p_{j,k})\}_{(j,k)}, \quad  W_{j,k} = W_j, \quad  p_{j,k} = p_j p'_{j,k}.
\end{equation}
In this case, \(\mu_{\cG} = \mu_{\widetilde \cG}\) and \(\cZ(\cG) = \cZ(\widetilde \cG)\). In general, if two standard families \(\cG\) and \(\widetilde \cG\) can be obtained by modifying the weights as in \eqref{eq:indistinguishable}, we say that \(\cG\) and \(\widetilde \cG\) are \textit{indistinguishable} and write \(\cG \simeq \widetilde \cG\). Given a standard family \(\cG = \left\{W_j, p_j\right\}_{j \in \cI}\), we define its evolution \(\catt \cG\) as 
\begin{equation}\label{eq:def-evolution-sf}
\catt \cG = \{(\catt (W_{j,k}), p_{j,k})\}_{(j,k)}, \quad p_{j,k} = p_j \frac{| W_{j,k}|}{| W_{j}|},
\end{equation}
where \(W_{j,k}\) are the maximal connected components of \(W_j \setminus \cS_t^{+}\), \(j \in \cI\). By \eqref{eq:cone-invariance}, \(\catt \cG\) is a standard family. By a change of variable, we have that the evolution of standard families describes the pushforward of the associated measure, i.e.,
\begin{equation}\label{eq:pushforward-cat}
\begin{split}
\mu_{\cG}\left(\vf \circ \catt \right)  &=  \sum_{j \in \cI}p_j\frac{\int_{W_j}\vf \circ \catt}{|W_j|} = \sum_{j \in \cI}\sum_{k} p_j\frac{\int_{W_{j,k}}\vf \circ \catt}{|W_j|}\\
& = \sum_{j \in \cI}\sum_{k} p_j \frac{\int_{\catt(W_{j,k})} \vf }{|W_j|}\frac{|W_{j,k}|}{|\catt(W_{j,k})|} = \mu_{\catt \cG}(\vf).
\end{split}
\end{equation}
Notice that if \(\cG \simeq \widetilde \cG\) then \(\catt^p \cG \simeq \catt^p \widetilde \cG \), for all \(p \in \bN\). Next Lemma gives more information about the weights. For any \(W \in \cW\), there is naturally associated the standard family \(\cG_{W} = \{(W,1)\}\).
\begin{lemma}\label{lem:weigths-single-curve}
 Let \(W \in \cW\), \(n \in \bN_0\) and denote (renaming the indices) \(\catt^n \cG_{W} = \left\{(W_q, p_q)\right\}\). Then \(p_q = |W_q|/|\catt^n (W)|\) and in particular \(p_q \ge  |W_q|/(\overline \lambda^n |W|)\).
\end{lemma}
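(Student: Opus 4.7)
The plan is to proceed by induction on $n$, tracking carefully both the pieces of $W$ and their images. The key observation that makes the weights collapse into the clean ratio $|W_q|/|\catt^n(W)|$ is that $W$ is a single segment, all maps $\cF_{A,j,t}$ share the same linear part $A_t$, and the unstable cone is invariant under $A_t$; consequently every smooth branch of $\catt^n$ restricted to a piece of $W \setminus \cS_t^{+,n}$ is an affine map with the \emph{same} linear part $A_t^n$, and these pieces all have the same tangent direction as $W$.

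Concretely, let $v_0$ be a tangent vector to $W$ and write $W^{(n)}_q$ for the maximal connected components of $W \setminus \cS_t^{+,n}$ (so that $W_q = \catt^n(W^{(n)}_q)$ in the notation of the lemma). I will prove by induction that
\[
\catt^n \cG_W = \bigl\{\bigl(\catt^n(W^{(n)}_q),\, |W^{(n)}_q|/|W|\bigr)\bigr\}_q.
\]
The base case $n=0$ is immediate. For the inductive step, apply \eqref{eq:def-evolution-sf} to the standard family given by the induction hypothesis: each image curve $\catt^n(W^{(n)}_q)$ gets partitioned by $\cS_t^{+}$ into pieces whose preimages under $\catt^n$ are precisely the connected components of $W^{(n)}_q \setminus \catt^{-n}(\cS_t^+)$, i.e.\ the components of $W \setminus \cS_t^{+,n+1}$ that lie in $W^{(n)}_q$. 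Since $\catt^n$ on $W^{(n)}_q$ is affine with linear part $A_t^n$, the ratio of lengths $|\catt^n(W^{(n+1)}_r)|/|\catt^n(W^{(n)}_q)|$ equals $|W^{(n+1)}_r|/|W^{(n)}_q|$, and the new weight telescopes to $|W^{(n+1)}_r|/|W|$, completing the induction.

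To convert this formula into the one claimed by the lemma, observe that by the same affine argument $|\catt^n(W^{(n)}_q)| = (|A_t^n v_0|/|v_0|)\,|W^{(n)}_q|$ with the \emph{same} expansion factor for every $q$ (this is where we use that $W$ is a single segment, so all pieces share the tangent direction $v_0$). Summing over $q$ gives $|\catt^n(W)| = (|A_t^n v_0|/|v_0|)\,|W|$, and the common factor cancels in the ratio, yielding
\[
p_q \;=\; \frac{|W^{(n)}_q|}{|W|} \;=\; \frac{|\catt^n(W^{(n)}_q)|}{|\catt^n(W)|} \;=\; \frac{|W_q|}{|\catt^n(W)|}.
\]
Finally, by the cone invariance \eqref{eq:cone-invariance} the vector $A_t^k v_0$ lies in $\mathscr C^u$ for every $k \ge 0$, so iterating the definition \eqref{eq:minimal-contraction} of $\overline\lambda$ yields $|A_t^n v_0|/|v_0| \le \overline\lambda^n$, hence $|\catt^n(W)| \le \overline\lambda^n |W|$, which gives $p_q \ge |W_q|/(\overline\lambda^n |W|)$.

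I do not expect any serious obstacle here: once one notices that the translational parts of the $\cF_{A,j,t}$ drop out of all length computations and that all pieces of $W$ share a common tangent direction preserved by $A_t$, the proof is a clean two-line induction followed by a factor cancellation. The only bookkeeping care needed is distinguishing pre-images $W^{(n)}_q \subset W$ from images $W_q = \catt^n(W^{(n)}_q)$.
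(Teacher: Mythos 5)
Your proof is correct and follows essentially the same route as the paper: an induction driven by the evolution rule \eqref{eq:def-evolution-sf}, with the crucial cancellation coming from the fact that all pieces of $W$ (and of its images) share a common slope, so every affine branch stretches them by the same factor, and the bound via $\overline\lambda$ from \eqref{eq:minimal-contraction} at the end. The only difference is cosmetic bookkeeping—you induct on the preimage-side formula $p_q=|W^{(n)}_q|/|W|$ and then convert, while the paper inducts directly on $p_q=|W_q|/|\catt^n(W)|$—but the underlying cancellation is identical.
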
 
\begin{proof}
We prove the statement by induction. For \(n = 0\) the statement is trivially verified. Suppose the statement is true for some \(n \in \bN_0\). Let \((W_j, p_j) \in \cF_t^n \cG_W\) and let \(W_{j,k}\) the connected components of \(W_j \cap \cS_t^{+}\). 
Then, writing \(W_q = \catt (W_{j,k})\), using the inductive hypothesis and recalling \eqref{eq:def-evolution-sf}, the weight associated to \(W_q\) is
\[
p_{q} = p_j \frac{| W_{j,k}|}{|W_j|} =  \frac{|W_j|}{|\catt^n (W) |}  \frac{|W_{j,k}|}{|W_j|}  = \frac{| W_{j,k}|}{|\catt^n (W) |} = \frac{|\catt (W_{j,k})|}{|\catt^{n+1}(W)|} = \frac{|W_q|}{|\catt^{n+1}(W)|},
\] 
where in the fourth equality we have used that \(W_{j,k} \subseteq \catt^n (W)\) and thus \(W_{j,k}\) and \(\catt^n (W)\) have the same slope. This shows the first part of the Lemma. The second statement follows from what we just proved and the second of \eqref{eq:minimal-contraction}.
\end{proof}
In the next Lemma we show that small segments grow on average when iterated by the dynamics until they reach a certain length scale given by the properties of the map. While similar growth Lemmas are ubiquitous in the study of hyperbolic systems with discontinuities, the following is very close to the one in \cite{C99}. Let \(N_0\) be given by Lemma \ref{lem:hyp-beats-cutting}.
\begin{lemma}[Growth Lemma]\label{lem:growth-lemma}
  There exist \(z \in (0,1)\) and \(Z \in \bR^+\) such that, for any \(t \in [0, 1/8]\), \(k \in \bN_0\), and standard family \(\cG\),
  \[
      \cZ(\catt^{k N_0} \cG) \le z^k \cZ (\cG) + Z.
  \]
  Furthermore, there exist \(Z_{*}, Z_{**}, Q_1,Q_2 \in \bR^+\), such that, for all \(t \in [0, 1/8]\),
  \[
  \begin{split}
    &\cZ(\catt^{p}\cG) \le \max\{Z_{**}\cZ(\cG), Z_{*}\}, \quad \forall p \in \bN_0,\\
    &\cZ(\catt^{p}\cG) \le Z_{*}, \quad \forall p \ge Q_1\left|\ln \cZ(\cG)\right| + Q_2.
  \end{split}
  \]
\end{lemma}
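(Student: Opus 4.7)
The plan is to first establish an $N_0$-step contraction for a single standard curve, then extend it by linearity to general standard families, and finally handle non-multiple-of-$N_0$ iterates via a crude one-step bound.

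For a single $W \in \cW$, let $\{W_{j,k}\}$ denote the maximal connected components of $W \setminus \cS_t^{+,N_0}$. By Lemma \ref{lem:weigths-single-curve}, each piece of $\catt^{N_0}\cG_W$ carries weight proportional to its image length, so $\cZ(\catt^{N_0}\cG_W)$ collapses to $(\text{number of pieces})/|\catt^{N_0}(W)|$. Lemma \ref{lem:trasversality2} bounds the number of pieces by $K_{N_0} + 1 + P_{N_0}|W|$, while the minimum expansion \eqref{eq:minimal-contraction} gives $|\catt^{N_0}(W)| \ge \lambda^{N_0}|W|$. Together,
\[
    \cZ(\catt^{N_0}\cG_W) \le z\,\cZ(\cG_W) + Z_0, \qquad z := \frac{K_{N_0}+1}{\lambda^{N_0}}, \quad Z_0 := \frac{P_{N_0}}{\lambda^{N_0}},
\]
with $z<1$ by Lemma \ref{lem:hyp-beats-cutting}. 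For a general $\cG = \{(W_j, p_j)\}_j$, iterating the weight rule \eqref{eq:def-evolution-sf} shows that each piece of $\catt^{N_0}\cG$ coming from $W_j$ carries weight $p_j|W_q|/|\catt^{N_0}(W_j)|$, so $\cZ(\catt^{N_0}\cG) = \sum_j p_j \cZ(\catt^{N_0}\cG_{W_j})$. Summing the single-curve bound gives $\cZ(\catt^{N_0}\cG) \le z\,\cZ(\cG) + Z_0$, and iterating $k$ times produces $\cZ(\catt^{kN_0}\cG) \le z^k \cZ(\cG) + Z_0/(1-z)$, which is the first statement with $Z := Z_0/(1-z)$.

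For the remaining statements, the same single-step argument yields $\cZ(\catt\cG) \le C_1\cZ(\cG) + C_2$ with $C_1 := (K_1+1)/\lambda$ and $C_2 := P_1/\lambda$; there is no smallness here (so $C_1$ may exceed $1$), but iterating at most $N_0-1$ times still gives $\cZ(\catt^r\cG) \le A\,\cZ(\cG) + B$ for constants $A,B$ depending only on $C_1, C_2, N_0$. Writing $p = kN_0 + r$ with $0\le r < N_0$ and combining,
\[
    \cZ(\catt^p \cG) \le A z^k \cZ(\cG) + AZ + B.
\]
Taking $Z_{**} := 2A$ and $Z_* := 2(AZ+B)$, the right-hand side is at most $\max\{Z_{**}\cZ(\cG), Z_*\}$, giving the second statement. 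The third follows by solving $A z^k \cZ(\cG) \le AZ+B$ for $k$: one needs $k \ge (\ln\cZ(\cG) - \ln(Z + B/A))/\ln(1/z)$, which matches the form $p \ge Q_1|\ln\cZ(\cG)| + Q_2$ with $Q_1 := N_0/\ln(1/z)$ and $Q_2$ absorbing additive constants and the $r<N_0$ slack; replacing $\ln\cZ(\cG)$ by $|\ln\cZ(\cG)|$ is harmless since every $W \in \cW$ satisfies $|W|\le\sqrt 2$, hence $\cZ(\cG)\ge 1/\sqrt 2$ and the two differ by at most a constant.

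The main obstacle is the linearity identity $\cZ(\catt^{N_0}\cG) = \sum_j p_j \cZ(\catt^{N_0}\cG_{W_j})$: without the exact weight identification of Lemma \ref{lem:weigths-single-curve}, one would only control the total image length rather than each summand separately, and the recursive contraction would fail. Once this is in place, the hyperbolicity-beats-cutting input of Lemma \ref{lem:hyp-beats-cutting} supplies the multiplicative constant strictly less than one, and everything else is geometric-series bookkeeping.
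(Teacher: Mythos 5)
Your proposal is correct and follows essentially the same route as the paper: the same ingredients (the complexity bound of Lemma \ref{lem:trasversality2}, the weight identification of Lemma \ref{lem:weigths-single-curve}, the minimal expansion \eqref{eq:minimal-contraction}, and Lemma \ref{lem:hyp-beats-cutting}) give the $N_0$-step contraction, which is then iterated and combined with a crude one-step bound for the remainder $p \bmod N_0$. The only cosmetic difference is that you prove the bound curve-by-curve and sum using linearity of the evolved weights in $p_j$, whereas the paper runs the same double-sum computation directly on the family.
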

\begin{proof}
 Let \(\cG = \{W_j, p_j\}_{j \in \cI}\). For each \(j \in \cI\), let \(\{W_{j,j'}\}\), \(j' \in \cI_j\), be a partition of \(\catt^{p}(W_j)\), \(p \in \bN_0\), in connected components. The cardinality of \(\cI_j\) is not bigger than the number of connected components of \(W_j \setminus \cS^{+,p}_t\). Therefore, by Lemma \ref{lem:trasversality2}, there exists a constant \(P_{p} \in \bR^+\), such that, for each \(j \in \cI\),
\begin{equation}\label{eq:cardinality-new-partition}
  \# \cI_j \le K_{p} + 1 + P_{p}|W_j|.
\end{equation}
Moreover, by Lemma \ref{lem:weigths-single-curve}, the weight associated to \(W_{j,j'}\) is
\begin{equation}\label{eq:iterated-evol}
p_{j,j'} = p_j \frac{|W_{j,j'}|}{|\catt^p(W_j)|}.
\end{equation}
Hence, by \eqref{eq:cardinality-new-partition} and \eqref{eq:iterated-evol},
\begin{equation}\label{eq:estimate-evol-Z}
\begin{split}
  \cZ(\catt^{p}\cG) &= \sum_{j \in \cI}\sum_{j' \in \cI_{j}} p_{j,j'} \frac{1}{|W_{j,j'}|}= \sum_{j \in \cI}\sum_{j' \in \cI_{j}}p_j\frac{|W_{j,j'}|}{|\catt^{p} W_j|}\frac{1}{|W_{j,j'}|} \\
  &\le \sum_{j \in \cI}p_j\frac{K_{p} +1 +P_{p}|W_j|}{\lambda^{p}|W_j|}\le \frac{K_{p} + 1}{\lambda^{p}}\cZ(\cG) + \frac{P_{p}}{\lambda^{p}}.
\end{split}
\end{equation}
We now prove the first statement of the Lemma. The case \(k = 1\) follows by \eqref{eq:estimate-evol-Z} and Lemma \ref{lem:hyp-beats-cutting}. For general \(k \in \bN\), it follows by induction, using \eqref{eq:estimate-evol-Z} with  \(\catt^{k N_0} \cG\) in place of \(\cG\) and \(p = N_0\). Moreover, equation \eqref{eq:estimate-evol-Z} with \(p = 1\) yields
\begin{equation}\label{eq:bad-estimate}
  \cZ \left(\catt \cG\right) \le\frac{K_{1} + 1}{\lambda}\cZ(\cG) + \frac{P_{1}}{\lambda} \le\frac{K_1 + 1+ \sqrt{2} P_1}{\lambda} \cZ(\cG),
\end{equation}
where we have used that \(\cZ(\cG) \ge 1/\sqrt{2}\) (recall the upper bound for the lengths of standard segments). Writing \(p = kN_0 + q\), \(k \in \bN_0\), \(q \in \{0,...,N_0-1\}\),
\begin{equation}\label{eq:growth-eq-proof}
\begin{split}
\cZ(\catt^p \cG) &\le \left( \frac{K_1 + 1+ \sqrt{2}P_1}{\lambda}\right)^q \cZ(\catt^{kN_0}\cG) \le Z_{**} \max\left\{\cZ(\cG), \frac{Z}{1-z}\right\} \\
&\le \max\{Z_{**}\cZ(\cG), Z_{*}\},
\end{split}
\end{equation}
where we have used \eqref{eq:bad-estimate} repeatedly in the first inequality, the first part of the Lemma in the second, and we have set
\begin{equation}\label{eq:star-2star}
Z_{**} = \left(\frac{K_1 + 1+ \sqrt{2}P_1}{\lambda} + 1\right)^{N_0}, \quad Z_{*} = \frac{2ZZ_{**}}{1-z}.
\end{equation}
The third claim can be obtained by the first part of the Lemma together with the first inequality in \eqref{eq:growth-eq-proof} and the definition of \(Z_{*}\) in the equation above.
\end{proof}
We stress the uniformity in the parameter \(t\) in Lemma \ref{lem:growth-lemma}. Keeping track of the regularity of standard families will be an important task. We say that \(\cG\) is a \textit{regular} standard family if\footnote{\(20000\) is just a very big number that comes from the far from optimal estimates in Lemmata \ref{lem:int-rect} and \ref{lem:first-coupling}.}
\begin{equation}\label{eq:regular-sf-def}
\cZ(\cG) \le \widetilde Z, \quad \widetilde Z = 20000Z_{*},
\end{equation}
where \(Z_{*}\) is defined in \eqref{eq:star-2star}. The following simple Lemma unveils the connection between \(\cZ\) and the proportion of mass in long segments in a standard family.
\begin{lemma}\label{lem:proportion-of-good-sp}
  For any standard family \(\cG\) and any \(\delta \in \bR^+\),
  \[
  \begin{split}
    &\sum_{j \in \cI: |W_j| \le \delta}p_j \le \delta \cZ (\cG), \\
  \end{split}
  \]
  or, equivalently,
  \[
    \sum_{j \in \cI: |W_j| > \delta}p_j \ge 1 - \delta \cZ (\cG). 
  \]
\end{lemma}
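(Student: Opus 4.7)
The plan is to observe that this is a Markov-type inequality applied to the definition of $\cZ(\cG)$. Recall that $\cZ(\cG) = \sum_{j \in \cI} p_j/|W_j|$, and that $\{p_j\}_{j \in \cI}$ is a probability vector.

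First I would restrict the sum defining $\cZ(\cG)$ to those indices $j$ with $|W_j| \le \delta$, obtaining
\[
\cZ(\cG) \ge \sum_{j \in \cI : |W_j| \le \delta} \frac{p_j}{|W_j|} \ge \frac{1}{\delta}\sum_{j \in \cI : |W_j| \le \delta} p_j,
\]
where in the second inequality we used that $1/|W_j| \ge 1/\delta$ on the range of summation. Rearranging gives the first claimed inequality. The second (equivalent) statement follows by taking complements: since $\sum_{j \in \cI} p_j = 1$,
\[
\sum_{j \in \cI : |W_j| > \delta} p_j = 1 - \sum_{j \in \cI : |W_j| \le \delta} p_j \ge 1 - \delta \cZ(\cG).
\]

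There is no real obstacle here; the statement is a one-line Chebyshev/Markov bound on the random variable $1/|W_j|$ with respect to the probability distribution $\{p_j\}$, and its content is simply to record that a standard family with small $\cZ$ must concentrate most of its mass on long curves, a fact that will be used in conjunction with the Growth Lemma \ref{lem:growth-lemma} to ensure that after enough iterations most of the mass of $\catt^n \cG$ lies on curves of length bounded below.
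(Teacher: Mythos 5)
Your proof is correct and is essentially identical to the paper's: both use the Markov-type bound $p_j \le p_j\,\delta/|W_j|$ on the indices with $|W_j|\le\delta$ (you simply phrase it as restricting the sum defining $\cZ(\cG)$ and rearranging), and both deduce the second inequality from $\sum_{j\in\cI}p_j=1$. No gaps.
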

\begin{proof}
Indeed,
\[
\begin{split}
  \sum_{j \in \cI: |W_j| \le \delta}p_j \le \sum_{j \in \cI: |W_j| \le \delta}p_j \frac{\delta}{|W_j|} \le \delta \cZ(\cG).
\end{split}
\]
The second assertion can be deduced from the first and \(\sum_{j \in \cI} p_j= 1\).
\end{proof}
We introduce a number \(\delta_{*} \in \bR^+\) that will be useful in the future. Set
\begin{equation}\label{eq:deltastar}
\delta_{*} = \frac{1}{100 Z_{*}}.
\end{equation}
In view of Lemma \ref{lem:growth-lemma} and Lemma \ref{lem:proportion-of-good-sp}, \(\delta_{*}\) represents the minimal length of segments shared by most mass in a standard family (to be precise, not less than \(99\%\) of the mass) after the time required by the growth Lemma. Roughly speaking, we can view \(\delta_{*}\) as an intrinsic length scale given by the discontinuous dynamics at equilibrium (uniform in \(t\)). We now define coupling of standard families. Intuitively, two standard families are \((p,\eta)\)-coupled if at least a proportion \(p\) of their mass is at distance \(\eta\) along the stable direction.
\begin{definition}[\((p,\eta)\)-Coupling] \label{def:(p,eta)-coupling}
We say that \(W_1, W_2 \in \cW\) are \(\eta\)-\textit{coupled}, for \(\eta \in \bR^+\), if 
\[
\begin{split}
     &\forall x \in W_1, \quad \{x + \widetilde \eta v\}_{\widetilde \eta \in [-\eta, \eta]} \cap W_2 \neq \emptyset, \quad v \in E^s_t, \text{ }\|v\| =1,\\
    &\hspace{4cm}\text{and}\\
    &\forall x \in W_2, \quad \{x + \widetilde \eta v\}_{\widetilde \eta \in [-\eta, \eta]} \cap W_1 \neq \emptyset, \quad v \in E^s_t, \text{ }\|v\| =1.
\end{split}
\]
Given two standard families \(\cG_1 = \{W_{1,j}, p_{1,j}\}_{j \in \cI_1}\) and \(\cG_2 = \{W_{2,j}, p_{2,j}\}_{j \in \cI_2}\), we say that \(\cG_1\) and \(\cG_2\) are \((p, \eta)\)-coupled, \(p \in (0,1]\), \(\eta \in \bR^+\), if there exists, \(\cI_{1,c} \subseteq \cI_1\), \(\cI_{2,c} \subseteq \cI_2\), standard curves \(W_{1,j,c} \subseteq W_{1,j}\), \(j \in  \cI_{1,c}\), and \( W_{2,j,c} \subseteq W_{2,j}\), \(j \in  \cI_{2,c}\), and a bijection \(\Upsilon :  \cI_{1,c} \to  \cI_{2,c}\), such that 
\[
\begin{split}
    &W_{1,j,c} \text{ and } W_{2, \Upsilon(j),c}  \text{ are } \eta-\text{coupled};\\
  &\sum_{j \in  \cI_{1,c}} \min\left\{p_{1,j} \frac{|W_{1,j,c}|}{|W_{1,j}|}, p_{2,\Upsilon(j)} \frac{|W_{2,\Upsilon(j),c}|}{|W_{2,\Upsilon(j)}|}\right\} = p.
\end{split}
\]
 If \(\cG_1\) and \(\cG_2\) are \((1,\eta)\)-coupled, we say that they are \(\eta\)-coupled.
\end{definition}
Whenever we have two \((p,\eta)\)-coupled standard families, we can always reorder the indices to have \(\Upsilon(j) = j\), i.e., \(\cI_{1,c} = \cI_{2,c}\). For two \((p,\eta)\)-coupled standard family it is possible to define their coupled and uncoupled parts and renormalize the weights in a way that these are again standard families.
\begin{definition}[Decomposition] \label{def:decomposition-coupled-sf}
Let \(\cG_{\iota}\), \(\iota \in \{1,2\}\), be two \((p,\eta)\)-coupled standard families and \(\{W_{\iota,j, c}\}\), as above. For \(j \in \cI_\iota\), let \(\cD_{\iota,j}\) be some finite set indexing the connected components of \(W_{\iota,j} \setminus W_{\iota,j, c}\) and denote them by \(\overline W_{\iota,j, p}\), \(p \in \cD_{\iota,j}\) (if \(j \in \cI_\iota \setminus \cI_{\iota,c}\) we set \(W_{\iota,j, c} = \emptyset\)). We define,
\[
\begin{split}
&\underline{p}_{j} = \min\left\{p_{1,j} \frac{|W_{1,j,c}|}{|W_{1,j}|}, p_{2,j} \frac{|W_{2,j,c}|}{|W_{2,j}|}\right\}, \quad \forall j \in \cI_{1,c} = \cI_{2,c},  \quad \text{`the mass that} \\
&\text{we couple in the \(j^{th}\) element of the family';}\\\\
& p_{*,j} = \frac{\underline{p}_j}{\sum_{k \in  \cI_{1,c}} \underline p_k } =  \frac{\underline{p}_j}{p}, \quad \text{ `as above but normalized by the total coupled mass';}\\\\
& \overline p_{\iota,j} = p_{\iota,j} \frac{|W_{\iota,j,c}|}{|W_{\iota,j}|} - \underline{p}_j, \quad \forall j \in \cI_{\iota,c}, \quad \text{`mass that we do not couple in the \(j^{th}\)}\\
&\text{element of the family because of the difference between the two densities';}\\\\
& \overline p_{\iota,j,p} = p_{\iota,j}\frac{|\overline W_{\iota,j,p}|}{|W_{\iota,j}|}, \quad \forall p \in  \cD_{\iota,j}, \text{ }j \in \cI_{\iota}, \quad \text{`other mass that we do not couple}\\
&\text{in the \(j^{th}\) element of the family because it is supported on uncoupled segments';}\\\\
\end{split}
\]
\[
\begin{split}
&p_{\square, \iota, j} = \frac{\overline p_{\iota,j}}{1-p}, \quad \text{`non coupled mass normalized by all non coupled mass';}\\\\
& p_{\square, \iota, j, p} = \frac{\overline p_{\iota,j,p}}{1-p}\quad \text{`other non coupled mass normalized by all non coupled mass'}.\\
&
\end{split}
\]
Set \(\cD_\iota = \{(j,p) : j \in \cI_{\iota}, p \in \cD_{\iota,j}\}\). We define the coupled and uncoupled standard families as
\[
\begin{split}
    &\Coup_{p,\eta}(\cG_1, \cG_2) = \left\{(W_{1,j,c}, p_{*,j})\right\}_{j \in  \cI_{1,c}},\\
    &\Coup_{p,\eta}(\cG_2, \cG_1) = \left\{(W_{2,j,c}, p_{*,j})\right\}_{j \in  \cI_{2,c}},\\
    &\cG_1 \setminus \Coup_{p,\eta}(\cG_1, \cG_2) = \left\{(W_{1,j,c}, p_{\square, 1, j})\right\}_{j \in \cI_{1,c}} \cup \left\{(\overline W_{1,j,p}, p_{\square,1,j,p})\right\}_{(j,p) \in \cD_1},\\
    &\cG_2 \setminus \Coup_{p,\eta}(\cG_2, \cG_1) = \left\{(W_{2,j,c}, p_{\square, 2, j})\right\}_{j  \in \cI_{2,c}} \cup \left\{(\overline W_{2,j,p}, p_{\square,2,j,p})\right\}_{(j,p) \in \cD_2}.
\end{split}
\]
\end{definition}
Notice that \(\Coup_{p,\eta}(\cG_1, \cG_2)\) and \(\Coup_{p,\eta}(\cG_2, \cG_1)\) are \(\eta\)-coupled and all the above are standard families. We also observe that for each \(W_{\iota,j}\), \(\iota \in \{1,2\}\), we couple at most \textit{a single connected} segment \( W_{\iota ,j,c}\), so that
\begin{equation}\label{eq:number-connected-segments-coupling}
    \# \cD_{\iota, j} \le 2, \quad \forall j \in \cI_\iota.
\end{equation}
Since we will often deal with two standard families, we introduce the following notation for the `other' family. For \(\iota \in \{1,2\}\), we set
\[
    \iota^{*} = \iota +1 \mod 2.
\]
We will sometimes omit one or both subscripts in the \(\Coup\) notation, if clear from the context. Notice that if two standard families \(\cG_1\) and \(\cG_2\) are \((p,\eta)\)-coupled, there could be multiple choices for the standard families \(\Coup(\cG_{\iota}, \cG_{\iota*})\), but the difference is inessential for the following. The whole point of the decomposition above is that, for any \(\vf \in \cC^0(\cM)\),
  \begin{equation}\label{eq:basic-deco}
      \mu_{\cG_\iota}(\vf) = p \mu_{\Coup(\cG_\iota, \cG_{\iota^*})}(\vf) + (1-p)\mu_{\cG_\iota \setminus \Coup(\cG_\iota, \cG_{\iota^*})}(\vf).
  \end{equation}

The problem of coupling two standard families is that we are often able to couple only pieces of them. This fragmentation yields a loss of regularity of the resulting coupled and uncoupled families that we investigate in the next Lemma. 
\begin{lemma}\label{lem:Z-coupled}
  Let \(\cG_1\) and \(\cG_2\) two \((p,\eta)\)-coupled standard families for some \(p \in (0,1)\), \(\eta \in \bR^+\). Then, for \(\iota \in \{1,2\}\), 
  \[
  \begin{split}
      &\cZ\left(\Coup(\cG_\iota,\cG_{\iota^*})\right) \le \frac{\cZ(\cG_\iota)}{p}, \quad \cZ \left(\cG_\iota \setminus \Coup(\cG_\iota,\cG_{\iota^*})\right) \le \frac{3\cZ(\cG_\iota)}{1-p}. 
  \end{split}
  \]
\end{lemma}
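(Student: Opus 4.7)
The plan is to unfold the definitions from Definition \ref{def:decomposition-coupled-sf} and estimate each piece by $\cZ(\cG_\iota)$. The key observation is that everything reduces to two elementary bounds: $\underline{p}_j \le p_{\iota,j}|W_{\iota,j,c}|/|W_{\iota,j}|$ (the min is no larger than either term) and $\#\cD_{\iota,j} \le 2$ (cutting a single sub-segment off a segment leaves at most two pieces, cf.\ \eqref{eq:number-connected-segments-coupling}).

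For the coupled part, I would compute directly
\[
\cZ\!\left(\Coup(\cG_\iota,\cG_{\iota^*})\right) = \sum_{j \in \cI_{\iota,c}} \frac{p_{*,j}}{|W_{\iota,j,c}|} = \frac{1}{p}\sum_{j \in \cI_{\iota,c}} \frac{\underline{p}_j}{|W_{\iota,j,c}|}.
\]
Bounding $\underline{p}_j \le p_{\iota,j}|W_{\iota,j,c}|/|W_{\iota,j}|$ makes the $|W_{\iota,j,c}|$ cancel and yields the desired bound $\cZ(\cG_\iota)/p$ by definition of $\cZ(\cG_\iota)$.

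For the uncoupled part, I would split the sum along the two families of pairs in $\cG_\iota \setminus \Coup(\cG_\iota,\cG_{\iota^*})$:
\[
\cZ\!\left(\cG_\iota \setminus \Coup\right) = \frac{1}{1-p}\left[\sum_{j \in \cI_{\iota,c}} \frac{\overline{p}_{\iota,j}}{|W_{\iota,j,c}|} + \sum_{j \in \cI_\iota} \sum_{p \in \cD_{\iota,j}} \frac{p_{\iota,j}}{|W_{\iota,j}|}\right].
\]
(In the second sum the lengths $|\overline W_{\iota,j,p}|$ cancel via the definition of $\overline p_{\iota,j,p}$.) For the first bracketed sum, using $\overline{p}_{\iota,j} \le p_{\iota,j}|W_{\iota,j,c}|/|W_{\iota,j}|$ again cancels $|W_{\iota,j,c}|$ and produces a contribution bounded by $\cZ(\cG_\iota)$. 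For the second, $\#\cD_{\iota,j}\le 2$ gives a contribution bounded by $2\cZ(\cG_\iota)$. Adding yields $3\cZ(\cG_\iota)/(1-p)$.

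There is no real obstacle here: both bounds follow from the definitions once one sees that the only worry is the factor coming from fragmentation, and \eqref{eq:number-connected-segments-coupling} caps that factor at $2$. The only subtlety is being careful in the uncoupled-part computation that the ``residual'' mass $\overline p_{\iota,j}$ sitting on the still-coupled segment $W_{\iota,j,c}$ (arising when one family has more mass there than the other) must be counted separately from the $\overline W_{\iota,j,p}$ pieces; handling it simply requires the same $\underline p_j$ bound used for the coupled part.
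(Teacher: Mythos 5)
Your proposal is correct and follows essentially the same route as the paper: the same direct computation for the coupled part (bounding the min by the $\iota$-term so that $|W_{\iota,j,c}|$ cancels), and the same two-term split for the uncoupled part, with the residual mass $\overline p_{\iota,j}$ bounded by dropping $\underline p_j$ and the fragmentation term bounded via $\#\cD_{\iota,j}\le 2$, yielding $\cZ(\cG_\iota)/(1-p)+2\cZ(\cG_\iota)/(1-p)$. No gaps.
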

\begin{proof}
  Setting \(\iota = 1\) for notational convenience and according to Definition \ref{def:decomposition-coupled-sf}, 
  \[
  \begin{split}
   \cZ\left(\Coup(\cG_1,\cG_2)\right) &= \sum_{j \in  \cI_{1,c}}\frac{ p_{*,j}}{| W_{1,j,c}|} =   \sum_{j \in \cI_{1,c}} \frac{\min\left\{p_{1,j} \frac{|W_{1,j,c}|}{|W_{1,j}|}, p_{2,j} \frac{|W_{2,j,c}|}{|W_{2,j}|}\right\}}{p|W_{1,j,c}|}\\
& \le \frac{1}{p} \sum_{j \in \cI_{1,c}}\frac{p_{1,j}}{|W_{1,j}|} \le \frac{1}{p} \sum_{j \in \cI_{1}}\frac{p_{1,j}}{|W_{1,j}|}\le \frac{\cZ(\cG_1)}{p}.
  \end{split}
  \]
For the second inequality, we have
\[
  \cZ \left(\cG_1 \setminus \Coup(\cG_1,\cG_2)\right) = \sum_{j \in \cI_{1,c}} p_{\square,1,j}\frac{1}{| W_{1,j,c}|} + \sum_{(j,p) \in \cD_1} p_{\square, 1, j, p} \frac{1}{|\overline W_{1,j,p}|},
\]
and we estimate each piece separately. For the first term,
\begin{equation}\label{eq:zeta-first}
\begin{split}
    \sum_{j \in \cI_{1,c}} &\frac{\left(p_{1,j}\frac{| W_{1,j,c}|}{|W_{1,j}|} - \underline p_{j} \right)}{ 1-p}\frac{1}{|W_{1,j,c}|}\le \frac{\sum_{j \in \cI_{1,c}} p_{1,j}\frac{1}{|W_{1,j}|}}{1-p} \le \frac{\cZ(\cG_1)}{1-p}.
\end{split}
\end{equation}
As for the second part, by \eqref{eq:number-connected-segments-coupling},
\begin{equation}\label{eq:zeta-second}
\begin{split}
   \sum_{(j,p) \in \cD_1} p_{1,j}\frac{|\overline W_{1,j,p}|}{|W_{1,j}|}&\frac{1}{(1-p)|\overline W_{1,j,p}|} \\
&= \sum_{j \in \cI_1}p_{1,j}\frac{ \# \cD_{1,j} }{(1-p)|W_{1,j}|}\le \frac{2}{1-p} \cZ(\cG_1).
\end{split}
\end{equation}
The second statement then follows by adding \eqref{eq:zeta-first} and \eqref{eq:zeta-second}.
\end{proof}

We conclude this section with some simple observations that follow directly from Definition \ref{def:decomposition-coupled-sf} and will be useful later. Let \(\cG_1\) and \(\cG_2\) be \((p,\eta)\)-coupled:
 \begin{enumerate}
  \item [{\crtcrossreflabel{(O1)}[obs:p-eta]}] \(\cG_1\) and \(\cG_2\) are \((p',\eta')\)-coupled for any \(\eta' \ge \eta\) and \(p' \le p\). For the second part, one defines coupling with smaller segments which are retractions of the original ones.
\end{enumerate}
By considering smaller segments as in \ref{obs:p-eta}, the regularity of the coupled families decrease (i.e., \(\cZ\) increases). There is a somehow better way to couple less mass.
\begin{enumerate}
  \item  [{\crtcrossreflabel{(O2)}[obs:pless]}]
For any \(p' \in (0,p]\), there exist \((p',\eta)\)-coupled standard families \(\widetilde \cG_\iota \simeq \cG_\iota\) such that \(\cZ (\Coup_{p', \eta}(\widetilde \cG_\iota, \widetilde \cG_{\iota^*})) = \cZ (\Coup_{p, \eta}(\cG_\iota, \cG_{\iota^{*}}))\). This follows from the identity, for \(\iota \in \{1,2\}\),
\[
\begin{split}
\hspace{1cm}\mu_{\cG_{\iota}} \!= & p'\!\mu_{\Coup_{p,\eta}(\cG_\iota, \cG_{\iota^*})} \! +\! (p\!-\!p')\mu_{\Coup_{p,\eta}(\cG_\iota, \cG_{\iota^*})} \!+\!(1\!-\!p)\mu_{ \cG_{\iota} \setminus \Coup_{p,\eta}(\cG_{\iota}, \cG_{\iota^*})}.
\end{split}
\]
Then, one modifies the weights as in \eqref{eq:indistinguishable} and considers only the mass in the first addend as coupled. The regularity of \(\widetilde \cG_{\iota} \setminus \Coup_{p',\eta}(\widetilde \cG_{\iota}, \widetilde \cG_{\iota^*})\) is a weighted average of \(\cZ (\Coup_{p, \eta}(\cG_\iota, \cG_{\iota^*}))\) and \(\cZ(\cG_{\iota} \setminus \Coup_{p,\eta}(\cG_{\iota}, \cG_{\iota^*}))\) that can be estimated by Lemma \ref{lem:Z-coupled}.
  \item  [{\crtcrossreflabel{(O3)}[item:dist]}] For any \(\vf \in \cC^1(\cM)\), by the decompositions \eqref{eq:basic-deco}, 
  \[
\begin{split}
    \bigl|\mu_{\cG_1}(\vf) - &\mu_{\cG_2}(\vf)\bigr| \le p\left|\mu_{\Coup(\cG_1, \cG_2)}(\vf) \!- \!\mu_{\Coup(\cG_2, \cG_1)}(\vf)\right| \\
&+ (1-p)\bigl|\mu_{\cG_1 \setminus \Coup(\cG_1, \cG_2)}(\vf)\bigr| + (1-p)\bigl|\mu_{\cG_2 \setminus \Coup(\cG_2, \cG_1)}(\vf)\bigr| \\
&\le p\eta \|\vf'\|_{\cC^0} + 2(1-p)\|\vf\|_{\cC^0}. 
\end{split}
\]
 \end{enumerate}

Starting from here, we address the essence of the coupling arguments.

\subsection{Coupling of two standard segments}\label{subsec:coup-segments} \textit{In this subsection we prove that it is possible to couple any two standard segments after a big enough iterate of the perturbed map. The core of this subsection is Lemma \ref{lem:first-coupling}}. We introduce partitions of \(\cM\) useful for coupling. For \(\eta \in \bR^+\), consider a partition of \(\cM\) into squares whose sides are parallel to \(E_u^t\) and \(E_s^t\) (note that \(A_t\) is symmetric) and have length \(\eta\). It is not possible to construct a similar partition near \(\partial \cM\). There, we consider polygons \(Q' = Q \cap \cM\), where \(Q\) is a square as above. Call any such partition \(\cQ_\eta\) and let \(\widehat \cQ_\eta \subseteq \cQ_\eta\) be the elements of the partition that are at least \(2\eta\) distant from \(\partial \cM\) (and so are squares). Notice that \(\lim_{\eta \to 0}\#\widehat \cQ_\eta /\#\cQ_\eta = 1\). We call size of a square in \(\widehat\cQ_\eta\) the size of its sides. We will couple mass in these squares. The next Lemma is the consequence of equidistribution of standard families for the Arnold cat map tailored to what we need. Recall that \(\cW_\delta\) are segments in the unstable cone of length at least \(\delta\).
\begin{lemma}\label{lem:unp-density}
  For any \(\eta\) small enough and  \(\delta \in \bR^+\), there exists \(N_{\eta,\delta}\in \bN\) such that for any \(W \in \cW_{\delta}\) and \(k \ge N_{\eta, \delta}\),
\[
   \sum_{Q \in \widehat \cQ_{\eta}} \left| \frac{1}{\# \widehat \cQ_{\eta}} - \frac{\left|Q \cap \cato^k(W)\right|}{\left|\cato^k(W) \right|}\right| \le \frac{1}{100}.
\]
\end{lemma}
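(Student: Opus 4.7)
The strategy is to split
\[
\biggl|\frac{1}{\#\widehat\cQ_\eta} - \frac{|Q \cap \cato^k(W)|}{|\cato^k(W)|}\biggr| \le \biggl|\frac{1}{\#\widehat\cQ_\eta} - m(Q)\biggr| + \biggl|m(Q) - \frac{|Q \cap \cato^k(W)|}{|\cato^k(W)|}\biggr|,
\]
and bound each contribution separately after summing over the $\cO(\eta^{-2})$ cells $Q \in \widehat\cQ_\eta$. For the static piece, every $Q \in \widehat\cQ_\eta$ is an $\eta \times \eta$ square so $m(Q) = \eta^2$, while $\cQ_\eta \setminus \widehat\cQ_\eta$ covers a boundary strip of area $\cO(\eta)$, whence $\eta^2 \#\widehat\cQ_\eta = 1 + \cO(\eta)$. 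Therefore $|1/\#\widehat\cQ_\eta - m(Q)| = \cO(\eta^3)$, and summing over $\cO(\eta^{-2})$ cells gives a total $\cO(\eta)$, which is below $1/200$ once $\eta$ is small.

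For the dynamical piece, I would exploit that, after identifying $\cM$ with $\bR^2/\bZ^2$, $\cato$ is the linear Anosov toral automorphism $A_0 \in \mathrm{SL}_2(\bZ)$, so $\cato^k(W)$ is a disjoint union of parallel segments of total length $(\mu^u_0)^k |W| \ge (\mu^u_0)^k \delta$, all directed along the unstable eigenline $E^u_0$. The slope of $E^u_0$ is a quadratic irrational, so it satisfies the sharp Diophantine bound $|n \cdot E^u_0| \gtrsim |n|^{-1}$ for $0 \neq n \in \bZ^2$. A direct Fourier computation then yields, for any $\vf \in \cC^2(\cM)$ extended $\bZ^2$-periodically,
\[
\biggl|\frac{1}{|\cato^k(W)|}\int_{\cato^k(W)} \vf - m(\vf)\biggr| \le \frac{C\|\vf\|_{\cC^2}}{(\mu^u_0)^k \delta}.
\]
Approximating $\mathbbm 1_Q$ by a $\cC^2$ bump $\vf_Q$ at scale $\alpha=\eta^{3}$, and using a companion bump to control the mass of $\cato^k(W)$ in $[\partial Q]_\alpha$ via the same Fourier bound, yields
\[
\biggl|\frac{|Q \cap \cato^k(W)|}{|\cato^k(W)|} - m(Q)\biggr| \le C\bigl(\alpha + \alpha^{-2}(\mu^u_0)^{-k}\delta^{-1}\bigr).
\]
Summing over $\cO(\eta^{-2})$ cells and choosing $N_{\eta,\delta}$ so that $(\mu^u_0)^{N_{\eta,\delta}} \ge C' \eta^{-9}\delta^{-1}$ drives the total below $1/200$, which combined with the static piece concludes the proof.

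The main obstacle is managing the competition between the number of cells ($\cO(\eta^{-2})$) and the equidistribution rate per cell, which degrades polynomially with the smoothing scale $\alpha$. The quadratic irrationality of $E^u_0$ is what supplies the required Diophantine denominator, and the exponential expansion $(\mu^u_0)^k$ ensures that $N_{\eta,\delta}$ need only grow logarithmically in $1/\eta$ and $1/\delta$. An alternative route, avoiding explicit Fourier bookkeeping, is to invoke the exponential mixing of $\cato$ on an anisotropic Banach space (e.g.\ in the spirit of \cite{BG09}) applied to the standard pair $(W, 1/|W|)$ against the smoothed indicator $\vf_Q$; this absorbs all the oscillatory integral estimates into a single quasi-compactness statement and yields the same $N_{\eta,\delta}$.
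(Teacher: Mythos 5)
Your overall scheme --- splitting the per-cell discrepancy into a static piece $\bigl|1/\#\widehat\cQ_\eta - m(Q)\bigr|$ and a dynamical piece $\bigl|m(Q) - |Q\cap\cato^k(W)|/|\cato^k(W)|\bigr|$, making each per-cell error small compared with $\eta^2$, and then summing over the $\cO(\eta^{-2})$ cells --- is exactly the paper's. The difference lies in the dynamical piece: the paper simply invokes equidistribution of standard pairs for the unperturbed cat map (results as in \cite{DKL21}, Chapter 3, applied after approximating $\mathbbm{1}_Q$ by $\cC^1$ functions), which for fixed $\eta,\delta$ gives a per-cell error at most $\eta^2/200$ for all $k\ge N_{\eta,\delta}$; this is precisely the ``alternative route'' of your closing paragraph. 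Your primary route, a self-contained Fourier/Diophantine discrepancy estimate for long segments nearly parallel to $E^u_0$, is genuinely different and, if completed, would yield the extra information that $N_{\eta,\delta}$ need only grow logarithmically in $\eta^{-1}$ and $\delta^{-1}$, which the paper neither needs nor obtains.

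As written, though, the key quantitative step does not follow from the computation you describe. Averaging $e^{2\pi i n\cdot x}$ over a segment of length $L$ in direction $E^u_0$ gives at best $\min\bigl(1, C|n|/L\bigr)$ after inserting $|n\cdot E^u_0|\gtrsim |n|^{-1}$, while $\cC^2$ regularity only yields $|\hat\vf(n)|\lesssim \|\vf\|_{\cC^2}|n|^{-2}$; the resulting frequency sum $\sum_{n\neq 0}|n|^{-2}\min\bigl(1,C|n|/L\bigr)$ is $O(1)$, not $O(1/L)$ (the tail $\sum_{|n|>L}|n|^{-2}$ is not even summable over $\bZ^2$), so the claimed bound $C\|\vf\|_{\cC^2}(\mu^u_0)^{-k}\delta^{-1}$ is not established by a ``direct Fourier computation'' with that Diophantine input alone. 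This is fixable: either take $\cC^3$ bumps with $\|\vf_Q\|_{\cC^3}\lesssim\alpha^{-3}$, which still closes your bookkeeping at the price of a larger power of $\eta^{-1}$ in the threshold for $(\mu^u_0)^k$, or use the sharper lattice sums available for badly approximable slopes (e.g. $\sum_{q\le N}1/(q\|q\omega\|)=O((\log N)^2)$). A second point needs repair: $\cato^k(W)$ is aligned with $A_0^k$ applied to the direction of $W$, whose slope may be rational and is only exponentially close to $E^u_0$, and its total length is only comparable to (not equal to) $(\mu^u_0)^k|W|$; the Diophantine bound must therefore be transferred from $E^u_0$ to the actual direction, using that the angle error times the length of $\cato^k(W)$ is $o(1)$ --- a harmless extra term, but one that has to be accounted for rather than asserting the segments are ``directed along $E^u_0$''.
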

\begin{proof}
Let \(\eta_0 \in \bR^+\) small enough such that for all \(\eta \in (0, \eta_0)\),
\begin{equation}\label{eq:smalleta}
\left|\eta^2 - \frac{1}{\# \widehat \cQ_{\eta}} \right| = \frac{m \left(\bigcup_{Q \in \cQ_{\eta} \setminus \widehat \cQ_{\eta}} Q\right)}{\#\widehat \cQ_{\eta}} \le \eta^2\frac{\# (\cQ_{\eta} \setminus \widehat \cQ_{\eta})}{\#\widehat \cQ_{\eta}} \le \frac{\eta^2}{200},
\end{equation}
where in the first inequality we have used that \(m(Q) \le \eta^2\) for any \(Q \in \cQ_{\eta}\) and in the second the limit for \(\eta\) tending to zero. Fix \(\eta \in (0, \eta_0)\) and \(\delta \in \bR^+\). By equidistribution of standard pairs for the Arnold cat map,\footnote{Equation \eqref{eq:equidistribution-unp} follows e.g. by the results in \cite{DKL21} chapter 3, by approximating any \(Q \in \widehat{Q}_{\eta}\) with \(\cC^1\) functions.} there exists \(N_{\eta, \delta} \in \bN\), such that, for any \(k \ge N_{\eta,\delta}\), \(Q \in \widehat \cQ_{\eta}\),
\begin{equation}\label{eq:equidistribution-unp}
\left|\frac{\left|Q \cap \cato^k (W)\right|}{\left|\cato^k (W)\right|} - \eta^2 \right| = \left|\frac{\left|Q \cap \cato^k (W)\right|}{\left|\cato^k (W)\right|} - m(Q) \right| \le \frac{\eta^2}{200}, \quad  \forall W \in \cW_{\delta}.
\end{equation}
Hence, by \eqref{eq:smalleta} and \eqref{eq:equidistribution-unp}, since \(\eta^2 \#\widehat \cQ_{\eta}\le 1\), for any \(W \in \cW_{\delta}\) and \(k \ge N_{\eta, \delta}\),
\[ 
\begin{split}
    \sum_{Q \in \widehat \cQ_{\eta}} \biggl| \frac{1}{\# \widehat \cQ_{\eta}} - &\frac{\left|Q \cap \cato^k(W)\right|}{\left|\cato^k (W) \right|}\biggr| \le \sum_{Q \in \widehat \cQ_{\eta}} \biggl| \frac{1}{\# \widehat \cQ_{\eta}} - \eta^2\biggr|\\
    &+ \sum_{Q \in \widehat \cQ_{\eta}} \biggl|\eta^2 - \frac{\left|Q \cap \cato^k(W)\right|}{\left|\cato^k (W) \right|}\biggr|\le \frac{2}{200}  \sum_{Q \in \widehat \cQ_{\eta}} \eta^2 \le \frac{1}{100}.
\end{split}
\]
\end{proof}
For \(W \in \cW\) , we denote by \(\Center(W)\) the set of points in \(W\) whose distance from \(\partial W\) is bigger than \(|W|/100\). In the next purely geometric Lemma we argue that if two long standard segments intersect a small square in their centers they can be coupled. Recall that \(\cG_{W} = \{(W,1)\}\).
\begin{lemma}\label{lem:int-rect}
  Let \(W_1, W_2 \in \cW\) and set 
\[
\zeta = \min\{|W_1|, |W_2|\}, \quad \alpha_{*} = \max \left\{\angle(W_1, E_t^u), \angle(W_2, E_t^u)\right\}.
\] 
Let \(\rho \in \left(0, \zeta/1000\right)\), \(Q \in \widehat Q_{\rho}\) and assume that \(\alpha_{*} \le \pi/8\). If \(\Center W_{\iota} \cap Q \neq \emptyset\), \(\iota \in \{1,2\}\), then there exist standard curves \(W_{\iota,c} \subseteq W_\iota\) such that \(\min\{|W_{1,c}|, |W_{2,c}|\} \ge \zeta/200\) and \(\cG_{W_{1,c}}\), \(\cG_{W_{2,c}}\) are \(\left(3\rho + \zeta \sin 2 \alpha_{\star}\right)\)-coupled.
\end{lemma}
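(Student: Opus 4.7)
The plan is to reduce the statement to a direct planar estimate. Place orthonormal coordinates with $E^u_t$ along the $x$-axis and $E^s_t$ along the $y$-axis, so that $Q$ is an axis-aligned square of side $\rho$ which, after a translation, one can take as $Q=[-\rho/2,\rho/2]^2$. In these coordinates, being $\eta$-coupled in the sense of Definition~\ref{def:(p,eta)-coupling} (with full weight) simply means that the two sub-segments $W_{1,c},W_{2,c}$ have the same $x$-projection $\widetilde I$ and that their graph-functions $y_1,y_2$ satisfy $\sup_{x_0\in\widetilde I}|y_1(x_0)-y_2(x_0)|\le\eta$; the bijection witnessing the coupling is the one matching equal $x$-coordinates.

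For the construction, I pick $c_\iota\in\Center(W_\iota)\cap Q$. By the definition of $\Center$ together with $|\alpha_\iota|\le\alpha_*\le\pi/8$, the portion of $W_\iota$ within arclength $\zeta/100$ of $c_\iota$ $x$-projects onto an interval of length at least $(\zeta/50)\cos(\pi/8)>\zeta/60$ centred at $(c_\iota)_x$. Since $|(c_1)_x-(c_2)_x|\le\rho\le\zeta/1000$, the intersection $I$ of the two such $x$-projections still has length larger than $\zeta/60$ and contains $0$. I take $\widetilde I\subseteq I$ to be the symmetric interval about $0$ of length exactly $\zeta/200$, and let $W_{\iota,c}\subseteq W_\iota$ be the sub-segment whose $x$-projection is $\widetilde I$; since $|W_{\iota,c}|=|\widetilde I|/\cos\alpha_\iota\ge\zeta/200$, the required length bound holds.

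For the coupling estimate, for $x_0\in\widetilde I$ I expand
\[
y_1(x_0)-y_2(x_0)=(y_{c,1}-y_{c,2})+(x_0-(c_1)_x)\tan\alpha_1-(x_0-(c_2)_x)\tan\alpha_2,
\]
and bound the three contributions: $|y_{c,1}-y_{c,2}|\le\rho$ since both centres lie in $Q$; $|(c_1)_x\tan\alpha_1|+|(c_2)_x\tan\alpha_2|\le\rho\tan\alpha_*\le\rho$ since $\tan(\pi/8)<1$; and $|x_0|\,|\tan\alpha_1-\tan\alpha_2|\le(\zeta/400)\cdot 2\tan\alpha_*=\zeta\tan\alpha_*/200$. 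The first two pieces together are at most $2\rho\le 3\rho$. For the third, $\zeta\tan\alpha_*/200\le\zeta\sin 2\alpha_*$ is equivalent to $1/200\le 2\cos^2\alpha_*$, which holds throughout $[0,\pi/8]$ since $2\cos^2(\pi/8)\gg 1/200$ (and both sides vanish when $\alpha_*=0$). Summing gives exactly the bound $3\rho+\zeta\sin 2\alpha_*$ of the lemma.

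No serious obstacle is expected: the claim is purely geometric, about two nearly horizontal segments crossing the same small square, and the hypothesis $\rho\le\zeta/1000$ leaves very generous slack. The only point requiring care is the length of $\widetilde I$: choosing it short (of order $\zeta/200$) rather than as large as $|I|$ is what makes the slope-mismatch term fit uniformly inside $\zeta\sin 2\alpha_*$, including at $\alpha_*=0$; this also explains why the lemma states the length threshold $\zeta/200$ rather than one proportional to $\zeta\cos^2\alpha_*$.
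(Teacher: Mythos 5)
Your argument is correct, and it certifies the coupling in a slightly different way than the paper does. The paper's proof anchors both subcurves at the right side of an enlarged square \(Q_{3\rho}\) (both segments cross it side to side because \(\alpha_*\le\pi/8\)), lets them run to the right with length anywhere between \(\zeta/200\) and \(\zeta\), and then estimates the stable distance as (initial offset \(\le 3\rho\) at the anchoring side) plus (divergence \(\le \zeta\sin\angle(W_1,W_2)\le\zeta\sin2\alpha_*\) over length up to \(\zeta\)); the matching of unstable projections needed for the two-sided condition in Definition \ref{def:(p,eta)-coupling} is left implicit. You instead work in \(E^u_t\)-\(E^s_t\) coordinates, fix a \emph{common} unstable-projection window \(\widetilde I\) of length exactly \(\zeta/200\) centered at \(Q\), and bound the vertical gap by \(2\rho+(\zeta/200)\tan\alpha_*\), which is comfortably inside \(3\rho+\zeta\sin2\alpha_*\). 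This buys two things: the equal-projection requirement of the two-sided coupling definition is verified explicitly (a point the paper glosses over), and the estimate exposes how much slack the stated bound has; the price is that your coupled pieces have length only of order \(\zeta/200\) rather than possibly up to \(\zeta\), but this is harmless since Lemma \ref{lem:first-coupling} only ever uses the lower bound \(\zeta/200\). The one step you should tighten is the containment \(\widetilde I\subseteq I\): it does not follow formally from ``\(|I|>\zeta/60\) and \(0\in I\)'' (zero could a priori sit near an endpoint of \(I\)), but it is immediate from the data you already have, since each projection interval is centered at \((c_\iota)_x\) with \(|(c_\iota)_x|\le\rho/2\le\zeta/2000\) and has half-length at least \((\zeta/100)\cos(\pi/8)>\zeta/120\), hence contains \([-\zeta/400,\zeta/400]\).
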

\begin{proof}
Since \(\alpha_{*} \le \pi/8\), both \(W_1\) and \(W_2\) intersect a square \(Q_{3\rho}\) of size \(3\rho\), centered in the original square \(Q\), in both sides parallel to \(E^s_t\). Since \(\Center W_{\iota} \cap Q \neq \emptyset\), it is possible to consider \(W_{\iota,c} \subseteq W_{\iota}\) such that \(\zeta/200 \le |W_{\iota,c}| \le \zeta\) and \(W_{\iota,c}\) have an endpoint on the right side of \(Q_{3\rho}\), lie all at the right of such side and are \(\eta\)-coupled for some \(\eta\) that we now specify. Since both \(W_{\iota,c}\) intersect a square of size \(3\rho\), the distance along \(E^s_t\) between \(W_{1,c}\) and \(W_{2,c}\) is less than, 
\[
3 \rho + \max\{|W_{1,c}|, |W_{2,c}|\} \sin  \angle (W_1, W_2) \le 3 \rho + \zeta   \sin  \angle (W_1, W_2).
\]
The fact that the angle between \(W_1\) and \(W_2\) is less than \(2\alpha_{*}\) concludes the proof.
\end{proof}
We now show that for small \(t\), it is possible to couple a fixed amount of mass in a controlled time for the perturbed maps \(\catt\). Recall \(\widetilde Z\) from \eqref{eq:regular-sf-def}, \(\delta_{*}\) from \eqref{eq:deltastar} and that \(\cG_1 \simeq \cG_2\) is introduced after equation \eqref{eq:indistinguishable}.
\begin{lemma}\label{lem:first-coupling}
   For any \(\eta \in \bR^+\) and \(\delta \in \bR^+\), there exists \(N_{\eta, \delta}\in \bN\) such that for any \(k \ge N_{\eta,\delta}\) there exists \(\ve_0 \in (0, \frac{1}{8}]\) with the following property: For any \(t \in [0, \ve_0]\) and \(W_{\iota} \in \cW_{\delta}\), \(\iota \in \{1,2\}\), there exist standard families \(\cG_{c,k,\iota} \simeq \catt^{k}\cG_{W_\iota}\) that are
\[
 \left(\frac{\delta_{*}}{400 \overline \lambda^k}, \eta\right)-\text{coupled}.
\]
Moreover, \( \cZ\bigl(\Coup(\cG_{c,k,\iota}, \cG_{c,k,\iota^{*}})\bigr) \le \widetilde Z\) (i.e., the coupled family is regular).
\end{lemma}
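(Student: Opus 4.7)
The plan is to identify, after $k$ iterations, at least one pair of long standard segments---one from each of the evolved families $\catt^k \cG_{W_1}$ and $\catt^k \cG_{W_2}$---whose centers both meet a common small square in a partition $\widehat{\mathcal Q}_\rho$, and then invoke the purely geometric coupling Lemma~\ref{lem:int-rect} on that pair. The mass lower bound will then follow directly from the weight formula of Lemma~\ref{lem:weigths-single-curve}. Set $\rho=\eta/10$. I first fix $k$ large enough to guarantee simultaneously: (i) $\cZ(\catt^k\cG_W)\le Z_*$ for every $W\in\cW_\delta$ and every $t\in[0,1/8]$, via the third assertion of Lemma~\ref{lem:growth-lemma} applied to the initial regularity $\cZ(\cG_W)=1/|W|\le 1/\delta$; (ii) the unperturbed equidistribution from Lemma~\ref{lem:unp-density} holds at scale $\rho$ up to error $1/100$ for every segment of length at least $\delta_*$; (iii) any standard segment obtained by $k$-fold iteration makes an angle $\alpha_*$ with $E^u_t$ so small that $\delta_*\sin 2\alpha_*\le\rho$, which is possible because each application of $A_t$ contracts angles inside $\mathscr C^u$ by the ratio $\mu^s_t/\mu^u_t\le 1/\lambda^2$ uniformly in $t$. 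Having fixed such $k$, I then choose $\ve_0\in(0,1/8]$ small enough that, for all $t\in[0,\ve_0]$, the perturbed iterate $\catt^k(W)$ is close to $\cato^k(W)$ in both Hausdorff distance and total length for every $W\in\cW_\delta$; this uses the continuity of $\cS_t^{+,k}$ in $t$ (Lemma~\ref{lem:continuity-sing-set}) together with the smallness of $\|A_t-A_0\|$.

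By (i) and Lemma~\ref{lem:proportion-of-good-sp}, at most $\delta_* Z_*=1/100$ of the mass in each $\catt^k\cG_{W_\iota}$ sits on segments shorter than $\delta_*$, so the total length of centers of long segments in $\catt^k(W_\iota)$ is at least a definite proportion of $|\catt^k(W_\iota)|$. Combining the equidistribution from (ii) with the closeness of $\catt^k$ to $\cato^k$ for $t\in[0,\ve_0]$, I conclude that a proportion close to one of squares in $\widehat{\mathcal Q}_\rho$ intersect the center of at least one long segment of $\catt^k\cG_{W_\iota}$, for each $\iota\in\{1,2\}$. Intersecting the two collections leaves a nonempty common set; pick any $Q$ in it together with long segments $W'_\iota\in\catt^k\cG_{W_\iota}$ whose centers meet $Q$.

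Since $\min\{|W'_1|,|W'_2|\}\ge\delta_*>1000\rho$ and $\alpha_*\le\pi/8$ by (iii), Lemma~\ref{lem:int-rect} produces sub-segments $W'_{\iota,c}\subseteq W'_\iota$ of length at least $\delta_*/200$ that are $(3\rho+\delta_*\sin 2\alpha_*)$-coupled, hence $\eta$-coupled. By Lemma~\ref{lem:weigths-single-curve}, the weight of $W'_{\iota,c}$ inside $\catt^k\cG_{W_\iota}$ is $|W'_{\iota,c}|/|\catt^k(W_\iota)|\ge(\delta_*/200)/(\sqrt 2\,\overline\lambda^k)\ge\delta_*/(400\overline\lambda^k)$. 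Using the indistinguishability relation~\eqref{eq:indistinguishable} to split the weight of $W'_\iota$ into coupled and residual parts, together with observation~\ref{obs:pless} to trim the coupled mass to exactly the asserted amount if necessary, I obtain $\cG_{c,k,\iota}\simeq\catt^k\cG_{W_\iota}$ that are $(\delta_*/(400\overline\lambda^k),\eta)$-coupled. The coupled family $\Coup(\cG_{c,k,\iota},\cG_{c,k,\iota^*})$ then consists of a single segment of length at least $\delta_*/200$ with normalized weight $1$, giving $\cZ=200/\delta_*=20000\, Z_*=\widetilde Z$, as required. The main technical obstacle is the faithful transfer of the unperturbed equidistribution to the perturbed dynamics: one must verify that the long-segment structure of $\catt^k\cG_{W_\iota}$ remains close to that of $\cato^k\cG_{W_\iota}$ uniformly in small $t$, which hinges on the continuity of $\cS_t^{+,k}$ in $t$ and on the transversality of $\cS_t^{+,k}$ to standard curves.
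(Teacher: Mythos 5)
Your skeleton coincides with the paper's (growth lemma to make $\catt^k\cG_{W_\iota}$ regular, unperturbed equidistribution to find a common square, the geometric Lemma~\ref{lem:int-rect}, and the weight formula of Lemma~\ref{lem:weigths-single-curve} for the mass $\delta_{*}/(400\overline\lambda^k)$ and the bound $\cZ\le\widetilde Z$), but the decisive step is only asserted. The passage ``combining the equidistribution from (ii) with the closeness of $\catt^k$ to $\cato^k$ \dots a proportion close to one of squares intersect the center of at least one long segment of $\catt^k\cG_{W_\iota}$'' is exactly the content that needs proof, and you yourself label it the ``main technical obstacle'' without carrying it out. Moreover the route you sketch for it is not viable as stated: Hausdorff and length closeness of $\catt^k(W)$ to $\cato^k(W)$, or closeness of their ``long-segment structures,'' does not follow from Lemma~\ref{lem:continuity-sing-set} plus smallness of $\|A_t-A_0\|$, because subintervals of $W$ trapped in the $Q_k t$-neighborhood of $\cS_0^{+,k}$ follow different branch itineraries under $\catt^k$ and $\cato^k$ and can be mapped to entirely different regions of $\cM$; the comparison of Lemma~\ref{lem:n-map-sing} is pointwise and only valid off $[\cS_0^{+,k}]_{Q_k t}$. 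The paper never compares the two component structures. Instead it defines $G_t(\iota)\subseteq W_\iota$ as the set of points that are at distance at least $Q_k t$ from $\cS_0^{+,k}$ \emph{and} whose $\catt^k$-image lies in the center of a component of $\catt^k(W_\iota)$ of length at least $\delta_{*}$; the perturbed growth lemma plus the transversal intersection count of Lemma~\ref{lem:trasversality2} (this is where the second condition on $\ve_0$ enters) give $|G_t(\iota)|\ge\frac{9}{10}|W_\iota|$; equidistribution (Lemma~\ref{lem:unp-density}) is applied to the \emph{original} segment $W_\iota\in\cW_{\delta}$ and the defect $1/10$ is subtracted in the square count, yielding a common square $Q_c$ meeting $\cato^k(G_t(\iota))$ for both $\iota$; finally Lemma~\ref{lem:n-map-sing} with $C_k\ve_0\le\eta/100$ places the perturbed images in the doubled square $Q_{c,2}$, to which Lemma~\ref{lem:int-rect} is applied. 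This bookkeeping is the proof; without it (or an equivalent), your argument is incomplete.

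Two further points. First, your condition (ii) invokes equidistribution only for segments of length at least $\delta_{*}$, but in the statement $\delta$ is arbitrary and in the actual application (Lemma~\ref{lem:first-coupl-s-f}) one takes $\delta=(2\widetilde Z)^{-1}<\delta_{*}$, so (ii) does not cover the segments $W_\iota$ you start from; and it cannot be applied to the length-$\ge\delta_{*}$ components of $\catt^k(W_\iota)$ either, since those appear at the final time and are not iterated further. The correct input is equidistribution of $\cato^k(W_\iota)$ for $W_\iota\in\cW_{\delta}$, with $N$ depending on $\delta$. Second, selecting the square through the unperturbed image only guarantees that the relevant perturbed points lie within $C_k\ve_0$ of $Q$, not in $Q$ itself, so Lemma~\ref{lem:int-rect} must be applied to an enlargement of $Q$ (the paper doubles the square, which is why squares of $\widehat\cQ_\eta$ are kept $2\eta$ away from $\partial\cM$), and the enlarged size must be fed into the coupling distance; your slack $\rho=\eta/10$ can absorb this, but the step has to appear explicitly.
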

\begin{proof}
Let \(\eta_0 \in \bR^+\) be such that for any \(\eta \le \eta_0\) the statement of Lemma \ref{lem:unp-density} holds. Without loss of generality, we consider only \(\eta\) small enough, while for general \(\eta\) the statement is true because of \ref{obs:p-eta}. Choose any \(\eta, \delta \in \bR^+\),
\begin{equation}\label{eq:fix-eta-small-first-enc}
0 < \eta \le \min\left\{\eta_0, \frac{\delta_{*}}{2000}\right\}.
\end{equation}
We now fix \(N_{\eta, \delta}\), \(k\) and \(\ve_0\) in this order. Since \(W_{\iota} \in \cW_{\delta}\), we have that \(\cZ(\cG_{W_\iota}) \le \delta^{-1}\). Hence, by the third statement of the Growth Lemma \ref{lem:growth-lemma}, there exists \(N_1 \in \bN\) such that for all \(p \ge N_1\), \(W_{\iota} \in \cW_{\delta}\),
  \begin{equation}\label{eq:time-long-pieces}
    \max\left\{\cZ(\catt^{p}\cG_{W_1}), \cZ(\catt^{p}\cG_{W_2}) \right\} \le Z_{*}.
  \end{equation} 
  Let also \(N_2 \in \bN\) be given by Lemma \ref{lem:unp-density} for the \(\eta\) and \(\delta\) given by the statement of the Lemma. Finally, let \(N_3 \in \bN\) be big enough such that, for all \(p \ge N_3\),
  \begin{equation}\label{eq:K3-angle}
      \delta_{*}\sin\left (2\angle(\catt^{p}W_\iota, E_t^u) \right) \le \eta, \text{ } \text{ } \angle(\catt^{p}W_\iota, E_t^u) \le \frac{\pi}{8}, \quad  \iota \in\{1,2\}.
  \end{equation}
 Set \(N_{\eta, \delta} = \max\{N_1, N_2, N_3\}\). Fix \(k \ge N_{\eta, \delta}\). Recall that, by Lemma \ref{lem:n-map-sing}, there exist \(C_k, Q_k \in \bR^+\) such that 
  \begin{equation}\label{eq:distance-pert-unpert}
    \sup_{\cM \setminus [\cS_0^{+,k} ]_{Q_k t}} \left|\catt^k - \cato^k\right|\le C_k t.
  \end{equation}
Let also \(P_k\) be given by Lemma \ref{lem:trasversality2}. We fix \(\ve_0\) small enough such that 
\begin{equation}\label{eq:fixt_0}
  C_k \ve_0 \le \frac{\eta}{100},  \quad \left(\frac{K_k}{\delta} + P_k\right)Q_{k}C_{cone}\ve_0 \le \frac{1}{100},
\end{equation}
where \(C_{cone} \in \bR^+\) depends only on the transversality between \(\mathscr C^u\) and \(\mathscr C^s\) and will be introduced in \eqref{eq:good-prop-2} and \(K_k\) is the complexity. For \(t \in [0,\ve_0]\), \(\iota \in \{ 1,2\}\), define
\[
\begin{split}
&N_t (\iota) = \biggl\{x \in W_\iota: \catt^k (x) \text{ belong to the Center of a connected}\\
&   \hspace{3cm}\text{component of }\catt^k (W_\iota) \text{ not shorter than }\delta_{*}\biggr\},\\
& G_t (\iota) =\left( \cM \setminus [\cS_0^{+,k} ]_{Q_k t} \right) \cap  N_t (\iota), \\
&\widehat\cQ_{\eta, t}^{good} (\iota) = \left\{Q \in \widehat \cQ_{\eta}: Q \cap \cato^k \left(G_t (\iota)\right)\neq \emptyset\right\}.
\end{split}
\]

To help the reader, before proceeding with the proof, we discuss briefly the definitions just introduced. The set \(G_t (\iota) \) are points in \(W_{\iota}\) that after \(k\) iterations of the perturbed map will belong to centers of long connected components of \(\catt^k(W_{\iota})\) and stay sufficiently far from the singularity set \(\cS_{0}^{+,k}\). These choices are made for two reasons. The first is that to couple mass we show that a component of both the two families \(\catt^k \cG_{W_{\iota}}\) intersect the same square. However, in order to apply Lemma \ref{lem:int-rect}, we need the components to be long enough. The second reason is that we use mixing properties of the unperturbed dynamics to show that the two families intersect the same square simultaneously. However, we need to be sufficiently far from the singularities to compare perturbed and unperturbed dynamics (see equation \eqref{eq:distance-pert-unpert}). The squares where we are able to couple mass are (enlargements of) those in \(\widehat\cQ_{\eta, t}^{good}\). Going back to the proof, we make the following:\\

\textit{Claim:} for each \(t \in [0,\ve_0]\),\\
\begin{equation}\label{eq:claim}
    \widehat\cQ_{\eta, t}^{good} (1) \cap \widehat\cQ_{\eta, t}^{good} (2) \neq \emptyset.
\end{equation}
For convenience, set \(\catt^k\cG_{W_\iota} = \left\{(W_j^\iota, p_{j}^\iota)\right\}_{j \in \cI_\iota}\). Notice that, for \(\iota \in \{ 1,2\}\),
\[
\begin{split}
  \frac{|N_t (\iota)|}{|W_\iota|} &= \frac{|\catt^k (N_{t}(\iota))|}{|\catt^k (W_{\iota})|} = \sum_{j \in \cI_\iota : |W_j^\iota| \ge \delta_{*}} \frac{|\Center W_{j}^\iota|}{|\catt^k (W_{\iota})|}  \\
&= \sum_{j \in \cI_\iota : |W_j^\iota| \ge \delta_{*}} \frac{|W_j^{\iota}|}{|\catt^k (W_{\iota})|}\frac{|\Center W_{j}^\iota|}{|W_j^{\iota}|} =  \sum_{j \in \cI_\iota : |W_j^\iota| \ge \delta_{*}}p_j^\iota \frac{\left|\Center W_{j}^\iota \right|}{|W_{j}^\iota|},
\end{split}
\]
where in the first equality we have used that \(\catt\) is linear and injective, in the second one the definition of \(N_{t}(\iota)\) and in the last one Lemma \ref{lem:weigths-single-curve}. Since the proportion of points in the centers is \(49/50\) of the total, for all \(t \in [0,\ve_0]\),
\begin{equation}\label{eq:good-prop-1}
 \frac{|N_t (\iota)|}{|W_\iota|} \ge \frac{49}{50}  \sum_{j \in \cI_\iota : |W_j^\iota| \ge \delta_{*}}p_j^\iota \ge \frac{49}{50}\left(1- \delta_{*}Z_{*}\right) \ge  \frac{49}{50} \frac{99}{100} \ge \frac {95}{100},
\end{equation}
where in the second inequality we have used Lemma \ref{lem:proportion-of-good-sp} and equation \eqref{eq:time-long-pieces} and in the third the choice of \(\delta_{*}\) in \eqref{eq:deltastar}. Moreover, by transversality between \(\cS_0^{+,k}\) and \(\mathscr C^u\), for some \(C_{cone} \in \bR^+\) depending only on \(\mathscr C^s\) and \(\mathscr C^u\),
\begin{equation}\label{eq:good-prop-2}
\begin{split}
    \frac{\left|W_\iota \cap \left( \cM \setminus  [\cS_0^{+,k} ]_{Q_k t}\right)\right|}{|W_\iota|} &=  1 -  \frac{\left|W_\iota \cap [\cS_0^{+,k} ]_{Q_k t}\right|}{|W_\iota|}\\
& \ge 1- C_{cone} \frac{ Q_k t \# \{W_\iota \cap \cS_0^{+,k}\}}{|W_\iota|}.
\end{split}
\end{equation}
By Lemma \ref{lem:trasversality2}, the right term in \eqref{eq:good-prop-2} is not smaller than
\[
     1- C_{cone} \frac{ Q_k t \left(K_k + P_k|W_\iota|\right)}{|W_\iota|}.
\]
Hence, using that \(|W_\iota| \ge \delta\) and the second of \eqref{eq:fixt_0} we have, for all \(t \in [0,\ve_0]\),
\begin{equation}\label{eq:good-prop-2-fin}
\begin{split}
\frac{\left|W_\iota \cap \left( \cM \setminus  [\cS_0^{+,k} ]_{Q_k t}\right)\right|}{|W_\iota|} \ge 1 - \left(\frac{K_k}{\delta} + P_k \right)Q_k C_{cone} \ve_0 \ge \frac{99}{100}.
\end{split}
\end{equation}
Combining \eqref{eq:good-prop-1} and \eqref{eq:good-prop-2-fin}, we have that, for all \(t \in [0,\ve_0]\),
\[
\begin{split}
  \frac{\left|G_t(\iota)\right|}{|W_\iota|} &\ge \frac{\left|W_\iota \cap \left( \cM \setminus [\cS_0^{+,k} ]_{Q_k t} \right) \right| + \left|N_t (\iota) \right| - |W_\iota|}{|W_\iota|} \\
&\ge \frac{99+95}{100}-1\ge \frac{9}{10}.
\end{split}
\]
Therefore, using the linearity and injectivity of \(\cato\), for \(t \in [0,\ve_0]\), \(\iota \in \{1,2\}\),
\begin{equation}\label{eq:good-proportion-final.1}
  \frac{\left|\cato^k (G_t(\iota))\right|}{|\cato^k (W_\iota)|} = \frac{\left|G_t(\iota)\right|}{|W_\iota|}  \ge \frac{9}{10}, \quad \text{i.e.} \quad  \frac{\left|\cato^k (W_\iota) \setminus \cato^k (G_t(\iota))\right|}{|\cato^k (W_\iota)|} \le \frac{1}{10}.
\end{equation}
Recall that \(k \ge N_{\eta,\delta} \ge N_2\) and so we can apply Lemma \ref{lem:unp-density}. Hence, for all \(t \in [0,\ve_0]\), \(\iota \in \{1,2\}\), it holds
\[
\begin{split}
&\frac{\# \left(\widehat\cQ_{\eta} \setminus  \widehat\cQ_{\eta,t}^{good} (\iota)\right) }{\# \widehat\cQ_{\eta}} = \sum_{Q \in \widehat Q_{\eta}: Q \cap \cato^k \left(G_t(\iota)\right) = \emptyset} \frac{1}{\# \widehat\cQ_{\eta}} \\
&\le  \sum_{Q \in \widehat Q_{\eta}} \left| \frac{1}{\# \widehat\cQ_{\eta}} - \frac{\left|Q \cap \cato^k(W_\iota)\right|}{\left|\cato^k (W_\iota)\right|}\right| +  \sum_{Q \in \widehat Q_{\eta}: Q \cap \cato^k \left(G_t(\iota)\right) = \emptyset} \frac{\left|Q \cap \cato^k(W_\iota)\right|}{\left|\cato^k (W_\iota)\right|} \\
&\le \frac{1}{100} + \frac{\left|\cato^k(W_\iota) \setminus \cato^k \left(G_t^k(\iota)\right)\right|}{\left| \cato^k(W_\iota) \right|} \le \frac{1}{100} + \frac{1}{10} \le \frac{1}{5},
\end{split}
\]
where in the last inequality we have used \eqref{eq:good-proportion-final.1}. In particular the equation above proves the claim \eqref{eq:claim}, since the proportion of good squares for both segments is more than \(1/2\). 

We now conclude the proof of the Lemma. For any \(t \in [0,\ve_0]\), let \(Q_c \in \widehat \cQ_{\eta,t}^{good}(1) \cap \widehat \cQ_{\eta,t}^{good}(2)\) and let \(Q_{c,2}\) be a square centered in \(Q_c\) with double size (such a square exists because \(Q \in \widehat\cQ_{\eta}\) and is at least \(2\eta\) distant from \(\partial \cM\)). By definition of good squares, for each \(\iota \in \{1,2\}\), there exists \(x_{\iota} \in G_t(\iota)\) such that \(\cato^k(x_{\iota}) \in Q_c\). However, by \eqref{eq:distance-pert-unpert} and the first of \eqref{eq:fixt_0}, recalling that \(G_t(\iota)  \subseteq \cM\setminus [\cS_0^{+,k}]_{Q_k t}\),
\[
 d\left(\catt^k(x_{\iota}), \cato^k(x_{\iota}) \right) \le C_k t \le  C_k \ve_0 \le \frac{\eta}{100}.
\]
Hence, since \(Q_{c,2}\) has size \(2\eta\), 
\begin{equation}\label{eq:coupling-square-found}
  \catt^k(x_{\iota}) \in Q_{c,2}, \quad \iota \in \{1,2\}.
\end{equation}
Since \(x_{\iota} \in  G_t(\iota) \subseteq N_{t}(\iota) \), by definition of \(N_t(\iota)\), one has that \(\catt^k(x_{\iota})\) belongs to the center of connected component not smaller than \(\delta_{*}\) of \(\catt^k \left( W_\iota \right)\). Therefore, equation \eqref{eq:coupling-square-found} implies that for both \(\iota \in \{1,2\}\) long components of \(\catt^k \left( W_\iota \right)\) intersect the same square \(Q_{c,2}\). By \eqref{eq:fix-eta-small-first-enc} the size of \(Q_{c,2}\) is \(2\eta \le \delta_{*}/1000\) and considering also the second of \eqref{eq:K3-angle}, we can apply Lemma \ref{lem:int-rect}. By the mentioned Lemma, there exist coupled standard segments \(W_{\iota,c} \subseteq \catt^{k}(W_{\iota}) \) such that 
\begin{equation}\label{eq:lowe-length-seg}
 \min\{|W_{1,c}|, |W_{2,c}| \}\ge \frac{\delta_{*}}{200},
\end{equation}
and the associated standard families \(\cG_{W_{1,c}}\) and \(\cG_{W_{2,c}}\)  are 
\[
6\eta + \delta_{*}\sin\left (2\angle(\catt^{k}W_\iota, E_t^u) \right) \le 7\eta \text{ }\text{ }\text{-coupled},
\]
where we have also used the first of \eqref{eq:K3-angle}. To compute the amount of coupled mass, we denote by \(\overline W_{\iota,c}\) the connected components that contain \(W_{\iota,c}\) and by \(\overline p_{\iota,c}\) the associated weights. Then, by Lemma \ref{lem:weigths-single-curve},
\[
\begin{split}
\min \biggl\{\overline p_{1,c}\frac{|W_{1,c}|}{|\overline W_{1,c}|}, \overline p_{2,c}\frac{|W_{2,c}|}{|\overline W_{2,c}|}\biggr\} &=
\min \biggl\{\frac{|\overline W_{1,c}|}{|\catt^k W_1|}\frac{|W_{1,c}|}{|\overline W_{1,c}|}, \frac{|\overline W_{2,c}|}{|\catt^k W_2|}\frac{|W_{2,c}|}{|\overline W_{2,c}|}\biggr\} \\
&=\min \biggl\{\frac{|W_{1,c}|}{|\catt^k W_1|}, \frac{|W_{2,c}|}{|\catt^k W_2|}\biggr\}.
\end{split}
\]
Therefore, according to Definition \ref{def:(p,eta)-coupling}, the families \(\catt^k\cG_{W_1}\) and \(\catt^k\cG_{ W_2}\) are 
\[
\biggl( \min \biggl\{\frac{|W_{1,c}|}{|\catt^k W_1|}, \frac{|W_{2,c}|}{|\catt^k W_2|},\biggr\} ,7\eta \biggr) \text{ }\text{ }\text{-coupled}.
\]
Since \(|\catt^k W_\iota| \le \overline \lambda^k 2 \) and by \eqref{eq:lowe-length-seg}, the coupled mass is not less than \(\delta_{*}/(400 \overline \lambda^k)\). Moreover, setting \(\Coup (\catt^k\cG_{ W_\iota}, \catt^k\cG_{ W_{\iota^*}}) = (W_{\iota,c}, 1)\), by \eqref{eq:lowe-length-seg} again,
\[
\begin{split}
\cZ\left(\Coup(\catt^k\cG_{ W_\iota}, \catt^k\cG_{ W_{\iota^*}})\right) &\le \max\biggl\{\frac{1}{|W_{1,c}|}, \frac{1}{|W_{2,c}|}\biggr\}\le \frac{200}{\delta_{*}} \le 20000 Z_{*} = \widetilde Z.
\end{split}
\]
Finally, by \ref{obs:pless}, there exist standard families \(\cG_{c,k,\iota} \simeq \catt^k\cG_{W_{\iota}}\) that are precisely \(\left(\frac{\delta_{*}}{400 \overline \lambda^k}, 7\eta \right)\)-coupled and such that
\[
\cZ\bigl(\Coup(\cG_{c,k,\iota}, \cG_{c,k,\iota^{*}})\bigr) = \cZ\left(\Coup (\catt^k\cG_{ W_\iota}, \catt^k\cG_{ W_{\iota^*}})\right) \le \widetilde Z.
\]
By the last statement and by renaming \(\eta\), we conclude the proof of the lemma.
\end{proof}
In other words, it is possible to couple arbitrarily small pieces and have them arbitrarily close. The price for this is waiting a possibly very long time \(k \ge N_{\eta, \delta}\) and couple very little mass \(\sim \overline \lambda^{-k}\). However, the estimates are uniform in \(t \in [0, \ve]\) and the regularity of the coupled standard families is under control (\(\le \widetilde Z\)). We upgrade Lemma \ref{lem:first-coupling} to regular standard families. 

\subsection{Coupling of two standard families}\label{subsec:coupling-sf} \textit{In this subsection we prove that it is possible to couple any two regular standard families after a big enough iterate of the perturbed map.} We would like to apply Lemma \ref{lem:first-coupling} to couple \textit{all} the segments of two standard families. Unfortunately, this is not possible since standard families may have arbitrarily small pieces. However, if the standard families are regular, most of the pieces are not too short. Recall that \(\cG\) is regular if \(\cZ(\cG) \le \widetilde Z\).

\begin{lemma}\label{lem:first-coupl-s-f}
 For any \(\eta \in \bR^+\), there exists \(N_{\eta} \in \bN\) such that, for all \(k \ge N_{\eta}\), there exists \(p_c \in (0,1]\) and \(\ve_0 \in \bR^+\) with the following property: For any \(t \in [0,\ve_0]\), for any two regular standard families \(\cG_\iota\), \(\iota \in \{1,2\}\), there exist two \((p_c, \eta)\)-coupled standard families \(\cG_{c,k,\iota} \simeq \catt^k \cG_\iota\). Moreover, 
 \[
  \max\left\{\cZ\left(\Coup (\cG_{c,k,\iota}, \cG_{c,k,\iota^*})\right) ,  \cZ\bigl(\cG_{c,k,\iota} \setminus  \Coup(\cG_{c,k,\iota}, \cG_{c,k,\iota^*})\bigr) \right\} \le \widetilde Z.
 \]

\end{lemma}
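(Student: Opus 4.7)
The plan is to reduce to Lemma \ref{lem:first-coupling} via a product-type decomposition of (the common fraction of) the long parts of the two families. Since each $\cG_\iota$ is regular, Lemma \ref{lem:proportion-of-good-sp} implies that the indices $\cI_\iota^{long} = \{j : |W_{\iota, j}| \ge \delta\}$ carry mass $M_\iota \ge 1 - \delta \widetilde Z$, and choosing $\delta$ small enough ensures $M_\iota \ge 3/4$. Set $M^* = \min(M_1, M_2) \ge 3/4$, and write $\cG_\iota = M^* \widetilde \cG_\iota^{long} + (1 - M^*) \overline \cG_\iota$, where $\widetilde \cG_\iota^{long}$ is the normalized long part of $\cG_\iota$ and $\overline \cG_\iota$ absorbs the rest (including all short segments and, if $M_\iota > M^*$, a fraction $M_\iota - M^*$ of the long segments). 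Fix $N_\eta$ at least the $N_{\eta, \delta}$ of Lemma \ref{lem:first-coupling} and large enough that the third clause of Lemma \ref{lem:growth-lemma} gives $\cZ(\catt^k \cG) \le Z_*$ for every $k \ge N_\eta$ and every regular $\cG$.

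Next, using the $\simeq$ relation I would rewrite each $\widetilde \cG_\iota^{long}$ as an identically-indexed product over pairs $(j_1, j_2) \in \cI_1^{long} \times \cI_2^{long}$, assigning weight $a_{j_1, j_2} = (p_{1, j_1} p_{2, j_2})/(M_1 M_2)$ on both sides to the pair's two segments. This is the key symmetrization: the same weight $a_{j_1, j_2}$ labels both the duplicate of $W_{1, j_1}$ in $\widetilde \cG_1^{long}$ and the duplicate of $W_{2, j_2}$ in $\widetilde \cG_2^{long}$. For each pair, since $W_{1, j_1}, W_{2, j_2} \in \cW_\delta$, Lemma \ref{lem:first-coupling} supplies $(q, \eta)$-coupled standard families equivalent to $\catt^k \cG_{W_{\iota, j_\iota}}$, with $q = \delta_*/(400 \overline \lambda^k)$ and coupled regularity bounded by $\widetilde Z$. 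Assembling across pairs couples a mass $q$ on each side of $\catt^k \widetilde \cG_\iota^{long}$; multiplying by $M^*$ yields
\[
    p_c = M^* q = q \min(M_1, M_2) \ge \frac{3q}{4}
\]
of coupled mass on each side of $\catt^k \cG_\iota$. The uncoupled residue of the pair-couplings, together with $\catt^k \overline \cG_\iota$, is folded into $\cG_{c, k, \iota} \setminus \Coup$.

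The regularity bounds follow from the choice of $N_\eta$. For the coupled part, $\cZ(\Coup(\cG_{c, k, 1}, \cG_{c, k, 2})) \le \widetilde Z$ transfers from the per-pair bound of Lemma \ref{lem:first-coupling} by linearity of $\cZ$ under the convex combination over pairs. For the uncoupled part, applying Lemma \ref{lem:Z-coupled} to the global coupling gives $\cZ(\cG_{c, k, \iota} \setminus \Coup) \le 3 \cZ(\catt^k \cG_\iota)/(1 - p_c) \le 4 Z_*$, using $\cZ(\catt^k \cG_\iota) \le Z_*$ and $p_c$ small, which is well within $\widetilde Z = 20000 Z_*$.

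The main technical obstacle is the asymmetric coupling demanded by Definition \ref{def:(p,eta)-coupling}: a direct product pairing of all long segments weights the pair $(j_1, j_2)$ by $p_{1, j_1} p_{2, j_2}/M_2$ in $\cG_1$ and by $p_{1, j_1} p_{2, j_2}/M_1$ in $\cG_2$, so the resulting coupling has \emph{different} total coupled masses on the two sides and fails the symmetry demanded by the definition. The symmetrization above, restricting to the common fraction $M^* \widetilde \cG_\iota^{long}$ with weights $a_{j_1, j_2}$ identical on both sides, sidesteps this obstacle at a benign cost factor $M^* \ge 3/4$ in the coupled proportion.
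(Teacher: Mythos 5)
Your proposal is correct and is essentially the paper's own argument: the same symmetrized product re-weighting of the long parts (the paper pins the long mass to exactly \(1/2\) on both sides before forming the pairs with common weights \(2p^1_jp^2_l\), you restrict to the common fraction \(M^*=\min(M_1,M_2)\ge 3/4\)), followed by per-pair coupling via Lemma \ref{lem:first-coupling}, convexity of \(\cZ\) for the coupled part, and Lemma \ref{lem:Z-coupled} plus the third clause of the Growth Lemma for the uncoupled part. The only point to tighten is that \(p_c=M^*q\) as written depends on the pair \((\cG_1,\cG_2)\) through \(M^*\), whereas the statement requires one \(p_c\) for all regular families; this is fixed exactly as the paper does, by normalizing the common long-mass level to a fixed constant (or by invoking \ref{obs:pless} to couple exactly, say, \(3q/4\)).
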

\begin{proof}
Let \(\cG_\iota = \{W_j^\iota, p_j^\iota\}_{j \in \cI_\iota}\). Consider any \(\eta \in \bR^+\) and let \(N_{\eta}\) be given by \(N_{\eta, \delta}\) in Lemma \ref{lem:first-coupling} where \(\eta\) is given by the statement and \(\delta = (2 \widetilde Z)^{-1}\). Fix \(k \ge N_{\eta}\) and let \(\ve_0\) be given by Lemma \ref{lem:first-coupling} for the selected \(k\). By the choice of \(\delta\),
\[
\sum_{\left\{j \in \cI_\iota: |W_{j}^\iota| \ge \delta \right\} }p_{j}^\iota \ge 1 - \delta \cZ(\cG_\iota) \ge 1 - \delta \widetilde Z \ge \frac 1 2,
\]
where we have also used Lemma \ref{lem:proportion-of-good-sp} and that \(\cG_\iota\) are regular. By the equation above, and modifying the weights according to \eqref{eq:indistinguishable}, we may assume without loss of generality that there exist two subsets \(\widetilde \cI_{\iota} \subseteq \left\{j \in \cI_\iota: |W_{j}^\iota| \ge \delta \right\}\), such that
\[
 \sum_{j \in \widetilde \cI_\iota}p^\iota_{j} = \frac 1 2.
\]
To couple, we need to consider yet other two standard families \(\widetilde \cG_{\iota} \simeq \cG_{\iota}\). By the last equation, we have that \(p^{\iota}_{j} =\sum_{l \in \widetilde\cI_{\iota^*}}2p^\iota_{j}p^{\iota^*}_{l}\). Hence, the standard families \(\widetilde \cG_\iota\) defined by replacing each pair \((W^\iota_{j}, p^\iota_j) \), \(j \in \widetilde \cI_{\iota}\), with the pairs \(\{(W^\iota_{j}, 2p^\iota_{j}p^{\iota^*}_{l})\}_{l \in \widetilde \cI_{\iota^*}}\) are indistinguishable to the original ones. The resulting standard families are
\[
\begin{split}
    \widetilde \cG_{1} &= \{(W^{1}_{j}, 2p^1_{j}p^{2}_{l})\}_{(j,l) \in \widetilde \cI_{1} \times \widetilde \cI_{2}} \cup \{(W_j^{1}, p_j)\}_{j \in \cI_{1} \setminus \widetilde\cI_{1}}\\
  \widetilde \cG_{2} &= \{(W^{2}_{l}, 2p^1_{j}p^{2}_{l})\}_{(j,l) \in \widetilde \cI_{1} \times \widetilde \cI_{2}} \cup \{(W_l^{2}, p_l)\}_{l \in \cI_{2} \setminus \widetilde\cI_{2}}.
\end{split}
\]
In particular, they have same weights for the common indices \(\widetilde\cI_{1} \times \widetilde \cI_{2}\) and, by definition, \(\min\{|W^1_{j}|, |W^2_{l}|\} \ge \delta\), for all \((j,l) \in \widetilde\cI_{1}\times \widetilde \cI_2\). Therefore, Lemma \ref{lem:first-coupling} applies, allowing to couple \((W^{1}_{j}, 2p^1_{j}p^{2}_{l}) \in \widetilde \cG_1\) with \((W^{2}_{l}, 2p^1_{j}p^{2}_{l}) \in \widetilde \cG_2\), for any \((j,l) \in \widetilde\cI_{1}\times \widetilde\cI_2\). According to the last sentence, for any \(t \in [0,\ve_0]\), there are standard families \(\cG_{c,k,\iota} \simeq \catt^{k}\widetilde \cG_{\iota} \simeq \catt^{k}\cG_{\iota} \) which are \((p_{c}, \eta)\)-coupled, where 
\[
    p_{c} = \sum_{(j,l) \in \widetilde \cI_{1}\times \widetilde\cI_2}2p^1_{j}p^2_{l} \frac{\delta_{*}}{400 \overline \lambda^{k}} = \frac{\delta_{*}}{800 \overline \lambda^{k}} \in \left(0,\frac 1 2\right).
\]
This proves the first part of the Lemma. The estimate on the regularity of the coupled part follows from the second part of Lemma \ref{lem:first-coupling}. As for the regularity of the uncoupled part, \mbox{by Lemma \ref{lem:Z-coupled},}
\[
\begin{split}
    \cZ\bigl(\cG_{c,k,\iota} \setminus  \Coup_{p_c, \eta} (\cG_{c,k,\iota}, \cG_{c,k,\iota^*})\bigr) &\le \frac{3\cZ (\cG_{c,k,\iota})}{1-p_c} \\
    &= \frac{3\cZ(\catt^{k} \cG_\iota)}{1-p_c} \le  \frac{\widetilde Z}{2(1-p_c)} \le \widetilde Z,
\end{split}
\]
where the last inequality follows from \(p_c \le 1/2\) and in the second inequality we have used the last statement in Lemma \ref{lem:growth-lemma} (possibly choosing \(N_{\eta}\) big enough so that \(\cZ(\catt^{k} \cG_\iota) \le \widetilde Z /6\)). This last estimate concludes the proof of the Lemma.
\end{proof}

\subsection{Iteration of two coupled standard families} \label{subsec:iteration} \textit{In this subsection we consider two \(\eta\)-coupled standard families and we compute how much mass won't be coupled in the evolved standard families because of the discontinuities.} The next Lemma says that the amount of lost mass is proportional to \(\eta\) and so it is less and less as we iterate the two families (because the surviving mass gets closer and closer). Recall that \(N_0\) is the power of the map in the \mbox{Growth Lemma \ref{lem:growth-lemma}.}
\begin{lemma}\label{lem:push-sf}
 There exists \(L \in \bR^+\) such that for all \(\eta \in \bR^+\) small enough and any two \(\cG_{\iota}\), \(\iota \in \{1,2\}\), \(\eta\)-coupled regular standard families we have the following: For every \(k \in \bN\), \(t \in [0,1/8]\), \(\catt^{k N_0}\cG_{\iota}\) are 
\[
(1-L\eta, \lambda^{-kN_0}\eta)-\text{coupled}.
\]
Moreover, we have \(\cZ\bigl(\Coup (\catt^{k N_0}\cG_\iota, \catt^{k N_0} \cG_{\iota^*})\bigr) \le \widetilde Z\).
\end{lemma}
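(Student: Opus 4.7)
The plan is to prove a one-step estimate: each application of $\catt$ to an $\epsilon$-coupled pair of regular families produces families that are $(1-C_0\epsilon,\lambda^{-1}\epsilon)$-coupled, where $C_0$ is controlled by the complexity (Lemma \ref{lem:trasversality2}), the transversality between $\mathscr C^s$ and $\mathscr C^u$, and a uniform bound on the regularity of the coupled part. Iterating $kN_0$ times, the stable distance decays geometrically by \eqref{eq:contraction-uniform-cat}, and the accumulated mass loss is controlled by a geometric series in $\lambda^{-1}$, yielding $(1-L\eta,\lambda^{-kN_0}\eta)$-coupling.

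\textbf{One-step estimate.} Fix a coupled pair $(W_{1,c},W_{2,c})$ at stable distance $\le\epsilon$. Recall from the discussion after \eqref{eq:cone-invariance} that each segment of $\cS_t^+$ is aligned with $\mathscr C^s$, hence transversal to $W_{\iota,c}\subset\mathscr C^u$. If a singularity line cuts $W_{1,c}$ at $q_1$ and $W_{2,c}$ at $q_2$, then $q_1-q_2$ lies in $\mathscr C^s$ with stable component $\le\epsilon$; by cone transversality, the unstable displacement between the corresponding parameter values on the two segments is bounded by $C_{cone}\epsilon$. Matching the $\catt$-images of the resulting pieces one-to-one in order, the unmatched "transition strips" near each cut have length at most $\overline\lambda\,C_{cone}\epsilon$ per segment per cut. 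By Lemma \ref{lem:trasversality2} with $k=1$, the number of cuts on $W_{\iota,c}$ is at most $K_1+P_1|W_{\iota,c}|$. Summing the uncoupled mass over the coupled family, weighted by $p_j/|W_j|$, gives a loss bounded by
\[
C_0\,\epsilon\,\bigl(\cZ(\Coup(\cG_\iota,\cG_{\iota^*}))+1\bigr),
\]
for a constant $C_0=C_0(K_1,P_1,C_{cone},\overline\lambda)$ uniform in $t\in[0,1/8]$. The surviving pairs lie in $\catt(W_{\iota,c})$, and since the matching is by the eigenspace $E^s_t$, their stable separation is multiplied by exactly $|\mu_t^s|\le\lambda^{-1}$.

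\textbf{Iteration.} Write $\epsilon_n=\eta\lambda^{-n}$ and let $p_n$ denote the coupled mass after $n$ applications of $\catt$. Assuming inductively a uniform bound $\overline Z$ on $\cZ(\Coup^{(n)})$ (see the next paragraph), the one-step estimate yields
\[
p_n\;\ge\;1-C_0(\overline Z+1)\sum_{j=0}^{n-1}\eta\lambda^{-j}\;\ge\;1-\frac{C_0(\overline Z+1)\lambda}{\lambda-1}\,\eta.
\]
Setting $L=C_0(\overline Z+1)\lambda/(\lambda-1)$ and specializing to $n=kN_0$ gives the desired $(1-L\eta,\lambda^{-kN_0}\eta)$-coupling. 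For $\eta$ small enough so that $L\eta\le 1/2$, the coupling is nontrivial. The newly uncoupled mass at each step is absorbed into the uncoupled part, which is simply evolved by $\catt$.

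\textbf{Regularity and main obstacle.} The induction closes if we can maintain $\cZ(\Coup^{(n)})\le\overline Z$ for all $n$ and $\cZ(\Coup^{(kN_0)})\le\widetilde Z$ for all $k\in\bN$. Starting from the regular hypothesis $\cZ(\cG_\iota)\le\widetilde Z$, Lemma \ref{lem:growth-lemma} gives $\cZ(\catt^{N_0}\cG^{(n)})\le z\,\cZ(\cG^{(n)})+Z$ after each block of $N_0$ iterations, while inside a block the a priori estimate \eqref{eq:bad-estimate} inflates $\cZ$ by at most a bounded factor. Lemma \ref{lem:Z-coupled} then bounds $\cZ(\Coup^{(n+N_0)})\le\cZ(\catt^{N_0}\cG^{(n)})/(1-L\eta)$. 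Because $\widetilde Z=20000Z_*$ dominates both $Z$ and the block inflation factor appearing in \eqref{eq:star-2star}, for $\eta$ small enough this inequality iterates to $\cZ(\Coup^{(kN_0)})\le\widetilde Z$ for every $k\in\bN$ and provides a uniform intermediate bound $\overline Z$. The main obstacle is precisely the geometric lemma that two $\epsilon$-coupled segments are cut by $\cS_t^+$ at nearly aligned positions with combined unmatched length $O(\epsilon)$ uniformly in the pair: this is the step where the alignment of $\cS_t^+$ with $\mathscr C^s$ and transversality to $\mathscr C^u$ are indispensable, as otherwise the cuts on the two coupled segments could disagree by an amount independent of $\epsilon$ and the geometric summation would fail.
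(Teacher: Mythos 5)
Your overall strategy is the paper's: the mass that decouples is the mass whose stable connector is hit by the singularity set, it is bounded by (coupling distance)\(\times\)(complexity \(\times\) regularity) via Lemma \ref{lem:trasversality2} and cone transversality, the total loss is a geometric series because the coupling distance contracts by \eqref{eq:contraction-uniform-cat}, and the regularity of the coupled part is recovered from Lemma \ref{lem:Z-coupled} together with the Growth Lemma. The genuine difference is bookkeeping: you work one iterate at a time with \(\cS_t^{+,1}\), the paper works in blocks of \(N_0\) with \(\cS_t^{+,N_0}\). Your choice buys something — for this example the curves of \(\cS_t^{+,1}\) are full chords of the square, so you never need the continuation Lemma \ref{lem:geom-disc}, which the paper must invoke because curves of \(\cS_t^{+,N_0}\) can terminate inside \(\cM\) — and it costs something: your per-step loss is proportional to the regularity of the (coupled part of the) families at \emph{every} intermediate time, so you need the within-block inflation estimate \eqref{eq:bad-estimate} and a joint induction tying the mass bound to the intermediate bound \(\overline Z\), whereas the paper only needs regularity at block times, where it follows by applying Lemma \ref{lem:Z-coupled} to the full evolved family (controlled by the Growth Lemma) divided by \(1-L\eta\). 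Both routes give the statement, yours with a larger \(L\).

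The step you do not actually prove is the one you yourself label the main obstacle: that per cut the unmatched (``transition strip'') mass is \(O(\epsilon)\) uniformly over the coupled pair. Your argument treats matched cuts (``if a singularity line cuts \(W_{1,c}\) at \(q_1\) and \(W_{2,c}\) at \(q_2\)\dots''), but a singularity line can cross the stable strip between the two segments while cutting only one of them — near an endpoint of the partner, or when the partner is short compared with \(\epsilon\) — and then matching the image pieces ``in order'' misaligns everything downstream; asserting that transversality prevents this is not enough. The paper resolves exactly this point by first discarding the mass carried by segments shorter than \(\eta\) (cost \(\le\eta\widetilde Z\) by Lemma \ref{lem:proportion-of-good-sp}, absorbed into \(L\)) and then working with a bounded \(E\)-enlargement of each \(W_j^{\iota}\), so that any singularity curve meeting a stable connector \(\ell_x\) must also cross the enlarged segment within distance \(C_{cone}\eta\) of \(x\), with the number of such crossings controlled by Lemma \ref{lem:trasversality2} applied to the enlargement; in your one-step scheme the same fix works, or alternatively one can note that \(\epsilon\)-coupling in the sense of Definition \ref{def:(p,eta)-coupling} forces the stable projections of the endpoints of \(W_{1,c}\) and \(W_{2,c}\) to agree up to \(O(\epsilon)\), so each endpoint-mismatch region carries mass of order \(\epsilon\) times the regularity. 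Once this is written down, your per-step bound, and with it the lemma, goes through.
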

\begin{proof}
Recalling Definition \ref{def:(p,eta)-coupling}, since \(\cG_1\) and \(\cG_2\) are \(\eta\)-coupled, there exist \(\eta\)-coupled segments \(W^{\iota}_j\) and common weights and indices \(p_j >0\), \(j \in \cI\), such that, for \(\iota \in \{1,2\}\), \(\cG_{\iota} = \{W^{\iota}_j, p_j\}_{j \in \cI}\). By Lemma \ref{lem:proportion-of-good-sp} and since \(\cG_{\iota}\) are regular, 
\[
\sum_{\{j \in \cI: \text{ } W^{\iota}_j \in \cG_{\iota} \text{ }|W^{\iota}_j| \le \eta \}}p_j \le \eta \widetilde Z.
\]
Therefore, the amount of mass supported on segments shorter than \(\eta\) is proportional to \(\eta\) and we do not couple it. Thanks to the equation above and possibly choosing a bigger \(L\) in the statement, \mbox{we may assume without loss of generality that}
\begin{equation}\label{eq:miminal-length-for-coupling}
  |W^{\iota}_j| \ge \eta, \quad \forall j \in \cI, \text{ }\iota \in \{1,2\}.
\end{equation}
This observation will be useful later. By definition of \(\eta\)-coupling, for each \(x \in W^\iota_j\), there exists a segment \(\ell_x\) in the \(E^s_t\) direction joining \(W^\iota_j\) with \(W^{\iota^*}_j\). We define
\[
     W_{j,sing}^\iota = \left\{ x \in W^\iota_j: \ell_x \cap \cS_t^{+,N_0} \neq \emptyset \right\},
\]
and denote by \(\{\widetilde K^{\iota}_{j,p}\}_p\) a partition of \(W^{\iota}_{j} \setminus \cS_t^{+,N_0}\) into maximal connected components. We also consider the following partition of \(W^{\iota}_j\) in connected segments
\[
W^{\iota}_j = W^{\iota}_{j,sing} \cup \{K_{j,p}^{\iota}\}_p, \quad \cup_{p} K_{j,p}^{\iota} = (W^{\iota}_j \setminus W^{\iota}_{j,sing}),
\]
where \(K_{j,p}^{\iota}\subseteq \widetilde K_{j,p}^{\iota}\) are maximal intervals (possibly empty) such that their stable segments \(\ell_x\) do not intersect any singularity. We reorder the indices \(p\) such that every non-empty \(K^{\iota}_{j,p}\) is connected via stable segments to \(K^{\iota^{*}}_{j,p}\). Since \(\ell_x\) are parallel, the map that connects points through the stable direction is linear and it preserves proportions. I.e., for every \(j\) and \(p\),
\begin{equation}\label{eq:talete}
    \frac{|W^{1}_{j,sing}|}{|W^1_j|} =  \frac{|W^{2}_{j,sing}|}{|W^2_j|}, \quad \frac{|K_{j,p}^1|}{|W^{1}_j|} = \frac{|K_{j,p}^2|}{|W^{2}_j|}.
\end{equation}
Moreover, because \(\catt^{N_0}\) is continuous on \(\cM \setminus \cS_t^{+,N_0}\), all the points in \( K_{j,p}^{\iota} \subseteq  W_{j}^\iota \setminus  W_{j,sing}^\iota\) have their images in a \(\lambda^{-N_0}\eta\) stable neighborhood of a point of \(\catt^{N_0}K^{\iota^*}_j\). Therefore, according to Definition \ref{def:(p,eta)-coupling} (see also Definition \ref{eq:def-evolution-sf} with \(\catt^{N_0}\) in place of \(\catt\) for the evolution of the weights), the proportion of coupled mass for the families \(\catt^{N_0}\cG_{\iota}\) is not smaller than
\[
\begin{split}
&\sum_{j \in \cI}\sum_{p} \min \biggl\{p_j\frac{|\widetilde K^{1}_{j,p}|}{|W^{1}_j|} \frac{|\catt^{N_0} (K_{j,p}^1)|}{|\catt^{N_0}(\widetilde K_{j,p}^1)|}, p_j \frac{|\widetilde K^{2}_{j,p}|}{|W^{2}_j|} \frac{|\catt^{N_0}(K_{j,p}^2)|}{|\catt^{N_0}(\widetilde K_{j,p}^2)|}\biggr\}.
\end{split}
\]
By linearity, \(\catt\) disappears from the equation and \(|\widetilde K_{j,p}^{\iota}|\) cancels and, by the relations \eqref{eq:talete}, the quantity above is equal to
\[
\begin{split}
 \sum_{j \in \cI} p_j \sum_{p}\frac{ |K_{j,p}^{\iota}|}{|W^{\iota}_j|} =\sum_{j \in \cI} p_{j}\frac{| W^{\iota}_j \setminus W^\iota_{j,sing}|}{|W^{\iota}_j|}
 = 1- \sum_{j \in \cI} p_{j}\frac{| W^\iota_{j,sing}|}{|W^{\iota}_j|}. 
 \end{split}
\]
 This expression for the coupled mass holds for both \(\iota \in\{1,2\}\). Not surprisingly, the mass that will not be coupled is the one supported on points whose stable direction meets a discontinuity. Let \(E \in \bR^+\), depending only on \(\mathscr C^s\) and \(\mathscr C^u\), to be introduced shortly. We denote by \(\widetilde W^{\iota}_{j}\) an \(E\)-enlargement  of \(W^{\iota}_{j}\) (i.e., a segment such that \(|\widetilde W^{\iota}_{j}| = E|W^{\iota}_{j}|\) and \(W^{\iota}_{j}\) is in the center of \(\widetilde W^{\iota}_{j}\)). The following statement holds. There exists an \(E\)-enlargement of \(W^{\iota}_{j}\) with the following property: If \(\ell_x \cap \cS_t^{+, N_0} \supseteq \{z\}\) for some \(x \in W^{\iota}_{j,sing}\), \(z \in \cM\), then there exists a path \(\zeta_x \subseteq  \cS_t^{+, N_0}\) connecting \(z\) to \(\widetilde W^{\iota}_{j}\). In other words, if a singularity path intersects the stable line joining two coupled standard segments, then it must intersect both segments as well. The enlargement is needed because the singularity could actually `slightly miss' the original standard segments. We first prove the statement above. By Lemma \ref{lem:geom-disc}, there is a path inside \(\cS_t^{+, N_0}\) connecting \(z\) to \(\partial \cM\). But, by transversality, the mentioned path forms a positive angle with \(W^{\iota}_j\) and so it must intersect the line containing \(W^{\iota}_j\). However, by \eqref{eq:miminal-length-for-coupling}, the distance between \(W^1_j\) and \(W^2_j\) (not bigger than \(\eta\)) is in the worst case comparable with the size of the coupled segments (not smaller than \(\eta\)) and so it is sufficient to consider enlargements that are big enough to contain the intersection.

Continuing with the proof of the main statement, for any \(x \in W^{\iota}_{j,sing}\), by transversality again, the path \(\zeta_x\) introduced above and \( W^{\iota}_{j}\) describe everywhere a positive angle. Hence, since \(|\ell_x| \le \eta\), there exists \(C_{cone} \in \bR^+\), such that the distance between \(x\) and \(\zeta_x \cap \widetilde W^{\iota}_{j}\) is less than \(C_{cone}\eta\). It follows that any \(x \in W^{\iota}_{j,sing}\) belongs to a \(C_{cone}\eta\) neighborhood of an intersection between \(\widetilde W^{\iota}_{j}\) and \(\cS_t^{+, N_0}\). Therefore, 
\[
\begin{split}
    | W_{j,sing}^\iota|  \le C_{cone} \eta \#\{\cS_t^{+, N_0} \cap \widetilde W^{\iota}_{j}\} \le  C_{cone}\eta \left(K_{N_0} + P_{N_0}E|W^\iota_j|\right),
\end{split}
\]
where we have used Lemma \ref{lem:trasversality2} in the second inequality. By the above estimate, the amount of lost mass does not exceed
\begin{equation}\label{eq:amount-of-uncoupled}
\begin{split}
       \sum_{j \in \cI} p_{j}\frac{| W^\iota_{j,sing}|}{|W^{\iota}_j|}  &\le C_{cone}\eta(K_{N_0}\max_{\iota \in \{1,2\}}\cZ(\cG_{\iota}) + E P_{N_0}) \\
&\le  C_{cone}\eta(K_{N_0} \widetilde Z + E P_{N_0}).
\end{split}
\end{equation}
In the second inequality above we have used the regularity of the two families. We are about to conclude. Define \(L \in \bR^+\) as
\[
L = \sum_{j =0}^{\infty} \lambda^{-jN_0} C_{cone}(K_{N_0} \widetilde Z + E P_{N_0}).
\] 
By \eqref{eq:contraction-uniform-cat}, the mass \(m_j \in [0,1]\) that is still coupled after \(j N_0\) iterations, \(j \in \bN\), is \(\lambda^{-j N_0}\eta\)-coupled. Hence, by repeating the argument above with \(\lambda^{-j N_0}\eta\) in place of \(\eta\) and by equation \eqref{eq:amount-of-uncoupled}, we have that the proportion of initial mass that we fail to couple in the iterations between \(jN_0\) and \((j+1)N_0\) is not bigger than
\[
    C_{cone}\lambda^{-j N_0}\eta(K_{N_0} \widetilde Z + E P_{N_0}) m_j \le  C_{cone}\lambda^{-j N_0}\eta(K_{N_0} \widetilde Z + E P_{N_0}).
\]
Therefore, by summing over the iterations of \(\catt^{N_0}\), the proportion of initial mass that at time \(k N_0\) is \(\lambda^{-kN_0}\)-coupled is not smaller than, for any \(k \in \bN\),
\[
 1-\sum_{j=0}^{k-1} \lambda^{-j N_0}\eta C_{cone}(K_{N_0} \widetilde Z + E P_{N_0}) \ge 1- L\eta.
\]
Recalling \ref{obs:p-eta}, this proves the first part of the Lemma. As for the second part, by Lemma \ref{lem:Z-coupled} and the first part of the statement
\[
 \cZ\left(\Coup_{1-L\eta, \lambda^{- kN_0}\eta}(\catt^{k N_0}\cG_{\iota}, \catt^{k N_0} \cG_{\iota^*})\right) \le \frac{\cZ\left(\catt^{k N_0}\cG_{\iota}\right)}{1- L \eta}.
\]
By Lemma \ref{lem:growth-lemma} and since \(\cG_{\iota}\) are regular, the quantity above is not bigger than
\[
\frac{z^k \cZ(\cG_{\iota}) + Z}{1 - L\eta} \le \frac{z^k \widetilde Z + Z}{1- L \eta} \le \frac{z \widetilde Z + Z}{1- L \eta} \le \widetilde Z,
\]
where in the last inequality we have used that \(\widetilde Z \ge 2Z/(1-z)\) and we are considering \(\eta\) small enough. This concludes the proof of the Lemma.
\end{proof}
According to the previous result, the quantity of mass that `decouples' is proportional to the distance \(\eta\) of the coupled mass, which we expect to tend eventually to zero. However, it has a very bad regularity, since it is supported on small segments. These segments are created near the singularities and are order \(\eta\) as well. In other words, the price for losing very small mass is that we are forced to deal with mass supported on standard families with arbitrary bad regularity. This phenomenon is described by Lemma \ref{lem:Z-coupled}. To aid us, the Growth Lemma asserts that the mass becomes regular at an exponential rate. As it will be clear after the following two sections, the net effect is exponential mixing. 
\subsection{A linear scheme} \label{subsec:linear-scheme}\textit{In this subsection we use the previous results to show that the evolution of coupled and uncoupled mass obeys a particular linear scheme.} Let \(L\) be given by Lemma \ref{lem:push-sf}. We introduce \(\eta_0 \in \bR^+\) small enough such that Lemma \(\ref{lem:push-sf}\) holds and 
\begin{equation}\label{eq:initial-coup-dist}
\eta_{0} \le \frac{1}{2L}.
\end{equation} 
The number \(\eta_0\) is the maximal distance at which we couple mass and it is fixed now once and for all. Let \(N_{\eta_0} \in \bN\) be given by Lemma \ref{lem:first-coupl-s-f} for \(\eta_0\), and set
\begin{equation}\label{eq:time-window}
\begin{split}
&N_{c} = \min \left\{m N_0 : m \in \bN, \text{ }mN_0 \ge N_{\eta_0}, \text{ }\lambda^{-mN_0} < \frac 1 2\right\}, \\
&\hspace{1.7cm} \rho = \lambda^{-N_c} \in \left (0,\frac 1 2 \right).
\end{split}
\end{equation}
Notice that \(N_c \in \bN\) is a multiple of \(N_0\), as requested by Lemma \ref{lem:push-sf}, and is not smaller than \(N_{\eta_0}\), as requested by Lemma \ref{lem:first-coupl-s-f}. We will measure time in units of \(N_c\). To deal with standard families with very small segments we introduce \textit{regularity classes}. Recall \(z\) and \(Z\) from Lemma \ref{lem:growth-lemma}. For \(r \in \bN_0\), we define inductively \(Q_r \in \bR^+\), by
\[
  \begin{split}
Q_0 = \widetilde Z, \quad Q_{r+1} = \left(Q_{r} - Z\right) z^{- \frac{N_c}{N_0}}.
 \end{split}
\]
We have that \(Q_{r}\) grows exponentially in \(r\). Indeed, by writing explicitly \(Q_r\) according to the equation above, for any \(r \in \bN\),
\begin{equation}\label{eq:lower-bound-Q}
\begin{split}
Q_{r} &= \widetilde Z z^{-r \frac{N_c}{N_0} } - Z \sum_{j=1}^r z^{-j\frac{N_c}{N_0} } \ge \widetilde Z z^{-r \frac{N_c}{N_0}}- Z\frac{ z^{-(r+1)\frac{N_c}{N_0}}-1}{z^{-\frac{N_c}{N_0}}-1} \\
&\ge  \left(\widetilde Z - \frac{Z}{1-z^{\frac{N_c}{N_0}}}\right)z^{-r \frac{N_c}{N_0}} \ge \left(\widetilde Z - \frac{Z}{1-z}\right)z^{-r \frac{N_c}{N_0}} \ge\frac{\widetilde Z}{2} z^{-r\frac{N_c}{N_0}},
\end{split}
\end{equation}
where in the last inequality we have used that \(\widetilde Z = 20000Z_{*} \ge 2Z/(1-z)\) as it is clear from \eqref{eq:star-2star}. We say that a standard family \(\cG\) is in the \(r^{th}\)-regularity class if \(\cZ(\cG) \le Q_{r}\) and we write that \(\cG \in H_{r}\). By the first part of Lemma \ref{lem:growth-lemma} and the definition of \(Q_r\), for any \(r \in \bN_0\), \(\cG \in H_{r+1}\),
\[
\cZ (  \catt^{N_c} \cG ) = \cZ( \catt^{N_0\frac{N_c}{N_0}} \cG) \le z^{\frac{N_c}{N_0}} \cZ (\cG) + Z \le z^{\frac{N_c}{N_0}} Q_{r+1} + Z = Q_{r}.
\]
In particular, the last equation implies that
\begin{equation} \label{eq:regularity-one-time}
   \catt^{N_c} \cG \in H_{r} \quad \forall \text{ }\cG \in H_{r+1}.
\end{equation}
In other words, the mass in \(H_{r}\) takes \(r\) iterates of \(\catt^{N_c}\) before becoming mass supported on a regular standard family that we can couple (via Lemma \ref{lem:first-coupl-s-f}). Notice that \(\cG \in H_0\) is equivalent to saying that \(\cG\) is regular. We now introduce some book-keeping tools. Recall that \(\rho \in (0,1/2)\) is defined in \eqref{eq:time-window}.
\begin{definition}\label{def:ckuk}[Coupled-uncoupled decomposition]
For any two standard families \(\cG_1\), \(\cG_2\) and any \(n \in \bN_0\), we write 
\[
  c_{r}^{\cG_1, \cG_2} (n) = c_r, \quad  u_{r}^{\cG_1, \cG_2}(n) = u_r,
\]
for \(r \in \bN_0\), \(c_r, u_r, \in [0,1]\), if there exist \(\rho^{r}\eta_0\)-coupled regular standard families \(\cG_{1,r}^c\), \(\cG_{2,r}^c\) and standard families \(\cG_{1,r}^u\), \(\cG_{2,r}^u\) in \(H_r\) such that, for \(\iota \in \{1,2\}\),
\begin{equation}\label{eq:decomposition-sf}
\begin{split}
   &\mu_{\catt^{nN_c}\cG_\iota} = \sum_{r \in \bN_0} c_r \mu_{\cG_{\iota,r}^c} + \sum_{r \in \bN_0}u_r\mu_{\cG_{\iota,r}^u}.
\end{split}
\end{equation}
\end{definition}
We may drop the superscript and write simply \(c_r(n), u_r(n)\), whenever there is no ambiguity on the standard families. Notice that the decomposition in \eqref{eq:decomposition-sf} may be non unique in the sense that the value of \(c_{r}(n)\) and \(u_{r}(n)\) are not uniquely determined by \(\cG_1\) and \(\cG_2\). Whenever we write that \(c_{r}(n)\) and \(u_{r}(n)\) have some value, we mean that there exist decompositions as in \eqref{eq:decomposition-sf} such that \(c_{r}\) and \(u_{r}\) have that particular value. 
The utility of Definition \eqref{def:ckuk} is that it records only the essential data about the evolution of two standard families: how much mass is coupled and at which distance, and the regularity of the uncoupled part. From now on, let \(p_c, \in (0,1]\) and \(\ve_0 \in \bR^+\) be fixed according to the following: 
\begin{equation}\label{eq:fix-eps0}
\begin{split}
\text{Let } p_c \in (0,1]& \text{ and } \ve_{0} \in \bR^+ \text{ be given by Lemma \ref{lem:first-coupl-s-f}}\\
&\text{  for }\eta = \eta_0 \text{ and } k = N_c,
\end{split}
\end{equation}
where we recall that \(N_c\) was introduced in \eqref{eq:time-window}. The following linear scheme encodes essentially all the results from the last three sections.
\begin{lemma}\label{lem:infinite-matrix-scheme}
  Let  \(t \in [0,\ve_0]\), \(r_0 \in \bN_0\), and \(\cG_1, \cG_2 \in H_{r_0}\) be two standard families. There exist \(\beta_1, \beta_2 \in \bN\) and decompositions as in Definition \ref{def:ckuk} of \(\catt^{nN_c} \cG_{\iota}\), \(\iota \in \{1,2\}\), \(n \in \bN_0\), such that
\[
   c_r(0) = 0 \text{ }\forall r\in \bN_0, \quad  u_{r_0}(0) =  1, \quad u_r(0) = 0 \text{ } \forall r \in \bN_0 \setminus \{r_0\},
\]
and \(c_r(n), u_r(n)\), \(n \in \bN\), are defined inductively by
\[
  \begin{split}
    &c_{r} (n+1) = \left(1- \frac 1 2\rho^{r-1}\right) c_{r-1}(n) \quad \text{if} \text{ }\text{ }  r \in \bN \\
    & c_{0}(n+1) = p_{c}u_0(n)\\
    & u_{0}(n+1) =  u_{1}(n) +(1-p_{c}) u_{0}(n)\\
    &u_{r}(n+1) =  \begin{cases}
      u_{r+1}(n)  +  \frac 1 2\rho^{ \frac{r-\beta_2}{\beta_1}}c_{\frac{r-\beta_2}{\beta_1}}(n), & \text{ if}  \text{ }\text{ }r  \in \mathfrak N,\\
      u_{r+1}(n),  & r \in \bN \setminus \mathfrak N,
    \end{cases} 
  \end{split}
\]
where we have set \(\mathfrak N = \{r \in \bN: (r-\beta_2)/\beta_1 \in \bN_0\}\).
\end{lemma}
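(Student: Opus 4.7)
The plan is to prove the recursion by induction on $n$, reading each line of the scheme as a recipe for what happens to the four types of mass present in the decomposition at time $n$ when we iterate by $\catt^{N_c}$. The base case $n=0$ is immediate from the hypothesis $\cG_\iota\in H_{r_0}$: set $u_{r_0}(0)=1$ and every other coefficient to zero. For the inductive step I will evolve each summand in the decomposition \eqref{eq:decomposition-sf} separately and, using observations \ref{obs:p-eta} and \ref{obs:pless} to freely move mass between categories, place each resulting piece into the slot predicted by the recursion.

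Mass carried by $\cG^u_{\iota,r+1}\in H_{r+1}$, $r\ge 0$, has push-forward $\catt^{N_c}\cG^u_{\iota,r+1}\in H_r$ by the first inequality of Lemma \ref{lem:growth-lemma} together with the defining recursion for $Q_r$; this accounts for the $u_{r+1}(n)$ term in $u_r(n+1)$. Mass carried by the regular uncoupled family $\cG^u_{\iota,0}\in H_0$ is handled by Lemma \ref{lem:first-coupl-s-f} (applicable since $N_c\ge N_{\eta_0}$ by \eqref{eq:time-window}), producing $(p_c,\eta_0)$-coupled regular families; this supplies the $p_c u_0(n)$ term of $c_0(n+1)$ (distance $\rho^0\eta_0$) and the $(1-p_c)u_0(n)$ term of $u_0(n+1)$. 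Mass carried by a $\rho^r\eta_0$-coupled regular pair $\cG^c_{\iota,r}$ is handled by Lemma \ref{lem:push-sf} applied with $\eta=\rho^r\eta_0$ (applicable since $N_c$ is a multiple of $N_0$), yielding $(1-L\rho^r\eta_0,\rho^{r+1}\eta_0)$-coupled families whose coupled part is still regular; using \ref{obs:pless} together with the bound $L\eta_0\le 1/2$ from \eqref{eq:initial-coup-dist}, I lower the coupled proportion to exactly $1-\tfrac12\rho^r$, which accounts for $(1-\tfrac12\rho^{r})c_r(n)$ of $c_{r+1}(n+1)$ and leaves $\tfrac12\rho^r c_r(n)$ of newly-uncoupled mass still to be placed.

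The main obstacle is precisely the placement of this newly-uncoupled piece into a regularity class $H_{r''}$ high enough that the inductive structure can continue. To find $r''$, I will combine Lemma \ref{lem:Z-coupled}, which gives $\cZ\le 3\widetilde Z/(L\rho^r\eta_0)$ for the genuinely uncoupled portion produced by Lemma \ref{lem:push-sf}, with the weighted-average prescription of \ref{obs:pless} that absorbs the downgraded mass of regularity $\le\widetilde Z$ at weight $\tfrac12\rho^r-L\rho^r\eta_0$; a short computation then yields a regularity bound of the form $7\widetilde Z/\rho^r$. Comparing with the exponential lower bound $Q_{r''}\ge(\widetilde Z/2)z^{-r''N_c/N_0}$ from \eqref{eq:lower-bound-Q}, I fix
\[
\beta_1 = \left\lceil \frac{\log(1/\rho)}{(N_c/N_0)\log(1/z)}\right\rceil,\qquad \beta_2 = \left\lceil \frac{\log 14}{(N_c/N_0)\log(1/z)}\right\rceil,
\]
which place the newly-uncoupled mass into $H_{\beta_1 r+\beta_2}$, forcing $\mathfrak N=\{\beta_1 r'+\beta_2:r'\in\bN_0\}$. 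Collecting the four contributions and shifting $r\mapsto r-1$ in the coupled piece then reproduces the stated recursion. Every constant in sight ($L,\eta_0,\rho,\widetilde Z,z,Z,N_c/N_0,p_c$, and hence $\beta_1,\beta_2$) is uniform in $t\in[0,\ve_0]$, so the scheme holds throughout the parameter range.
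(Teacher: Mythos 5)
Your proposal is correct and follows essentially the same route as the paper's proof: induction on $n$, evolving each summand of the decomposition with \eqref{eq:regularity-one-time}, Lemma \ref{lem:first-coupl-s-f}, and Lemma \ref{lem:push-sf}, then using \ref{obs:pless} together with Lemma \ref{lem:Z-coupled} and \eqref{eq:lower-bound-Q} to place the newly-uncoupled mass in $H_{\beta_1 r+\beta_2}$. The only (harmless) differences are that you make $\beta_1,\beta_2$ explicit and bound the uncoupled regularity by $7\widetilde Z/\rho^r$ by averaging after applying Lemma \ref{lem:Z-coupled} at proportion $1-L\rho^r\eta_0$, whereas the paper first reduces to proportion $1-\tfrac12\rho^r$ and gets $6\widetilde Z/\rho^r$.
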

\begin{proof}
We prove the statement by induction. For \(n = 0\) we can always consider \(\cG_1\) and \(\cG_2\) totally uncoupled and the statement follows from the fact that \(\cG_1\) and \(\cG_2\) belong to \(H_{r_0}\) by hypothesis. Let us assume that the statement is true for some \(n \in \bN_0\) and let \(\cG_{\iota,r}^{c/u}\) be given by the decompositions \eqref{eq:decomposition-sf}. Then,
\begin{equation}\label{eq:decomposition-sf-proof}
\begin{split}
   \mu_{\catt^{N_c (n+1)}\cG_\iota} &= \sum_{r \in \bN_0} c_r(n) \mu_{\catt^{N_c}\cG_{\iota,r}^c} + \sum_{r \in \bN_0}u_r(n)\mu_{\catt^{N_c}\cG_{\iota,r}^u}\\
&=\sum_{r \in \bN_0} c_r(n) \mu_{\catt^{N_c}\cG_{\iota,r}^c} + u_0(n)\mu_{\catt^{N_c}\cG_{\iota,0}^u} + \sum_{r \in \bN}u_r(n)\mu_{\catt^{N_c}\cG_{\iota,r}^u}.
\end{split}
\end{equation}
We study all the terms in \eqref{eq:decomposition-sf-proof} one by one. We start with the rightmost sum. By \eqref{eq:regularity-one-time}, the standard families \(\widetilde \cG_{\iota,r}^u := \catt^{N_c}\cG_{\iota,r+1}^u\) belong to \(H_r\), \(r \in \bN_0\), and we have
\begin{equation}\label{eq:matrix-u-u}
\sum_{r \in \bN}u_r(n)\mu_{\catt^{N_c}\cG_{\iota,r}^u} = \sum_{r \in \bN_0}u_{r+1}(n)\mu_{\widetilde \cG_{\iota,r}^u}.
\end{equation}
We now consider the term with \(u_0(n)\). Recalling \eqref{eq:fix-eps0} and Lemma \ref{lem:first-coupl-s-f}, there exist regular \((p_c, \eta_0)\)-coupled standard families \(\widetilde \cG_{\iota,0}^c\) and a regular uncoupled standard family \(\widetilde \cG_{\iota,0}^u\) such that
\begin{equation}\label{eq:matrix-u-c}
\begin{split}
&u_0(n)\mu_{\catt^{N_c}\cG_{\iota,0}^u} =  u_0(n)p_c\mu_{\widetilde \cG_{\iota,0}^c} + u_0(n)(1-p_c)\mu_{\widetilde \cG_{\iota,0}^u}.
\end{split}
\end{equation}
Finally, we consider the leftmost sum in \eqref{eq:decomposition-sf-proof}. By Lemma \ref{lem:push-sf} and since \(\cG_{\iota, r}^c\) and \(\cG_{\iota^*,r}^c\) are \(\eta_0 \rho^{r}\)-coupled, one has that \(\catt^{N_c}\cG_{1,r}^c\) and \(\catt^{N_c}\cG_{2,r}^c\) are 
\[
\begin{split}
   (1 -L\eta_0 \rho^r, \eta_0 \rho^{r+1})-\text{coupled},
\end{split}
\]
where we have also used the definition of \(\rho\). Recalling that \(\eta_0 \le 1/(2L)\), as specified in \eqref{eq:initial-coup-dist}, and possibly by coupling less mass according to \ref{obs:pless}, there exist two standard families \(\cH_{\iota,r} \simeq \catt^{N_c}\cG_{\iota,r}^{c}\) which are \((1 - \rho^r/2, \eta_0 \rho^{r+1})-\)coupled and 
\[
\cZ(\Coup(\cH_{\iota,r}, \cH_{\iota^{*},r})) = \cZ(\Coup(\catt^{N_c}\cG_{\iota,r}^{c},\catt^{N_c}\cG_{\iota^{*},r}^{c})) \le \widetilde Z.
\]
In the equation above, the equality is a consequence of \ref{obs:pless} and the inequality follows by the second part of Lemma \ref{lem:push-sf}. Moreover, by Lemma \ref{lem:Z-coupled},
\[
\begin{split}
&\cZ \left(\cH_{\iota,r} \! \setminus \!\Coup (\cH_{\iota,r}, \cH_{\iota^{*},r}) \right) \!\le \!\frac{3 \cZ(\cH_{\iota,r})}{\frac{1}{2} \rho^{r}} \!=\! \frac{6\cZ(\catt^{N_c}\cG_{\iota,r}^{c})}{\rho^r}\!\le\! \frac{6\bigl(z^{\frac{N_c}{N_0}}\widetilde Z + Z\bigr) }{\rho^{r}} \!\le\!  \frac{6\widetilde Z}{\rho^r},
\end{split}
\]
where in the second inequality we have used the Growth Lemma \ref{lem:growth-lemma} and the fact that \(\cG_{\iota,r}^{c}\) is regular and the equality follows from \(\cH_{\iota,r} \simeq \catt^{N_c}\cG_{\iota,r}^{c}\). To have a bound in terms of regularity classes, notice that there exist \(\beta_1, \beta_2 \in \bN\) big enough such that, for any \(r \in \bN_0\), the quantity above is not bigger than
\[
\frac{\widetilde Z}{2}z^{-\frac{N_c}{N_0}(\beta_1 r + \beta_2)} \le Q_{ \beta_1 r  + \beta_2 },
\]
where we have used \eqref{eq:lower-bound-Q}. Hence, as \(r\) ranges over \(\bN_0\), we have the following \((1-\rho^{r/2}, \eta_0 \rho^{r+1})\)-coupled regular standard families and uncoupled standard families in \(H_{\beta_1 r + \beta_2} \), 
\[
\widetilde \cG_{\iota,r+1}^c :=  \Coup (\cH_{\iota,r}, \cH_{\iota^*,r}), \text{ } \text{ } \widetilde \cG_{\iota, \beta_1 r + \beta_2}^{u} := \cH_{\iota,r} \setminus  \Coup (\cH_{\iota,r}, \cH_{\iota^*,r}),
\]
whose sum, weighted by the coupled/uncoupled mass, is equivalent to \(\catt^{N_c}\cG_{\iota,r}^{c}\). Therefore, we have that
\begin{equation}\label{eq:matrix-u-c2}
\sum_{r \in \bN_0} c_r(n) \mu_{\catt^{N_c}\cG_{\iota,r}^c} = \sum_{r \in \bN_0} c_r(n) \left[ \left(1- \frac 1 2 \rho^r\right) \mu_{\widetilde \cG_{\iota,r+1}^c} + \rho^r\frac 1 2 \mu_{\widetilde \cG_{\iota, \beta_1 r + \beta_2}^{u}}\right].
\end{equation}
By substituing \eqref{eq:matrix-u-u}, \eqref{eq:matrix-u-c} and \eqref{eq:matrix-u-c2} into \eqref{eq:decomposition-sf-proof} we prove the statement.

\end{proof}

\subsection{Exponential decay of correlations for standard families} \textit{In this subsection we use the linear scheme developed in Subsection \ref{subsec:linear-scheme} to show exponential decay of correlations for standard families and prove Proposition \ref{prop:fundamental-example}.} The relations in Lemma \ref{lem:infinite-matrix-scheme} imply that all the mass of two standard families gets closer and closer exponentially. A way to prove this is to introduce a quantity that controls the distance between two standard families and is contracted by the scheme described above. Let \(\beta_1, \beta_2\) be given by Lemma \ref{lem:infinite-matrix-scheme} and \(\psi_- \in (0,1)\), \(\psi_+ \in (1, \infty)\) be such that
\begin{equation}\label{eq:defpsi}
    \psi_- \in \left(\rho, \frac 1 2\right), \quad \psi_+ \in \left(1, \min \left\{\left(\frac{\psi_-}{\rho}\right)^{\frac 1 \beta_1}, [2(1-\psi_-)]^{\frac{1}{\beta_2+1}}, z^{-\frac{N_c}{N_0}} \right\}\right).
\end{equation}
Notice that we can guarantee the above relations by choosing \(\psi_-\) very close to \(1/2\) and \(\psi_{+}\) very close to \(1\) depending on \(\psi_{-}\). The parameter \(z\in(0,1)\) is given by the Growth Lemma. We isolate here a consequence of equation \eqref{eq:defpsi} that will be useful later. For any \(r \in \bN_0\),
\begin{equation}\label{eq:uncunny-algebra}
\begin{split}
 \psi_- \left(1-\frac{\rho^r}{2}\right) + \frac{\rho^r}{2}\left(\frac{\psi_{+}^{\beta_1}}{\psi_{-}} \right)^r \psi_{+}^{\beta_2 + 1} < \psi_- + \frac{\psi_+^{\beta_2 +1}}{2}   <1,
\end{split}
\end{equation}
where in the first inequality we have used the first condition on \(\psi_{+}\) and in the second inequality the second condition. We also introduce the following norm on the set of pairs \(\{(c_r, u_r)\}_r\) of \(\bN_0\)-sequences of nonnegative real numbers,
\begin{equation}\label{eq:norm-def}
       \left\|(c_r,u_r)\right\|_{\star} = \sum_{r \in \bN_0} \psi_-^{r} c_r + \sum_{r \in \bN_0}\psi_+^{r+1}u_r.
\end{equation}
Let \((c_r(n),u_r(n))\), \(n \in \bN_0\), be as in the statement of Lemma \ref{lem:infinite-matrix-scheme} (for any \(r_0 \in \bN_0\)).
\begin{lemma}\label{lem:contraction-strange-norm} There exists \(\tau \in (0,1)\) such that, for all \(n \in \bN_0\),
  \[
  \|(c_r(n+1), u_r(n+1))\|_{\star} \le \tau \|(c_r(n), u_r (n))\|_{\star}.
  \]
\end{lemma}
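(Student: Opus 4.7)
The plan is to substitute the recurrences of Lemma \ref{lem:infinite-matrix-scheme} into the definition \eqref{eq:norm-def} and track the coefficients of $c_r(n)$ and $u_r(n)$ that appear in the expression for $\|(c(n+1),u(n+1))\|_\star$. It is natural to split the norm as $C(n) + U(n)$, with $C(n) := \sum_{r \in \bN_0} \psi_-^r c_r(n)$ and $U(n) := \sum_{r \in \bN_0} \psi_+^{r+1} u_r(n)$, and to compute each piece separately before recombining.

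For $C(n+1)$, the rules $c_r(n+1) = (1-\rho^{r-1}/2)\,c_{r-1}(n)$ and $c_0(n+1)=p_c u_0(n)$ give, after the index shift $r \mapsto r+1$, the identity $C(n+1) = p_c u_0(n) + \psi_-\sum_{r}\psi_-^r(1-\rho^r/2)\,c_r(n)$. For $U(n+1)$ I would use the shift $u_r(n+1)=u_{r+1}(n)$ away from the re-entry indices, the recursion for $u_0(n+1)$, and the reindexing $r = \beta_1 k + \beta_2$ on $\mathfrak N$, together with the elementary identity $\sum_{s \ge 1}\psi_+^s u_s(n) = U(n)/\psi_+ - u_0(n)$, to arrive at
\begin{equation*}
U(n+1) = [\psi_+(1-p_c) - 1]\,u_0(n) + \frac{U(n)}{\psi_+} + \frac{\psi_+^{\beta_2+1}}{2}\sum_{k \in \bN_0}(\psi_+^{\beta_1}\rho)^k c_k(n).
\end{equation*}

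Adding the two identities, the coefficient of $u_0(n)$ collapses to $(\psi_+-1)(1-p_c)$, and the coefficient of $c_k(n)$ factorizes as $\psi_-^k\bigl[\psi_-(1-\rho^k/2) + (\psi_+^{\beta_2+1}/2)(\psi_+^{\beta_1}\rho/\psi_-)^k\bigr]$. Absorbing $u_0(n)$ into $U(n)$ via the trivial bound $\psi_+ u_0(n)\le U(n)$, the $u$-contribution collects into $\tau_2 U(n)$ with $\tau_2 := (1-p_c) + p_c/\psi_+ < 1$. The bracket in front of $c_k(n)$ is capped uniformly in $k$ by $\tau_1 := \psi_-+\psi_+^{\beta_2+1}/2$ thanks to the algebraic inequality \eqref{eq:uncunny-algebra}: the condition $\psi_+^{\beta_1}<\psi_-/\rho$ in \eqref{eq:defpsi} makes the ratio $\psi_+^{\beta_1}\rho/\psi_-$ strictly less than $1$, so the geometric factor is maximized at $k=0$, and the condition $\psi_+^{\beta_2+1}<2(1-\psi_-)$ then forces $\tau_1<1$. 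Setting $\tau := \max(\tau_1, \tau_2)$ yields the desired contraction $\|(c(n+1),u(n+1))\|_\star \le \tau\,[C(n)+U(n)]$.

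The main difficulty is the balancing act implicit in \eqref{eq:defpsi}. Decoupling mass flows from a coupled slot $c_r$ into the much coarser uncoupled slot $u_{\beta_1 r + \beta_2}$, and once inside $U$ it carries an exponentially growing weight $\psi_+^{\beta_2+1}\psi_+^{\beta_1 r}$, while coupled mass only loses a factor $\rho$ per step in the $C$-weights. Without the precise link between $\psi_-$, $\psi_+$, $\rho$, $\beta_1$ and $\beta_2$ imposed in \eqref{eq:defpsi}, the geometric series governing the $c\to u$ re-entry would diverge and the uniform cap $\tau_1<1$ would fail; every other ingredient is routine bookkeeping of a telescoping/shift-based sum.
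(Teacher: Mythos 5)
Your proposal is correct and follows essentially the same route as the paper: substitute the recurrences of Lemma \ref{lem:infinite-matrix-scheme} into \eqref{eq:norm-def}, regroup so that the coefficient of $c_k(n)$ becomes $\psi_-^k\bigl[\psi_-(1-\rho^k/2)+\tfrac{\psi_+^{\beta_2+1}}{2}(\psi_+^{\beta_1}\rho/\psi_-)^k\bigr]$, bound it uniformly via \eqref{eq:uncunny-algebra}, and control the $u$-part through the factor $p_c/\psi_+ + (1-p_c)<1$. The only cosmetic difference is that you absorb $u_0(n)$ into $U(n)$ before taking the maximum, whereas the paper keeps the coefficients $\psi_+^{-1}p_c+(1-p_c)$ and $\psi_+^{-1}$ separate inside the max; the two bookkeepings are equivalent.
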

\begin{proof}
 We find convenient to split the sums in the \(\|\cdot\|_{\star}\) norm in the following way,
  \begin{equation}\label{eq:norm0}
\begin{split}
 \|(c_r(n+1),& u_r(n+1))\|_{\star} = \sum_{r \in \bN_0}\psi_-^{r+1} c_{r+1}(n+1) + c_0(n+1) \\
&+ \psi_+ u_0(n+1) + \sum_{r \in \bN \setminus \mathfrak N}\psi_+^{r+1}u_r(n+1) + \sum_{r \in \mathfrak N}\psi_+^{r+1}u_{r}(n+1).
\end{split}
\end{equation}
Recalling Lemma \ref{lem:infinite-matrix-scheme} and the definition of \(\mathfrak N\) there, equation \eqref{eq:norm0} is equal to
\begin{equation}\label{eq:norm1}
\begin{split}
&\sum_{r \in \bN_0}\psi_-^{r+1} \left(1- \frac 1 2\rho^{r}\right) c_r(n)  + p_c u_0(n) + \psi_+ \left( u_{1}(n) +(1-p_{c}) u_{0}(n) \right) \\
&+ \sum_{r \in \bN \setminus \mathfrak N}\psi_+^{r+1}u_{r+1}(n) + \sum_{r \in \mathfrak N}\psi_+^{r+1} \left(u_{r+1}(n) +  \frac 1 2\rho^{ \frac{r-\beta_2}{\beta_1}}c_{\frac{r-\beta_2}{\beta_1}}(n)\right).
\end{split}
\end{equation}
Grouping similar terms together and after some algebra, \eqref{eq:norm1} becomes
\begin{equation}
\begin{split}
\sum_{r \in \bN_0}\biggl[\psi_-^{r+1} &\left(1- \frac 1 2\rho^{r}\right)   + \psi_+^{r\beta_1 + \beta_2+1}  \frac 1 2\rho^{r} \biggr]c_r(n) \\
&+\left(p_c + \psi_+ (1-p_c)\right)u_0(n) + \sum_{r \in \bN_0}\psi_+^{r+1} u_{r+1}(n)\\
&\!\!\!\!\!\!\!\!\!\!\!\!\!\!\!\!\!\!\!\!\!\!\!\!\!\!\!\!\!\!\!\!\!=  \sum_{r \in \bN_0}\left[\psi_- \left(1-\frac{\rho^r}{2}\right) + \frac{\rho^r}{2}\biggl(\frac{\psi_{+}^{\beta_1}}{\psi_{-}} \biggr)^r \psi_{+}^{\beta_2 + 1}\right]\psi_{-}^r c_r(n)\\
& +\left(\psi_{+}^{-1}p_c +  (1-p_c)\right) \psi_+ u_0(n) + \psi_+^{-1}\sum_{r \in \bN}\psi_+^{r+1} u_{r}(n).
\end{split}
\end{equation}
Recalling \eqref{eq:uncunny-algebra}, this is less that \(\tau \|c_r(n), u_r(n)\|_{\star}\) where,
\[
\begin{split}
   \tau = \max \Biggl\{\psi_{+}^{-1}p_c &+  (1-p_c), \\
 &\sup_{r \in \bN_{0}}\biggl[\psi_- \left(1-\frac{\rho^r}{2}\right) + \frac{\rho^r}{2}\biggl(\frac{\psi_{+}^{\beta_1}}{\psi_{-}} \biggr)^r \psi_{+}^{\beta_2 + 1}\biggr]\Biggr\}<1.
\end{split}
\]
This concludes the proof of the Lemma.
\end{proof}

At this point, we illustrate how the previous results lead to decay of correlations.

\begin{lemma}\label{lem:decay-pre-standard}
  There exist \(C \in \bR^+\) and \(\gamma \in (0,1)\) such that, for each \(t \in [0, \ve_0]\) and any two standard families \(\cG_1\) and \(\cG_2\), we have
  \[
\begin{split}
      \bigl|\mu_{\cG_1}\left(\vf \circ \catt^n\right) &- \mu_{\cG_2}\left(\vf \circ \catt^n\right)\bigr| \le C  \max\left\{\cZ(\cG_1), \cZ(\cG_2) \right\} \|\vf\|_{\cC^1(\cM)}\gamma^n,
\end{split}
  \]
for any \(\vf \in \cC^1(\cM)\) and \(n \in \bN_0\).
\end{lemma}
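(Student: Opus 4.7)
The plan is to combine the linear scheme of Lemma \ref{lem:infinite-matrix-scheme} with the exponential contraction of $\|\cdot\|_\star$ from Lemma \ref{lem:contraction-strange-norm}. First I would reduce to the case where $n$ is a multiple of $N_c$: writing $n = m N_c + q$ with $0 \le q < N_c$, the pushforward identity \eqref{eq:pushforward-cat} rewrites the quantity of interest as $|\mu_{\catt^q \cG_1}(\vf \circ \catt^{m N_c}) - \mu_{\catt^q \cG_2}(\vf \circ \catt^{m N_c})|$, and the second part of the Growth Lemma \ref{lem:growth-lemma}, combined with $\cZ(\cG) \ge 1/\sqrt{2}$, yields $\cZ(\catt^q \cG_\iota) \le C \cZ(\cG_\iota)$. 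Thus, up to enlarging the constant $C$ and losing a factor $\tau^{-1}$ absorbable into $\gamma = \tau^{1/N_c}$, it suffices to prove the bound for $n = m N_c$.

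Next, given $\cG_1, \cG_2$, I would choose $r_0 \in \bN_0$ minimal so that $\max\{\cZ(\cG_1), \cZ(\cG_2)\} \le Q_{r_0}$, so that $\cG_\iota \in H_{r_0}$, and apply Lemma \ref{lem:infinite-matrix-scheme} with this $r_0$. The resulting decomposition gives
\[
|\mu_{\catt^{m N_c} \cG_1}(\vf) - \mu_{\catt^{m N_c} \cG_2}(\vf)| \le \sum_{r \in \bN_0} c_r(m) |\mu_{\cG_{1,r}^c}(\vf) - \mu_{\cG_{2,r}^c}(\vf)| + \sum_{r \in \bN_0} u_r(m) |\mu_{\cG_{1,r}^u}(\vf) - \mu_{\cG_{2,r}^u}(\vf)|.
\]
Since $\cG_{1,r}^c$ and $\cG_{2,r}^c$ are $(1, \eta_0 \rho^r)$-coupled, observation \ref{item:dist} yields $|\mu_{\cG_{1,r}^c}(\vf) - \mu_{\cG_{2,r}^c}(\vf)| \le \eta_0 \rho^r \|\vf\|_{\cC^1}$, while for the uncoupled pairs the crude bound $2 \|\vf\|_{\cC^0}$ suffices. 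Using $\rho < \psi_-$ from \eqref{eq:defpsi} to dominate $\rho^r \le \psi_-^r$, and $\psi_+ > 1$ to dominate $1 \le \psi_+^r$, the total is bounded by $C \|\vf\|_{\cC^1} \|(c_r(m), u_r(m))\|_\star$.

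Iterating Lemma \ref{lem:contraction-strange-norm} and using the initial datum $u_{r_0}(0) = 1$, $c_r(0) = u_{r'}(0) = 0$ for $r' \neq r_0$ provided by Lemma \ref{lem:infinite-matrix-scheme}, one has $\|(c_r(m), u_r(m))\|_\star \le \tau^m \psi_+^{r_0 + 1}$. The final step is to control $\psi_+^{r_0}$ linearly by $\max\{\cZ(\cG_1), \cZ(\cG_2)\}$: the lower bound \eqref{eq:lower-bound-Q} $Q_r \ge (\widetilde Z/2) z^{-r N_c/N_0}$ together with the minimality of $r_0$ forces $r_0 \le (N_0/N_c) \log(2\max\{\cZ(\cG_1),\cZ(\cG_2)\}/\widetilde Z)/\log(1/z) + 1$, and the constraint $\psi_+ < z^{-N_c/N_0}$ from \eqref{eq:defpsi} ensures that the exponent $\alpha = (N_0/N_c) \log \psi_+ / \log(1/z)$ is strictly less than $1$, so that $\psi_+^{r_0} \le C \max\{\cZ(\cG_1), \cZ(\cG_2)\}^\alpha \le C' \max\{\cZ(\cG_1), \cZ(\cG_2)\}$ (using $\cZ \ge 1/\sqrt 2$). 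Setting $\gamma = \tau^{1/N_c}$ and combining everything yields the claim. The most delicate point, and the true content of the argument, is precisely this final calibration: the choices of $\rho$, $\psi_\pm$, and the growth rate of the regularity classes $Q_r$ have been tuned in \eqref{eq:defpsi} and \eqref{eq:lower-bound-Q} exactly so that the exponential blow-up $\psi_+^{r_0}$ coming from the initial lack of regularity is absorbed by the \emph{linear} factor $\max\{\cZ(\cG_1), \cZ(\cG_2)\}$ permitted in the target estimate.
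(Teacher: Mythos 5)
Your proposal is correct and follows essentially the same route as the paper's proof: split off the remainder $n \bmod N_c$ using the Growth Lemma, place the families in a regularity class $H_{r_0}$ with $r_0 \sim \log\max\{\cZ(\cG_1),\cZ(\cG_2)\}$, bound the difference via the linear scheme of Lemma \ref{lem:infinite-matrix-scheme}, \ref{item:dist} and the $\|\cdot\|_{\star}$-norm, contract with Lemma \ref{lem:contraction-strange-norm}, and absorb $\psi_+^{r_0+1}$ into $C\max\{\cZ(\cG_1),\cZ(\cG_2)\}$ using $\psi_+ < z^{-N_c/N_0}$ from \eqref{eq:defpsi}. The only cosmetic difference is that the paper fixes $r_0$ by the explicit formula \eqref{eq:initial-reg-class0} (with the factor $Z_{**}$ so that all pre-iterates $\catt^{n_0}\cG_\iota$ lie in $H_{r_0}$) rather than your minimal choice, which amounts to the same calibration.
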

\begin{proof}
Recall \(Z_{**}\in \bR^+\), \(z \in (0,1)\) and \(N_0 \in \bN\) from the Growth Lemma and define
\begin{equation}\label{eq:initial-reg-class0}
  r_0 = \max\left\{\left\lfloor\frac{N_0}{N_c} \ln_{z^{-1}}\biggr(Z_{**}\max\left\{\cZ(\cG_1), \cZ(\cG_2)\right\}\biggl)\right\rfloor + 1, 0\right\}.
\end{equation}
By the Growth Lemma and the definition above, for any \(n_0 \in \bN_0\),
\[
\begin{split}
   \max\left\{\cZ(\catt^{n_0}\cG_1), \cZ(\catt^{n_0}\cG_2)\right\} &\le Z_{**}\max\left\{\cZ(\cG_1), \cZ(\cG_2)\right\}\\
   &\le  z^{-r_0 N_c / N_0} \le z^{-r_0 N_c / N_0} \widetilde Z/2 \le Q_{r_0},
\end{split}
\]
where we have used \eqref{eq:lower-bound-Q} in the last inequality. Hence, for any given \(n_0 \in \bN_0\), we have that \(\catt^{n_0}\cG_{\iota}\) belongs to the regularity class \(H_{r_0}\), and we have the following estimate that will be useful later on
\begin{equation}\label{eq:initial-norm}
\begin{split}
&\biggl\|\bigl(c^{\catt^{n_0} \cG_1, \catt^{n_0} \cG_2}_r (0), u^{\catt^{n_0} \cG_1, \catt^{n_0} \cG_2}_r (0)\bigr) \biggr\|_{\star} \le  \psi_{+}^{r_0+1} \\
&\le C \psi_{+}^{\frac{N_0}{N_c}\ln_{z^{-1}} \max\{\cZ(\cG_1), \cZ(\cG_2)\}}  \le C\max\left\{\cZ(\cG_1), \cZ(\cG_2) \right\},
\end{split}
\end{equation}
where the first inequality follows from the definition of the norm and the regularity of \(\catt^{n_0} \cG_{\iota}\), in the second we have used the definition \eqref{eq:initial-reg-class0} of \(r_0\), and the rightmost inequality is a consequence of the last condition on \(\psi_{+}\) in \eqref{eq:defpsi}. 

For any \(n \in \bN_0\), we write \(n = \left\lfloor n/N_c\right\rfloor N_c +n_0\), \(n_0 \in\{0,...,N_c -1\}\). According to this decomposition,
\begin{equation}\label{eq:corr-two-sp}
\begin{split}
    \bigl|\mu_{\cG_1}(\vf \circ \catt^n)& - \mu_{\cG_2} (\vf \circ \catt^n ) \bigr| \\
&= \left|\mu_{\catt^{n_0} \cG_1}(\vf \circ \cF_t^{\left\lfloor \frac{n}{N_c}\right \rfloor N_c}) - \mu_{\catt^{n_0} \cG_2} (\vf \circ \catt^{\left\lfloor \frac{n}{N_c}\right\rfloor N_c})\right|, 
\end{split}
\end{equation}
Applying Lemma \ref{lem:infinite-matrix-scheme} to the two families \(\catt^{n_0}\cG_{\iota} \in H_{r_0}\) and recalling the decomposition \eqref{eq:decomposition-sf} and the corresponding notation, we \mbox{re-write the expression above as}
\[
\begin{split}
    \biggl|\sum_{r \in \bN_0} c_r^{\catt^{n_0} \cG_1, \catt^{n_0} \cG_2} &\left(\left\lfloor n/N_c \right\rfloor\right) (\mu_{\cG_{1,r}^c}(\vf) - \mu_{\cG_{2,r}^c}(\vf)) \\
&+ \sum_{r \in \bN_0} u_r^{\catt^{n_0} \cG_1, \catt^{n_0} \cG_2} \left(\left\lfloor  n/N_c\right\rfloor\right) (\mu_{\cG_{1,r}^u}(\vf) - \mu_{\cG_{2,r}^u}(\vf)) \biggr|.
\end{split}
\]
Since by definition \(\cG_{1,r}^c\) and \(\cG_{2,r}^c\) are \(\eta_0 \rho^r\)-coupled, by \ref{item:dist}, \(|\mu_{\cG_{1,r}^c}(\vf) - \mu_{\cG_{2,r}^c}(\vf)| \le \eta_0 \rho^{r} \|\vf'\|_{\cC^0}\). Hence, by the triangular inequality, the previous quantity is not bigger than
\[
\begin{split}
\sum_{r \in \bN_0} c_r^{\catt^{n_0} \cG_1, \catt^{n_0} \cG_2} &\left(\left\lfloor  n/N_c\right\rfloor\right)\rho^{r}\eta_0\|\vf'\|_{\cC^0} \\
&+ 2\sum_{r \in \bN_0} u_r^{\catt^{n_0} \cG_1, \catt^{n_0} \cG_2} \left(\left\lfloor  n/N_c\right\rfloor\right)\|\vf\|_{\cC^0}.
\end{split}
\]
By \eqref{eq:defpsi}, we have that \(\rho < \psi_-\) and \(\psi_+ > 1\). Hence, the quantity above can be bounded using the \(\|\cdot\|_{\star}\)-norm by
\[
\begin{split}
      &\max\{2,\eta_0\}\biggl\|\bigl(c_r^{\catt^{n_0} \cG_1, \catt^{n_0} \cG_2}\left(\left\lfloor  n/N_c \right\rfloor \right), u_r^{\catt^{n_0} \cG_1, \catt^{n_0} \cG_2}  \left (\left\lfloor  n/N_c \right \rfloor \right)\bigr)\biggr\|_{\star}\|\vf\|_{\cC^1}.
\end{split}
\]
Therefore, by Lemma \ref{lem:contraction-strange-norm}, there exists \(\tau \in (0,1)\) such that the previous expression is not bigger than
\[
\begin{split}
&\max\{2,\eta_0\}\tau^{\left\lfloor \frac{n}{N_c}\right\rfloor }\biggl\|\bigl(c_r^{\catt^{n_0} \cG_1, \catt^{n_0} \cG_2}\left(0\right), u_r^{\catt^{n_0} \cG_1, \catt^{n_0} \cG_2}\left ( 0\right)\bigr)\biggr\|_{\star} \|\vf\|_{\cC^1}.
\end{split}
\]
By \eqref{eq:initial-norm} and setting \(\gamma = \tau^{1/N_c} \in (0,1)\), the quantity above, that is a bound for the difference \eqref{eq:corr-two-sp} between correlations of two standard families, is estimated with
\[
      C\max\left\{\cZ(\cG_1), \cZ(\cG_2) \right\}\tau^{\left\lfloor\frac{n}{N_c}\right\rfloor}\|\vf\|_{\cC^1} \le  C \max\left\{\cZ(\cG_1), \cZ(\cG_2) \right\} \gamma^n \|\vf\|_{\cC^1},
\]
concluding the proof of the Lemma.
\end{proof}

\begin{lemma}\label{lem:phys-measure}
There exist \(C \in \bR^+\) and \(\gamma \in (0,1)\) such that, for every \(t \in [0, \ve_0]\), there exists a measure \(\mu_t\) such that, for any standard family \(\cG\) and \(n \in \bN_0\),
\[
\begin{split}
&\lim_{m \to \infty} \mu_{\cG}\left(\vf \circ \catt^m\right) = \mu_t(\vf), \quad \forall \vf \in \cC^0(\cM),\\
&\left |\mu_{\cG}\left(\vf \circ \catt^n\right) - \mu_t(\vf)\right| \le C \cZ(\cG)\|\vf\|_{\cC^1}\gamma^n, \quad \forall \vf \in \cC^1(\cM).
\end{split}
\]
\end{lemma}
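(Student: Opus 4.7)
The plan is to upgrade Lemma \ref{lem:decay-pre-standard} (which bounds the distance between two standard families) to show that a single family equidistributes to a \(\cG\)-independent limit. The key observation will be that the semigroup property of standard family evolution, namely \(\mu_{\cG}(\vf \circ \catt^m) = \mu_{\catt^{m-n}\cG}(\vf \circ \catt^n)\) for \(m > n\) (which follows from iterating \eqref{eq:pushforward-cat}), allows me to rewrite the difference of two iterates of the same family as a difference of two families evaluated at the same iterate.

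First I will establish existence of the limit for \(\vf \in \cC^1(\cM)\). Applying Lemma \ref{lem:decay-pre-standard} to the pair \(\cG\) and \(\catt^{m-n}\cG\) and using the semigroup relation gives
\[
\bigl|\mu_{\cG}(\vf \circ \catt^n) - \mu_{\cG}(\vf \circ \catt^m)\bigr| \le C \max\{\cZ(\cG),\cZ(\catt^{m-n}\cG)\}\|\vf\|_{\cC^1}\gamma^n.
\]
By the second statement of the Growth Lemma \ref{lem:growth-lemma}, \(\cZ(\catt^{m-n}\cG)\le \max\{Z_{**}\cZ(\cG), Z_*\}\) uniformly in \(m\), and since every standard family satisfies \(\cZ(\cG) \ge 1/\sqrt 2\) (all standard curves are contained in \(\cM\) and have length \(\le \sqrt 2\)), this maximum is bounded by \(C\cZ(\cG)\). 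Therefore \(\{\mu_{\cG}(\vf \circ \catt^m)\}_m\) is Cauchy and the limit exists.

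Next I will extend the limit to \(\vf \in \cC^0(\cM)\). Since \(|\mu_{\cG}(\vf \circ \catt^m)| \le \|\vf\|_{\cC^0}\) uniformly and \(\cC^1(\cM)\) is dense in \(\cC^0(\cM)\), a standard approximation argument shows the limit exists for every continuous \(\vf\) and is independent of the approximating sequence. Call this limit \(\mu_t(\vf)\). Linearity, positivity on nonnegative \(\vf\), and the bound \(|\mu_t(\vf)|\le \|\vf\|_{\cC^0}\) yield via Riesz representation that \(\mu_t\) is a Borel probability measure. Independence of the reference family is immediate from Lemma \ref{lem:decay-pre-standard}: for any two standard families \(\cG\) and \(\cG'\),
\[
|\mu_{\cG}(\vf \circ \catt^m) - \mu_{\cG'}(\vf \circ \catt^m)| \le C\max\{\cZ(\cG),\cZ(\cG')\}\|\vf\|_{\cC^1}\gamma^m \to 0.
\]

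Finally, the quantitative rate in the lemma is obtained by passing \(m\to\infty\) in the Cauchy estimate displayed above, which yields
\[
|\mu_{\cG}(\vf \circ \catt^n) - \mu_t(\vf)| \le C \cZ(\cG)\|\vf\|_{\cC^1}\gamma^n.
\]
I do not expect any genuine obstacle: the entire argument is a clean application of the triangle inequality combined with the semigroup relation for pushed-forward standard families and the two inputs already established, namely Lemma \ref{lem:decay-pre-standard} and the Growth Lemma \ref{lem:growth-lemma}. The only point requiring mild care is the uniform control of \(\cZ(\catt^{m-n}\cG)\) by a constant multiple of \(\cZ(\cG)\), which, as noted, follows from the universal lower bound \(\cZ(\cG)\ge 1/\sqrt 2\).
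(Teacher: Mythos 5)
Your proposal is correct and follows essentially the same route as the paper: both apply Lemma \ref{lem:decay-pre-standard} to the pair \(\cG\), \(\catt^{m-n}\cG\) (via the pushforward identity \eqref{eq:pushforward-cat}), use the Growth Lemma \ref{lem:growth-lemma} together with \(\cZ(\cG)\ge 1/\sqrt2\) to absorb the uniform bound on \(\cZ(\catt^{m-n}\cG)\) into \(C\cZ(\cG)\), and conclude by a Cauchy argument plus passage to the limit for the quantitative rate. The only cosmetic difference is that you prove convergence for \(\cC^1\) observables first and extend to \(\cC^0\) by density, whereas the paper inserts the \(\cC^1\) approximation directly inside the Cauchy estimate; the content is identical.
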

\begin{proof}
Let \(\cG\) be any standard family. By the Growth Lemma \ref{lem:growth-lemma}, for any \(m \in \bN_0\), 
\begin{equation}\label{eq:Z-nonincrease}
\cZ(\catt^m \cG) \le \max\{Z_{**}\cZ(\cG), Z_{*}\}.
\end{equation}
Let \(\vf \in \cC^0(\cM)\) and fix any \(\ve \in \bR^+\). Take \(\vf_{\ve} \in \cC^1(\cM)\) such that \(\|\vf_{\ve} - \vf\|_{\cC^0} = \ve\). By Lemma \ref{lem:decay-pre-standard}, there exists \(C \in \bR^+\) and \(\overline n \in \bN\) such that for any \(n \ge \overline n\), \(m \in \bN_0\),
\[
\begin{split}
\bigl| \mu_{\catt^n \cG}(\vf) -& \mu_{\catt^{n+m} \cG}(\vf)\bigr| \le \left| \mu_{\catt^n \cG}(\vf) - \mu_{\catt^{n} \cG}(\vf_j)\right| \\
&+ \left| \mu_{\cG}(\vf_j \circ \catt^n) - \mu_{\catt^{m} \cG}(\vf_j \circ \catt^n)\right| + \left| \mu_{\catt^{n+m} \cG}(\vf_j) - \mu_{\catt^{n+m} \cG}(\vf)\right|\\
&\le 2\ve + C \gamma^n \max \{\cZ(\cG), \cZ(\catt^m \cG)\}\|\vf_{\ve}\|_{\cC^1}  \le 3 \ve,
\end{split}
\]
where in the last inequality we have used \eqref{eq:Z-nonincrease}. Therefore \(\{\mu_{\catt^n \cG}\}_{n \in \bN}\) is a Cauchy sequence (in the weak topology) and it converges to a measure \(\mu_t\). Furthermore, also any measure associated with any other standard family \(\widetilde \cG\) has the same limit. Indeed, for any \(\ve \in \bR^+\), approximating \(\vf\) with \(\cC^1\) functions \(\vf_{\ve}\) as before and by using that \(\mu_{\catt^m\cG}\) tends to \(\mu_t\), there exists an \(\overline n \in \bN\) such that for any \(n \ge \overline n\),
\[
\begin{split}
|\mu_{\widetilde \cG}(\vf\circ \catt^n) - \mu_t(\vf)| & = \lim_{m \to \infty}|\mu_{\catt^n\widetilde \cG}(\vf) - \mu_{\catt^{n+m} \cG}(\vf)| \\
&\le 2\ve + C\gamma^n \max \left\{\cZ(\widetilde \cG), \cZ(\cG), Z_{*}\right\}\|\vf_{\ve}\|_{\cC^1} \le 3 \ve,
\end{split}
\]
where the first inequality can be deduced as in the equation above using \eqref{eq:Z-nonincrease} and Lemma \ref{lem:decay-pre-standard}. This shows the first part of the Lemma. Using the result we just proved and Lemma \ref{lem:decay-pre-standard}, for any \(\vf \in \cC^1(\cM)\) and standard family \(\cG\),
\[
\begin{split}
\bigl|\mu_{\cG}(\vf \circ \catt^n ) \!-\! \mu_t(\vf) \bigr| & = \!\!\lim_{m \to \infty}\!|\mu_{\cG}(\vf \circ \catt^n ) \!- \!\mu_{\catt^m \cG}(\vf \circ \catt^n) |\!\le\! C \gamma^n\! \cZ(\cG)\|\vf\|_{\cC^1},
\end{split}
\]
concluding the proof of the Lemma.
\end{proof}
We have just obtained that constant probability densities supported on segments converge to the physical measure exponentially fast. Since these constant densities may be supported on arbitrarily short segments, we can upgrade the result to \(\cC^1\) densities on standard segments by approximation.
\begin{lemma}\label{lem:spc1}
There exist \(C \in \bR^+\) and \(\gamma \in (0,1)\) such that, for all \(t \in [0,\ve_0]\), \(W \in \cW\), \(\rho \in \cC^1(W, \bR)\) and \(n \in \bN_0\),
\[
\left|\int_{W} (\vf \circ \catt^n) \rho - \mu_t(\vf)\int_{W} \rho \right| \le C \gamma^n \|\rho\|_{\cC^1}\|\vf\|_{\cC^1}.
\]
\end{lemma}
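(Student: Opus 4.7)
The strategy is to discretize the density $\rho$ into a piecewise constant function on $W$ at a scale $\ell>0$ chosen to balance the approximation error against the decay rate provided by Lemma \ref{lem:phys-measure}. For constant densities on a standard segment $V \in \cW$, Lemma \ref{lem:phys-measure} applied to the standard family $\cG_V = \{(V,1)\}$ (for which $\cZ(\cG_V) = 1/|V|$) gives, for every $\vf \in \cC^1(\cM)$,
\[
\left|\int_V \vf\circ\catt^n - \mu_t(\vf)\,|V|\right| \le C\,\|\vf\|_{\cC^1}\,\gamma^n,
\]
where the factor $|V|$ from multiplying $\mu_{\cG_V}(\vf\circ \catt^n)$ by $|V|$ cancels the $1/|V|$ from $\cZ(\cG_V)$; crucially, the resulting bound is \emph{independent of $|V|$}.

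Fix $\ell \in (0,\sqrt{2}]$ to be chosen. Partition $W$ into consecutive (standard) subsegments $W_1,\dots,W_N$ with $|W_i|\le \ell$ and $N \le |W|/\ell + 1$; all $W_i$ inherit the slope of $W$, so each $W_i\in \cW$. For each $i$, pick $x_i \in W_i$ and write
\[
\int_W (\vf\circ \catt^n)\rho - \mu_t(\vf)\int_W \rho \;=\; \sum_{i=1}^N \rho(x_i)\left[\int_{W_i} \vf\circ \catt^n - |W_i|\mu_t(\vf)\right] + E_{\mathrm{approx}},
\]
where $E_{\mathrm{approx}} = \sum_i \int_{W_i} (\vf\circ\catt^n - \mu_t(\vf))(\rho - \rho(x_i))$. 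By the displayed estimate above, each term in the sum is at most $C\|\rho\|_{\cC^0}\|\vf\|_{\cC^1}\gamma^n$ in absolute value, so their total contribution is bounded by $C N \|\rho\|_{\cC^0}\|\vf\|_{\cC^1}\gamma^n$. The approximation remainder satisfies $|E_{\mathrm{approx}}| \le 2\|\vf\|_{\cC^0}\|\rho'\|_{\cC^0}\ell\,|W|$ since $|\rho(x) - \rho(x_i)| \le \|\rho'\|_{\cC^0}\ell$ on $W_i$.

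Combining these bounds and using $|W|\le \sqrt{2}$, we obtain
\[
\left|\int_W (\vf\circ \catt^n)\rho - \mu_t(\vf)\int_W \rho\right|
 \le C\bigl(\ell^{-1}\|\rho\|_{\cC^0}\|\vf\|_{\cC^1}\gamma^n + \ell\,\|\vf\|_{\cC^0}\|\rho'\|_{\cC^0}\bigr).
\]
Choosing $\ell = \min\{\sqrt{2},\gamma^{n/2}\}$ balances the two terms and yields a bound of order $\|\vf\|_{\cC^1}\|\rho\|_{\cC^1}\gamma^{n/2}$. When $|W|< \gamma^{n/2}$ the partition collapses to $N=1$ and the same estimate holds directly (the approximation term is then $O(\|\rho'\|_{\cC^0}|W|^2)=O(\gamma^n)$). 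For small $n$ with $\gamma^{n/2}\ge 1$, one uses the trivial bound $2\|\vf\|_{\cC^0}\|\rho\|_{\cC^0}|W|$, which is absorbed into a large prefactor $C$. Replacing $\gamma$ by $\sqrt{\gamma}\in(0,1)$ gives the statement.

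The only ingredient used beyond Lemma \ref{lem:phys-measure} is that subsegments of a standard segment are again standard segments, which is immediate from the definition of $\cW$; there is no genuine obstacle, the whole proof is a standard discretization estimate, with the one subtle point being the observation that the $1/|V|$ factor in $\cZ(\cG_V)$ exactly cancels when the Lemma is phrased in terms of unnormalized integrals, allowing the error per piece to be $|W_i|$-independent.
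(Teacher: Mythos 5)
Your proposal is correct and follows essentially the same route as the paper: approximate \(\rho\) by a piecewise constant function on \(\sim\gamma^{-n/2}\) standard subsegments, apply the second part of Lemma \ref{lem:phys-measure} to each piece in unnormalized form (where the \(1/|W_i|\) from \(\cZ(\cG_{W_i})\) cancels, exactly as in the paper's estimate \eqref{eq:decay-interval-approx}), and balance the two error terms by the choice \(N\sim\gamma^{-n/2}\), renaming \(\sqrt{\gamma}\) as \(\gamma\). The only cosmetic difference is that you use point values \(\rho(x_i)\) instead of the averages \(\int_{W_i}\rho/|W_i|\), which changes nothing.
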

\begin{proof}
Consider a partition \(\{W_i\}\) of \(W\) in \(N \!\in \!\bN\) \mbox{segments of size \(|W|/N\). One has}
\begin{equation}\label{eq:apploximation}
\biggl|\rho - \sum_{i=1}^N \frac{\int_{W_i}\rho}{|W_i|} \mathbbm{1}_{W_i} \biggr| \le \|\rho'\|_{\cC^0}\max_{i \in\{1,...N\}}|W_i| \le \frac{\|\rho\|_{\cC^1}|W|}{N}.
\end{equation}
Recall that \(\cZ(\cG_i) = 1/|W_i|\) for the standard families \(\cG_i = (W_i, 1)\). Hence, by the second part of Lemma \ref{lem:phys-measure}, there exist \(C \in \bR^+\) and \(\gamma \in (0,1)\) such that, for any \(\vf \in \cC^1\) and \(i \in \{1,...,N\}\), we have the following bound for later,
\begin{equation}\label{eq:decay-interval-approx}
\begin{split}
\biggl|\int_{W_i} \!\!(\vf \circ \catt^n) \frac{\int_{W_i}\rho}{|W_i|} - \mu_t(\vf)\int_{W_i}\!\!\rho \biggr| & = \left|\int_{W_i}\rho\right| \biggl|\int_{W_i} \frac{\vf \circ \catt^n}{|W_i|}  - \mu_t(\vf)\biggr| \\
&\le  C\frac{\int_{W_i}|\rho|}{|W_i|}  \gamma^n \|\vf\|_{\cC^1}.
\end{split}
\end{equation}
Returning to the main estimate, by adding and subtracting and a triangular inequality,
\[
\begin{split}
\biggl|\int_{W} (\vf \circ \catt^n)\rho &- \mu_t(\vf)\int_{W} \rho \biggr| \le \biggl|\int_{W} \vf \circ \catt^n \biggl(\rho - \sum_{i=1}^N \frac{\int_{W_i}\rho}{|W_i|} \mathbbm{1}_{W_i}\biggr)\biggr|\\
&  +\sum_{i=1}^N \biggl| \int_{W_i} \vf \circ \catt^n \frac{\int_{W_i}\rho}{|W_i|}  -  \mu_t(\vf)\int_{W_i}\rho\biggr|.
\end{split}
\]
By \eqref{eq:apploximation} and \eqref{eq:decay-interval-approx} respectively, the previous expression is bounded by
\[
\begin{split}
&\frac{\|\rho\|_{\cC^1}\|\vf\|_{\cC^0}|W|^2}{N}+C \sum_{i=1}^N \frac{\int_{W_i}|\rho|}{|W_i|} \gamma^n \|\vf\|_{\cC^1}. 
\end{split}
\]
Recalling that \(|W_i| = |W|/N\), the above quantity is less or equal to, for any \(n \in \bN_0\),
\[
  \frac{\|\rho\|_{\cC^1}\|\vf\|_{\cC^0}|W|^2}{N} + C \frac{\int_{W}|\rho|}{|W|} N \gamma^n\|\vf\|_{\cC^1} \le C \|\vf\|_{\cC^1}\|\rho\|_{\cC^1} \left(  N^{-1}+ N \gamma^n \right).
\]
We have also used that \(|W| \!\le \!\sqrt{2}\) for \(W \!\in \!\cW\). Setting \(N \! =\! \gamma^{-n/2}\) we obtain the sought bound \(C \|\vf\|_{\cC^1}\|\rho\|_{\cC^1} \gamma^{n/2}\). \!\!This yields \mbox{the result by renaming \(\sqrt \gamma\) with \(\gamma\).} 
\end{proof}

\begin{proof}[\textbf{Proof of Proposition \ref{prop:fundamental-example}}]
Let \(\mu_t\) be the measure given by Lemma \ref{lem:phys-measure} and \(\vf \in \cC^0(\cM)\). Denoting by \(\{V_x\}_{x \in [0,1]}\) the partition of the unit square into vertical lines and by \((x,y)\) the usual Cartesian coordinates in \(\cM\), by Fubini theorem, 
\begin{equation}\label{eq:Fubini}
\lim_{n \to \infty} m \left( \vf \circ \catt^n\right) = \lim_{n \to \infty}\int_{0}^1 \int_{V_x} \vf \circ \catt^n(x, y) dy dx.
\end{equation}
Since for each \(x \in [0,1]\) \(V_x\) are standard segments (they are in the cone) and \(|V_x| =1\), by the first part of Lemma \ref{lem:phys-measure},
\begin{equation}\label{eq:limit-vertical}
\lim_{n \to \infty} \int_{V_x} \vf \circ \catt^n(x,y) dy = \lim_{n \to \infty} \mu_{\cG_{V_x}} (\vf \circ \catt^n) =  \mu_t(\vf).
\end{equation}
Finally, by \eqref{eq:limit-vertical} and since \(\| \vf \circ \catt^n\|_{L^{\infty}} \le \|\vf\|_{\cC^0}\), we can apply Lebesgue dominated convergence theorem, and by \eqref{eq:Fubini} we have, for any \(\vf \in \cC^0(\cM)\),
\[
 \lim_{n \to \infty} m \left( \vf \circ \catt^n\right)= \int_{0}^1  \lim_{n \to \infty} \int_{V_x} \vf \circ \catt^n(x, y) dy dx = \mu_t (\vf).
\]
The equation above proves the first part of Proposition \ref{prop:fundamental-example}, while the second part is precisely Lemma \ref{lem:spc1}.
\end{proof}

\newpage

\appendix

\section{The partition \(\mathscr P_t^1\) for the perturbed cat map} \label{sec:app}
Here we give more details on the construction of the partition \(\mathscr P_t^1\) in the proof of Theorem \ref{thm:cat-lin-resp} and we prove \eqref{eq:C1-norm-conditional}. First, we have that 
\[
\cM \setminus (R_t \cup B_t \cup \cS_0^{-} \cup \cS_t^{-}) = A(t) \cup B(t) \cup C(t) \cup D(t) \cup E(t) \cup F(t) \cup G(t),
\]
where (see the rightmost square in \ref{Fig:2}),
\begin{equation}\label{eq:pedant}
\begin{split}
&A(t) = \{x>0, \text{ } y<1, \text{ } y<x+1-t, \text{ } y>2x\},\\
&B(t) = \{y<1,\text{ } y<2x, \text{ }y > (2-t)x\},\\
&C(t) = \{y<1,\text{ } y<(2-t)x,\text{ } y > x\},\\
&D(t) = \{y>0, \text{ }y< x, \text{ }y > x-t, \text{ }y> 2x-1\},\\
& E(t) = \{y>0, \text{ }y<x-t, \text{ }y>2x-1\},\\
&F(t) = \{y>0, \text{ }y<2x-1, \text{ }y < x-t, \text{ }y >(2-t)x -1\},\\
& G(t) = \{y>0,\text{ }x<1,\text{ } y<(2-t)x-1\}.
\end{split}
\end{equation}
We consider the set \(A(t)\) only, since the computations for the others are analogous as explained at the end of this appendix. Notice that the following is a \mbox{partition of \(A(t)\)},
\[
   \cA_t = \left\{\{y=\tan(\theta) x\}\cap A(t): \theta \in (\arctan(2), \pi/2)\right\}.
\]
We now compute the conditional and factor measure. Let \(g: \cM \to \bR\) be a bounded measurable function and set \(r(\theta, t) = |\{y = \tan(\theta)x\} \cap A(t)|\). Considering polar coordinates,
\begin{equation}\label{eq:polar}
\begin{split}
    \int_{A(t)}gdm &= \int_{\arctan(2)}^{\pi/2} \int_{0}^{r(\theta,t)} g(r,\theta)rdrd\theta \\
    &= \int_{\arctan(2)}^{\pi/2} \frac{\int_{0}^{r(\theta,t)} g(r,\theta)rdr}{\int_0^{r(\theta,t)} r dr } \left(\int_0^{r(\theta,t)} r dr \right)d\theta.
    \end{split}
\end{equation}
From \eqref{eq:polar}, we identify the disintegration of \(m\) and the corresponding conditional measures for the partition \(\cA_t\), 
\begin{equation}\label{eq:conditionalA}
   dm_{\cA_t}(\theta) = \left(\int_0^{r(\theta,t)} r dr \right)d\theta, \quad p_{\theta,t} (r) = \frac{r}{\int_0^{r(\theta,t)} r dr}.
\end{equation}
Since \(r(\theta,t) \ge 1/2\), for all \(t \in (0, 1/8)\), one has that 
\[
     \int_{0}^{r(\theta,t)}rdr = \frac{r(\theta,t)^2}{2} \ge \frac{1}{8}.
\]
This last estimate together with \eqref{eq:conditionalA} shows that \(\|p_{\theta,t}\|_{\cC^1}\) is bounded, uniformly in \(\theta\) and \(t\) as claimed in the proof of Theorem \ref{thm:cat-lin-resp}. It is possible to foliate all the sets in \eqref{eq:pedant} with lines that are either parallel or forming sectors of cones and we define \(\mathscr P_t^1\) in this way. By considering disintegrations either in Cartesian or polar coordinates, these partitions give rise to affine densities normalized over the length of the corresponding elements of the partition. Since there is a lower bound uniform in \(t\) of the above lengths, equation \eqref{eq:C1-norm-conditional} holds.


\end{document}